%%%%%%%%%%%%%%%%%%%%%%%%%%%%%%%%%%%%%%%%%%%%%%%%%%%%%%%%%%%%%%%%%%%%%%%%
%%%%%%%%%%%%%%%%%%%%%%%%%%%%%%%%%%%%%%%%%%%%%%%%%%%%%%%%%%%%%%%%%%%%%%%%
              %%%%%% Poissonsche Hyperebenen im hyperbolischen Raum %%%%%%%%
%%%%%%%%%%%%%%%%%%%%%%%%%%%%%%%%%%%%%%%%%%%%%%%%%%%%%%%%%%%%%%%%%%%%%%%%
%%%%%%%%%%%%%%%%%%%%%%%%%%%%%%%%%%%%%%%%%%%%%%%%%%%%%%%%%%%%%%%%%%%%%%%%

\documentclass[11pt,a4paper]{article}
\usepackage[left=2.45cm, top=2.45cm,bottom=2.45cm,right=2.45cm]{geometry}

\usepackage{times}
\usepackage[utf8]{inputenc}
\usepackage[T1]{fontenc}
\usepackage[ngerman, english]{babel}
\usepackage{mathrsfs}
\usepackage{tikz}
\usepackage{pgfplots}

\usepackage{graphicx}
\usepackage{latexsym}
\usepackage{dsfont}
\usepackage{amssymb,amsmath,amsfonts,amsthm}
\usepackage{array}

\usepackage{tikz}
\usetikzlibrary{matrix,arrows,patterns}
%\allowdisplaybreaks

\newtheorem{Satz}{Satz}

\newtheorem{Lemma}[Satz]{Lemma}	

\newtheorem{Corollary}[Satz]{Corollary}
\newtheorem{theorem}[Satz]{Theorem}

\newtheorem*{Lemma*}{Lemma}

\theoremstyle{definition}

\newtheorem{remark}{Remark}

\newcommand{\E}{\mathbb{E}}
\newcommand{\R}{\mathbb{R}}

\newcommand{\N}{\mathbb{N}}

\renewcommand{\S}{\mathbb S}

\newcommand{\cK}{\mathcal{K}}

\newcommand{\cH}{\mathcal{H}}

\newcommand{\Cov}{\mathbb{C}\text{\rm ov}}
\newcommand{\Var}{\mathbb{V}\text{\rm ar}}

\newcommand{\1}{\mathds{1}}

\newcommand{\oldU}{i}

\DeclareMathOperator{\cum}{cum}

\DeclareMathOperator{\arcosh}{{\rm arcosh}}

\newcommand{\Hd}{\mathbb H^{d}}

\newcommand{\defeq}{\mathrel{\mathop:}=}

\renewcommand{\to}{\rightarrow}

\newcounter{variablenzaehler}
 %Def. d. Verb.
 %Referenzieren der Verb

% Beim nächsten Mal wird nur noch drauf zurückgegriffen:

%Ende nummerierte Variablen

\hyphenation{Min-kow-ski}

\begin{document}
\title{Does a central limit theorem hold for the $k$-skeleton\\ of Poisson hyperplanes in hyperbolic space?}

\author{Felix Herold\footnote{Karlsruhe Institute of Technology, Karlsruhe, Germany, felix.herold@kit.edu}, Daniel Hug\footnote{Karlsruhe Institute of Technology, Karlsruhe, Germany, daniel.hug@kit.edu} and Christoph Thäle\footnote{Fakult\"at f\"ur Mathematik, Ruhr-Universit\"at Bochum, Bochum, Germany, christoph.thaele@rub.de}}
\date{\today}

\maketitle

\begin{abstract}
\noindent Poisson processes in the space of $(d-1)$-dimensional totally geodesic subspaces (hyperplanes) in a $d$-dimensional hyperbolic space of constant curvature $-1$ are studied. The $k$-dimensional Hausdorff measure of their $k$-skeleton is considered. Explicit formulas for first- and second-order quantities restricted to bounded observation windows are obtained. The central limit problem for the $k$-dimensional Hausdorff measure of the $k$-skeleton is approached in two different set-ups: (i) for a fixed window and growing intensities, and (ii) for fixed intensity and growing spherical windows. While in case (i) the central limit theorem is valid for all $d\geq 2$, it is shown that in case (ii) the central limit theorem holds for $d\in\{2,3\}$ and fails if $d\geq 4$ and $k=d-1$ or if $d\geq 7$ and for general $k$. Also rates of convergence are studied and multivariate central limit theorems are obtained. Moreover, the situation in which the intensity and the spherical window are growing simultaneously is discussed. In the background are the Malliavin-Stein method for normal approximation and the combinatorial moment structure of Poisson U-statistics as well as tools from hyperbolic integral geometry.
\\[0.5em]
{\bf Keywords}. Central limit theorem, Crofton formula, Malliavin-Stein method, hyperbolic integral geometry, hyperbolic stochastic geometry, $K$-function, pair correlation function, Poisson hyperplane process, random measures, U-statistics.\\
{\bf MSC}. Primary: 60D05, 53C65, 52A22,   Secondary: 52A55, 60F05.
\end{abstract}

\tableofcontents

\section{Introduction}

Random tessellations in $\R^d$ form a class of mathematical objects that have been under intensive investigation in stochastic geometry during the last decades. In addition to intrinsic mathematical curiosity, a major reason for continuing interest in random tessellations is  that they provide highly relevant models for practical applications, for example, in telecommunication or materials science  \cite{Gloaguen,OhserMuecklich,Okabe,Redenbach09}. One of the principal random tessellation models in Euclidean space is induced by a
Poisson process of hyperplanes. In $\R^d$ with $d\geq 2$ and in the stationary and isotropic case, the construction of a Poisson hyperplane tessellation can be described as follows. Fix a parameter $t>0$ and consider a stationary Poisson point process on the real line with intensity $t$. To each point $p_i$ of the Poisson process we attach independently of each other and independently of the underlying Poisson process a random vector $u_i$ which is uniformly distributed on the unit sphere $\S^{d-1}$ of $\R^d$. Then to each pair $(p_i,u_i)\in\R\times\S^{d-1}$ we associate the hyperplane $H_i:=\{x\in\R^d:\langle x,u_i\rangle=p_i\}$ and call the random collection of all such hyperplanes a (stationary and isotropic) Poisson hyperplane process in $\R^d$ with intensity $t$. The random hyperplanes $H_i$ almost surely divide the space $\R^d$ into countably many random convex polytopes. The collection of all these polytopes is a (stationary and isotropic) Poisson hyperplane tessellation in $\R^d$ with intensity $t$. We remark that the intensity parameter $t$, roughly speaking, controls the expected surface content of the Poisson hyperplane tessellation per unit volume. More precisely, $t=\E\cH^{d-1}(Z\cap[0,1]^d)$, where $Z=\bigcup_{i=1}^\infty H_i$ is the random union set induced by the Poisson hyperplane process and $\cH^{d-1}$ stands for the $(d-1)$-dimensional Hausdorff measure.

For Poisson hyperplane tessellations many first- and second-order quantities are explicitly available for a broad class of functionals and also a comprehensive central limit theory has been developed over the last 15 years, cf. \cite{HeinrichCLTforPHT,HeinrichSchmidtSchmidtCLT,LPST,ReitznerSchulteCLT} and \cite[Chapter 10]{SW} as well as the many references cited therein. In the literature, central limit theorems for functionals of Poisson hyperplanes have been considered in two different set-ups. In a first setting the tessellation is restricted to a fixed (usually convex) observation window and the asymptotic behaviour is explored when the intensity $t$ of the underlying Poisson process is increased. Alternatively, the intensity is kept fixed, while the size of the observation window is increased. By a simple scaling relation both set-ups are equivalent when homogeneous functionals (such as intrinsic volumes, positive powers of intrinsic volumes or integrals with respect to support measures) of the tessellation are considered, see \cite[Corollary 6.2]{LPST}.

While the spherical analogues of Poisson hyperplane tessellations, namely Poisson great hypersphere tessellations, were investigated, for example, in \cite{ArbeiterZahle,HuThspstitt,HugReichenbacher,HugSchneiderConicalTessellations,MilesSphere}, only few results seem to be available for such tessellations in standard spaces of  constant negative curvature, see \cite{BenjaminiEtAl,PorretBlanc,SantaloYanetz,TykessonCalka}. The spherical space of constant positive curvature especially features  by its compactness, which in turn implies that Poisson great hypersphere tessellations almost surely consist of only finitely many spherical random polytopes, In contrast, Poisson hyperplane tessellations in a standard space of constant negative curvature display a number of striking new phenomena that cannot be observed in their Euclidean or spherical counterparts. It is the purpose of the present paper to initiate a systematic study of intersection processes of Poisson hyperplane tessellations in the $d$-dimensional hyperbolic space $\Hd$ and to uncover some of the anticipated and remarkable new phenomena. We confine ourselves to the study of the total volume (in the appropriate dimension) of the intersection processes induced by Poisson hyperplanes in a (hyperbolic convex) test set. We explicitly identify the expectation and the covariance structure of these functionals by making recourse to general formulas for and structural properties of Poisson U-statistics and to Crofton-type formulas from hyperbolic integral geometry. In addition and more importantly, we study probabilistic limit theorems for these functionals in the two asymptotic regimes described above for the Euclidean set-up. While the central limit theorems for growing intensity and fixed observation window are a direct consequence of general central limit theorems for Poisson U-statistics \cite{LPST,ReitznerSchulteCLT,SchulteDissertation,SchulteKolmogorov}, it will turn out that the limit theory in the other regime, that is, when the intensity is kept fixed and the size of the observation window is increased, is fundamentally different. We will prove that here a central limit theorem in fact holds in space dimensions $d=2$ and $d=3$. On the other hand, we will show that a central limit theorem fails for all space dimensions $d\geq 4$ if the total $(d-1)$-volume of the union of all hyperplanes is considered. For the total volume of intersection processes of arbitrary order this will be proved for technical reasons only for dimensions $d\geq 7$. We emphasize that this remarkable and surprising new feature is a consequence of the negative curvature of the underlying space and has no counterpart in the Euclidean or spherical set-up. Another interesting and unexpected feature is observed in this regime for the asymptotic covariance matrix of the vector of  $k$-volumes of the $k$-skeletons, $k=0,\ldots,d-1$. This matrix turns out to have full rank for $d=2$, but it has rank one in dimension $d\geq 3$. In addition, we will study the situation in which the intensity and the size of the observation window are increased \textit{simultaneously}. In this case it will turn out that in all situations where the central limit theorem fails for fixed intensity, the Gaussian fluctuations are in fact preserved as soon as the intensity tends to infinity, independently of the behaviour of the size of the observation window (as long as it is bounded from below).

As anticipated above, the proofs of our results concerning first- and second-order properties of the total volume of intersection processes rely on general formulas for U-statistics of Poisson point processes as presented in \cite{LP} and on tools from hyperbolic integral geometry as developed in \cite{Brothers,GallegoSolanes,Santalo,Solanes}. The central limit theorems we consider will be of quantitative nature, that is, we will provide explicit bounds on the quality (speed) of normal approximation measured in terms of both the Wasserstein and the Kolmogorov distance. Their proofs are based on general normal approximation bounds that have been derived in \cite{EichelsbacherThaele14,ReitznerSchulteCLT,SchulteKolmogorov} using the Malliavin-Stein technique on Poisson spaces (see collection \cite{ReitznerPeccati} for a representative overview concerning this method). This directly implies the central limit theorem for fixed windows and growing intensities. On the other hand, for fixed intensity and when the window is a hyperbolic ball $B_r$ of radius $r$ around a fixed point in $\Hd$, crucial building blocks of these bounds are Crofton-type integrals of the form $$
\int_{A_h(d,k)}\cH^k(H\cap B_r)^l\,\mu_k(dH),
$$
 where $A_h(d,k)$ denotes the space of $k$-dimensional totally geodesic subspaces of $\Hd$ and $\mu_k$ is the suitably normalized invariant measure on $A_h(d,k)$ (all terms will be explained in detail below). While in the Euclidean case the asymptotic behaviour of such integrals, as $r\to\infty$, is quite straightforward, this is not the case in the hyperbolic set-up. In contrast to the Euclidean case, it will turn out that their behaviour crucially depends on whether $l(k-1)$ is less than, greater than or equal to $d-1$ (see Lemma \ref{lem:lines_intersecting_ball_inequality}). In essence, the latter is an effect of the negative curvature, which in turn causes an exponential growth of volume of linearly expanding balls in $\Hd$. To show that a central limit theorem fails in higher space dimensions is arguably the most technical part of this paper. We do this by showing that the fourth cumulant of the centred and normalized total volume of the intersection processes does not converge to $0$, which in turn is the fourth cumulant of a standard Gaussian distribution. However, to bring this in contradiction with a central limit theorem we need to argue that the fourth power of the total volume is uniformly integrable, which in turn will be established by consideration of their fifths moments. This requires a fine analysis of combinatorial moment formulas for U-statistics of Poisson processes. In essence and in contrast to the lower dimensional cases $d=2$ and $d=3$, the failure of the central limit theorem for space dimensions $d \geq 4$ is due to the fact that in these dimensions the contribution of single hyperplanes is asymptotically not negligible anymore.

We emphasize that the present paper contributes to a recent and active line of current mathematical research in stochastic geometry on models in non-Euclidean spaces. As concrete examples we mention here the studies about spherical convex hulls and convex hulls on half-spheres in \cite{BaranyHugReitznerSchneider,KabluchkoMarynychTemesvariThaele,MaeharaMartini18}. Central limit theorems for the volume of random convex hulls in spherical space, hyperbolic spaces and Minkowski geometries were obtained in \cite{BesauThaele},  asymptotic normality of very general so-called stabilizing functionals of Poisson point processes on manifolds was considered in \cite{PenroseYukichMf}. Again more specifically, the papers \cite{BodeFoun...,FountulakisYukich,MullerStaps,TakashiYogesh} study various aspects of random geometric graphs in hyperbolic spaces, including central limit theorems for a number of parameters. Random tessellations of the unit sphere by great hyperspheres are the content of \cite{ArbeiterZahle,HugReichenbacher,HugSchneiderConicalTessellations,MilesSphere}, while so-called random splitting tessellations in spherical spaces were introduced and investigated in \cite{DeussHoerrmannThaele,HuThspstitt}. The paper \cite{CalkaChapronEnriquez} is concerned with properties of Poisson-Voronoi tessellations on general Riemannian manifolds. Finally, the geometry of random fields on the sphere is studied in the monograph \cite{MarinucciPeccati} and invariant random fields on
spaces with a group action are described in \cite{Malyarenko2013}. In a similar vein, it is pointed out in \cite{Liao} that a
systematic study of the invariance properties of probability distributions under a general group action is missing. The  book \cite{Liao}  therefore explores Markov processes whose distributions are
invariant under the action of a Lie group.

\medspace

The remaining parts of this paper are structured as follows. In the next section we formally define Poisson hyperplane tessellations in $\Hd$ and present our main results. We start in Section \ref{subsec:FirstOrder} with expectations and continue in Section \ref{subsec:SecondOrder} with second-order characteristics associated with the total volume of intersections processes. Our limit theorems will be discussed in Section \ref{subsec:LimitTheorems}. The necessary background material on hyperbolic geometry and hyperbolic integral geometry is collected in Section \ref{sec:3.1},  the background material on Poisson U-statistics is the content of Sections \ref{subsec:NormalApproxUstatistics} and \ref{subsec:3.3}. All remaining sections are devoted to the proofs of our results. In Section \ref{sec:4} we present the proofs for first- and second-order parameters and also carry out a detailed covariance analysis, which is needed for our multivariate central limit theory. Our results on generalizations of the K-function and the pair-correlation function are established in Section \ref{sec:ProofKg}. All univariate limit theorems are proved in Section \ref{sec:ProofUni}, while the arguments for the multivariate central limit theorems are provided in the final Section \ref{sec:Multi}.

\section{Main results}\label{sec:MainResults}

\subsection{First-order quantities}\label{subsec:FirstOrder}

We denote by $\Hd$, for $d\ge 2$, the $d$-dimensional hyperbolic space of constant curvature $-1$, which is supplied with the hyperbolic metric $d_h(\,\cdot\,,\,\cdot\,)$. We refer to Section \ref{sec:3.1} below for further background material on hyperbolic geometry and for a description of the conformal ball model for $\Hd$.  Let $p\in\Hd$ be an arbitrary (fixed) point, also referred to as the origin. For $r\ge 0$ we denote by $B_r=\{x\in\Hd:d_h(x,p)\leq r\}$ the hyperbolic ball around $p$ with radius $r$. A set $K\subset\Hd$ is called a hyperbolic convex body, provided that $K$ is non-empty, compact and if with each pair of points $x,y\in K$ the (unique) geodesic connecting $x$ and $y$ is contained in $K$. The space of hyperbolic convex bodies is denoted by $\cK_h^d$. Recall that for $k\in\{0,1,\ldots,d-1\}$ a $k$-dimensional totally geodesic subspace of $\Hd$ is called a $k$-plane and especially $(d-1)$-planes are called hyperplanes. The space of $k$-planes in $\Hd$ is denoted by $A_h(d,k)$. The space $A_h(d,k)$ carries a measure $\mu_k$, which is invariant under isometries of $\Hd$ (see Section \ref{sec:3.1} for the present normalization of this measure). For $s\geq 0$ we denote by $\cH^s$ the $s$-dimensional Hausdorff measure with respect to the intrinsic metric of $\Hd$ as a Riemannian manifold. Finally, we write $\omega_k={2\pi^{k/2}/\Gamma({k/ 2})}$, $k\in\N$, for the surface area of the $k$-dimensional unit ball in the Euclidean space $\R^k$.

\begin{figure}[t]
    \centering
	\includegraphics[width=0.9\columnwidth]{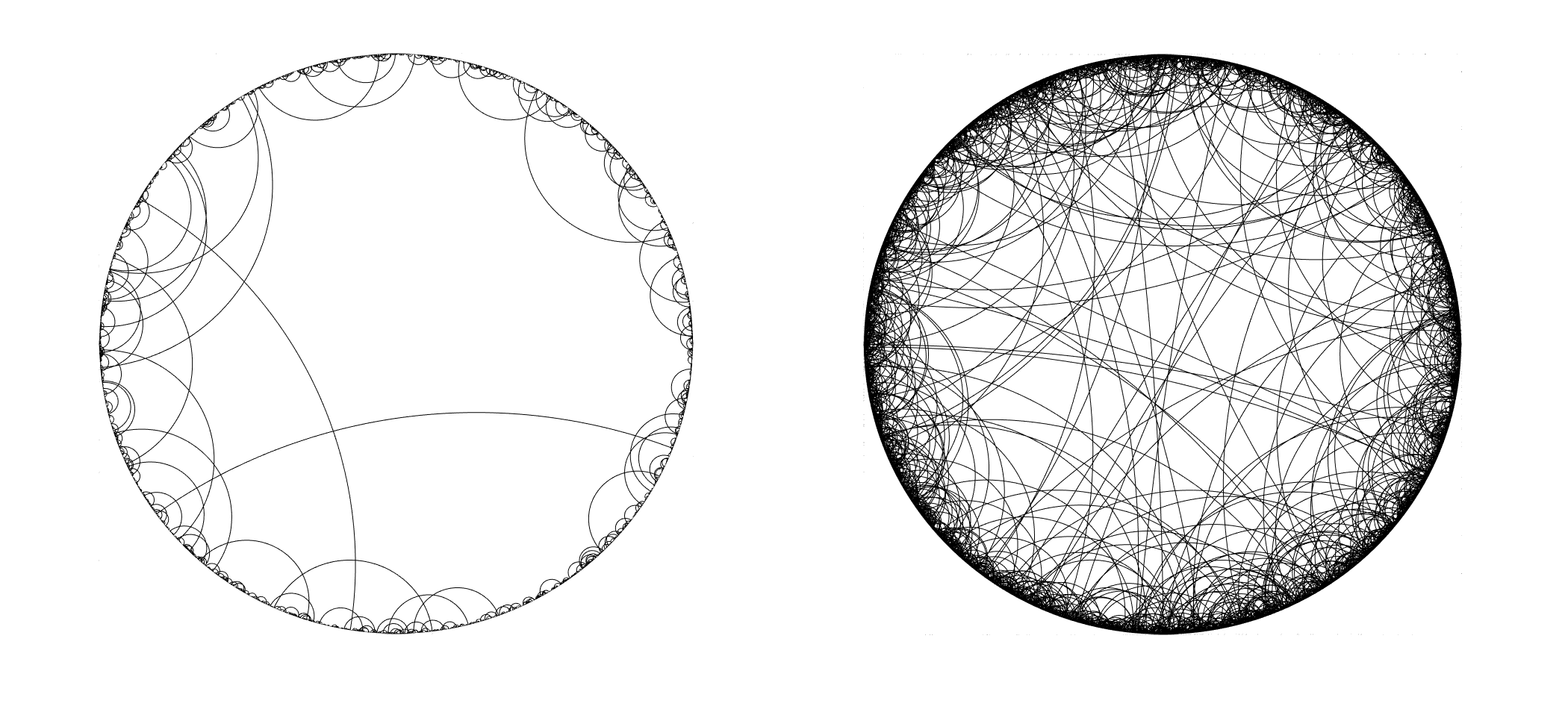}
        \caption{Two realizations of a Poisson hyperplane tessellation in $\mathbb{H}^2$ of different intensities represented in the conformal disc model.}
\label{fig:int_both}
\end{figure}

For $t>0$, let $\eta_t$ be a Poisson process on the space $A_h(d,d-1)$ of hyperplanes in $\Hd$ with intensity measure $t\mu_{d-1}$. We refer to $\eta_t$ as a (hyperbolic) Poisson hyperplane process with intensity $t$. It induces a Poisson hyperplane tessellation in $\Hd$, i.e., a subdivision of $\Hd$ into (possibly unbounded) hyperbolic cells (generalized polyhedra), see Figure \ref{fig:int_both}. For $i\in\{0,\ldots,d-1\}$ we consider the intersection process $\xi_t^{(i)}$ of order $d-i$ of the Poisson hyperplane process $\eta_t$ given by
$$
\xi_t^{(i)} := \frac{1}{(d-i)!}\sum_{(H_1,\ldots,H_{d-i})\in\eta_{t,\neq}^{d-i}}\delta_{H_1\cap\ldots\cap H_{d-i}}\,{\bf 1}\{\dim(H_1\cap\ldots\cap H_{d-i})=i\},
$$
where $\eta_{t,\neq}^{d-i}$ is the set of $(d-i)$-tuples of different hyperplanes supported by $\eta_t$, $\delta_{(\,\cdot\,)}$ denotes the Dirac measure and $\dim(\,\cdot\,)$ stands for the dimension of the set in the argument. In this paper we are interested in random variables of the form
\begin{equation}\label{eq:DefFwti}
\begin{split}
F_{W,t}^{(i)} :&= \int \cH^i(E\cap W)\,\xi^{(i)}_t(dE) \\
& = \frac{1}{(d-i)!}\sum_{(H_1,\ldots,H_{d-i})\in\eta_{t,\neq}^{d-i}}\cH^i(H_1\cap\ldots\cap H_{d-i} \cap W)\,{\bf 1}\{\dim(H_1\cap\ldots\cap H_{d-i})=i\},
\end{split}
\end{equation}
where $W\subset\Hd$ is a (fixed) Borel set in $\Hd$ . In other words, $F_{W,t}^{(i)}$ measures the total $i$-volume (i.e., the $i$-dimensional Hausdorff measure) of the intersection process $\xi_t^{(i)}$ within $W$. For example,
$$
F_{W,t}^{(d-1)} = \sum_{H\in\eta_t}\cH^{d-1}(H\cap W) = \cH^{d-1}\Big(\bigcup_{H\in\eta_t}H\cap W\Big)
$$
is the total surface content of the union of all hyperplanes of $\eta$ within $W$. On the other hand,
$$
F_{W,t}^{(0)} = {1\over d!}\sum_{(H_1,\ldots,H_d)\in\eta_{t,\neq}^d}{\bf 1}\{H_1\cap\ldots\cap H_d\in W,{\rm dim}(H_1\cap\ldots\cap H_d)=0\}
$$
is the total number of vertices in $W$ of the Poisson hyperplane tessellation, i.e., the total number of intersection points induced by the hyperplanes of $\eta_t$. In the Euclidean case these random variables have received particular attention in the literature, see e.g.\ \cite{GH,HeinrichCLTforPHT,HTW,Kallenberg76,Kallenberg80,LPST,Mecke91,ReitznerSchulteCLT,SW} and the references cited therein. As in the Euclidean case, we will start by investigating the expectation of $F_{W,t}^{(i)}$.

\begin{theorem}[Expectation]\label{thm:Expectation}
If $W\subset\Hd$ is a Borel set, $t>0$ and $i\in\{0,1,\ldots,d-1\}$, then
$$
\mathbb{E} F_{W,t}^{(i)}= \frac{\omega_{i+1}}{\omega_{d+1}}\left(\frac{\omega_{d+1}}{\omega_d}\right)^{d-i} \frac{t^{d-i}}{(d-i)!}\, \mathcal{H}^d(W).
$$
\end{theorem}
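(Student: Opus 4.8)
The strategy is to apply the Mecke equation (multivariate Mecke formula) for Poisson processes to the explicit U-statistic representation \eqref{eq:DefFwti} of $F_{W,t}^{(i)}$, and then to evaluate the resulting deterministic integral over $(d-i)$-tuples of hyperplanes by an iterated Crofton-type formula from hyperbolic integral geometry. Concretely, since $\xi_t^{(i)}$ is a Poisson U-statistic of order $m:=d-i$ with kernel $(H_1,\dots,H_m)\mapsto \cH^i(H_1\cap\dots\cap H_m\cap W)\,{\bf 1}\{\dim(H_1\cap\dots\cap H_m)=i\}$, the multivariate Mecke equation gives
$$
\E F_{W,t}^{(i)} = \frac{t^{d-i}}{(d-i)!}\int_{A_h(d,d-1)^{d-i}}\!\!\!\cH^i(H_1\cap\dots\cap H_{d-i}\cap W)\,{\bf 1}\{\dim(\cdots)=i\}\,\mu_{d-1}(dH_1)\cdots\mu_{d-1}(dH_{d-i}).
$$
So the combinatorial prefactor $\omega_{i+1}/\omega_{d+1}\cdot(\omega_{d+1}/\omega_d)^{d-i}$ and the factor $\cH^d(W)$ must emerge entirely from this integral.

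The main work is therefore to evaluate the multiple integral, which I would do by iterating a Crofton formula in $\Hd$. The relevant tool is the hyperbolic Crofton formula stated (in the form needed here) in the integral-geometry section: for a suitable normalization of $\mu_{d-1}$, integrating $\cH^{q-1}(H\cap M)$ over all hyperplanes $H$ against $\mu_{d-1}$ returns a constant times $\cH^q(M)$, for a $q$-dimensional submanifold $M$. Peeling off the hyperplanes one at a time, $H_1\cap\dots\cap H_{d-i}$ starts as a $d$-dimensional region cut successively down through dimensions $d-1, d-2, \dots, i$; the almost-sure genericity of the intersection (so that $\dim = i$ holds $\mu_{d-1}^{\otimes(d-i)}$-a.e.) removes the indicator. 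Each of the $d-i$ integrations contributes one explicit constant coming from the normalizing factors of the invariant measures (these are the $\omega_j$-ratios appearing in Section~\ref{sec:3.1}), and the product of these $d-i$ constants telescopes to exactly $\tfrac{\omega_{i+1}}{\omega_{d+1}}\big(\tfrac{\omega_{d+1}}{\omega_d}\big)^{d-i}$, while the chain of Crofton formulas replaces $\cH^d(W)$ by itself (it is the starting dimension). Equivalently — and this is the cleaner bookkeeping — one uses the iterated/kinematic Crofton formula for the intersection of $d-i$ independent uniform hyperplanes in one step, which is exactly the hyperbolic analogue of \cite[Chapter~10]{SW} and is recorded in the background section; the constant in that formula is precisely the claimed prefactor.

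The step I expect to be the genuine obstacle is \emph{tracking the normalization constants} through the iteration. In hyperbolic space there is no canonical normalization of $\mu_k$, and the paper fixes a specific one in Section~\ref{sec:3.1}; one must check that with that normalization the single-hyperplane Crofton constant is $\omega_{d+1}/\omega_d$ when cutting a $d$-dimensional set, and more generally produces the ratio $\omega_{q+1}/\omega_q$ when applied to a $q$-dimensional totally geodesic piece (the relevant constants do not depend on the curvature because Crofton constants are dimensional, not metric, quantities — this is the one place where the hyperbolic case genuinely mimics the Euclidean one). Granting the Crofton formula with these constants, everything else is a routine telescoping product and an application of Mecke; there are no convergence issues here since $W$ is bounded and all integrands are nonnegative and finite. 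I would also remark that for $i=d-1$ the formula degenerates to the single hyperplane case, $\E F_{W,t}^{(d-1)} = \tfrac{\omega_d}{\omega_{d+1}}\cdot\tfrac{\omega_{d+1}}{\omega_d}\,t\,\cH^d(W) = t\,\cH^d(W)$, which is the expected hyperbolic Crofton identity and serves as a useful sanity check.
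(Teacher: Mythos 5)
Your plan coincides with the paper's proof: the expectation is computed via the multivariate Mecke equation \eqref{eq:Mecke} applied to the U-statistic representation, and the resulting integral is evaluated by the transformation formula of Lemma \ref{lem:mu_almost_surely_d-n_hyperplane} (whose constant $c(d,i)$ is itself obtained by a $(d-i)$-fold iteration of the Crofton formula) followed by one more application of Lemma \ref{lem:Crofton} with $k=i$, which is exactly your ``telescoping'' of normalization constants. The approach and the bookkeeping of the $\omega_j$-ratios are the same as in Section \ref{sec:Expectations}.
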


\begin{remark}\rm
In comparison with the Euclidean and spherical case we observe that precisely the same formula holds in these spaces. This is not surprising, since the proof of Theorem \ref{thm:Expectation} is based only on the multivariate Mecke formula for Poisson processes and a recursive application of Crofton's formula from integral geometry, see Section \ref{sec:4}. Since the latter holds for any standard space of constant curvature $\kappa\in\{-1,0,1\}$ with the same constant (cf.\ \cite{Brothers,Santalo}), independently of the curvature $\kappa$, the result of Theorem \ref{thm:Expectation} holds simultaneously for all standard spaces of constant curvature $\kappa\in\{-1,0,1\}$. In other words this means that the expectation $\mathbb{E} F_{W,t}^{(i)}$ is not an appropriate quantity to `feel' or to `detect' the curvature of the underlying space. For this we will use second-order characteristics.
\end{remark}

\subsection{Second-order quantities}\label{subsec:SecondOrder}

In a next step, we describe the covariance structure of the functionals $F_{W,t}^{(i)}$, $i\in\{0,1,\ldots,d-1\}$, introduced in \eqref{eq:DefFwti}. The following explicit representation
for the covariances will be derived from the Fock space representation of Poisson U-statistics.

\begin{theorem}[Covariances]\label{thm:Variance}
Let $W\subset\Hd$ be a Borel set, let $t>0$, and let $i,j\in\{0,1,\ldots,d-1\}$. Then
$$
\Cov(F_{W,t}^{(i)},F_{W,t}^{(j)}) =\sum_{n=1}^{\min\{d-i,d-j\}} c_{i,j,n,d}\,t^{2d-i-j-n}\int_{A_h(d,d-n)}\cH^{d-n}(E\cap W)^{2}\,\mu_{d-n}(dE)
$$
with
$$
c_{i,j,n,d} = \frac{1}{n!}\, \frac{1}{\omega_{d+1} \, \omega_{d-n+1}}\frac{\omega_{i+1}}{(d-i-n)!}
\frac{\omega_{j+1}}{(d-j-n)!} \left(\frac{\omega_{d+1}}{\omega_d} \right)^{2d-i-j-n}.
$$
\end{theorem}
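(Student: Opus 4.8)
The plan is to view $F_{W,t}^{(i)}$ as a Poisson U-statistic of order $d-i$ built from $\eta_t$, with symmetric kernel
$$
f^{(i)}(H_1,\ldots,H_{d-i})=\cH^i(H_1\cap\ldots\cap H_{d-i}\cap W)\,\mathbf{1}\{\dim(H_1\cap\ldots\cap H_{d-i})=i\},
$$
and to insert it into the Fock space (chaos) representation of Poisson U-statistics recalled in Section~\ref{subsec:3.3} (see also \cite{LP}). By the multivariate Mecke equation the $n$-th chaos kernel of $F_{W,t}^{(i)}$ equals, for $0\le n\le d-i$ (and vanishes for $n>d-i$),
$$
f_n^{(i)}(H_1,\ldots,H_n)=\frac{t^{d-i-n}}{n!\,(d-i-n)!}\int_{A_h(d,d-1)^{d-i-n}}\cH^i(H_1\cap\ldots\cap H_n\cap E_1\cap\ldots\cap E_{d-i-n}\cap W)\,\mathbf{1}\{\dim(\,\cdot\,)=i\}\;\mu_{d-1}^{d-i-n}(d(E_1,\ldots,E_{d-i-n})),
$$
and $n=0$ reproduces Theorem~\ref{thm:Expectation}. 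The Fock space isometry then yields $\Cov(F_{W,t}^{(i)},F_{W,t}^{(j)})=\sum_{n\ge 1}n!\,\langle f_n^{(i)},f_n^{(j)}\rangle_{L^2((t\mu_{d-1})^n)}$, which already explains the summation range $1\le n\le\min\{d-i,d-j\}$.

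Next I would evaluate the inner integral by the iterated hyperbolic Crofton formula of Section~\ref{sec:3.1} (cf.\ \cite{Brothers,Santalo}). For $\mu_{d-1}^n$-almost every tuple the set $H_1\cap\ldots\cap H_n$ is a totally geodesic $(d-n)$-plane, so $H_1\cap\ldots\cap H_n\cap W$ is a Borel, hence $\cH^{d-n}$-rectifiable, subset of it; each successive intersection with a hyperplane of $\Hd$ then lowers the dimension by one (the indicator merely discards a $\mu$-null set of degenerate configurations), and after $d-i-n$ steps the integral equals a constant times $\cH^{d-n}(H_1\cap\ldots\cap H_n\cap W)$. That constant is a product of single-step Crofton factors; telescoping and matching it against Theorem~\ref{thm:Expectation} (which fixes the full chain from dimension $d$ down to $i$, respectively down to $d-n$) gives the value $\frac{\omega_{i+1}}{\omega_{d-n+1}}\big(\frac{\omega_{d+1}}{\omega_d}\big)^{d-i-n}$. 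Hence $f_n^{(i)}(H_1,\ldots,H_n)=a_{i,n,d}\,t^{d-i-n}\,\cH^{d-n}(H_1\cap\ldots\cap H_n\cap W)$ with $a_{i,n,d}=\frac{1}{n!\,(d-i-n)!}\frac{\omega_{i+1}}{\omega_{d-n+1}}\big(\frac{\omega_{d+1}}{\omega_d}\big)^{d-i-n}$, so that
$$
\Cov(F_{W,t}^{(i)},F_{W,t}^{(j)})=\sum_{n=1}^{\min\{d-i,d-j\}}n!\,a_{i,n,d}\,a_{j,n,d}\,t^{2d-i-j-n}\int_{A_h(d,d-1)^n}\cH^{d-n}(H_1\cap\ldots\cap H_n\cap W)^2\,\mu_{d-1}^n(d(H_1,\ldots,H_n)),
$$
where the power of $t$ is $(d-i-n)+(d-j-n)+n$, i.e.\ two kernel factors plus the $n$ copies of $t\mu_{d-1}$.

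The final step is to convert the integral over $n$-tuples of hyperplanes into an integral over $(d-n)$-planes via the hyperbolic intersection formula for flats,
$$
\int_{A_h(d,d-1)^n}g(H_1\cap\ldots\cap H_n)\,\mu_{d-1}^n(d(H_1,\ldots,H_n))=\frac{\omega_{d-n+1}}{\omega_{d+1}}\Big(\frac{\omega_{d+1}}{\omega_d}\Big)^{n}\int_{A_h(d,d-n)}g(E)\,\mu_{d-n}(dE)
$$
for nonnegative measurable $g$ (again Section~\ref{sec:3.1}; the constant is fixed by the chosen normalization of the measures $\mu_k$ and is consistent with Theorem~\ref{thm:Expectation}). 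Applying this with $g(E)=\cH^{d-n}(E\cap W)^2$ and collecting the constants gives $c_{i,j,n,d}=n!\,a_{i,n,d}\,a_{j,n,d}\,\frac{\omega_{d-n+1}}{\omega_{d+1}}\big(\frac{\omega_{d+1}}{\omega_d}\big)^{n}$, and a direct rearrangement shows this equals the asserted expression.

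The argument is conceptually short; the real effort is to propagate the normalization of the measures $\mu_k$ correctly through the two chains of integral-geometric identities, and I expect the genuinely delicate points to be: (a) that the single-step hyperbolic Crofton formula applies with its universal constant also to sets contained in a lower-dimensional totally geodesic subspace, so that the telescoping step is legitimate; and (b) the measurability and Fubini justifications together with finiteness of all integrals, which for a bounded Borel window follow from the Crofton bounds (only flats meeting $W$ contribute, with uniformly bounded Hausdorff content, so that in particular $F_{W,t}^{(i)}\in L^2$ as the Fock representation requires) and for a general Borel $W$ reduce to this case by monotone approximation, the identity then being read in $[0,\infty]$.
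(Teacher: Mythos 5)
Your proposal is correct and follows essentially the same route as the paper: the Fock space/chaos representation of the Poisson U-statistic gives the sum over $n$, the chaos kernels are reduced to constant multiples of $\cH^{d-n}(H_1\cap\ldots\cap H_n\cap W)$ via the (iterated) hyperbolic Crofton formula, and the $n$-fold product integral over hyperplanes is converted to an integral over $(d-n)$-planes by the invariance-based intersection formula (the paper's Lemma \ref{lem:mu_almost_surely_d-n_hyperplane}), with all constants matching. The delicate points you flag — applicability of Crofton to sets inside lower-dimensional flats (handled in the paper via Hausdorff rectifiability of $L_{d-n}\cap W$ for bounded $W$) and the bookkeeping of the normalization of $\mu_k$ — are exactly the ones the paper attends to.
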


\begin{remark}\rm
Since Theorem \ref{thm:Variance} follows from the general Fock space representation of Poisson U-statistics, the formula for $\Cov(F_{W,t}^{(i)},F_{W,t}^{(j)})$ is formally the same for all spaces of constant curvature $\kappa\in\{-1,0,1\}$. However, the curvature properties of the underlying space are hidden in the integral-geometric expression
$$
J_k(W):=\int_{A_h(d,k)}\cH^{k}(E\cap W)^{2}\,\mu_{k}(dE),
$$
for $k\in\{0,\ldots,d-1\}$. In fact, if $\kappa\in\{-1,0\}$ and if we replace $W$ by a ball $B_r$ of radius $r$ around an arbitrary fixed point, we can consider the asymptotic behaviour of $J_k(B_r)$, as $r\to\infty$, which is quite different in these two cases (note that in spherical spaces with constant curvature $\kappa=1$ the range of $r$ is bounded). While in the Euclidean case $\kappa=0$, $J_k(B_r)$ behaves like a constant multiple of $r^{d+k}$ for all choices of $k$, in the hyperbolic case $\kappa=-1$ we will show that $J_k(B_r)$ behaves like a constant multiple of $e^{(d-1)r}$ if $2k-1<d$, like a constant multiple of $re^{(d-1)r}$ if $2k-1=d$ and like a constant multiple of $e^{2(k-1)r}$ if $2k-1>d$, see Lemma \ref{lem:lines_intersecting_ball_inequality} below. In this sense we can say that second-order properties of the functionals $F_{W,t}^{(i)}$ are sensitive to the curvature of the underlying space.
\end{remark}

Continuing the discussion of second-order properties of Poisson hyperplane tessellations in $\Hd$, we now introduce and describe the K-function and the pair-correlation function of the $i$-dimensional Hausdorff measure restricted to the $i$-skeleton of the tessellation. In the Euclidean case these two functions have turned out to be essential tools in the second-order analysis of stationary random measures (see the original paper \cite{Ripley} and the recent monograph \cite{BaddeleyTurner} as well as the references cited therein). To be precise, for $i\in\{0,1,\ldots,d-1\}$ and fixed $t>0$, we first consider the $i$-skeleton of the Poisson hyperplane tessellation in $\Hd$ with intensity $t$, which is defined as the random closed set
\begin{align*}
{\rm skel}_i := \bigcup_{(H_1,\ldots,H_{d-i})\in\eta_{t,\neq}^{d-i}\atop {\rm dim}(H_1\cap \ldots\cap H_{d-i})=i}H_1\cap\ldots\cap H_{d-i}.
\end{align*}
The $i$-dimensional Hausdorff measure on ${\rm skel}_i$ is denoted by $\mathbf{M}_i$. It is a stationary random measure on $\Hd$, that is, its distribution is invariant under isometries of $\Hd$.  Its intensity is defined by $\lambda_i=\E F_{B,t}^{(i)}$, where $B\subset\Hd$ is an arbitrary Borel set with $\cH^d(B)=1$.  It follows from Theorem \ref{thm:Expectation} that
\begin{equation}\label{lambdak}
\lambda_i=\frac{\omega_{i+1}}{\omega_{d+1}}\left(\frac{\omega_{d+1}}{\omega_d}\right)^{d-i} \frac{t^{d-i}}{(d-i)!}.
\end{equation}
 The K-function of the random measure $\mathbf{M}_i$ is defined by
\begin{equation}\label{Kfdh}
K_i(r) : = \frac{1}{\lambda_i^2}\,\E\int_{{\Hd}}\int_B\1\{0<d_h(x,y)\leq r\}\,\mathbf{M}_i(dy)\,\mathbf{M}_i(dx),\qquad r>0.
\end{equation}
The condition $d_h(x,y)>0$ is usually omitted in the definition of the K-function of a diffuse stationary random measure. For $i\in\{1,\ldots,d-1\}$, the proof of the following more general
Theorem \ref{thm:KFunctionAndPCF} will show that $K_i(r)$  remains indeed unchanged if we drop the condition $d_h(x,y)>0$. For $i=0$, however,
the random measure $\mathbf{M}_i$ is a stationary point process in $\Hd$ and then the restriction $d_h(x,y)>0$ is common. The proof of
Theorem \ref{thm:KFunctionAndPCF} will also show that the summands corresponding to indices $n\in\{0,\ldots,d-1\}$ in \eqref{ctdh1} are not affected by the restriction, but the summand with $n=d$ will be zero.

If we define $K_i(B,r)$ as in \eqref{Kfdh}, but for a general measurable set $B\subset\Hd$, it follows from the stationarity of $\eta_t$
that the measure $K_i(\,\cdot\,,r)$ is isometry invariant and hence a constant multiple of $\mathcal{H}^d(\, \cdot \, )$, provided it is locally finite. In Theorem \ref{thm:KFunctionAndPCF}, this will be shown and the constant will be determined.
We will also see that $K_i(r)$ is differentiable, which allows us to consider the pair-correlation function
$$
g_i(r) := {1\over \omega_{d}\sinh^{d-1}(r)}\,\frac{dK_i}{ dr}(r),\qquad r> 0.
$$
Roughly speaking it describes the probability of finding a point on the $i$-skeleton at geodesic distance $r$ from another point belonging to ${\rm skel}_i$.

More generally and in analogy to the covariances considered in Theorem \ref{thm:Variance}, we will consider the
mixed K-function $K_{ij}$ for $ i, j \in \{0,\ldots,d-1\}$.
For  $r>0$ and a measurable set $B\subset\Hd$  with $\cH^d(B)=1$ it is defined by
\begin{align*}
K_{ij}(r) & = {1\over\lambda_i \lambda_j}\,\E\int_{{\Hd}}\int_B\1\{0<d_h(x,y)\leq r\}\,\mathbf{M}_j(dy)\,\mathbf{M}_i(dx)\\
& = {1\over\lambda_i \lambda_j}\,\E\int_{{\rm skel}_i}\int_{{\rm skel}_j\cap B}\1\{0<d_h(x,y)\leq r\}\,\cH^j(dy)\,\cH^i(dx)
\end{align*}
and describes the random measure $\mathbf{M}_i$ as seen from a typical point of $\mathbf{M}_j$, in the sense of Palm distribution. In particular, we retrieve the ordinary K-function by the special choice $j=i$. The mixed pair-correlation function $g_{ij}$ is then defined in the obvious way by differentiation of $K_{ij}$, namely,
$$
g_{ij}(r) := {1\over \omega_{d}\sinh^{d-1}(r)}\,\frac{dK_{ij}}{ dr}(r),\qquad r> 0.
$$
As in the case of the K-function, the condition that $0<d_h(x,y)$ can be omitted if $i\ge 1$ or $j\ge 1$.

\begin{theorem}[Mixed K-function and mixed pair-correlation function]\label{thm:KFunctionAndPCF}
If  $i,j\in\{0,1,\ldots,d-1\}$, $t>0$ and $r>0$, then
\begin{align}
K_{ij}(r) &= \sum_{n=0}^{m(d,i,j)}n!{d-i\choose n}{d-j\choose n}{\omega_{d+1}\omega_{d-n}\over\omega_{d-n+1}}\bigg({\omega_d\over\omega_{d+1}}{1\over t}\bigg)^{n}\int_0^r\sinh^{d-n-1}(s)\,ds,\label{ctdh1}\\
g_{ij}(r) &= 1+\sum_{n=1}^{m(d,i,j)}n!{d-i\choose n}{d-j\choose n}{\omega_{d-n}\over\omega_{d-n+1}}\bigg({\omega_d\over\omega_{d+1}}\bigg)^{n-1}{1\over (t\sinh(r))^{n}},
\nonumber
\end{align}
where $m(d,i,j):=\min\{d-i,d-j,d-1\}$.
\end{theorem}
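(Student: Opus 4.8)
\emph{Proof plan.} I would follow the same route as for Theorems~\ref{thm:Expectation} and~\ref{thm:Variance}, namely the multivariate Mecke formula combined with iterated Crofton formulas, the one new ingredient being the treatment of the weight $\mathbf{1}\{0<d_h(x,y)\le r\}$. First, write $\mathbf{M}_i$ through the intersection process, $\int f\,d\mathbf{M}_i=\frac{1}{(d-i)!}\sum_{(H_1,\dots,H_{d-i})\in\eta_{t,\neq}^{d-i}}\mathbf{1}\{\dim(H_1\cap\dots\cap H_{d-i})=i\}\int_{H_1\cap\dots\cap H_{d-i}}f\,d\mathcal{H}^i$, and similarly for $\mathbf{M}_j$. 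Inserting both expansions into the definition of $K_{ij}(r)$ (for a fixed Borel set $B$ with $\mathcal{H}^d(B)=1$) turns $\lambda_i\lambda_j K_{ij}(r)$ into the expectation of a double sum over pairs of tuples of hyperplanes of $\eta_t$.

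Next I would split this double sum according to the number $n\in\{0,\dots,\min\{d-i,d-j\}\}$ of hyperplanes shared by the two tuples and apply the multivariate Mecke equation. Choosing which $n$ of the $d-i$, respectively $d-j$, positions carry the shared hyperplanes and matching them up gives $\binom{d-i}{n}\binom{d-j}{n}n!$ ordered configurations, each yielding the same value by symmetry; as the nonnegative integrand depends only on the two intersection flats, this produces a factor $t^{2d-i-j-n}$ and an integral with respect to $\mu_{d-1}^{2d-i-j-n}$ over the $n$ shared hyperplanes $G_1,\dots,G_n$, the $d-i-n$ first-tuple-only hyperplanes $H_1,\dots,H_{d-i-n}$, and the $d-j-n$ second-tuple-only hyperplanes $H_1',\dots,H_{d-j-n}'$; together with the $\frac{1}{(d-i)!(d-j)!}$ coming from the two expansions, the combinatorial weight collapses to $\frac{1}{n!\,(d-i-n)!\,(d-j-n)!}$. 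For $\mu_{d-1}$-almost every configuration all intersections have their expected dimensions, so the dimension indicators drop out and $E:=G_1\cap\dots\cap G_n\cap H_1\cap\dots\cap H_{d-i-n}$ and $E':=G_1\cap\dots\cap G_n\cap H_1'\cap\dots\cap H_{d-j-n}'$ are an $i$-flat and a $j$-flat contained in the $(d-n)$-flat $L:=G_1\cap\dots\cap G_n$.

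The core is the evaluation of the inner $(2d-i-j-n)$-fold integral. Holding $L$ and the second tuple fixed, I would integrate out $H_1,\dots,H_{d-i-n}$ one at a time by a weighted Crofton formula, with weight $x\mapsto\int_{E'\cap B}\mathbf{1}\{0<d_h(x,y)\le r\}\,\mathcal{H}^j(dy)$; each such step raises the dimension of the relevant flat by one and contributes a Crofton constant, so that after all $d-i-n$ steps one is left (up to a constant) with the integral of the weight over $L$ in dimension $d-n$. Integrating out $H_1',\dots,H_{d-j-n}'$ similarly (with weight $y\mapsto\mathbf{1}\{y\in B\}\,\mathbf{1}\{0<d_h(x,y)\le r\}$) replaces the inner integral by $a_{i,n}b_{j,n}\int_L\int_{L\cap B}\mathbf{1}\{0<d_h(x,y)\le r\}\,\mathcal{H}^{d-n}(dy)\,\mathcal{H}^{d-n}(dx)$, where $a_{i,n},b_{j,n}$ are products of Crofton constants. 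Here the hyperbolic geometry enters: $L$ is totally geodesic, hence isometric to $\mathbb{H}^{d-n}$, so for fixed $y\in L$ the inner integral equals the $(d-n)$-volume of the geodesic ball of radius $r$ in $\mathbb{H}^{d-n}$, namely $\omega_{d-n}\int_0^r\sinh^{d-n-1}(s)\,ds$, \emph{independently of $y$}; therefore the double integral equals $\omega_{d-n}\big(\int_0^r\sinh^{d-n-1}(s)\,ds\big)\,\mathcal{H}^{d-n}(L\cap B)$. When $d-n=0$ — which forces $i=j=0$ and $n=d$ — this vanishes, since the ball degenerates to the single point $\{y\}$, which the condition $d_h(x,y)>0$ discards; this is exactly why the summation in \eqref{ctdh1} runs only up to $m(d,i,j)$, and it also shows that the restriction $d_h(x,y)>0$ is immaterial whenever $i\ge1$ or $j\ge1$ (then $\min\{d-i,d-j\}\le d-1$, so $n=d$ never occurs and the diagonal $\{x=y\}$ is $\mathcal{H}^i\otimes\mathcal{H}^j$-null inside $E\times E'$). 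It then remains to integrate out $G_1,\dots,G_n$; iterated Crofton turns $\int_{A_h(d,d-1)^n}\mathcal{H}^{d-n}(L\cap B)\,\mu_{d-1}^n(d(G_1,\dots,G_n))$ into a Crofton constant times $\mathcal{H}^d(B)=1$.

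Finally I would assemble the constants. Writing $C_q:=\prod_{p=q+1}^d c_{d,p,d-1}$, Theorem~\ref{thm:Expectation} together with \eqref{lambdak} identifies $C_q=\frac{\omega_{q+1}}{\omega_{d+1}}\big(\frac{\omega_{d+1}}{\omega_d}\big)^{d-q}$ (indeed $\lambda_q=\frac{t^{d-q}}{(d-q)!}\,C_q$), and all the Crofton constants above are expressed through the $C_q$ — for instance $a_{i,n}=C_i/C_{d-n}$, and the integration over the shared hyperplanes contributes $C_{d-n}$. Substituting and using $\binom{d-i}{n}\binom{d-j}{n}n!=\frac{(d-i)!(d-j)!}{n!(d-i-n)!(d-j-n)!}$ yields \eqref{ctdh1}. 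The expression for $g_{ij}$ then follows by differentiating \eqref{ctdh1} in $r$ — each summand is a smooth function of $r$ — via $\frac{d}{dr}\int_0^r\sinh^{d-n-1}(s)\,ds=\sinh^{d-n-1}(r)$ and division by $\omega_d\sinh^{d-1}(r)$, with the $n=0$ term producing the leading $1$. Local finiteness of $K_{ij}(\,\cdot\,,r)$ — and hence, by isometry invariance of $\eta_t$, its proportionality to $\mathcal{H}^d$ as well as the independence of the chosen set $B$ — is a byproduct of the explicit finite formula. The only genuinely delicate point is organizational: keeping track of the combinatorial decomposition of the double sum and of the various Crofton constants, and formulating the iterated weighted Crofton formula in the precise form needed; the conceptual core — the reduction, via Crofton, to the volume of a hyperbolic ball in $\mathbb{H}^{d-n}$ — is short.
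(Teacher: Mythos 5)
Your plan matches the paper's proof essentially step for step: decompose the double sum over pairs of tuples by the number $n$ of shared hyperplanes (yielding the same coefficient $\tfrac{1}{n!(d-i-n)!(d-j-n)!}$), apply the multivariate Mecke equation, reduce the hyperplane integrals via iterated Crofton (the paper packages this as Lemma \ref{lem:mu_almost_surely_d-n_hyperplane} plus one application of Lemma \ref{lem:Crofton}), and observe that $\cH^{d-n}(E\cap B(y,r))$ is the volume of a geodesic ball in $\mathbb{H}^{d-n}$ independent of $y\in E$, with the same explanation for the vanishing $n=d$ term and the role of the condition $d_h(x,y)>0$. The argument is correct; only the explicit simplification of the Crofton constants remains to be carried out, and your bookkeeping via the constants $C_q$ is consistent with the paper's.
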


In \eqref{ctdh1} we restrict the summation to $n\le d-1$ in order to avoid an undefined expression which arises for
  $i=j=0$ and $n=d$. Alternatively, for $n=d$ the factor $\omega_{d-n}=\omega_0$ is $\omega_0=2/\Gamma(0)=0$ and the product
with the infinite integral can be defined to be zero.

In the special case $d=2$ and for $i=j$ we thus obtain
\begin{align*}
g_0(r) = 1+{4\over \pi\,t}\,{1\over \sinh(r)}\qquad\text{and}\qquad
g_1(r) = 1+{1\over \pi\,t}\,{1\over \sinh(r)},
\end{align*}
and for $d=3$  and  again $i=j$ we get
\begin{align*}
g_0(r) & = 1 + {9\over 2\,t}\,{1\over\sinh(r)} + {36\over \pi^2\,t^2}\,{1\over\sinh^2(r)},\\
g_1(r) & = 1 + {2\over t}\,{1\over\sinh(r)} + {4\over\pi^2\,t^2}\,{1\over\sinh^2(r)},\\
g_2(r) & = 1 + {1\over 2\,t}\,{1\over\sinh(r)},
\end{align*}
see Figure \ref{fig:PCF}.
%%%%%%%%%%%%%%%%%%%%%%%%%%%%%%
%%%%% arxiv-Version  %%%%%%%%%
%%%%%%%%%%%%%%%%%%%%%%%%%%%%%%
\begin{figure}[t]
\begin{center}
\begin{minipage}{0.45\columnwidth}
  \begin{tikzpicture}[scale=0.8]
    \begin{axis}[
     clip=false,
     xmin=0,xmax=1,
     %ymin=0, ymax=20,
     %axis lines=left,
     %axis x line=middle,
     %axis y line=left,
     xtick={0,0.5,1},
     xticklabels={$0$, $0.5$, $1$}
     ]
      \addplot[domain=0.07:1,samples=200,black]{1+4/(3.1415*sinh(x))};
      \addplot[domain=0.025:1,samples=200,blue]{1+1/(3.1415*sinh(x))};
    \end{axis}
  \end{tikzpicture}
\end{minipage}
\begin{minipage}{0.45\columnwidth}
  \begin{tikzpicture}[scale=0.8]
    \begin{axis}[
     clip=false,
     xmin=0,xmax=1,
     %ymin=0, ymax=20,
     %axis lines=left,
     %axis x line=middle,
     %axis y line=left,
     xtick={0,0.5,1},
     xticklabels={$0$, $0.5$, $1$}
     ]
      \addplot[domain=0.2:1,samples=200,black]{1+9/(2*sinh(x))+36/(3.1415^2*sinh(x)^2)};
      \addplot[domain=0.08:1,samples=200,blue]{1+2/(sinh(x))+4/(3.1415^2*sinh(x)^2)};
      \addplot[domain=0.01:1,samples=200,red]{1+1/(2*sinh(x))};
    \end{axis}
  \end{tikzpicture}
\end{minipage}
\end{center}
\caption{Left panel: The pair-correlation functions $g_{0}$ (black curve) and $g_1$ (blue curve) for $d=2$ and $t=1$. Right panel: The pair-correlation functions $g_{0}$ (black curve), $g_1$ (blue curve) and $g_2$ (red curve) for $d=3$ and $t=1$.}
\label{fig:PCF}
\end{figure}
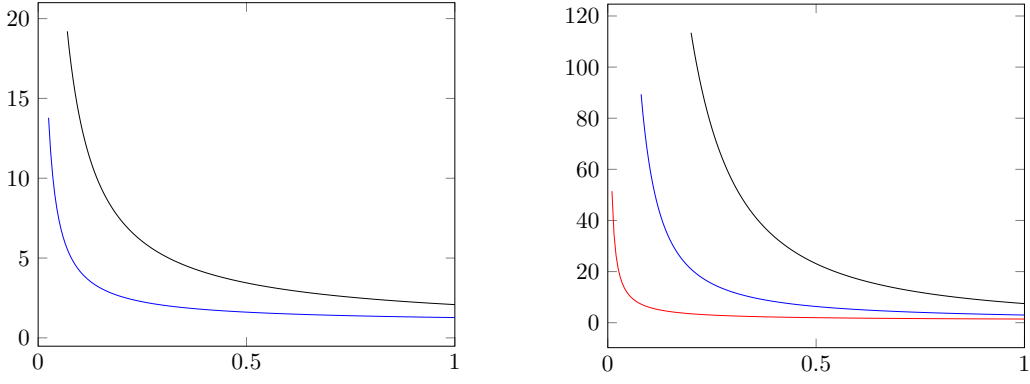

%%%%%%%%%%%%%%%%%%%%%%%%%%%%%%
%%%%% Black/White-Version  %%%%%%%%%
%%%%%%%%%%%%%%%%%%%%%%%%%%%%%%
%\begin{figure}[t]
%\begin{center}
%\begin{minipage}{0.45\columnwidth}
%  \begin{tikzpicture}[scale=0.8]
%    \begin{axis}[
%     clip=false,
%     xmin=0,xmax=1,
%     %ymin=0, ymax=20,
%     %axis lines=left,
%     %axis x line=middle,
%     %axis y line=left,
%     xtick={0,0.5,1},
%     xticklabels={$0$, $0.5$, $1$}
%     ]
%      \addplot[domain=0.07:1,samples=200,black]{1+4/(3.1415*sinh(x))};
%      \addplot[domain=0.025:1,samples=200,black, dashed]{1+1/(3.1415*sinh(x))};
%    \end{axis}
%  \end{tikzpicture}
%\end{minipage}
%\begin{minipage}{0.45\columnwidth}
%  \begin{tikzpicture}[scale=0.8]
%    \begin{axis}[
%     clip=false,
%     xmin=0,xmax=1,
%     %ymin=0, ymax=20,
%     %axis lines=left,
%     %axis x line=middle,
%     %axis y line=left,
%     xtick={0,0.5,1},
%     xticklabels={$0$, $0.5$, $1$}
%     ]
%      \addplot[domain=0.2:1,samples=200,black]{1+9/(2*sinh(x))+36/(3.1415^2*sinh(x)^2)};
%      \addplot[domain=0.08:1,samples=200,black, dashed ]{1+2/(sinh(x))+4/(3.1415^2*sinh(x)^2)};
%      \addplot[domain=0.01:1,samples=200,black, densely dotted]{1+1/(2*sinh(x))};
%    \end{axis}
%  \end{tikzpicture}
%\end{minipage}
%\end{center}
%\caption{Left panel: The pair-correlation functions $g_{0}$ (solid curve) and $g_1$ (dashed curve) for $d=2$ and $t=1$. Right panel: The pair-correlation functions $g_{0}$ (solid curve), $g_1$ (dashed curve) and $g_2$ (dotted curve) for $d=3$ and $t=1$.}
%\label{fig:PCF}
%\end{figure}

\begin{remark}
An inspection of the proof shows that Theorem \ref{thm:KFunctionAndPCF} is based only on Crofton's formula and Lemma \ref{lem:mu_almost_surely_d-n_hyperplane}, which in turn is also based on Crofton's formula. However, since the latter holds for any space of constant curvature $\kappa\in\{-1,0,1\}$ with the same constant (cf.\ \cite{Brothers,Santalo}), independently of the curvature $\kappa$, Theorem \ref{thm:KFunctionAndPCF} remains valid also in spherical and Euclidean spaces of curvature $\kappa=1$ and $\kappa=0$, respectively. Namely, defining the modified sine function
$$
{\rm sn}_\kappa(r) := \begin{cases}
\sin(r) &: \kappa=1,\\
r &: \kappa=0,\\
\sinh(r) &: \kappa=-1,
\end{cases}
$$
we obtain
\begin{align*}
K_{ij}(r) = \sum_{n=0}^{m(d,i,j)}n!{d-i\choose n}{d-j\choose n}{\omega_{d+1}\omega_{d-n}\over\omega_{d-n+1}}\bigg({\omega_d\over\omega_{d+1}}{1\over t}\bigg)^{n}\int_0^r{\rm sn}_\kappa^{d-n-1}(s)\,ds
\end{align*}
and
\begin{align*}
g_{ij}(r) = 1+\sum_{n=1}^{m(d,i,j)}n!{d-i\choose n}{d-j\choose n}{\omega_{d-n}\over\omega_{d-n+1}}\bigg({\omega_d\over\omega_{d+1}}\bigg)^{n-1}{1\over (t\,{\rm sn}_\kappa(r))^{n}}
\end{align*}
for $r> 0$ if $\kappa\in\{-1,0\}$ and $0< r<\pi$ if $\kappa=1$. For $i=j=d-1$ and $\kappa=1$ these formulas have been proved in
{\cite[Section 6.2]{HuThspstitt}} based on a different normalization. Moreover, for $\kappa=0$ the formula for $g_0(r)$ appears as the identity (3.15) in \cite{HeinrichMuchePVT}, while $g_{d-1}(r)$ can be found in {\cite[Section 7]{SchreiberThaele2ndOrder}}. As already explained in \cite{HeinrichSchmidtSchmidtCLT}, for general $i\in\{0,1,\ldots,d-1\}$ it can in principle be deduced from an explicit formula for the second-order moments of the total volume of intersection processes, see \cite[p.~164]{Matheron}.
\end{remark}

\subsection{Limit theorems}\label{subsec:LimitTheorems}

Our next result is a central limit theorem for $F_{W,t}^{(i)}$, for a fixed hyperbolic convex body $W$, when the intensity parameter $t$ tends to infinity. We will measure the distance between (the laws of) two random variables by the Wasserstein and the Kolmogorov distance. For their definitions we refer to Section \ref{subsec:NormalApproxUstatistics} below.

\begin{theorem}[CLT, growing intensity]\label{thm:CLTtToInfinity}
Let $d\geq 2$, $i\in\{0,1,\ldots,d-1\}$ and let $W\in\cK_h^d$ be a fixed hyperbolic convex body with non-empty interior. Let $N$ be a standard Gaussian random variable, and let $d(\,\cdot\,,\,\cdot\,)$ denote either the Wasserstein or the Kolmogorov distance. Then there exists a constant $c\in(0,\infty)$ such that
$$
d\left(\frac{F_{W,t}^{(i)}- \mathbb{E}F_{W,t}^{(i)}}{\sqrt{\Var F_{W,t}^{(i)}}},N \right) \leq c\,t^{-1/2}
$$
for all $t\geq 1$.
\end{theorem}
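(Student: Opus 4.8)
The plan is to recognize $F_{W,t}^{(i)}$ as a Poisson U-statistic of order $d-i$ and to invoke the quantitative central limit theorem for such functionals established via the Malliavin–Stein method, for instance in the form given in \cite{LPST,ReitznerSchulteCLT,SchulteKolmogorov}. Concretely, $F_{W,t}^{(i)}$ as defined in \eqref{eq:DefFwti} is of the form $\frac{1}{(d-i)!}\sum_{(H_1,\ldots,H_{d-i})\in\eta_{t,\neq}^{d-i}} f(H_1,\ldots,H_{d-i})$ with the bounded, symmetric kernel $f(H_1,\ldots,H_{d-i}) = \cH^i(H_1\cap\ldots\cap H_{d-i}\cap W)\,\1\{\dim(H_1\cap\ldots\cap H_{d-i})=i\}$; boundedness follows since $W$ is a hyperbolic convex body, hence compact, so $\cH^i(H_1\cap\ldots\cap H_{d-i}\cap W)$ is bounded by a constant depending only on $W$ and $i$. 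The general bounds for U-statistics express the Wasserstein and Kolmogorov distances to a standard Gaussian in terms of the finitely many Wiener–Itô chaos kernels $f_\ell$, $\ell=1,\ldots,d-i$, of $F_{W,t}^{(i)}$, and the whole task reduces to tracking the $t$-dependence of these terms against the normalizing variance.

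The key steps, in order, are as follows. First, I would record the chaos decomposition: $f_\ell(H_1,\ldots,H_\ell) = \binom{d-i}{\ell}\frac{1}{(d-i)!}\int_{A_h(d,d-1)^{d-i-\ell}} f(H_1,\ldots,H_\ell,\,\cdot\,)\, (t\mu_{d-1})^{d-i-\ell}(d(\cdot))$, each of which carries a factor $t^{d-i-\ell}$. Second, using the multivariate Mecke formula together with a recursive application of Crofton's formula from hyperbolic integral geometry (exactly as in the proofs of Theorems \ref{thm:Expectation} and \ref{thm:Variance}), I would observe that $\Var F_{W,t}^{(i)}$ is a polynomial in $t$ whose leading term is of order $t^{2(d-i)-1}$, coming from the first chaos, with a strictly positive coefficient because $W$ has non-empty interior (so the relevant integral-geometric integral $J_{i+1}(W)>0$); this is already contained in Theorem \ref{thm:Variance} with $n=1$. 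Thus $\Var F_{W,t}^{(i)} \asymp t^{2(d-i)-1}$ as $t\to\infty$. Third, I would plug the chaos kernels into the Malliavin–Stein bound; each contraction norm appearing there is again computed via Mecke and Crofton to be a monomial in $t$, and a direct power count — standard for Poisson U-statistics with a fixed bounded kernel — shows every summand in the normalized bound is $O(t^{-1/2})$, with the dominant contribution governed by the gap between the second chaos (order $t^{2(d-i-1)}$ in the unnormalized variance-type quantities) and the square of the first-chaos normalization. Finally, I would combine these estimates to conclude $d(\,\cdot\,,N)\le c\,t^{-1/2}$ for $t\ge 1$.

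The only genuinely delicate point is verifying that the leading-order variance really grows like $t^{2(d-i)-1}$ rather than degenerating — i.e.\ that the coefficient $c_{i,i,1,d}\,J_{i+1}(W)$ from Theorem \ref{thm:Variance} is strictly positive. This is where the hypothesis that $W$ has non-empty interior enters: it guarantees a positive-measure set of $(i+1)$-planes $E$ with $\cH^{i+1}(E\cap W)>0$, so $J_{i+1}(W)>0$. Everything else is bookkeeping: once the variance order is pinned down, the Malliavin–Stein bounds for Poisson U-statistics yield the rate $t^{-1/2}$ mechanically, and crucially no curvature-specific phenomenon intervenes because the observation window $W$ is fixed and bounded — the hyperbolic geometry only manifests through the values of the (finite, fixed) integral-geometric constants, not through any unbounded growth. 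Hence, in contrast to the fixed-intensity/growing-window regime treated later in the paper, the argument here is insensitive to the dimension $d$ and works for all $d\ge 2$.
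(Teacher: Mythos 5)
Your proposal is correct and takes essentially the same approach as the paper, which proves this theorem in one line by citing the general quantitative CLT for Poisson U-statistics (Corollary 4.3 of \cite{SchulteKolmogorov}); your chaos decomposition, power counting in $t$, and verification that the variance is of exact order $t^{2(d-i)-1}$ are precisely what that corollary encapsulates for a fixed, bounded, integrable kernel. The only (inconsequential) slip is that the leading variance coefficient from Theorem \ref{thm:Variance} with $n=1$ involves $\int_{A_h(d,d-1)}\cH^{d-1}(E\cap W)^2\,\mu_{d-1}(dE)$ rather than $J_{i+1}(W)$, both of which are strictly positive since $W$ has non-empty interior.
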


As already explained in the introduction, the central limit problem for $F_{W,t}^{(i)}$ can also be approached in another set-up, which in the Euclidean case is equivalent to the one just discussed, but turns out to be fundamentally different in hyperbolic space. More precisely, we turn now to the case, where the intensity $t$ is fixed, while the size of the observation window is increased. We do this only in the case of spherical windows in $\Hd$. In other words, we choose for $W$ the hyperbolic ball $B_r$ (around the origin $p$) and write $F_{r,t}^{(i)}$ instead of $F_{B_r,t}^{(i)}$ in this case. Our next result is a central limit theorem for $F_{r,t}^{(i)}$ for dimension $d=2$ in part (a) and for $d=3$ in part (b). Moreover, it turns out that a central limit theorem for $F_{r,t}^{(i)}$ is no longer valid in any space dimension $d\geq 4$, see part (c). We emphasize that this surprising phenomenon is in sharp contrast to the Euclidean case \cite{HeinrichCLTforPHT,LPST,ReitznerSchulteCLT} and is an effect of the negative curvature.

\begin{theorem}[CLT, growing spherical window]\label{thm:CLTrToInfinity}
Let $t\geq 1$, let $N$ be a standard Gaussian random variable, and let  $d(\,\cdot\,,\,\cdot\,)$ denote either the Wasserstein or the Kolmogorov distance.
\begin{itemize}
\item[{\rm (a)}] If $d=2$, then there is a constant $c_2\in(0,\infty)$ only depending on $t$ such that
$$
d\left(\frac{F_{r,t}^{(i)}- \mathbb{E}F_{r,t}^{(i)}}{\sqrt{\Var F_{r,t}^{(i)}}},N \right) \leq c_2\,r^{1-i}\,e^{-r/2}
$$
for  $i\in\{0,1\}$ and $r\geq 1$.
\item[{\rm (b)}]If $d=3$, then there is a constant $c_3\in(0,\infty)$ only depending on $t$ such that
$$
d\left(\frac{F_{r,t}^{(i)}- \mathbb{E}F_{r,t}^{(i)}}{\sqrt{\Var F_{r,t}^{(i)}}},N \right) \leq \begin{cases}
c_3\,r^{-1} &: i=2,\\
c_3\,r^{-1/2} &: i\in\{0,1\},\\
\end{cases}
$$
for $r\geq 1$.
\item[{\rm (c)}] If $d\geq 4$ and $i=d-1$ or if $d\ge 7$ and $i\in\{0,1,\ldots,d-1\}$, then a central limit theorem for $(F_{r,t}^{(i)}- \mathbb{E}F_{r,t}^{(i)})/\sqrt{\Var   F_{r,t}^{(i)}}$ does not hold for $r\to\infty$.
\end{itemize}
\end{theorem}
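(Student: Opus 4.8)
The three parts rest on two different mechanisms. Parts (a) and (b) are a direct application of the quantitative normal approximation for Poisson $U$-statistics together with the integral-geometric asymptotics behind Lemma~\ref{lem:lines_intersecting_ball_inequality}, whereas part (c) is an obstruction argument: I will show that the fourth cumulant of the standardized variable does not vanish as $r\to\infty$, and that this is incompatible with a central limit theorem because the fourth powers of the standardized variables are uniformly integrable.

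\textbf{Parts (a) and (b).} First I would record that $F_{r,t}^{(i)}$ is a Poisson $U$-statistic of order $m=d-i$ over $\eta_t$, with symmetric kernel
\[
f(H_1,\dots,H_m)=\tfrac{1}{m!}\,\cH^i(H_1\cap\dots\cap H_m\cap B_r)\,\mathbf{1}\{\dim(H_1\cap\dots\cap H_m)=i\}.
\]
By the Fock space / Wiener--It\^o decomposition, $F_{r,t}^{(i)}-\E F_{r,t}^{(i)}=\sum_{n=1}^{m}I_n(f_n)$, where the kernels $f_n$ arise from $f$ by partial integration against $\mu_{d-1}$ and, by iterated use of Crofton's formula (Section~\ref{sec:3.1}), are again explicit integral-geometric expressions. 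I would then feed this into the Malliavin--Stein bound for Poisson $U$-statistics recorded in Section~\ref{subsec:NormalApproxUstatistics} (in the form of \cite{EichelsbacherThaele14,ReitznerSchulteCLT,SchulteKolmogorov}): its right-hand side is a finite sum of contraction norms $\|f_n\star_a^b f_{n'}\|$ divided by suitable powers of $\Var F_{r,t}^{(i)}$. Each such contraction unfolds, after another application of Crofton's formula, into a linear combination of the basic integrals $\int_{A_h(d,k)}\cH^k(E\cap B_r)^{\ell}\,\mu_k(dE)$, whose asymptotics as $r\to\infty$ are governed by the sign of $\ell(k-1)-(d-1)$ exactly as in Lemma~\ref{lem:lines_intersecting_ball_inequality} (which is the case $\ell=2$, and whose proof carries over to general $\ell$). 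The leading order of $\Var F_{r,t}^{(i)}$ follows from Theorem~\ref{thm:Variance} and the same estimates. Collecting the dominant term yields the stated bounds $c_2\,r^{1-i}e^{-r/2}$ for $d=2$ and $c_3\,r^{-1}$, resp.\ $c_3\,r^{-1/2}$, for $d=3$; since all contributions tend to $0$, this is the central limit theorem for $d\in\{2,3\}$.

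\textbf{Part (c).} Here I argue by contradiction, assuming that $\widetilde F_{r,t}^{(i)}:=(F_{r,t}^{(i)}-\E F_{r,t}^{(i)})/\sqrt{\Var F_{r,t}^{(i)}}$ converges in distribution to $N$ as $r\to\infty$. The first ingredient is uniform integrability of $\{(\widetilde F_{r,t}^{(i)})^4:r\ge 1\}$, which I would obtain from the combinatorial moment formula for $U$-statistics of Poisson processes (Section~\ref{subsec:3.3}): it writes $\E|F_{r,t}^{(i)}-\E F_{r,t}^{(i)}|^5$ as a finite sum over partition diagrams of integral-geometric terms, and bounding each of these by the estimates above and dividing by $(\Var F_{r,t}^{(i)})^{5/2}$ gives $\sup_{r\ge 1}\E|\widetilde F_{r,t}^{(i)}|^5<\infty$, hence uniform integrability of the fourth powers. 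Together with the assumed weak convergence this would force $\E(\widetilde F_{r,t}^{(i)})^k\to\E N^k$ for $k\le 4$, in particular $\cum_4(\widetilde F_{r,t}^{(i)})\to\cum_4(N)=0$. The second ingredient is that this is false, i.e.\ $\limsup_{r\to\infty}\cum_4(\widetilde F_{r,t}^{(i)})>0$. When $i=d-1$ this is immediate: $F_{r,t}^{(d-1)}=\int\cH^{d-1}(H\cap B_r)\,\eta_t(dH)$ is a linear statistic, so $\cum_m(F_{r,t}^{(d-1)})=t\int_{A_h(d,d-1)}\cH^{d-1}(H\cap B_r)^m\,\mu_{d-1}(dH)$ for all $m$; since $2(d-1)-1>d$ for $d\ge 4$, the generalized Lemma~\ref{lem:lines_intersecting_ball_inequality} gives $\Var F_{r,t}^{(d-1)}\asymp e^{2(d-2)r}$ and $\cum_4(F_{r,t}^{(d-1)})\asymp e^{4(d-2)r}$, whence $\cum_4(\widetilde F_{r,t}^{(d-1)})$ tends to a strictly positive constant. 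For general $i$ and $d\ge 7$ one invokes the combinatorial cumulant formula: among the connected diagrams contributing to $\cum_4(F_{r,t}^{(i)})$, the one in which a single common hyperplane is shared is dominant, and a dimension count shows that its contribution is of the exact order $(\Var F_{r,t}^{(i)})^2$ while every competing diagram is of strictly smaller exponential order in $r$; balancing these exponents is precisely where $d\ge 7$ enters. In either case $\cum_4(\widetilde F_{r,t}^{(i)})\not\to 0$, contradicting the previous step and proving (c).

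\textbf{Main obstacle.} The delicate point is the integral-geometric bookkeeping behind (c): one must establish matching upper \emph{and} lower bounds for the dominant diagram in both $\cum_4$ and the fifth moment, and show that all other diagrams decay faster in exponential order. This requires extending Lemma~\ref{lem:lines_intersecting_ball_inequality} not only to higher powers $\ell$ but also to the ``mixed'' integrals over flags of nested planes that appear in the partition diagrams, and it is exactly the competition among the various exponential growth rates that imposes the threshold $d\ge 7$ in the general case, whereas for $i=d-1$, where only the order-one chaos is present, $d\ge 4$ already suffices.
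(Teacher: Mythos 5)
Your parts (a) and (b) follow essentially the paper's route: write $F_{r,t}^{(i)}$ as a Poisson U-statistic, invoke the Malliavin--Stein bound \eqref{eq:Kolmogorov} (the quantities $M_{u,v}$ there are exactly your contraction norms, written as sums over connected diagrams), and estimate each diagram integral separately because, unlike in the Euclidean case, the diagrams have genuinely different exponential orders in $r$. One caveat: the diagram integrals do not all reduce by Crofton's formula to the one-flat integrals of Lemma \ref{lem:lines_intersecting_ball_inequality}; the paper additionally needs pointwise monotonicity bounds such as $\cH^1(H_1\cap H_2\cap B_r)\le\cH^1(L_1(H_1)\cap B_r)$ and the auxiliary mixed integrals $I(a,b)$ of Lemma \ref{upperboundL1}. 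You flag this in your ``main obstacle'' paragraph, so I read it as acknowledged rather than missing.

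For part (c) your architecture (uniform fifth moments $\Rightarrow$ uniform integrability of the fourth powers, combined with a non-vanishing fourth cumulant) is also the paper's, and your observation that for $i=d-1$ the functional is a first-order U-statistic with explicit cumulants is a clean way to see the obstruction in that case. However, you have located the threshold $d\ge 7$ in the wrong step, and this is a genuine gap. The non-vanishing of $\cum_4$ does \emph{not} require $d\ge 7$: the paper uses the general inequality $\cum_4\ge M_{1,1}(f^{(i)})/(\Var F_{r,t}^{(i)})^2$ from \cite{SchulteKolmogorov}, and the bounds $M_{1,1}(f^{(i)})\ge c\,e^{4r(d-2)}$ (from Lemma \ref{lem:lines_intersecting_ball_inequality}, since $4(d-2)>d-1$) together with $\Var F_{r,t}^{(i)}\le C\,e^{2r(d-2)}$ (Lemma \ref{lem:VarianceBoundd>=4}) hold for \emph{all} $d\ge4$ and all $i$; no diagram competition is needed there. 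The restriction $d\ge 7$ enters instead in the step you assert without qualification, namely $\sup_{r\ge1}\E|\widetilde{F_{r,t}^{(i)}}|^5<\infty$. For $i<d-1$ the moment formula \eqref{eq:MomentsUstatistic} produces sub-partitions of a $5\times(d-i)$ diagram whose integrals must all be shown to be $O(e^{5r(d-2)})$, and the paper's Lemma \ref{lem:UnifInt} only achieves this for $i=d-1$ (any $d\ge4$) or for $d\ge7$ (any $i$); the verification reduces to inequalities such as $(d-1)(m(\sigma)-1)>5$, which is exactly where $d\ge 7$ is used and which can fail for $4\le d\le 6$. So as written, your proposal proves the cumulant step under an unnecessary hypothesis while leaving unproved the fifth-moment bound in precisely the regime where it is delicate; to repair it you would need to carry out the diagram-by-diagram analysis of the fifth moment and accept that this, not the cumulant, forces the stated restriction on $(d,i)$.
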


\begin{remark}\rm
\begin{itemize}
\item[(i)] The restriction imposed on the parameters $d,i$ in Theorem \ref{thm:CLTrToInfinity} (c) is the result of a number of technical obstacles one needs to overcome in its proof. We strongly believe that a central limit theorem in fact fails for all $d\geq 4$ and all choices of $i\in\{0,1,\ldots,d-1\}$. However, we have to leave this as an open problem for future work. For some remarks about the potential limiting distribution in Theorem \ref{thm:CLTrToInfinity} (c) we refer to Remark \ref{rem:LimitDistribution}.

\item[(ii)] It is instructive to rewrite the normal approximation bounds in Theorem \ref{thm:CLTrToInfinity} (a) and (b) as follows. For $d=2$ and $i\in\{0,1\}$ we have that
$$
d\left(\frac{F_{r,t}^{(i)}- \mathbb{E}F_{r,t}^{(i)}}{\sqrt{\Var F_{r,t}^{(i)}}},N \right) \leq \hat{c}_2 \, {\log^{1-i}\cH^2(B_r)\over\sqrt{\cH^2(B_r)}},\qquad r\geq 1,
$$
and for $d=3$ we have, again for $r\geq 1$,
$$
d\left(\frac{F_{r,t}^{(i)}- \mathbb{E}F_{r,t}^{(i)}}{\sqrt{\Var F_{r,t}^{(i)}}},N \right) \leq \hat{c}_3 \begin{cases}
{1\over\log\cH^3(B_r)} &: i=2,\\
{1\over\sqrt{\log\cH^3(B_r)}} &: i\in\{0,1\}.\\
\end{cases}
$$
Here $\hat{c}_2,\hat{c}_3\in(0,\infty)$ are again constants only depending on $t$. This means that in dimension $d=2$ and for $i=0$ the speed of convergence is the same as in the Euclidean case, up to the logarithmic factor. Moreover, it shows that $d=3$ is the critical dimension for the central limit theorem, which only holds in this case with a rate of convergence which is very much slowed down.
\end{itemize}
\end{remark}

Theorem \ref{thm:CLTtToInfinity} shows that for fixed radius $r$ and increasing intensity $t$ a central limit theorem for $F_{r,t}^{(i)}$ with $i\in\{0,1,\ldots,d-1\}$ holds. On the other hand, according to Theorem \ref{thm:CLTrToInfinity} (c) the central limit theorem breaks down for dimensions $d\geq 4$ (if the total surface area is considered) or $d\geq 7$ (for general $i\in\{0,1,\ldots,d-1\}$) if the intensity $t$ stays fixed and $r\to\infty$. Against this background the question arises whether in these cases the central limit behaviour can be preserved if the intensity $t$ and the radius $r$ tend to infinity \textit{simultaneously}. In fact, the following result states that this is indeed the case. More precisely, it says that, independently of the behaviour of $r$, the central limit theorem holds as soon as $t\to\infty$ (and $r$ is bounded from below by $1$).

\begin{theorem}[CLT for simultaneous growth of intensity and window]\label{thm:RandTtoInfinity}
Let $d\geq 4$ and $i=d-1$ or $d\geq 7$ and $i\in\{0,1,\ldots,d-1\}$. Also, let $N$ be a standard Gaussian random variable. Then there is a constant $c\in(0,\infty)$ such that
$$
d\left(\frac{F_{r,t}^{(i)}- \mathbb{E}F_{r,t}^{(i)}}{\sqrt{\Var F_{r,t}^{(i)}}},N \right) \leq {c\over\sqrt{t}}
$$
for all $r\geq 1$ and $t\geq 1$, where $d(\,\cdot\,,\,\cdot\,)$ denotes either the Wasserstein or the Kolmogorov distance.
\end{theorem}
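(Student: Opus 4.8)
The plan is to obtain the bound from the general quantitative normal approximation inequalities for Poisson $U$-statistics proved by the Malliavin--Stein method (collected in Section~\ref{subsec:NormalApproxUstatistics}, see \cite{ReitznerSchulteCLT,EichelsbacherThaele14,SchulteKolmogorov}), applied while keeping track of the dependence on \emph{both} parameters $r$ and $t$. The assertion to establish is that, in the stated range of $d$ and $i$, every term occurring in these bounds is at most a constant times $t^{-1/2}$, uniformly in $r\ge 1$; in particular, the quantities that grow exponentially in $r$ and are responsible for the breakdown of a central limit theorem at fixed $t$ (Theorem~\ref{thm:CLTrToInfinity}(c)) must cancel against the corresponding exponential growth of $\Var F_{r,t}^{(i)}$, leaving only a surplus factor $t^{-1/2}$.

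\textbf{Chaos kernels and variance lower bound.} As in the proofs of Theorems~\ref{thm:Expectation} and \ref{thm:Variance}, $F_{r,t}^{(i)}$ is a Poisson $U$-statistic of order $k:=d-i$ with a non-negative kernel, and repeated use of Crofton's formula together with Lemma~\ref{lem:mu_almost_surely_d-n_hyperplane} shows that its Fock-space kernels are
$$
f_n(H_1,\dots,H_n)=a_n\,t^{k-n}\,\cH^{d-n}(H_1\cap\dots\cap H_n\cap B_r),\qquad n=1,\dots,k,
$$
with explicit constants $a_n=a_n(d,i)>0$; a further application of Crofton's formula gives $\|f_n\|_{L^2}^2=b_n\,t^{2k-n}\,J_{d-n}(B_r)$ with $b_n>0$ and $J_m(B_r):=\int_{A_h(d,m)}\cH^m(E\cap B_r)^2\,\mu_m(dE)$, whence $\Var F_{r,t}^{(i)}=\sum_{n=1}^{k}n!\,\|f_n\|_{L^2}^2$ (re-proving Theorem~\ref{thm:Variance}). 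Since $d\ge 4$ we have $2(d-1)-1>d$, so Lemma~\ref{lem:lines_intersecting_ball_inequality} gives $J_{d-1}(B_r)\ge c\,e^{2(d-2)r}$ for $r\ge 1$; comparing the exponential rates of the summands, the $n=1$ term dominates, and keeping only it yields the crucial lower bound
$$
\Var F_{r,t}^{(i)}\ \ge\ c\,t^{2k-1}\,e^{2(d-2)r}\qquad\text{for all }r\ge 1,\ t\ge 1 .
$$

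\textbf{Estimating the error terms.} The Malliavin--Stein bounds express the Wasserstein (respectively Kolmogorov) distance as a finite sum of terms, each of which is the $L^2$-norm of a contraction of two of the kernels $f_1,\dots,f_k$ divided by $\Var F_{r,t}^{(i)}$ or by $(\Var F_{r,t}^{(i)})^{3/2}$ (for the Kolmogorov distance one extra term appears, again controlled by the variance). Substituting the expressions for the $f_n$, using Crofton's formula to integrate out hyperplanes and the Cauchy--Schwarz inequality wherever a contraction couples distinct flats, each such norm reduces to a constant multiple of a product of powers of $t$ and of integrals $\int_{A_h(d,m)}\cH^m(E\cap B_r)^q\,\mu_m(dE)$, each of the latter being at most $c\,e^{\max\{d-1,\,q(m-1)\}\,r}$ for $r\ge 1$ by (the general version of) Lemma~\ref{lem:lines_intersecting_ball_inequality}. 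A bookkeeping of the powers of $t$ and of the exponential rates, together with the variance lower bound of the preceding step, then shows that for $d\ge 4$ every summand is $\le c\,t^{-1/2}$ for all $r\ge 1$ and $t\ge 1$: the first chaos $I_1(f_1)$ governs simultaneously the order of $\Var F_{r,t}^{(i)}$ and the dominant exponential $r$-growth of every contraction norm, so that the $e^{(d-2)r}$-factors cancel and exactly one surplus power $t^{-1/2}$ survives (the quadratic error terms turn out to be $O(t^{2k-3/2}e^{2(d-2)r})$ and the cubic ones $O(t^{3k-2}e^{3(d-2)r})$). Summing the finitely many terms, and arguing identically for the Kolmogorov distance, completes the proof.

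\textbf{Main difficulty.} The hard part is this bookkeeping --- verifying for \emph{every} contraction of the kernels, in particular those built from the subdominant higher chaoses $f_n$ with $n\ge 2$, that after division by the appropriate power of $\Var F_{r,t}^{(i)}$ the result is $\le c\,t^{-1/2}$ uniformly on the whole range $r\ge 1$, $t\ge 1$ (and uniformly over admissible $i$ when $d\ge 7$). The higher chaoses can be kept under control exactly in the regime $\{d\ge 4,\ i=d-1\}\cup\{d\ge 7,\ i\in\{0,\dots,d-1\}\}$, which is precisely the regime in which a central limit theorem is shown to \emph{fail} at fixed $t$ in Theorem~\ref{thm:CLTrToInfinity}(c); heuristically, the obstruction to normality detected there --- for instance, $\cum_4$ of the normalized statistic converging to a positive constant multiple of $1/t$, independently of $r$ --- is itself of exact order $t^{-1}$ and hence vanishes as $t\to\infty$.
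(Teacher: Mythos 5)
Your set-up is sound: the representation as a Poisson U-statistic of order $k=d-i$, the variance lower bound $\Var F_{r,t}^{(i)}\ge c\,t^{2k-1}e^{2(d-2)r}$ from Lemma~\ref{lem:VarianceBoundd>=4}, and the extraction of the surplus factor $t^{-1/2}$ by comparing the maximal power of $t$ in $M_{u,v}(f^{(i)};t\mu_{d-1})$ (attained at $u=v=1$) with $t^{2k-1}$ all agree with the paper (cf.\ \eqref{eq:Mij_in_t}). The gap is in the step you yourself flag as the ``main difficulty'': you assert, but do not prove, that after dividing by $\Var(F_{r,1}^{(i)})$ every term $\sqrt{M_{u,v}(f^{(i)};\mu_{d-1})}$ is bounded uniformly in $r\ge 1$. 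For general $d$ and $i$ this requires controlling the integrals attached to \emph{all} partitions in $\Pi_{\ge 2}^{\rm con}(u,u,v,v)$ with $u,v\le d-i$ -- an enumeration that already fills several pages for $d=2,3$ and is unbounded in size as $d$ grows -- and your proposal gives no general mechanism for it, nor any explanation of why the admissible range should be exactly $\{d\ge4,\,i=d-1\}\cup\{d\ge7\}$. The specific intermediate claims (e.g.\ that the ``quadratic error terms turn out to be $O(t^{2k-3/2}e^{2(d-2)r})$'') are stated without derivation.

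The paper avoids this bookkeeping entirely. After reducing to $t=1$, it invokes the \emph{reverse} fourth-moment inequality for Poisson U-statistics (discussed after Corollary~4.3 of \cite{SchulteKolmogorov}, see also \cite{LaPecc}), namely
$\sqrt{M_{u,v}(f^{(i)};\mu_{d-1})}\big/\Var(F_{r,1}^{(i)})\le\bigl(\E(\widetilde{F_{r,1}^{(i)}})^4-3\bigr)^{1/2}$,
and then bounds the right-hand side uniformly in $r$ via H\"older and the uniform fifth-moment bound of Lemma~\ref{lem:UnifInt} -- the same lemma that underlies the failure of the CLT at fixed $t$, and the sole source of the restriction on $(d,i)$. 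Your closing heuristic (``$\cum_4$ is of exact order $t^{-1}$'') is in fact very close to the actual mechanism of the proof, but in your write-up it remains a heuristic rather than the argument. To repair the proposal, replace the unexecuted contraction bookkeeping by this fourth-moment/fifth-moment reduction, or else actually carry out a general partition analysis in the spirit of the proof of Lemma~\ref{lem:UnifInt} for the four-row diagrams defining $M_{u,v}$.
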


\begin{remark}
In dimensions $d=2$ and $d=3$ we also have normal approximation bounds that simultaneously involve the two parameters $t$ and $r$. In fact, for $d=2$ the bounds \eqref{eq:Boundd=2i=1} and \eqref{eq:Boundd=2i=0} below show that
$$
d\left(\frac{F_{r,t}^{(i)}- \mathbb{E}F_{r,t}^{(i)}}{\sqrt{\Var F_{r,t}^{(i)}}},N \right) \leq c\,t^{-1/2}\,r^{1-i}e^{-r/2}
$$
holds for all $t\geq 1$, $r\geq 1$ and $i\in\{0,1\}$. Similarly, for $d=3$ the estimates \eqref{eq:Boundd=3i=2}, \eqref{eq:Boundd=3i=1} and \eqref{eq:Boundd=3i=0} prove that
$$
d\left(\frac{F_{r,t}^{(i)}- \mathbb{E}F_{r,t}^{(i)}}{\sqrt{\Var F_{r,t}^{(i)}}},N \right) \leq c\cdot\begin{cases}
t^{-1/2}r^{-1} &: i=2,\\
t^{-1/2}r^{-1/2} &: i\in\{0,1\},
\end{cases}
$$
for all $t\geq 1$ and $r\geq 1$. In both cases, $d(\,\cdot\,,\,\cdot\,)$ stands for either the Wasserstein or the Kolmogorov distance. This way we recover Theorem \ref{thm:CLTtToInfinity} for $d=2$ and $d=3$ in the special case where $W=B_r$ with $r$ fixed and we recover Theorem \ref{thm:CLTrToInfinity} (a) and (b) by fixing $t$.
\end{remark}

Finally, let us turn to the multivariate set-up. To compare the distance between the distributions of (the laws of) two random vectors we use what is known as the $d_2$- and the $d_3$-distance; for their  definition we refer to Section \ref{subsec:3.3} below. We approach the multivariate central limit theorem by considering, as above, two different settings. To handle the central limit problem for a fixed window $W\in\cK_h^d$ and growing intensities we define for $t>0$ the $d$-dimensional random vector
\begin{align*}
\textbf{F}_{W,t} := \Bigg({F_{W,t}^{(0)}-\E F_{W,t}^{(0)}\over t^{d-1/2}},\ldots,{F_{W,t}^{(i)}-\E F_{W,t}^{(i)}\over t^{d-i-1/2}},\ldots,{F_{W,t}^{(d-1)}-\E F_{W,t}^{(d-1)}\over t^{1/2}}\Bigg).
\end{align*}
Moreover, for $i,j\in\{0,1,\ldots,d-1\}$ we introduce the asymptotic covariances and the asymptotic covariance matrix of the random vector $\textbf{F}_{W,t}$, as $t\to\infty$, by
$$
\tau^{i,j}_W := \lim_{t\to\infty}\Cov\Bigg({F_{W,t}^{(i)}-\E F_{W,t}^{(i)}\over t^{d-i-1/2}},{F_{W,t}^{(j)}-\E F_{W,t}^{(j)}\over t^{d-j-1/2}}\Bigg),\qquad T_W:=\left(\tau^{i,j}_W\right)_{i,j=0}^{d-1}.
$$
The existence of the limit and the precise value of  $\tau^{i,j}_W$  follows from \eqref{eq:covariance_general} below. It is easy to
see that $T_W$ has rank one, as in  Euclidean space.

In view of Theorem \ref{thm:CLTrToInfinity}, for fixed intensity $t>0$ and a sequence of growing spherical windows, taking $W=B_r$ for $r>0$ we put
\begin{align*}
\textbf{F}_{r,t} := \begin{cases}
\Bigg({F_{r,t}^{(0)}-\E F_{r,t}^{(0)}\over e^{r/2}},{F_{r,t}^{(1)}-\E F_{r,t}^{(1)}\over e^{r/2}}\Bigg) &: d=2,\\
\Bigg({F_{r,t}^{(0)}-\E F_{r,t}^{(0)}\over \sqrt{r}\,e^{r}},{F_{r,t}^{(1)}-\E F_{r,t}^{(1)}\over \sqrt{r}\,e^{r}},{F_{r,t}^{(2)}-\E F_{r,t}^{(2)}\over \sqrt{r}\,e^{r}}\Bigg) &: d=3,\\
\Bigg({F_{r,t}^{(0)}-\E F_{r,t}^{(0)}\over e^{r(d-2)}},\ldots,{F_{r,t}^{(d-1)}-\E F_{r,t}^{(d-1)}\over e^{r(d-2)}}\Bigg) &: d\geq 4,
\end{cases}
\end{align*}
and define the asymptotic covariance matrix $\Sigma_d=\left(\sigma^{i,j}_d\right)_{i,j=0}^{d-1}$ of the
random vector $\textbf{F}_{r,t}$,  as $r\to\infty$, for $d \geq 2$ by
$$
\sigma^{i,j}_d: = \begin{cases}
\lim\limits_{r\to\infty}\Cov\Bigg({F_{r,t}^{(i)}-\E F_{r,t}^{(i)}\over e^{r/2}},{F_{r,t}^{(j)}-\E F_{r,t}^{(j)}\over e^{r/2}}\Bigg) &: d=2,\\
\lim\limits_{r\to\infty}\Cov\Bigg({F_{r,t}^{(i)}-\E F_{r,t}^{(i)}\over \sqrt{r}\,e^{r}},{F_{r,t}^{(j)}-\E F_{r,t}^{(j)}\over \sqrt{r}\,e^{r}}\Bigg) &: d=3,\\
\lim\limits_{r\to\infty}\Cov\Bigg({F_{r,t}^{(i)}-\E F_{r,t}^{(i)}\over e^{r(d-2)}},{F_{r,t}^{(j)}-\E F_{r,t}^{(j)}\over e^{r(d-2)}}\Bigg) &: d\geq 4.
\end{cases}
$$
The covariance matrices $\Sigma_d$ are explicitly given by \eqref{eq:Sigma_2} for $d=2$, \eqref{eq:Sigma_3} for $d=3$ and \eqref{eq:Sigma_4} for $d \geq 4$ below. Moreover, in Section \ref{sec:4.5} we determine convergence rates.
In particular, we will show that $\Sigma_2$ has full rank (is positive definite) and $\Sigma_d$ has rank one for $d \geq 3$. We remark that this is in sharp contrast to the corresponding result in Euclidean spaces, where the asymptotic covariance matrix has rank one for all $d\geq 2$, see \cite[Theorem 5.1 (ii)]{HeinrichCLTforPHT}. Note that the dependence of these limits on the fixed intensity $t>0$ is not made explicit by our notation, but this dependence is shown in Lemmas 20, 21 and 23.

In order to state the multivariate central limit theorem, we use the $d_2$ and the $d_3$ distance for random vectors (see Section \ref{subsec:3.3} for explicit definitions).

\begin{theorem}[Multivariate CLT]\label{thm:CLTMultivariate}
\begin{itemize}
\item[{\rm (a)}] Let $d\geq 2$ and $W\in\cK_h^d$. Let $N_{T_W}$ be a $d$-dimensional centred Gaussian random vector with covariance matrix $T_W$. Then there exists a constant $c\in(0,\infty)$ such that
$$
d_3(\textbf{F}_{W,t},N_{T_W}) \leq c\,t^{-1/2}
$$
for all $t\geq 1$.

\item[{\rm (b)}] Fix $t\geq 1$ and let $d=2$. Let $N_{\Sigma_2}$ be a $2$-dimensional centred Gaussian random vector with covariance matrix $\Sigma_2$. Then there exists a constant $c_2\in(0,\infty)$ such that
  $$
  d_j(\textbf{F}_{r,t},N_{\Sigma_2}) \leq c_2\,r\,e^{-r/2}
  $$
  for all $r\geq 1$ and $j\in\{2,3\}$.
\item[{\rm (c)}] Fix $t\geq 1$ and let $d=3$. Let $N_{\Sigma_3}$ be a $3$-dimensional centred Gaussian random vector with covariance matrix $\Sigma_3$. Then there exists a constant $c_3\in(0,\infty)$ such that
  $$
  d_3(\textbf{F}_{r,t},N_{\Sigma_3}) \leq c_3\,r^{-1/2}
  $$
  for all $r\geq 1$.
\end{itemize}
\end{theorem}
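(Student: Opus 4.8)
The plan is to derive all three parts from a multivariate Malliavin--Stein bound for vectors of Poisson U-statistics in the $d_2$- and $d_3$-distances, fed with the integral-geometric estimates already available in the paper. First I would write each $F_{W,t}^{(i)}$ as a Poisson U-statistic of order $d-i$; by the multivariate Mecke formula and a recursive use of Crofton's formula its Wiener--It\^o kernels $f_i^{(n)}$, $1\le n\le d-i$, are explicit constant multiples of $t^{d-i-n}\,\cH^{d-n}(H_1\cap\dots\cap H_n\cap W)$ on the event $\{\dim(H_1\cap\dots\cap H_n)=d-n\}$, so that the exact covariance matrix of the vector is the one in Theorem~\ref{thm:Variance}. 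The multivariate normal-approximation bound recalled in Section~\ref{subsec:3.3} then controls $d_j(\mathbf F,N_\Sigma)$ by two contributions: (I) the Hilbert--Schmidt norm of the discrepancy between the target covariance $\Sigma$ and the suitably normalized exact covariance of $\mathbf F$, and (II) a finite sum of $L^2$-norms of contraction kernels $f_i^{(n)}\star_\ell^m f_j^{(n')}$. Since $d_2$ and $d_3$ do not require $\Sigma$ to be invertible, this framework applies verbatim to the degenerate limits $\Sigma_d$ ($d\ge 3$) and $T_W$, which is precisely why these distances are used rather than the Wasserstein or Kolmogorov distance.

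For term (I) I would argue as follows. In case (a), dividing the covariances of Theorem~\ref{thm:Variance} by the normalizing powers of $t$ gives $\Cov(F_{W,t}^{(i)},F_{W,t}^{(j)})/t^{2d-i-j-1}=\tau_W^{i,j}+O(t^{-1})$, so term (I) is $O(t^{-1})$. In cases (b) and (c), with $W=B_r$, I combine Theorem~\ref{thm:Variance} with the asymptotics of $J_k(B_r)$ from Lemma~\ref{lem:lines_intersecting_ball_inequality}: for $d=2$ every $J_k(B_r)\asymp e^r$, whereas for $d=3$ one has $J_2(B_r)\asymp re^{2r}$ while $J_0(B_r),J_1(B_r)\asymp e^{2r}$. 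Hence after normalization only the summand $n=1$ (the hyperplane term $J_{d-1}$) survives in the limit, producing the rank-one matrix $\Sigma_3$ (and likewise $\Sigma_d$ for $d\ge 4$); for $d=2$ the extra $n=2$ summand $J_0(B_r)$ adds a strictly positive term to the $(0,0)$-entry, so that $\Sigma_2$ is positive definite. The remainder in $J_k(B_r)/(\mathrm{norm})^2$, read off from the proof of Lemma~\ref{lem:lines_intersecting_ball_inequality}, is of lower order than the bounds asserted in (b) and (c), so term (I) is negligible there as well.

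For term (II) I would write out the contractions $f_i^{(n)}\star_\ell^m f_j^{(n')}$ and integrate out the free hyperplanes by Crofton's formula; every summand is then bounded, via Cauchy--Schwarz, by a geometric mean of integrals of the form $\int_{A_h(d,k)}\cH^k(E\cap W)^l\,\mu_k(dE)$ with $l\in\{1,2,3,4\}$, which are exactly the quantities estimated in the proofs of the univariate Theorems~\ref{thm:CLTtToInfinity} and~\ref{thm:CLTrToInfinity}. Thus term (II) is at most a constant times the maximum over $i$ of the univariate rates: $O(t^{-1/2})$ in case (a); in case (b) the decisive component is $i=0$, giving $O(r\,e^{-r/2})$ (the component $i=1$ gives only $O(e^{-r/2})$); in case (c) the decisive components are $i\in\{0,1\}$, giving $O(r^{-1/2})$ (while $i=2$ gives $O(r^{-1})$). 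Adding (I) and (II) and noting that in each case (II) dominates (I) yields the stated estimates.

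The main obstacle will be the combinatorial bookkeeping in term (II): among the many index tuples $(i,j,n,n',m,\ell)$ occurring in the multivariate bound one must check that no contraction --- in particular no cross term pairing a low-order U-statistic such as $F_{r,t}^{(d-1)}$ with a high-order one such as $F_{r,t}^{(0)}$ --- produces a rate worse than the worst univariate rate, and one must track the exact remainder in the covariance asymptotics so that term (I) is genuinely negligible against term (II) even in the rank-deficient cases. As throughout the paper, the mechanism that makes all of these estimates work is the exponential volume growth in $\Hd$ encapsulated in Lemma~\ref{lem:lines_intersecting_ball_inequality}.
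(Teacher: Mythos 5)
Your proposal follows essentially the same route as the paper: the multivariate Malliavin--Stein bounds \eqref{eq:d_3} and \eqref{eq:d_2} for vectors of Poisson U-statistics, with the covariance-discrepancy term controlled by the asymptotic covariance analysis and the second term controlled by the very same $M_{u,v}$ estimates established in the univariate proofs (part (a) the paper simply cites \cite[Theorem 5.2]{LPST}, which encapsulates your sketch). The only caveat is that the rates for term (I) in cases (b) and (c) cannot quite be ``read off'' from Lemma \ref{lem:lines_intersecting_ball_inequality} alone --- the paper devotes Lemmas \ref{lem:dist_cov_d=2} and \ref{lem:dist_cov_d=3} to exact evaluations of $\langle f_1^{(i)},f_1^{(j)}\rangle_1$ to obtain the rates $r^2e^{-r}$ and $r^{-1}$ --- but you correctly flag this as the point requiring care, and it does not change the strategy.
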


\begin{remark}
After having seen that in the univariate case the central limit theorem for $d\geq 4$ can be preserved by a simultaneous growth of the intensity $t$ and the radius $r$, the question arises whether such a phenomenon also holds in the multivariate set-up. This is in fact the case, but we decided not to present the details for brevity.
\end{remark}

\section{Background material and preparations}\label{sec:Background}

\subsection{More hyperbolic geometry}\label{sec:3.1}

Recall that by $\Hd$ we denote the hyperbolic space of dimension $d$. For concreteness we may take as a specific model for $\Hd$ the $d$-dimensional Euclidean unit ball $B_{\rm euc}^d$ supplied with the Poincar\'e metric $d_h$ given by
$$
\cosh d_h(x,y) := 1+{2\|x-y\|_{\rm euc}^2\over (1-\|x\|_{\rm euc}^2)(1-\|y\|_{\rm euc}^2)},\qquad x,y\in B_{\rm euc}^d,
$$
where $\|\,\cdot\,\|_{\rm euc}$ stands for the usual Euclidean norm. This is known as the conformal ball model for $\Hd$, see {\cite[Chapter 4.5]{Ratcliffe}}. However, it should be emphasized that
our arguments are independent of the special choice of a model for a simply connected, geodesically complete space  of constant negative curvature $\kappa=-1$.
We write $B(z,r)=\{x\in\Hd:d_h(x,z)\leq r\}$ for the hyperbolic ball with centre $z\in\Hd$ and radius $r\ge 0$ and put $B_r=B(p,r)$, where $p$ is a fixed reference point. In this paper the $s$-dimensional Hausdorff measure $\cH^s$, $s\geq 0$, is understood with respect to the metric space $(\Hd,d_h)$.

For later reference we need a formula for the surface area of a hyperbolic ball $B(z,r)$. It is given by
$$
\mathcal{H}^{d-1}(\partial B(z,r))=\omega_d \sinh^{d-1}(r),
$$
where $\omega_d=d \kappa_d ={2 \pi^{d/2}}/{\Gamma(d/2)}$
is the surface area of a $d$-dimensional unit ball in the Euclidean space $\R^d$ and $\kappa_d$ is its volume. Moreover, the volume of a hyperbolic ball of radius $r$ is given by
\begin{equation}\label{volbalr}
\mathcal{H}^{d}(B(z,r))= \omega_d \int_{0}^{r} \sinh^{d-1}(s) \ ds.
\end{equation}
We refer to Sections 3.3 and 3.4 and especially to formulas (3.25) and (3.26) in the monograph \cite{Chavel}.
For the special case $d=2$, we thus get $\mathcal{H}^2(B(z,r))= 2 \pi(\cosh(r)-1)$. Here, $\cosh$ and $\sinh$ are the hyperbolic cosine and sine, which are given by
$$\cosh(x)=\frac{e^x+e^{-x}}{2}\qquad\text{and}\qquad\sinh(x)=\frac{e^x-e^{-x}}{2},\qquad x\in\R,
$$
respectively. We will frequently make use of the fact that $\cosh(x), \sinh(x) \in \Theta(e^x)$, as $x\to \infty$, where $\Theta(\,\cdot\,)$ stands for the usual Landau symbol. Additionally we will use the following inequalities.

\begin{Lemma}\label{lem:inequalities}
The function $\sinh$ satisfies the inequalities
\begin{align*}
  &{\rm (a)}\ \sinh(x)\geq e^{x-3} \quad \text{for } x \geq 0.1, && {\rm (b)} \ \sinh(x)\ge x \quad \text{for } x\ge 0.
\end{align*}
\end{Lemma}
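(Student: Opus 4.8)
The two inequalities are elementary and independent, so I would dispose of them separately. For part (b), $\sinh(x) \geq x$ for $x \geq 0$, the cleanest route is to compare Taylor series: $\sinh(x) = \sum_{n\geq 0} x^{2n+1}/(2n+1)!$, and every term with $n \geq 0$ is nonnegative for $x \geq 0$, so $\sinh(x) \geq x$ (the $n=0$ term). Alternatively, set $f(x) = \sinh(x) - x$; then $f(0) = 0$ and $f'(x) = \cosh(x) - 1 \geq 0$ for all real $x$, since $\cosh \geq 1$ everywhere. Hence $f$ is nondecreasing on $[0,\infty)$ and therefore $f(x) \geq f(0) = 0$. I would write the one-line monotonicity argument, as it also gives equality exactly at $x = 0$.

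For part (a), $\sinh(x) \geq e^{x-3}$ for $x \geq 0.1$, the key observation is that $\sinh(x) = \tfrac{1}{2}e^{x}(1 - e^{-2x})$, so the claimed inequality is equivalent to $1 - e^{-2x} \geq 2 e^{-3}$, i.e. to $e^{-2x} \leq 1 - 2e^{-3}$. Since $2e^{-3} \approx 0.0996 < 1$, the right-hand side is a positive constant, and $e^{-2x}$ is decreasing in $x$, so it suffices to check the inequality at the left endpoint $x = 0.1$: there we need $e^{-0.2} \leq 1 - 2e^{-3}$, i.e. $e^{-0.2} + 2e^{-3} \leq 1$. Numerically $e^{-0.2} \approx 0.8187$ and $2e^{-3} \approx 0.0996$, whose sum is $\approx 0.9183 < 1$, which settles it; to make this rigorous one can use the crude bounds $e^{-0.2} < 0.82$ (from $e^{0.2} > 1.22$) and $e^{-3} < 0.05$ (from $e^{3} > 20$), giving $e^{-0.2} + 2e^{-3} < 0.82 + 0.10 = 0.92 < 1$.

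There is no real obstacle here — both parts are routine — but the one point requiring a little care is making the numerical comparison in part (a) airtight rather than merely asserting it; I would phrase it via the reduction to the endpoint $x = 0.1$ together with explicit rational upper bounds on $e^{-0.2}$ and $e^{-3}$ as above. The constant $3$ in the exponent is not sharp (any constant $c$ with $2e^{-c} \leq 1 - e^{-0.2}$ works), and it is chosen simply for convenience in later applications; I would remark on this only if it aids readability.
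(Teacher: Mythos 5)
Your proposal is correct and follows essentially the same route as the paper: part (a) is reduced, via $\sinh(x)=\tfrac12 e^x(1-e^{-2x})$, to the condition $e^{-2x}\le 1-2e^{-3}$ checked at the endpoint $x=0.1$ (the paper states this same condition as $e^{2x}\ge(1-2e^{-3})^{-1}$), and part (b) is the standard monotonicity/series argument the paper dismisses as basic calculus. Your explicit rational bounds for the endpoint check are a slight tightening of the paper's unelaborated numerical claim, but not a different method.
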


\begin{proof}
\begin{itemize}
  \item [(a)] By the definition of the hyperbolic sine function, we get
  $$\frac{2 \sinh(x)}{e^{x-3}}=e^3-e^{-2x+3}=e^3(1-e^{-2x}) \geq 2 \quad\text{for }x\ge 0.1,$$
since $\exp(2x)\ge (1-2\exp(-3))^{-1}$ for $x\ge 0.1$.
  \item [(b)] This follows from the definition of  $\sinh$ by basic calculus.
\end{itemize}
\end{proof}

Let $I(\Hd)$ denote the isometry group of $\Hd$ and let $I(\Hd,p)$ denote the subgroup of isometries which fix $p$. We remark that in the conformal ball model, $I(\Hd)$ can be identified with the group of M\"obius transformations of $B_{\rm euc}^d$, see {\cite[Corollary 4.5.1]{Ratcliffe}}. We denote by $G_h(d,k)$ the compact space of $k$-dimensional totally geodesic subspaces containing the origin $p$. In the conformal ball model, all elements of $G_h(d,k)$ arise as follows. If $p$ coincides with the centre $o$ of $B_{\rm euc}^d$, then an element of $G_h(d,k)$ is the intersection of $B_{\rm euc}^d$ with a $k$-dimensional Euclidean linear subspace of $\R^d$. If otherwise $p\neq o$, then an element of $G_h(d,k)$ is the intersection of $B_{\rm euc}^d$ with a $k$-dimensional Euclidean sphere in $\R^d$ through $p$ which is orthogonal to the boundary of $B_{\rm euc}^d$, cf.\ {\cite[Theorem 4.5.3]{Ratcliffe}}. Up to a scaling factor, $G_h(d,k)$ carries a regular Borel measure $\nu_k$ which is invariant under $I(\Hd,p)$. Since $G_h(d,k)$ is compact we can normalize $\nu_k$ such that $\nu_k(G_h(d,k))=1$. Recall that $A_h(d,k)$ is the space of $k$-dimensional planes in $\mathbb{H}^{d}$. In the conformal ball model all elements of $A_h(d,k)$ can be represented as intersections with $B_{\rm euc}^d$ of either $k$-dimensional Euclidean linear subspace of $\R^d$ or $k$-dimensional Euclidean spheres in $\R^d$ that are orthogonal to the boundary of $B_{\rm euc}^d$. On $A_h(d,k)$ there exists a unique (up to scaling) $I(\Hd)$-invariant measure. In contrast to $G_h(d,k)$, the larger space $A_h(d,k)$ is not compact. Each $k$-plane $H \in A_h(d,k)$ is uniquely determined by its orthogonal subspace $L_{d-k}$ passing through the origin $p$ and the intersection point $\{x\}=H \cap L_{d-k}$. Using these facts, Santal\'o {\cite[Equation (17.41)]{Santalo}} (see also \cite[Proposition 2.1.6]{Solanes}, \cite[Equation (9)]{GallegoSolanes})  provides a useful representation of an isometry invariant measure on $A_h(d,k)$, which we use here with a different normalization. For a Borel set $B \subset A_h(d,k)$, it is given by
\begin{equation}\label{eq:Croftonmass}
\mu_k(B)= \int_{G_h(d,d-k)} \int_{L}\cosh^{k}(d_h(x,p)) \, \mathds{1}\{H(L,x) \in B\} \ \mathcal{H}^{d-k}(dx) \ \nu_{d-k}(d L),
\end{equation}
where $H(L,x)$ is the $k$-plane orthogonal to $L$ passing through $x$.

\begin{remark}
The current normalization of the measure $\mu_k$ differs from the normalization of the measure $dL_k$ used in \cite{Santalo} by the constant  $ {\omega_d \cdots \omega_{d-k+1}}/({\omega_k \cdots \omega_1})$. This also affects the constants in the formulas from hyperbolic integral geometry taken from \cite{Santalo}. The reason for the present normalization is to simplify a comparison of our results to corresponding results in Euclidean and  spherical space.
\end{remark}

According to {\cite[Equation (14.69)]{Santalo}} the measure $\mu_k$  satisfies  the following Crofton-type formula. In fact, the discussion in {\cite[Section 7]{Brothers}} allows us to state the result not only for sets bounded by smooth submanifolds (as in \cite{Santalo}), but for much more general sets, which include arbitrary convex sets as a very special case. The following lemma holds for $\mathcal{H}^{d+i-k}$ measurable sets $W\subset\Hd$ which are Hausdorff  $(d+i-k)$-rectifiable. Following {\cite[Definition 5.13]{Brothers}}, we say that a set $W\subset\Hd$ is
$\ell$-rectifiable if  $\ell$ is an integer with $0< \ell\le d$
and $W$ is the image of some bounded subset of $\R^\ell$ under a Lipschitz map from $\R^\ell$ to $\Hd$. A set $W\subset\Hd$ is Hausdorff
$\ell$-rectifiable provided that $\mathcal{H}^\ell(W)<\infty$ and if there exist $\ell$-rectifiable subsets $B_1,B_2,\ldots$ of $\Hd$ such that $\mathcal{H}^\ell(W\setminus \bigcup_{i\ge 1}B_i)=0$. Clearly, any Borel set $W$  which is contained in an $\ell$-dimensional plane is Hausdorff $\ell$-rectifiable if it satisfies $\mathcal{H}^\ell(W)<\infty$.

\begin{Lemma}\label{lem:Crofton}
Let  $0 \leq i \leq k \leq d-1$, and let $W\subset\Hd$ be a Borel set which is Hausdorff $(d+i-k)$-rectifiable.  Then
\begin{align}\label{eq:Crofton_volume}
\int_{A_h(d,k)} \mathcal{H}^{i}(W \cap E) \ \mu_k(d E) = \frac{\omega_{d+1} \, \omega_{i+1}}{\omega_{k+1} \, \omega_{d-k+i+1}}\,\mathcal{H}^{d+i-k}(W).
\end{align}
\end{Lemma}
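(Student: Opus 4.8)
The plan is to reduce the general statement to the case where $W$ is contained in a single $m$-plane with $m = d+i-k$, and then to iterate the basic Crofton formula from Lemma \ref{lem:Crofton} in the form already stated for volumes. Actually, since Lemma \ref{lem:Crofton} as displayed contains both a general rectifiable $W$ and the exponent $i$ separate from the codimension, the natural route is: first establish the formula for $i=k$ (this is the ``volume'' case, where $\mathcal{H}^k(W\cap E)$ is integrated and $\mathcal{H}^d(W)$ appears), then handle $i<k$ by a fibration/co-area argument over a flag of planes. Concretely, I would write $A_h(d,k)$ as fibered over $A_h(d,\ell)$ for $\ell$ with $i\le\ell\le k$, expressing $\mu_k$ via an iterated integral: a $k$-plane is obtained by first choosing an $\ell$-plane $E'$ and then, within $E'\cong\Hd[\ell]$... — but this does not directly work because a $k$-plane need not contain a given $\ell$-plane. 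The cleaner direction is the opposite: integrate first over $k$-planes $E$ containing a fixed $i$-plane, i.e. use the disintegration of $\mu_k$ along the map $E\mapsto$ (nothing canonical) — so instead I would proceed as follows.

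\textbf{Step 1 (normalization check via a flag identity).} Using the Santaló representation \eqref{eq:Croftonmass} of $\mu_k$ as an integral over $G_h(d,d-k)\times L$ with the weight $\cosh^k(d_h(x,p))$, I would verify the identity by plugging a test set $W$ into the left-hand side of \eqref{eq:Crofton_volume} and computing. For $W$ contained in an $m$-plane $H_0$ through $p$ with $m=d+i-k$, the intersection $W\cap E$ is (for $\mu_k$-a.e.\ $E$) an $i$-dimensional set, and one parametrizes $E$ by its orthogonal complement direction $L\in G_h(d,d-k)$ together with the foot point $x\in L$. The inner integral $\int_L \cosh^k(d_h(x,p))\,\mathds{1}\{\cdots\}\,\mathcal{H}^{d-k}(dx)$ must be combined with the identification of $\mathcal{H}^i(W\cap H(L,x))$; a change of variables sending $(L,x)$ to the pair (direction of $H_0\cap E$ inside $H_0$, foot point of that flat inside $H_0$) converts the integral into a Crofton integral intrinsic to $H_0\cong\Hd[m]$, picking up exactly the Jacobian factors that turn $\cosh^k$ into $\cosh^i$ times a curvature-independent constant. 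This is the computational heart.

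\textbf{Step 2 (reduction to planes through $p$, then to general rectifiable $W$).} By isometry invariance of $\mu_k$ and of $\mathcal{H}^{d+i-k}$, the constant in \eqref{eq:Crofton_volume} does not depend on which $m$-plane $W$ lies in, so Step 1 gives the result for any Borel $W$ contained in any $m$-plane with $\mathcal{H}^m(W)<\infty$. For general Hausdorff $(d+i-k)$-rectifiable $W$ I would invoke the measure-theoretic machinery of \cite[Section 7]{Brothers}: both sides of \eqref{eq:Crofton_volume} are countably additive in $W$, so by the definition of Hausdorff rectifiability it suffices to treat each $\ell$-rectifiable piece, i.e.\ $W = f(A)$ for a Lipschitz $f\colon\R^m\supset A\to\Hd$; here one applies the co-area / area formula together with the fact that $\mathcal{H}^m$-a.e.\ point of a rectifiable set has an approximate tangent plane, reducing locally (up to sets of measure zero and up to Lipschitz control of the error) to the flat case handled above, and then sums. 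The constant, being a ratio of $\omega$'s determined in the flat case, transfers verbatim because Crofton's formula holds with the \emph{same} constant in all constant-curvature spaces (as recorded in the remarks following Theorem \ref{thm:Expectation}); in particular one may even read off the constant by specializing to Euclidean space.

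\textbf{Main obstacle.} The delicate point is Step 1: correctly tracking the Jacobian when one disintegrates the Santaló measure $\mu_k$ — written via an orthogonal complement $L\in G_h(d,d-k)$ and a foot point carrying the weight $\cosh^k(d_h(\,\cdot\,,p))$ — and re-expresses it as an intrinsic Crofton measure on the $m$-plane $H_0$. One must show that the weight exponent drops from $k$ to $i$ and that all remaining factors are curvature-independent constants combining to $\omega_{d+1}\omega_{i+1}/(\omega_{k+1}\omega_{d-k+i+1})$; verifying this normalization (rather than some other ratio of $\omega$'s) is where care is needed, and it is cleanest to pin it down by evaluating both sides on a convex body in a Euclidean flat, where the classical Crofton formula \cite[Section 7]{Brothers} supplies the value directly. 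Everything else — additivity, the rectifiable approximation, isometry invariance — is routine given \cite{Brothers}.
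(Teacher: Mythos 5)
Your route is genuinely different from the paper's, and it is worth being clear about what the paper actually does: it does \emph{not} derive \eqref{eq:Crofton_volume} from scratch. For $i<k$ the formula is simply quoted from Santal\'o (Equation (14.69)) together with the discussion in \cite[Section 7]{Brothers}, which extends it from smooth submanifolds to Hausdorff rectifiable sets; the only piece the paper proves itself is the marginal case $i=k$, which it explicitly notes is \emph{not} covered by \cite{Brothers}. There the left side of \eqref{eq:Crofton_volume} is an isometry-invariant, locally finite Borel measure on $\Hd$, hence a constant multiple of $\mathcal{H}^d$, and the constant is pinned down by evaluating on $W=B_r$ via \eqref{eq:Croftonmass} and \eqref{eq:volume_ball} and differentiating in $r$. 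Your plan goes the other way: it sketches a full derivation for $i<k$ (reduce to $W$ in an $m$-plane through $p$, disintegrate the Santal\'o measure, track the Jacobian, then pass to rectifiable sets) while not singling out $i=k$ at all. Since your Step 2 still invokes the machinery of \cite{Brothers} for the rectifiable extension, you end up citing the same source the paper cites, but for a smaller portion of the argument.

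Two concrete gaps. First, Step 1 --- the disintegration of $\mu_k$ along the foot-point fibration and the verification that the weight $\cosh^k(d_h(x,p))$ collapses to the intrinsic Crofton weight on $H_0$ with the stated ratio of $\omega$'s --- is the entire content of the computation, and you have only described it, not performed it; as written, the proposal asserts that ``exactly the Jacobian factors'' appear without exhibiting them. Your fallback of reading off the constant by specializing to Euclidean space is circular: curvature-independence of the constant is a \emph{consequence} of carrying out the computation in each space of constant curvature (and is justified in the paper only by citing Brothers/Santal\'o), so it cannot be assumed in order to avoid the hyperbolic computation. Second, you do not address the case $i=k$, which genuinely requires a separate argument (the reduction ``$W$ contained in an $m$-plane'' degenerates to $m=d$, i.e.\ no reduction at all, and \cite{Brothers} does not cover it); the paper's invariant-measure-plus-ball-evaluation argument, or something equivalent, is needed there. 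A correct write-up along your lines would have to supply the Step 1 Jacobian computation explicitly and add the $i=k$ case.
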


\begin{remark}
Strictly speaking the case $k=i$ is not covered by \cite{Brothers}. Although the framework in \cite{Brothers} should extend to this marginal case, we prefer to provide an elementary direct argument for the case $k=i$. In this case, the left side of \eqref{eq:Crofton_volume} defines an isometry invariant Borel measure on $\Hd$. Therefore, in order to confirm \eqref{eq:Crofton_volume} in this case, it is
sufficient to show that the equality holds for $W=B_r$, $r\ge 0$. Since equality holds for $r=0$ and in view of \eqref{volbalr}, it is sufficient to show that $\omega_d\sinh^{d-1}(r)$ is the derivative with respect to
$r$ of the function defined by
\begin{align*}
h(r):&= \int_{A_h(d,k)} \mathcal{H}^{i}(B_r \cap E) \ \mu_k(d E)\\
&=\omega_k\omega_{d-k}\int_0^r\sinh^{d-k-1}(t)\cosh^k(t) \int_0^{\arcosh \left(\frac{\cosh(r)}{\cosh(t)}\right)}\sinh^{k-1}(s)\, ds\, dt,
\end{align*}
where we used \eqref{eq:Croftonmass} and \eqref{eq:volume_ball} for the equality. The differential of $h$ can be determined by basic rules of calculus. Using that $\arcosh(\cosh(r)/\cosh(r))=0$, we thus obtain
$$
h'(r)=\omega_k\omega_{d-k}\int_0^r\sinh^{d-k-1}(t)\sinh(r)\big(\cosh^2(r)-\cosh^2(t)\big)^{\,k-2}\, dt.
$$
The substitution $\sinh(t)=\sinh(r)\cdot x$ leads to
\begin{align*}
h'(r)&= \omega_k\omega_{d-k}\int_0^1 x^{d-k-1}\big({1-x^2}\big)^{\,k-2}\, dx \, \sinh^{d-1}(r)=\omega_d\sinh^{d-1}(r),
\end{align*}
as was to be shown.
\end{remark}

\begin{remark}
Although both sides of \eqref{lem:Crofton} define measures with respect to their dependence on a Borel set $W\subset\Hd$, for $k\neq i$ the
equality in \eqref{lem:Crofton} in general does not extend from $(d+i-k)$-rectifiable sets to general Borel sets. This is due to deep classical
results in the structure theory of geometric measure theory, see \cite[p.~2]{Federer69} or \cite[Chapter 3]{Morgan} for an introduction and
\cite[Theorem 3.3.13]{Federer69} for the general treatment. In fact, in the Euclidean setting, for $i=0$,
$k\in\{1,\ldots,d-1\}$ and for a general Borel set $W\subset\R^d$, the right side of \eqref{lem:Crofton} is always as large as the left side with equality
if and only if $W$ is $(d-k)$-rectifiable.
\end{remark}

We will frequently make use of the following transformation formula.

\begin{Lemma}\label{lem:mu_almost_surely_d-n_hyperplane}
Let $k \in \{0,\ldots,d-1\}$, and let $f:A_h(d,d-1)^{d-k}\to\R$ be a non-negative measurable function satisfying $f(H_1,\ldots,H_{d-k})=0$ if $\textrm{\rm dim}(H_1\cap\ldots\cap H_{d-k})\neq k$. Then
$$
\int_{A_{h}(d,d-1)^{d-k}} f(H_1 \cap \ldots \cap H_{d-k}) \,\mu_{d-1}^{d-k}(d(H_1,\ldots,H_{d-k}))=c(d,k) \int_{A_{h}(d,k)} f(E) \ \mu_{k}(d E)
$$
with
$$
c(d,k)=\frac{\omega_{k+1}}{\omega_{d+1}}\left(\frac{\omega_{d+1}}{\omega_d}\right)^{d-k}.
$$
\end{Lemma}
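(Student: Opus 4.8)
The plan is to recognise the left-hand side, read as a functional of the test function $f$ on $A_h(d,k)$, as integration against an $I(\Hd)$-invariant Radon measure on $A_h(d,k)$, so that it must be a constant multiple of $\mu_k$, and then to determine the constant by evaluating both sides at one convenient choice of $f$.

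First I would set up this functional. Write $D\subset A_h(d,d-1)^{d-k}$ for the (Borel, in fact open) set of tuples $(H_1,\ldots,H_{d-k})$ whose intersection is exactly $k$-dimensional and let $\Phi\colon D\to A_h(d,k)$, $\Phi(H_1,\ldots,H_{d-k})=H_1\cap\ldots\cap H_{d-k}$, be the intersection map, which is continuous, hence measurable, on $D$. Since the integrand $f(H_1\cap\ldots\cap H_{d-k})$ vanishes off $D$ by hypothesis, the left-hand side equals $\int_{A_h(d,k)}f\,\mathrm d\tilde\mu_k$, where $\tilde\mu_k:=\Phi_{*}\big(\mu_{d-1}^{d-k}|_D\big)$ is the push-forward of the restricted product measure; by Tonelli's theorem this identity holds for every non-negative measurable $f$. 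Because $\mu_{d-1}$ is $I(\Hd)$-invariant, $\mu_{d-1}^{d-k}$ is invariant under the diagonal action of $I(\Hd)$; since $D$ is invariant and $\Phi$ is equivariant, $\tilde\mu_k$ is an $I(\Hd)$-invariant measure on $A_h(d,k)$.

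Next I would check that $\tilde\mu_k$ is Radon, which at the same time yields the key computation. For $r>0$ and $f=\mathcal H^k(\,\cdot\,\cap B_r)$, integrate out the hyperplanes $H_{d-k},H_{d-k-1},\ldots,H_1$ one after another and apply Crofton's formula (Lemma \ref{lem:Crofton}, applied on the space of hyperplanes) at each step: the running intersections $(H_1\cap\ldots\cap H_{j-1})\cap B_r$ are bounded Borel subsets of lower-dimensional totally geodesic planes and hence Hausdorff-rectifiable, so each step multiplies the integral by $\omega_{d+1}\omega_{i+1}/(\omega_d\,\omega_{i+2})$ and raises the Hausdorff exponent $i$ by one (at each step the hyperplanes containing the current partial intersection form a $\mu_{d-1}$-null set, and those missing it contribute nothing). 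Letting $i$ run from $k$ to $d-1$ this gives
$$
\int_{A_h(d,k)}\mathcal H^k(E\cap B_r)\,\tilde\mu_k(\mathrm dE)=\Bigg(\prod_{i=k}^{d-1}\frac{\omega_{d+1}\,\omega_{i+1}}{\omega_d\,\omega_{i+2}}\Bigg)\mathcal H^d(B_r)=\frac{\omega_{k+1}}{\omega_{d+1}}\Big(\frac{\omega_{d+1}}{\omega_d}\Big)^{d-k}\mathcal H^d(B_r)<\infty.
$$
Since any $k$-plane meeting $B_r$ contains a geodesic $k$-ball of radius $1$ lying inside $B_{r+1}$, the quantity $\mathcal H^k(E\cap B_{r+1})$ is bounded below by a positive constant depending only on $k$ for every such $E$; hence $\tilde\mu_k$ is finite on each set $\{E\in A_h(d,k):E\cap B_r\neq\emptyset\}$, and these sets exhaust $A_h(d,k)$, so $\tilde\mu_k$ is Radon. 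By the essential uniqueness of the $I(\Hd)$-invariant Radon measure on $A_h(d,k)$ it follows that $\tilde\mu_k=c(d,k)\,\mu_k$ for some $c(d,k)\in[0,\infty)$.

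Finally, Crofton's formula on $A_h(d,k)$ (Lemma \ref{lem:Crofton} with $i=k$) gives $\int_{A_h(d,k)}\mathcal H^k(E\cap B_r)\,\mu_k(\mathrm dE)=\mathcal H^d(B_r)$, and comparing with the displayed identity yields $c(d,k)=\frac{\omega_{k+1}}{\omega_{d+1}}\big(\frac{\omega_{d+1}}{\omega_d}\big)^{d-k}$, as claimed. I expect the main obstacle to lie not in the constant, which falls out of a short telescoping product, but in the measure-theoretic bookkeeping behind the reduction: the measurability and equivariance of the intersection map, the negligibility of the degenerate configurations so that the iterated Crofton argument goes through, and the local finiteness of $\tilde\mu_k$ needed to invoke the uniqueness of the invariant measure.
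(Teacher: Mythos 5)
Your proof is correct and follows essentially the same route as the paper's: interpret the left-hand side as an isometry-invariant measure on $A_h(d,k)$, invoke the essential uniqueness of such a measure, and pin down the constant by evaluating both sides on $\mathcal H^k(\,\cdot\,\cap B_r)$ via an iterated application of Crofton's formula. Your additional care about measurability of the intersection map and the local finiteness of the push-forward measure (needed to legitimately invoke the uniqueness of the invariant Radon measure) is a point the paper passes over tacitly, but it does not change the substance of the argument.
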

\begin{proof}
  Let $\varphi \in I(\mathbb{H}^d)$ be an arbitrary isometry, and let $B$ a measurable subset of $A_{h}(d,k)$. Then we have
  \begin{align*}
    &\mu_{d-1}^{d-k}(\{(H_1,\ldots,H_{d-k}) \in A_{h}(d,d-1)^{{d-k}}: \ H_1 \cap \ldots \cap H_{d-k} \in \varphi B\}) \\
    &\qquad = \mu_{d-1}^{d-k}(\{(H_1,\ldots,H_{d-k}) \in A_{h}(d,d-1)^{{d-k}}: \  H_1 \cap \ldots \cap  H_{d-k} \in  B\})
  \end{align*}
by the isometry invariance of $\mu_{d-1}$. Since up to a multiplicative constant, $\mu_{k}$ is the only isometry invariant measure on $A_h(d,k)$, the formula follows up to the determination of the constant, which is independent of the function $f$. We do this by choosing
$$f(H_1,\ldots,H_{d-k})=
\begin{cases}
   \cH^k(H_1\cap\ldots\cap H_{d-k}\cap W) & : \text{dim}(H_1\cap\ldots\cap H_{d-k})=k, \\
   0 & :\mbox{otherwise},
\end{cases}
$$
where $W\in\cK_h^d$ is a fixed convex body with $\cH^d(W)=1$.
We compute
    \begin{align*}
      & \int_{A_{h}(d,d-1)^{d-k}} f(H_1 \cap \ldots \cap H_{d-k} \cap W) \ \mu_{d-1}^{d-k}(d(H_1,\ldots,H_{d-k})) \\
&\qquad= \left(\prod_{i=k}^{d-1} \frac{\omega_{d+1}}{\omega_d} \frac{\omega_{i+1}}{\omega_{i+2}} \right) \mathcal{H}^d(W) = \frac{\omega_{k+1}}{\omega_{d+1}}\left(\frac{\omega_{d+1}}{\omega_d}\right)^{d-k}
    \end{align*}
    by a $(d-k)$-fold application of the Crofton formula \eqref{eq:Crofton_volume} with the choice $k=d-1$ and (successively) $i=k,k+1,\ldots,d-1$ there. On the other hand, applying directly the Crofton formula with $i=k$, we get
    $$\int_{A_h(d,k)} \mathcal{H}^{k}(W \cap E) \ \mu_k(d E) = \mathcal{H}^{d}(W)=1.  $$
    A comparison yields the constant and proves the assertion of the lemma.
\end{proof}

In what follows we use the convention that $\text{dim}(\varnothing)=-1$.

\begin{Lemma}\label{lem:mu_d_measure}
Fix $d\geq 2$ and let $n\in\{1,\ldots,d\}$. Then $\text{\rm dim}(H_1 \cap \ldots \cap H_{n})\in\{-1,d-n\}$ holds for $\mu_{d-1}^n$-almost all $(H_1,\ldots,H_n)\in A_h(d,d-1)^n$.
\end{Lemma}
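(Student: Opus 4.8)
The plan is to argue by induction on $n$, reducing each step to the statement that a fixed lower-dimensional totally geodesic subspace is contained in only a $\mu_{d-1}$-null set of hyperplanes. For $n=1$ there is nothing to prove, since $\dim H_1=d-1=d-n$. For the inductive step, assume the claim for $n-1$, where $2\le n\le d$, and set $E:=H_1\cap\ldots\cap H_{n-1}$. By the induction hypothesis, for $\mu_{d-1}^{n-1}$-almost all $(H_1,\ldots,H_{n-1})$ we have $\dim E\in\{-1,d-n+1\}$. If $\dim E=-1$, then $E\cap H_n=\varnothing$ for every $H_n$ and the desired conclusion holds for all $H_n$. If $\dim E=d-n+1$ (note $1\le d-n+1\le d-1$), then, using that the intersection of two totally geodesic subspaces of $\Hd$ is again a (connected, by uniqueness of geodesics) totally geodesic subspace, and that passing to the tangent space at an intersection point reduces the dimension estimate to elementary linear algebra, one gets $\dim(E\cap H_n)\ge (d-n+1)+(d-1)-d=d-n$ whenever $E\cap H_n\neq\varnothing$; hence $\dim(E\cap H_n)\in\{-1,d-n,d-n+1\}$, and the value $d-n+1$ occurs exactly when $E\subseteq H_n$. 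Thus the inductive step will follow, via Tonelli's theorem, once we show that $\mu_{d-1}(\{H\in A_h(d,d-1):E\subseteq H\})=0$ for every $(d-n+1)$-plane $E$.

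To justify the use of Tonelli I would first note that $\mu_{d-1}$ is $\sigma$-finite: by Lemma~\ref{lem:Crofton} with $i=k=d-1$ and $W=B_r$, the set of hyperplanes meeting $B_r$ has finite $\mu_{d-1}$-measure, and these sets exhaust $A_h(d,d-1)$ as $r\to\infty$. Then, writing $N_n$ for the (Borel) set of $n$-tuples with $\dim(H_1\cap\ldots\cap H_n)\notin\{-1,d-n\}$, Tonelli gives $\mu_{d-1}^n(N_n)=\int \mu_{d-1}(\{H_n:(H_1,\ldots,H_n)\in N_n\})\,\mu_{d-1}^{n-1}(d(H_1,\ldots,H_{n-1}))$, and by the previous paragraph the inner measure vanishes for all $(H_1,\ldots,H_{n-1})\notin N_{n-1}$, i.e.\ on a set of full $\mu_{d-1}^{n-1}$-measure.

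For the key claim, fix $j:=d-n+1\ge 1$, pick a point $q\in E$, and let $W:=\{x\in E:d_h(x,q)\le 1\}$, a $j$-dimensional hyperbolic ball lying in the $j$-plane $E$. Then $W$ is Hausdorff $j$-rectifiable, $0<\mathcal H^j(W)=\omega_j\int_0^1\sinh^{j-1}(s)\,ds<\infty$, while $\mathcal H^{j-1}(W)=\infty$ (a $j$-ball has infinite $(j-1)$-dimensional measure; for $j=1$ this reads: a geodesic segment consists of uncountably many points). Applying Lemma~\ref{lem:Crofton} with $k=d-1$ and $i=j-1$, so that $d+i-k=j$, yields
$$
\int_{A_h(d,d-1)}\mathcal H^{j-1}(W\cap H)\,\mu_{d-1}(dH)=\frac{\omega_{d+1}\,\omega_j}{\omega_d\,\omega_{j+1}}\,\mathcal H^j(W)<\infty,
$$
so $\mathcal H^{j-1}(W\cap H)<\infty$ for $\mu_{d-1}$-almost all $H$. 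But $E\subseteq H$ forces $W\cap H=W$, hence $\mathcal H^{j-1}(W\cap H)=\infty$; therefore $\{H:E\subseteq H\}$ is contained in a $\mu_{d-1}$-null set, which is exactly what was needed.

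I expect the only genuinely delicate points to be geometric rather than computational: verifying that the intersection of two totally geodesic subspaces of $\Hd$ is again a totally geodesic subspace obeying the expected dimension bound (for which I would invoke that $\exp_x$ carries linear subspaces of $T_x\Hd$ to totally geodesic subspaces through $x$, and that $E\cap H$ is geodesically convex by uniqueness of geodesics, so that a $(d-n+1)$-dimensional $E\cap H\subseteq E$ must equal $E$), and the measurability of the sets $N_n$ needed for Tonelli, which I regard as routine since the dimension of an intersection of totally geodesic subspaces is a Borel function of the defining hyperplanes.
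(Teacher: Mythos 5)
Your proof is correct and follows the same overall route as the paper: induction on $n$, the observation that on the event $\dim(H_1\cap\ldots\cap H_{n-1})=d-n+1$ the only bad alternative is $H_1\cap\ldots\cap H_{n-1}\subseteq H_n$, and a Fubini/Tonelli reduction to showing that the hyperplanes containing a fixed nonempty totally geodesic subspace $E$ form a $\mu_{d-1}$-null set. The two arguments diverge only in how that last null-set claim is established: the paper simply picks (measurably) a point $c$ on $E$ and uses that $\{H: c\in H\}$ is $\mu_{d-1}$-null, which is immediate from the representation \eqref{eq:Croftonmass} (for fixed $L$ there is at most one $x\in L$ with $c\in H(L,x)$); you instead apply Lemma~\ref{lem:Crofton} with $i=j-1$, $k=d-1$ to a $j$-ball $W\subset E$ and contradict $\mathcal H^{j-1}(W)=\infty$. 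Your version is a bit more elaborate but equally valid and has the mild advantage of quoting only the stated Crofton formula; the paper's point-selection argument is shorter and avoids the (harmless) issue of whether $\{H:E\subseteq H\}$ itself is measurable by bounding its indicator pointwise.
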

\begin{proof}
We apply induction over $n$ and start by observing that for $n=1$ there is nothing to show. For $n \geq 2$ we have
  $$\mu_{d-1}^{n-1}(\{(H_1,\ldots,H_{n-1}) \in A_{h}(d,d-1)^{n-1}: \ \text{dim}(H_1 \cap \ldots \cap H_{n-1}) \not \in \{-1,d-(n-1)\}\})=0$$
  by the induction hypothesis. Let us introduce the abbreviation $L_{d-k} \defeq H_1 \cap \ldots \cap H_k$ for $H_1,\ldots,H_k\in A_h(d,d-1)$ and $k\in\{1,\ldots,d\}$. We obtain
  \begin{align*}
     & \mu_{d-1}^{n}(\{(H_1,\ldots,H_{n}) \in A_{h}(d,d-1)^{n}: \ \text{dim}(H_1 \cap \ldots \cap H_{n}) \not \in \{-1,d-n\}\}) \\
     & \qquad = \int_{A_{h}(d,d-1)^n} \mathbf{1} \{\text{dim}(L_{d-n})\not \in \{-1,d-n\} \} \ \mu_{d-1}^{n}(d(H_1,\ldots,H_n)).
\end{align*}
We decompose the indicator function as follows:
\begin{equation}\label{eq:040919}
\begin{split}
&\mathbf{1} \{\text{dim}(L_{d-n})\not \in \{-1,d-n\} \}\\
&\qquad = \mathbf{1} \{\text{dim}(L_{d-n})\not \in \{-1,d-n\}, \ \text{dim}(L_{d-(n-1)})=d-(n-1) \} \\
&\qquad\qquad +\mathbf{1} \{\text{dim}(L_{d-n})\not \in \{-1,d-n\}, \ \text{dim}(L_{d-(n-1)})=-1 \}\\
&\qquad\qquad +\mathbf{1} \{\text{dim}(L_{d-n})\not \in \{-1,d-n\}, \ \text{dim}(L_{d-(n-1)}) \not \in \{-1,d-(n-1)\} \}.
\end{split}
\end{equation}
Since the second indicator function on the right-hand side is identically equal to zero, we arrive at
\begin{align*}
& \mu_{d-1}^{n}(\{(H_1,\ldots,H_{n}) \in A_{h}(d,d-1)^n: \ \text{dim}(H_1 \cap \ldots \cap H_{n}) \not \in \{-1,d-n\}\}) \\
      &\qquad \le  \int_{A_{h}(d,d-1)^n} \mathbf{1} \{\text{dim}(L_{d-n})\not \in \{-1,d-n\}, \ \text{dim}(L_{d-(n-1)})=d-(n-1) \} \\
     &\qquad\qquad \qquad \qquad + \mathbf{1} \{\text{dim}(L_{d-(n-1)}) \not \in \{-1,d-(n-1)\} \} \ \mu_{d-1}^{n}(d(H_1,\ldots,H_n)).
  \end{align*}
  By the induction hypothesis and Fubini's theorem we get
  \begin{align*}
    \int_{A_{h}(d,d-1)^n} \mathbf{1} \{\text{dim}(L_{d-(n-1)}) \not \in \{-1,d-(n-1)\} \} \ \mu_{d-1}^{n}(d(H_1,\ldots,H_n))
    = 0,
  \end{align*}
  which covers the case of the third indicator function on the right-hand side of \eqref{eq:040919}. Finally, we write
   $c(H_1,\ldots,H_{n-1})$ for an arbitrary point chosen on $H_1 \cap \ldots \cap H_{n-1}$ (in a measurable way). Then, again by Fubini's theorem, we conclude for the first indicator function on the right-hand side of \eqref{eq:040919} that
  \begin{align*}
  & \mu_{d-1}^{n}(\{(H_1,\ldots,H_{n}) \in A_{h}(d,d-1)^n: \ \text{dim}(L_{d-n}) \notin \{-1,d-n \}, \\
   & \hspace{6.932cm} \text{dim}(L_{d-(n-1)})=d-(n-1)\}) \\
       &\qquad  \leq  \int_{A_{h}(d,d-1)^{n-1}} \int_{A_{h}(d,d-1)} \mathbf{1} \{H_1 \cap \ldots \cap H_{n-1} \subseteq H_n, \ H_1 \cap \ldots \cap H_{n-1} \neq \emptyset \} \\
       & \hspace{6.932cm} \times\, \mu_{d-1}(d H_n) \ \mu_{d-1}^{n-1}(d(H_1,\ldots,H_{n-1}))  \\
     & \qquad \leq \int_{A_{h}(d,d-1)^{n-1}} \int_{A_{h}(d,d-1)} \mathbf{1} \{c(H_1, \ldots, H_{n-1}) \in H_n \} \ \mu_{d-1}(d H_n) \ \mu_{d-1}^{n-1}(d(H_1,\ldots,H_{n-1})) \\
      &\qquad   =  \int_{A_{h}(d,d-1)^{n-1}}  0 \ \mu_{d-1}^{n-1}(d(H_1,\ldots,H_{n-1}))  = 0.
  \end{align*}
  This completes the proof.
\end{proof}

\subsection{Poisson U-statistics}\label{subsec:NormalApproxUstatistics}

Let $(\mathbb{X},\mathcal{X})$ be a measurable space, which is supplied with a $\sigma$-finite measure $\mu$. Let $\eta$ be a Poisson process on $\mathbb{X}$ with intensity measure $\mu$ (we refer to \cite{LP} for a formal construction). Further, fix $m\in\N$ and let $h:\mathbb{X}^m\to\R$ be a non-negative, measurable and symmetric function, which is integrable with respect to $\mu^m$, the $m$-fold product measure of $\mu$. By a Poisson U-statistic (of order $m$ and with kernel $h$) we understand a random variable of the form
$$
\mathscr{U} = \sum_{(x_1,\ldots,x_m)\in\eta_{\neq}^m}h(x_1,\ldots,x_m),
$$
where $\eta_{\neq}^m$ is the collection of all $m$-tuples of distinct points of $\eta$, see \cite{LP}. Functionals of this type have received considerable attention in the literature, especially in connection with applications in stochastic geometry, see, for example, \cite{EichelsbacherThaele14,HTW,LaPecc0,LaPecc,LPST,ReitznerPeccati,ReitznerSchulteCLT,SchulteDissertation,SchulteKolmogorov}. In the following, we will frequently use the following consequence of the multivariate Mecke equation for Poisson functionals \cite[Theorem 4.4]{LP}. Namely, the expectation $\E\mathscr{U}$ of the Poisson U-statistic $\mathscr U$ is given by
\begin{equation}\label{eq:Mecke}
\E \mathscr{U} = \E \sum_{(x_1,\ldots,x_m)\in\eta_{\neq}^m}h(x_1,\ldots,x_m)= \int_{\mathbb{X}^m}h(x_1,\ldots,x_m)\,\mu^m(d(x_1,\ldots,x_m)).
\end{equation}

In the present paper we need a formula for the centred moments of the Poisson U-statistics $\mathscr{U}$ as well as a bound for the Wasserstein and the Kolmogorov distance of a normalized version of $\mathscr{U}$ and a standard Gaussian random variable. To state such results, we need some more notation. Following {\cite[Chapter 12]{LP}}, for an integer $n\in\N$ we let $\Pi_n$ and $\Pi_n^*$ be the set of partitions and sub-partitions of $[n]:=\{1,\ldots,n\}$, respectively. We recall that by a sub-partition of $\{1,\ldots,n\}$ we understand a family of non-empty disjoint subsets (called blocks) of $\{1,\ldots,n\}$ and that a sub-partition $\sigma$ is called a partition if $\bigcup_{J\in\sigma}J=\{1,\ldots,n\}$. For $\sigma\in\Pi_n^*$ we let $|\sigma|$ be the number of blocks of $\sigma$ and $\|\sigma\|=\big|\bigcup_{J\in\sigma}J\big|$ be the number of elements of $\bigcup_{J\in\sigma}J$. In particular, a partition $\sigma\in\Pi_n$ satisfies $\|\sigma\|=n$. For $\ell\in\N$ and  $n_1,\ldots,n_\ell\in\N$, let $n:=n_1+\ldots+n_\ell$ and define
$$
J_i:=\{j\in\N:n_1+\ldots+n_{i-1}<j\leq n_1+\ldots+n_i\},\qquad i\in\{1,\ldots,\ell\},
$$
and $\pi:=\{J_i:i\in\{1,\ldots,\ell\}\}$. Next, we introduce two classes of sub-partitions of $[n]$ by
\begin{align*}
\Pi^*(n_1,\ldots,n_\ell) &:= \{\sigma\in\Pi_n^*:|J\cap J'|\leq 1\text{ for all }J\in\sigma\text{ and }J'\in\pi\},\\
\Pi_{\geq 2}^*(n_1,\ldots,n_\ell) &:= \{\sigma\in\Pi^*(n_1,\ldots,n_\ell):|J|\geq 2\text{ for all }J\in\sigma\}.
\end{align*}
In the same way the two classes of partitions $\Pi(n_1,\ldots,n_\ell)$ and $\Pi_{\geq 2}(n_1,\ldots,n_\ell)$ of $[n]$ are defined (just by omitting the upper index $\!\!\!\!\phantom{x}^*$ in the above definition). From now on we assume that $n_1=\ldots=n_\ell=m\in\N$ and define, for $\sigma\in\Pi^*(m,\ldots,m)$ (where here and below $m$ appears $\ell$ times),
$$
[\sigma] := \{i\in[\ell]:\text{there exists a block }J\in\sigma\text{ such that }J\cap\{m(i-1)+1,\ldots,mi\}\neq\emptyset\}
$$
as well as
$$
\Pi_{\geq 2}^{**}(m,\ldots,m) := \{\sigma\in\Pi_{\geq 2}^*(m,\ldots,m):[\sigma]=[\ell]\}.
$$
The sub-partitions $\sigma\in\Pi_{\geq 2}^{**}(m,\ldots,m)$ of $[m\ell]$ are easy to visualize as diagrams (cf.\ \cite[Chapter 4]{PTbook}). The $m\ell$ elements of $[m\ell]$ are arranged in an array of $\ell$ rows and $m$ columns, where $1,\ldots,m$ form the first row, $m+1,\ldots,2m$ the second etc. The blocks of $\sigma$ are indicated by closed curves, where the elements enclosed by a curve are meant to belong to the same block. Then the condition that $\sigma\in\Pi_{\geq 2}^{**}(m,\ldots,m)$ can be expressed by the following three requirements:
\begin{itemize}
\item[(i)] all blocks of $\sigma$ have at least two elements,
\item[(ii)] each block of $\sigma$ contains at most one element from each row,
\item[(iii)] in each row there is at least one element that belongs to some block of $\sigma$.
\end{itemize}
For an example and a counterexample we refer to Figure \ref{fig:Diag1}.

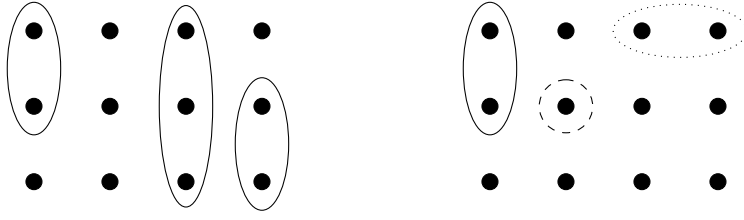
\begin{figure}[t]
\begin{center}
\begin{tikzpicture}
\filldraw
(0,0) circle (3pt)
(1,0) circle (3pt)
(2,0) circle (3pt)
(3,0) circle (3pt)
(0,1) circle (3pt)
(1,1) circle (3pt)
(2,1) circle (3pt)
(3,1) circle (3pt)
(0,2) circle (3pt)
(1,2) circle (3pt)
(2,2) circle (3pt)
(3,2) circle (3pt)
(0+6,0) circle (3pt)
(1+6,0) circle (3pt)
(2+6,0) circle (3pt)
(3+6,0) circle (3pt)
(0+6,1) circle (3pt)
(1+6,1) circle (3pt)
(2+6,1) circle (3pt)
(3+6,1) circle (3pt)
(0+6,2) circle (3pt)
(1+6,2) circle (3pt)
(2+6,2) circle (3pt)
(3+6,2) circle (3pt);

\draw[rotate=90] (1.5,0) ellipse (25pt and 10pt);
\draw[rotate=90] (1,-2) ellipse (38pt and 10pt);
\draw[rotate=90] (0.5,-3) ellipse (25pt and 10pt);

\draw[rotate=90] (1.5,-6) ellipse (25pt and 10pt);
\draw[rotate=90,dashed] (1,-7) ellipse (10pt and 10pt);
\draw[rotate=90,dotted] (2,-8.5) ellipse (10pt and 25pt);

\end{tikzpicture}
\end{center}
\caption{Left panel: Sub-partition from $\Pi_{\geq 2}^{**}(4,4,4)$. Right panel: Example of a sub-partition not belonging to $\Pi_{\geq 2}^{**}(4,4,4)$. In fact, the block indicated by the dashed curve contradicts condition (i), the block indicated by the dotted curve contradicts condition (ii) and since no element from the last row is contained in any block also condition (iii) is violated.}
\label{fig:Diag1}
\end{figure}

For two functions $g_1:\mathbb{X}^{\ell_1}\to\R$ and $g_2:\mathbb{X}^{\ell_2}\to\R$ with $\ell_1,\ell_2\in\N$), we denote by $g_1\otimes g_2:\mathbb{X}^{\ell_1+\ell_2}\to\R$ their usual tensor product. We are now in the position to rephrase the following formula for the centred moments of the Poisson U-statistic $\mathscr{U}$ (see \cite[Proposition 12.13]{LP}):
\begin{align}\label{eq:MomentsUstatistic}
\E[(\mathscr{U}-\E\mathscr{U})^\ell] = \sum_{\sigma\in\Pi_{\geq 2}^{**}(m,\ldots,m)}\int_{\mathbb{X}^{m\ell+|\sigma|-\|\sigma\|}}(h^{\otimes \ell})_\sigma\,d\mu^{m\ell+|\sigma|-\|\sigma\|},
\end{align}
where $h^{\otimes \ell}$ is the $\ell$-fold tensor product of $h$ with itself and $(h^{\otimes \ell})_\sigma:\mathbb{X}^{m\ell+|\sigma|-\|\sigma\|}\to\R$ stands for the function that arises from $h^{\otimes \ell}$ by replacing all variables that are in the same block of $\sigma$ by a new, common variable. Here, we implicitly assume that the function $h$ is such that all integrals that appear on the right-hand side are well-defined. This formula will turn out to be a crucial tool in the proof of Theorem \ref{thm:CLTrToInfinity} (c).

\subsection{Normal approximation bounds}\label{subsec:3.3}

In this section, we continue to use the notation and the set-up of the preceding section. But since we turn to normal approximation bounds for Poisson U-statistics, some further notation is required. For $u,v\in\{1,\ldots,m\}$ we let $\Pi_{\geq 2}^{{\rm con}}(u,u,v,v)$ be the class of partitions in $\Pi_{\geq 2}(u,u,v,v)$ whose diagram is connected, which means that the rows of the diagram cannot be divided into two subsets, each defining a separate diagram (cf.\ \cite[page 47]{PTbook}). More formally, there are no sets $A,B\subset[4]$ with $A\cup B=[4]$, $A\cap B=\emptyset$ and such that each block either consists of elements from rows in $A$ or of elements from rows in $B$, see Figure \ref{fig:Diag2} for an example and a counterexample. We can now introduce the quantities
\begin{equation}\label{eq:defMij}
M_{u,v}(h) := \sum_{\sigma\in\Pi_{\geq 2}^{{\rm con}}(u,u,v,v)} \int_{\mathbb{X}^{|\sigma|}}(h_u\otimes h_u\otimes h_v\otimes h_v)_\sigma\,\ d\mu^{|\sigma|},
\end{equation}
where
\begin{align}\label{eq:ChaosKernels}
h_u(x_1,\ldots,x_u) = {m\choose u}\int_{\mathbb{X}^{m-u}}h(x_1,\ldots,x_u,\tilde{x}_1,\ldots,\tilde{x}_{m-u})\,\mu^{m-u}(d(\tilde{x}_1,\ldots,\tilde{x}_{m-u}))
\end{align}
for $u\in\{1,\ldots,m\}$ (again, we implicitly assume that $h$ is such that the integrals appearing in \eqref{eq:defMij} are well-defined). To measure the distance between two real-valued random variables $X,Y$ (or, more precisely, their laws), the Kolmogorov distance
$$
d_K(X,Y) \defeq \sup_{s \in \mathbb{R}} |\mathbb{P}(X \leq s)- \mathbb{P}(Y \leq s)|
$$
and the Wasserstein distance
$$
d_W(X,Y) \defeq \sup_{\varphi \in \text{Lip}(1)} | \mathbb{E}\varphi(X)-\mathbb{E}\varphi(Y)|
$$
are used, where $\text{Lip}(1)$ denotes the space of Lipschitz functions $\varphi:\R\to\R$ with a Lipschitz constant less than or equal to one. It is well known that convergence with respect to the Kolmogorov or the Wasserstein distance implies convergence in distribution. We are now in the position to rephrase a quantitative central limit theorem for Poisson U-statistics. Namely, {\cite[Theorem 4.7]{ReitznerSchulteCLT}} and {\cite[Therorem 4.2]{SchulteKolmogorov}} state that there exists a constant $c_m\in(0,\infty)$, depending only on $m$ (the order of the Poisson U-statistic), such that
\begin{align}\label{eq:Kolmogorov}
d\left(\frac{\mathscr{U}-\mathbb{E}\mathscr{U}}{\sqrt{\Var(\mathscr{U})}},N \right) \leq c_m \sum_{u,v=1}^{m} \frac{\sqrt{M_{u,v}(h)}}{\Var(\mathscr{U})},
\end{align}
where $d(\,\cdot\,,\,\cdot\,)$ stands for either the Wasserstein or the Kolmogorov distance. Here, one can choose $c_m=2 m^{7/2}$ for the Wasserstein distance and $c_m=19 m^{5}$ for the Kolmogorov distance.

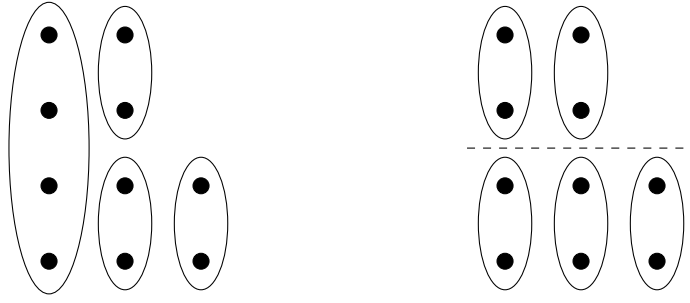
\begin{figure}[t]
\begin{center}
\begin{tikzpicture}
\filldraw
(0,0) circle (3pt)
(1,0) circle (3pt)
(2,0) circle (3pt)
(0,1) circle (3pt)
(1,1) circle (3pt)
(2,1) circle (3pt)
(0,2) circle (3pt)
(1,2) circle (3pt)
(0,3) circle (3pt)
(1,3) circle (3pt)
(0+6,0) circle (3pt)
(1+6,0) circle (3pt)
(2+6,0) circle (3pt)
(0+6,1) circle (3pt)
(1+6,1) circle (3pt)
(2+6,1) circle (3pt)
(0+6,2) circle (3pt)
(1+6,2) circle (3pt)
(0+6,3) circle (3pt)
(1+6,3) circle (3pt);

\draw[rotate=90] (1.5,0) ellipse (55pt and 15pt);
\draw[rotate=90] (2.5,-1) ellipse (25pt and 10pt);
\draw[rotate=90] (0.5,-1) ellipse (25pt and 10pt);
\draw[rotate=90] (0.5,-2) ellipse (25pt and 10pt);

\draw[rotate=90] (2.5,-1-6) ellipse (25pt and 10pt);
\draw[rotate=90] (0.5,-1-6) ellipse (25pt and 10pt);
\draw[rotate=90] (0.5,-2-6) ellipse (25pt and 10pt);
\draw[rotate=90] (2.5,-1-5) ellipse (25pt and 10pt);
\draw[rotate=90] (0.5,-1-5) ellipse (25pt and 10pt);
\draw[dashed] (5.5,1.5)--(8.5,1.5);

\end{tikzpicture}
\end{center}
\caption{Left panel: Partition from $\Pi_{\geq 2}^{{\rm con}}(2,2,3,3)$. Right panel: Example of a partition not belonging to $\Pi_{\geq 2}^{{\rm con}}(2,2,3,3)$. In fact, the diagram is not connected as indicated by the dashed line.}
\label{fig:Diag2}
\end{figure}

Finally, we turn to a multivariate normal approximation for Poisson U-statistics. For integers $p\in\N$ and  $m_1,\ldots,m_p\in\N$, and for each $\oldU \in\{1,\ldots,p\}$, let
$$
\mathscr{U}_\oldU = \sum_{(x_1,\ldots,x_{m_\oldU})\in\eta_{\neq}^{m_\oldU}}h^{(\oldU)}(x_1,\ldots,x_{m_\oldU})
$$
be a Poisson U-statistic of order $m_\oldU$ based on a kernel function $h^{(\oldU)}:\mathbb{X}^{m_\oldU}\to\R$ satisfying the same assumptions as above. We form the $p$-dimensional random vector $\textbf{U}:=(\mathscr{U}_1,\ldots,\mathscr{U}_p)$ and our goal is to compare $\textbf{U}$ with a $p$-dimensional Gaussian random vector $\textbf{N}$. To do this, we use the so-called $d_2$- and $d_3$-distance, which are defined as
\begin{align*}
d_2(\textbf{U},\textbf{N}) &:= \sup_{h\in C^2}\big|\E \varphi(\textbf{U})-\E \varphi(\textbf{N})\big|\\
d_3(\textbf{U},\textbf{N}) &:= \sup_{h\in C^3}\big|\E \varphi(\textbf{U})-\E \varphi(\textbf{N})\big|,
\end{align*}
respectively. Here, $C^2$ is the space of function $\varphi:\R^p\to\R$ which are twice partially continuously differentiable and satisfy
$$
\sup_{x\neq y}{|\varphi(x)-\varphi(y)|\over\|x-y\|}\leq 1\qquad\text{and}\qquad \sup_{x\neq y}{\|\nabla\varphi(x)-\nabla\varphi(y)\|_{\rm op}\over\|x-y\|}\leq 1,
$$
where $\|\,\cdot\,\|$ denotes the Euclidean norm in $\R^p$ and $\|\,\cdot\,\|_{\rm op}$ stands for the operator norm. Moreover, $C^3$ is the space of functions $\varphi:\R^p\to\R$ which are thrice partially continuously differentiable and satisfy
$$
\max_{1\leq i\leq j\leq p}\sup_{x\in\R^p}\Big|{\partial^2 \varphi(x)\over\partial x_i\partial x_j}\Big| \leq 1\qquad\text{and}\qquad \max_{1\leq i\leq j\leq k\leq p}\sup_{x\in\R^p}\Big|{\partial^3 \varphi(x)\over\partial x_i\partial x_j\partial x_k}\Big| \leq 1.
$$
Moreover, similarly to the quantities $M_{u,v}(h)$ introduced in \eqref{eq:defMij}, for $i,j\in\{1,\ldots,p\}$, $u\in\{1,\ldots,m_i\}$ and $v\in\{1,\ldots,m_j\}$ we define
$$
M_{u,v}(h^{(i)},h^{(j)}) := \sum_{\pi\in\Pi_{\geq 2}^{{\rm con}}(u,u,v,v)}\int_{\mathbb{X}^{|\pi|}}(h_u^{(i)}\otimes h_u^{(i)}\otimes h_v^{(j)}\otimes h_v^{(j)})_\pi\,d\mu^{|\pi|},
$$
where $h_u^{(i)}$ and $h_v^{(j)}$ are given by \eqref{eq:ChaosKernels}. This allows us to state the following multivariate normal approximation bound from {\cite[Theorem 6.3]{SchulteDissertation}} (see also \cite[Equation (5.1)]{ReitznerSchulteThaele}). Namely, if $\textbf{N}$ is a centred Gaussian random vector with covariance matrix $\Sigma=(\sigma_{i,j})_{i,j=1}^p$, then
\begin{equation}\label{eq:d_3}
\begin{split}
 d_3(\textbf{U}-\mathbb{E}\textbf{U},\textbf{N}) & \leq \frac{1}{2} \sum_{i,j=1}^{p} |\sigma_{i,j}- \Cov(\mathscr{U}_i,\mathscr{U}_j)| \\
  & \qquad  +\frac{p}{2} \left(\sum_{n=1}^{p} \sqrt{\Var(\mathscr{U}_n)}+1\right) \sum_{i,j=1}^{p} \sum_{u=1}^{m_i} \sum_{v=1}^{m_j} m_i^{7/2} \sqrt{M_{u,v}(h^{(i)},h^{(j)})}.
\end{split}
\end{equation}
If the covariance matrix $\Sigma$ is positive definite then also
\begin{equation}\label{eq:d_2}
\begin{split}
 & d_2(\textbf{U}-\mathbb{E}\textbf{U},\textbf{N}) \leq \|\Sigma^{-1}\|_{\rm op}\|\Sigma\|_{\rm op}^{1/2}\sum_{i,j=1}^{p} |\sigma_{i,j}- \Cov(\mathscr{U}_i,\mathscr{U}_j)| +\\
  & \qquad  +\frac{p\sqrt{2\pi}}{4}\|\Sigma^{-1}\|_{\rm op}^{3/2}\|\Sigma\|_{\rm op}\left(\sum_{\oldU=1}^{p} \sqrt{\Var(\mathscr{U}_\oldU)}+1\right) \sum_{i,j=1}^{p} \sum_{u=1}^{m_i} \sum_{v=1}^{m_j} m_i^{7/2} \sqrt{M_{u,v}(h^{(i)},h^{(j)})},
\end{split}
\end{equation}
where again $\|\,\cdot\,\|_{\rm op}$ stands for the operator norm. We remark that although the bound for $d_2(\textbf{U}-\mathbb{E}\textbf{U},\textbf{N})$ is not explicitly stated in the literature, it directly follows from \cite[Theorem 3.3]{PeccatiZheng} together with the computations in \cite[Chapters 5 and 6]{SchulteDissertation} for the $d_3$-distance.

\section{Proofs I -- Expectations and variances}\label{sec:4}

\subsection{Representation as a Poisson U-statistic}\label{sec:4.1}

We recall that $\eta_t$, for $t>0$, is a Poisson hyperplane process in $\Hd$ with intensity measure $t\mu_{d-1}$. Moreover, for a Borel set $W\subset\Hd$ and $i\in\{0,1,\ldots,d-1\}$ we recall from \eqref{eq:DefFwti} the definition of the functional $F_{W,t}^{(i)}$. To shorten our notation we put
$$
f^{(i)}(H_1,\ldots,H_{d-i}) \defeq \begin{cases}
                            \frac{1}{(d-i)!} \mathcal{H}^{i}(H_1 \cap \ldots \cap H_{d-i} \cap W) &: \text{dim}(H_1 \cap \ldots \cap H_{d-i})=i, \\
                            0 &: \mbox{otherwise},
                          \end{cases}
$$
which allows us to rewrite $F_{W,t}^{(i)}$ as
$$
F_{W,t}^{(i)}= \sum_{(H_1,\ldots,H_{d-i}) \in \eta_{t,\neq}^{d-i}} f^{(i)}(H_1,\ldots,H_{d-i}).
$$
In other words, $F_{W,t}^{(i)}$ is a Poisson U-statistic of order $d-i$ and with kernel $f^{(i)}$. It is well known (see \cite{LP,LaPecc0,LaPecc,LPST,ReitznerSchulteCLT}) that Poisson U-statistics admit a Fock space representation having only a finite number of terms. This leads to the variance and covariance representations
\begin{align}\label{eq:variance_general}
\Var(F_{W,t}^{(i)})&= \sum_{n=1}^{d-i} t^{2(d-i)-n}n!\|f_n^{(i)}\|_n^{2},
\end{align}
where the functions $f_n^{(i)}: A_h(d,d-1)^n \rightarrow [0, \infty)$ are given by
\begin{align*}
f_n^{(i)}(H_1,\dots,H_n)
      \defeq \binom{d-i}{n} \int_{A_h(d,d-1)^{d-i-n}} &f^{(i)}(H_1,\ldots,H_n,\tilde{H}_1,\ldots,\tilde{H}_{d-i-n}) \\ &\qquad\times\mu_{d-1}^{d-i-n}(d(\tilde{H}_1,\ldots,\tilde{H}_{d-i-n})),
\end{align*}
recall \eqref{eq:ChaosKernels}, and we write $\|\, \cdot \,\|_n$ for the norm in the $L^2$-space $L^2(\mu_{d-1}^n)$ with respect to the $n$-fold product measure of $\mu_{d-1}$. Similarly, for $i,j \in \{0,1, \ldots, d-1\}$ the covariance $\Cov(F_{W,t}^{(i)},F_{W,t}^{(j)})$ can be represented as
\begin{align}\label{eq:covariance_general}
\Cov(F_{W,t}^{(i)},F_{W,t}^{(j)})&= \sum_{n=1}^{\min \{d-i,d-j\}} t^{2d-i-j-n}n!\langle f_n^{(i)},f_n^{(j)} \rangle_{n},
\end{align}
where $\langle \, \cdot \, ,  \, \cdot \, \rangle_{n}$ denotes the standard scalar product in $L^2(\mu_{d-1}^n)$.

\subsection{Expectations: Proof of Theorem \ref{thm:Expectation}}\label{sec:Expectations}

Theorem \ref{thm:Expectation} is a consequence of the transformation formula in Lemma \ref{lem:mu_almost_surely_d-n_hyperplane} and the Crofton formula in Lemma \ref{lem:Crofton} with $k=i$ there. In fact, using \eqref{eq:Mecke} we obtain
\begin{align*}
\mathbb{E}F_{W,t}^{(i)} &= t^{d-i} \int_{A_h(d,d-1)^{d-i}} f^{(i)}(H_1,\ldots,H_{d-i}) \ \mu_{d-1}^{d-i}(d(H_1,\ldots,H_{d-i})) \\
&= \frac{t^{d-i}}{(d-i)!} \int_{A_h(d,d-1)^{d-i}} \mathcal{H}^{i}(H_1\cap \ldots \cap H_{d-i} \cap W) \ \mu_{d-1}^{d-i}(d(H_1,\ldots,H_{d-i})) \\
&= c(d,i)\,\frac{t^{d-i}}{(d-i)!} \int_{A_h(d,i)} \mathcal{H}^{i}(E \cap W)\,\mu_i(dE)\\
&=c(d,i)\,\frac{t^{d-i}}{(d-i)!}\, \mathcal{H}^{d}(W)\\
&= \frac{\omega_{i+1}}{\omega_{d+1}}\left(\frac{\omega_{d+1}}{\omega_d}\right)^{d-i} \frac{t^{d-i}}{(d-i)!} \ \mathcal{H}^d(W),
\end{align*}
and the proof is complete.  \hfill $\Box$

\begin{remark}{\rm
The  measure $W\mapsto\mathbb{E}F_{W,t}^{(i)}$ is isometry invariant. One could argue that it must be a constant multiple of  $\mathcal{H}^{d}$,
if one knows that it is also locally finite.  Theorem \ref{thm:Expectation} shows that this is indeed the case and also yields the constant.}
\end{remark}

\subsection{Variances: Proof of Theorem \ref{thm:Variance}}\label{sec:variance}

To investigate the variance of $F_{W,t}^{(i)}$ we use the representation as a Poisson U-statistic, especially \eqref{eq:variance_general}. We start by simplifying the kernel functions $f_n^{(i)}$.

\begin{Lemma}\label{lem:function_f_n}
Let $n\in\{1,\ldots,d-i\}$. Let $W\subset\Hd$ be a bounded Borel set. If $H_1,\ldots,H_n \in A_h(d,d-1)$ are $n$ hyperplanes satisfying $\textup{dim}(H_1 \cap \ldots \cap H_n)=d-n$, then
$$
f_n^{(i)}(H_1,\ldots,H_n)=  c(i,n,d)\, \mathcal{H}^{d-n}(H_1 \cap \ldots \cap H_n \cap W)
$$
with
$$
c(i,n,d):={\binom{d-i}{n}\over (d-i)!}\frac{\omega_{i+1} }{\omega_{d-n+1}} \left( \frac{\omega_{d+1}}{\omega_d} \right)^{d-n-i}.
$$
\end{Lemma}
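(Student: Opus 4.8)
Assume $n<d-i$ (for $n=d-i$ the empty product gives $f_n^{(i)}=f^{(i)}$ and $c(i,d-i,d)=\tfrac1{(d-i)!}$, so there is nothing to prove). Fix hyperplanes $H_1,\dots,H_n$ with $L:=H_1\cap\dots\cap H_n$ a $(d-n)$-plane. Since $f^{(i)}(H_1,\dots,H_n,\tilde H_1,\dots,\tilde H_{d-i-n})$ depends on $H_1,\dots,H_n$ only through $L$, the definition of $f_n^{(i)}$ reads
$$
f_n^{(i)}(H_1,\dots,H_n)=\frac{\binom{d-i}{n}}{(d-i)!}\int_{A_h(d,d-1)^{d-i-n}}\mathcal H^i\bigl(L\cap\tilde H_1\cap\dots\cap\tilde H_{d-i-n}\cap W\bigr)\,\mathbf 1\{\dim(L\cap\tilde H_1\cap\dots\cap\tilde H_{d-i-n})=i\}\,\mu_{d-1}^{d-i-n}(d(\tilde H_1,\dots,\tilde H_{d-i-n})).
$$
The point is that this quantity is an integral-geometric expression taking place inside the totally geodesic copy $L\cong\mathbb H^{d-n}$ of hyperbolic space, and should therefore be computable by Crofton's formula applied in $L$.

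\textbf{Pushing $\mu_{d-1}^{d-i-n}$ into $L$.}
Write $\mathcal A_L:=\{E\in A_h(d,i):E\subseteq L\}$ for the $i$-planes contained in $L$; under the isometry $L\cong\mathbb H^{d-n}$ this space carries a unique (up to scaling) invariant measure, the pullback $\mu_i^L$ of $\mu_i$ on $A_h(d-n,i)$. For Borel $B\subseteq\mathcal A_L$ put
$$
\nu(B):=\int_{A_h(d,d-1)^{d-i-n}}\mathbf 1\{L\cap\tilde H_1\cap\dots\cap\tilde H_{d-i-n}\in B\}\,\mu_{d-1}^{d-i-n}(d(\tilde H_1,\dots,\tilde H_{d-i-n})),
$$
noting that membership in $\mathcal A_L$ already encodes the condition $\dim(\,\cdot\,)=i$. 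This $\nu$ is $\sigma$-finite (any $E\in\mathcal A_L$ meets some ball $B_\rho$, and $\nu(\{E\in\mathcal A_L: E\cap B_\rho\neq\varnothing\})\le\mu_{d-1}(\{H: H\cap B_\rho\neq\varnothing\})^{d-i-n}<\infty$ by Crofton), and it is invariant under the isometry group of $L$: every isometry of $L$ extends to an isometry $\varphi$ of $\Hd$ with $\varphi L=L$, for which $L\cap\varphi\tilde H=\varphi(L\cap\tilde H)$, and $\mu_{d-1}$ is $\varphi$-invariant. Hence $\nu=\beta(d,i,n)\,\mu_i^L$ with a constant $\beta(d,i,n)$ that, by homogeneity of $\Hd$, is independent of $L$ (and of $W$). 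Substituting this and applying the Crofton formula of Lemma~\ref{lem:Crofton} inside $L\cong\mathbb H^{d-n}$ in the diagonal case (``$k=i$'', for which the right-hand side is just $\mathcal H^{\dim L}$ of the set, by the Remark following that lemma), we get
$$
f_n^{(i)}(H_1,\dots,H_n)=\frac{\binom{d-i}{n}}{(d-i)!}\,\beta(d,i,n)\int_{\mathcal A_L}\mathcal H^i(E\cap W)\,\mu_i^L(dE)=\frac{\binom{d-i}{n}}{(d-i)!}\,\beta(d,i,n)\,\mathcal H^{d-n}\bigl(H_1\cap\dots\cap H_n\cap W\bigr).
$$

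\textbf{Identifying the constant.}
The last display holds whenever $\dim(H_1\cap\dots\cap H_n)=d-n$ and trivially (both sides vanish) when $H_1\cap\dots\cap H_n=\varnothing$; by Lemma~\ref{lem:mu_d_measure} these cases exhaust $\mu_{d-1}^n$-almost every tuple. Integrating and using $f_n^{(i)}=\binom{d-i}{n}\int f^{(i)}(H_1,\dots,H_n,\tilde H_\bullet)\,\mu_{d-1}^{d-i-n}(d\tilde H_\bullet)$ together with Fubini,
$$
\int_{A_h(d,d-1)^{d-i}}f^{(i)}\,d\mu_{d-1}^{d-i}=\frac{\beta(d,i,n)}{(d-i)!}\int_{A_h(d,d-1)^n}\mathcal H^{d-n}\bigl(H_1\cap\dots\cap H_n\cap W\bigr)\,\mu_{d-1}^n(d(H_1,\dots,H_n)).
$$
The integral on the right equals $c(d,d-n)\int_{A_h(d,d-n)}\mathcal H^{d-n}(E\cap W)\,\mu_{d-n}(dE)=c(d,d-n)\,\mathcal H^d(W)$, by Lemma~\ref{lem:mu_almost_surely_d-n_hyperplane} (with $k=d-n$) and then again the diagonal Crofton formula; the left-hand side was shown in the proof of Theorem~\ref{thm:Expectation} to equal $\tfrac1{(d-i)!}\,c(d,i)\,\mathcal H^d(W)$. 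Comparing gives $\beta(d,i,n)=c(d,i)/c(d,d-n)=\frac{\omega_{i+1}}{\omega_{d-n+1}}\bigl(\frac{\omega_{d+1}}{\omega_d}\bigr)^{d-n-i}$, and plugging this back into the previous display produces exactly the asserted constant $c(i,n,d)$.

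\textbf{Main obstacle.} Everything hinges on the middle step: recognizing that integrating the kernel against $\mu_{d-1}^{d-i-n}$ collapses, via the invariant measure $\nu$, to a Crofton integration inside the totally geodesic subspace $L$. Making this rigorous requires the (standard but not entirely formal) facts that $\nu$ is $\sigma$-finite and invariant under the lifted isometry group of $L$, so that uniqueness of the invariant measure on $\mathcal A_L$ applies; here the negative curvature (unbounded $L$, possibly ultraparallel hyperplane configurations) is what one must be careful about, and it is convenient to fall back on the representation \eqref{eq:Croftonmass} and a null-set argument in the spirit of Lemma~\ref{lem:mu_d_measure}. Once this structural reduction is secured, the remainder is routine Crofton bookkeeping. (Alternatively one can prove a standalone ``restriction lemma'' expressing the pushforward of $\mu_{d-1}$ under $\tilde H\mapsto\tilde H\cap L$ as a constant multiple of the Crofton measure of $L$, and iterate it; this is equivalent in substance.)
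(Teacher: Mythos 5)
Your proof is correct, but it takes a genuinely different route from the paper's. The paper's argument is a two-step computation entirely in the ambient space: it applies the transformation formula of Lemma \ref{lem:mu_almost_surely_d-n_hyperplane} to convert the integral over $(\tilde H_1,\dots,\tilde H_{d-i-n})\in A_h(d,d-1)^{d-i-n}$ into $c(d,i+n)$ times an integral over $E\in A_h(d,i+n)$, and then applies the Crofton formula of Lemma \ref{lem:Crofton} in its general Hausdorff-rectifiable form (with $k=i+n$ and Hausdorff index $i$) to the $(d-n)$-rectifiable set $L_{d-n}\cap W$, which produces $\mathcal H^{d-n}(L_{d-n}\cap W)$ together with the explicit constant in one stroke. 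You instead intersect with $L$ first, identify the pushforward of $\mu_{d-1}^{d-i-n}$ as a locally finite invariant measure on the $i$-planes of $L\cong\mathbb H^{d-n}$, invoke uniqueness of the invariant measure, apply only the diagonal ($k=i$) Crofton formula inside $L$, and then pin down the proportionality constant $\beta(d,i,n)$ by integrating over $(H_1,\dots,H_n)$ and comparing with the expectation from Theorem \ref{thm:Expectation}. What you describe as the main obstacle --- the restriction/pushforward step --- is precisely what the paper's combination of Lemma \ref{lem:mu_almost_surely_d-n_hyperplane} with the rectifiable Crofton formula packages automatically; your version trades the application of the deep rectifiable Crofton formula to the lower-dimensional set $L\cap W$ for a uniqueness-of-invariant-measure argument (the same principle underlying Lemma \ref{lem:mu_almost_surely_d-n_hyperplane}) plus a separate normalization step. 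Your bookkeeping is right: $\beta=c(d,i)/c(d,d-n)$ does reproduce $c(i,n,d)$. Two small points worth making explicit: when dividing by $\mathcal H^d(W)$ to extract $\beta$ you should fix one reference set with $0<\mathcal H^d(W)<\infty$ (legitimate, since $\beta$ is defined without reference to $W$), and in the application of Lemma \ref{lem:mu_almost_surely_d-n_hyperplane} with $k=d-n$ the integrand $\mathcal H^{d-n}(H_1\cap\dots\cap H_n\cap W)$ should carry the indicator $\mathbf 1\{\dim(H_1\cap\dots\cap H_n)=d-n\}$, which is harmless by Lemma \ref{lem:mu_d_measure}.
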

\begin{proof}
We use the definition of $f_n^{(i)}$, the transformation formula in Lemma \ref{lem:mu_almost_surely_d-n_hyperplane} and the Crofton formula (\ref{eq:Crofton_volume}) (in the general form indicated before the statement of Lemma \ref{lem:Crofton}). Putting $L_{d-n}:=H_1\cap\ldots\cap H_{n}$, this gives
  \begin{align*}
    & \binom{d-i}{n}^{-1}f_n^{(i)}(H_1,\ldots,H_n)   \\
     &=   \frac{1}{(d-i)!} \int_{A_h(d,d-1)^{d-i-n}} \mathcal{H}^{i}(L_{d-n} \cap \tilde{H}_1 \cap \ldots \cap \tilde{H}_{d-i-n} \cap W) \ \mu_{d-1}^{d-i-n}(d(\tilde{H}_1,\ldots,\tilde{H}_{d-i-n})) \\
     &=\frac{c(d,i+n)}{(d-i)!} \int_{A_h(d,i+n)}\cH^i(L_{d-n}\cap W \cap E)\,\mu_{i+n}(dE)\\
     & =   \frac{c(d,i+n)}{(d-i)!} \frac{\omega_{d+1} \, \omega_{i+1}}{\omega_{i+n+1} \, \omega_{d-n+1}}\mathcal{H}^{d-n}(L_{d-n} \cap W)  \\
     & =  {1\over (d-i)!}\frac{\omega_{i+1} }{\omega_{d-n+1}} \left( \frac{\omega_{d+1}}{\omega_d} \right)^{d-n-i} \mathcal{H}^{d-n}(H_1 \cap \ldots \cap H_n \cap W).
  \end{align*}
  Here we used that since $L_{d-n}$ is $(d-n)$-dimensional, the intersection $L_{d-n}\cap W$ is Hausdorff $(d-n)$-rectifiable.
\end{proof}

For the variances and covariances, we need the $L^{2}$-norms and the scalar products of these functions.

\begin{Corollary}\label{cor:VarianceExact}
Let $W\subset\Hd$ be a bounded Borel set. If $n\in\{1,\ldots,\min\{d-i,d-j\}\}$, then
$$
\langle f_n^{(i)},f_n^{(j)} \rangle_n = c(d,n,i,j) \int_{A_h(d,d-n)}\cH^{d-n}(E\cap W)^{2}\,\mu_{d-n}(dE).
$$
Especially, the choice $W=B_r$ for some $r>0$ yields
$$
\langle f_n^{(i)},f_n^{(j)}  \rangle_n = c(d,n,i,j)  \, \omega_{n} \int_{0}^{r} \cosh^{d-n}(s) \sinh^{n-1}(s) \,\mathcal{H}^{d-n}(L_{d-n}(s) \cap B_r)^2 \ ds,
$$
where $ c(d,n,i,j):=  c(d,d-n)\, c(i,n,d)\, c(j,n,d)$ and $L_{d-n}(s)$ for $s\in[0,r]$ is an arbitrary $(d-n)$-dimensional totally geodesic subspace which satisfies $d_h(L_{d-n}(s),p)=s$.
\end{Corollary}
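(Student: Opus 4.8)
The plan is to combine the explicit form of the kernels $f_n^{(i)}$ obtained in Lemma~\ref{lem:function_f_n} with the transformation formula of Lemma~\ref{lem:mu_almost_surely_d-n_hyperplane}, and, in the case $W=B_r$, to unfold the invariant measure $\mu_{d-n}$ by means of Santal\'o's representation~\eqref{eq:Croftonmass}. First I would observe that, by Lemma~\ref{lem:mu_d_measure}, $\dim(H_1\cap\ldots\cap H_n)\in\{-1,d-n\}$ for $\mu_{d-1}^n$-almost every tuple $(H_1,\ldots,H_n)$; on the event $\{\dim=-1\}$ the intersection is empty, so both $f_n^{(i)}(H_1,\ldots,H_n)$ and $\cH^{d-n}(H_1\cap\ldots\cap H_n\cap W)$ vanish, while on $\{\dim=d-n\}$ Lemma~\ref{lem:function_f_n} applies. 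Hence $f_n^{(i)}=c(i,n,d)\,\cH^{d-n}(H_1\cap\ldots\cap H_n\cap W)$ holds $\mu_{d-1}^n$-almost everywhere, and the same with $i$ replaced by $j$; multiplying the two identities and integrating yields
$$
\langle f_n^{(i)},f_n^{(j)}\rangle_n = c(i,n,d)\,c(j,n,d)\int_{A_h(d,d-1)^n}\cH^{d-n}(H_1\cap\ldots\cap H_n\cap W)^2\,\mu_{d-1}^n(d(H_1,\ldots,H_n)).
$$

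Next I would apply Lemma~\ref{lem:mu_almost_surely_d-n_hyperplane} with $k=d-n$ (so $d-k=n$) to the non-negative measurable function $E\mapsto\cH^{d-n}(E\cap W)^2$ on $A_h(d,d-n)$. This function vanishes whenever $\dim(H_1\cap\ldots\cap H_n)<d-n$, since a lower-dimensional set has zero $(d-n)$-Hausdorff measure, and by Lemma~\ref{lem:mu_d_measure} the complementary case $\dim>d-n$ forms a $\mu_{d-1}^n$-null set, so after setting the function to zero there the hypothesis of that lemma is met without changing either integral. The lemma then produces the factor $c(d,d-n)$ and turns the integral over $A_h(d,d-1)^n$ into $\int_{A_h(d,d-n)}\cH^{d-n}(E\cap W)^2\,\mu_{d-n}(dE)$, which is exactly the first claimed identity, with the constant $c(d,n,i,j)=c(d,d-n)\,c(i,n,d)\,c(j,n,d)$.

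For the case $W=B_r$ I would insert Santal\'o's formula~\eqref{eq:Croftonmass}, read with $k=d-n$ (so that the outer integration runs over $G_h(d,n)$), into the right-hand side, obtaining
$$
\int_{A_h(d,d-n)}\cH^{d-n}(E\cap B_r)^2\,\mu_{d-n}(dE)=\int_{G_h(d,n)}\int_{L}\cosh^{d-n}(d_h(x,p))\,\cH^{d-n}(H(L,x)\cap B_r)^2\,\cH^{n}(dx)\,\nu_n(dL).
$$
Since $x\in L$ and $p\in L$, the geodesic from $p$ to $x$ lies in $L$ and meets $H(L,x)$ orthogonally at $x$, so $x$ is the foot of the perpendicular from $p$ to $H(L,x)$ and $d_h(H(L,x),p)=d_h(x,p)$; by invariance of $\cH^{d-n}$ under $I(\Hd)$ the quantity $\cH^{d-n}(H(L,x)\cap B_r)$ therefore depends on $x$ only through $s:=d_h(x,p)$, equals $\cH^{d-n}(L_{d-n}(s)\cap B_r)$, and vanishes for $s>r$. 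Using that $\nu_n$ is a probability measure on $G_h(d,n)$ and switching to polar coordinates inside the totally geodesic copy of $\mathbb{H}^{n}$ given by $L$ --- for which $\int_L g(d_h(x,p))\,\cH^{n}(dx)=\omega_n\int_0^\infty g(s)\sinh^{n-1}(s)\,ds$ for non-negative $g$, by the surface-area formula $\cH^{n-1}(\partial B(z,\rho))=\omega_n\sinh^{n-1}(\rho)$ --- the double integral collapses to $\omega_n\int_0^r\cosh^{d-n}(s)\sinh^{n-1}(s)\,\cH^{d-n}(L_{d-n}(s)\cap B_r)^2\,ds$, which is the second asserted identity.

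The routine verifications --- integrability of the $f_n^{(i)}$ (guaranteed since $W$ is bounded and each $L_{d-n}\cap W$ is Hausdorff $(d-n)$-rectifiable, so that the representations \eqref{eq:variance_general}--\eqref{eq:covariance_general} and Lemma~\ref{lem:mu_almost_surely_d-n_hyperplane} are available) and the measurability of $s\mapsto\cH^{d-n}(L_{d-n}(s)\cap B_r)$ --- present no difficulty. The one step that needs a little care, and which I expect to be the main obstacle, is the final reduction in the ball case: identifying $d_h(H(L,x),p)$ with $d_h(x,p)$, invoking isometry invariance to see that the intersection volume is a function of this distance alone, performing the polar-coordinate integration correctly inside the curved subspace $L$, and treating the degenerate value $d-n=0$ (where $\cH^0(L_0(s)\cap B_r)^2=\1\{s\le r\}$, $\mu_0=\cH^d$, and the formula reduces to the identity $\cH^d(B_r)=\omega_d\int_0^r\sinh^{d-1}(s)\,ds$).
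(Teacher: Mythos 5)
Your proposal is correct and follows essentially the same route as the paper: Lemma \ref{lem:function_f_n} plus the transformation formula of Lemma \ref{lem:mu_almost_surely_d-n_hyperplane} for the first identity, and the Santal\'o representation \eqref{eq:Croftonmass} with geodesic polar coordinates in the subspaces through $p$ for the ball case. Your write-up in fact supplies details the paper leaves implicit (the null-set argument via Lemma \ref{lem:mu_d_measure}, the identification $d_h(H(L,x),p)=d_h(x,p)$, and the degenerate case $d-n=0$), all of which are handled correctly.
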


\begin{proof}
The first claim is a direct consequence of the previous lemma and the transformation formula from Lemma \ref{lem:mu_almost_surely_d-n_hyperplane}.

The second claim follows by  combining  the previous result with \eqref{eq:Croftonmass} and using geodesic spherical coordinates in the $(d-n)$-dimensional planes $L_{d-n}$ through $p$ (see \cite[Proposition 3.1 and Equation (3.22)]{Chavel}).
\end{proof}

\begin{proof}[Proof of Theorem \ref{thm:Variance}]
This is now a direct consequence of \eqref{eq:variance_general} and Corollary \ref{cor:VarianceExact}.
\end{proof}

\subsection{Variance: Asymptotic behaviour}

In this section we look at the variance of $F_{r,t}^{(i)}=F_{B_r,t}^{(i)}$ in the asymptotic regime, as $r\to\infty$. We divide our analysis into the three different cases $d=2$, $d=3$ and $d\geq 4$. Before, we start with a number of preprations.

\subsubsection{Preliminaries}

The following lemma will be repeatedly applied below.

\begin{Lemma}\label{lem:CoshBound}
	If $r>0$ and $ 0\le s \le r $, then
	$$
	0\leq\arcosh\left (\frac{\cosh(r)}{\cosh(s)} \right)-(r-s)\leq \log(2).
	$$
\end{Lemma}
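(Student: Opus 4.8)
The plan is to prove the two-sided estimate
$$0\le\arcosh\!\left(\frac{\cosh(r)}{\cosh(s)}\right)-(r-s)\le\log 2$$
by working directly with the exponential representation of $\cosh$ and $\arcosh$. Recall that $\arcosh(y)=\log\!\big(y+\sqrt{y^2-1}\big)$ for $y\ge 1$, and note that $\cosh(r)/\cosh(s)\ge 1$ precisely because $0\le s\le r$ and $\cosh$ is increasing on $[0,\infty)$, so the left-hand side of the claimed inequality is at least the statement that $\arcosh$ is applied to a legitimate argument. Set $y:=\cosh(r)/\cosh(s)$ and $u:=r-s\ge 0$; the goal becomes the clean double inequality $e^{u}\le y+\sqrt{y^2-1}\le 2e^{u}$, which after multiplying out and using $y^2-1\ge 0$ is equivalent to controlling $y$ and $\sqrt{y^2-1}$ in terms of $e^{u}$.

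First I would establish the lower bound. Since $y+\sqrt{y^2-1}\ge y=\cosh(r)/\cosh(s)$, it suffices to show $\cosh(r)\ge e^{r-s}\cosh(s)$, i.e. $\cosh(r)\ge e^{r}\,e^{-s}\cosh(s)$. Writing $e^{-s}\cosh(s)=\tfrac12(1+e^{-2s})$ and $\cosh(r)=\tfrac12 e^{r}(1+e^{-2r})$, this reduces to $1+e^{-2r}\ge 1+e^{-2s}$ — wait, that goes the wrong way; instead one uses $\cosh(r)=\tfrac12(e^r+e^{-r})\ge \tfrac12 e^r$ directly, giving $y\ge \tfrac12 e^r/\cosh(s)$, which is not quite enough, so the cleaner route is: $e^{r-s}\cosh(s)=\tfrac12(e^{r}+e^{r-2s})\le \tfrac12(e^{r}+e^{-r})=\cosh(r)$, where the last inequality holds because $r-2s\le r$ is false in general — rather $e^{r-2s}\le e^{r}$ always, but we need $e^{r-2s}\le e^{-r}$, equivalently $2r\le 2s$, which fails. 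Let me instead argue the lower bound via $y+\sqrt{y^2-1}\ge 2\sqrt{y^2-1}+\text{(something)}$ or, most robustly, directly: $\arcosh$ is increasing, and $\cosh(r-s)\le \cosh(r)/\cosh(s)$ would give the lower bound immediately, which is exactly the subadditivity-type inequality $\cosh(r-s)\cosh(s)\le\cosh(r)$; expanding, $\cosh(r-s)\cosh(s)=\tfrac12(\cosh r+\cosh(r-2s))\le\tfrac12(\cosh r+\cosh r)=\cosh r$ since $|r-2s|\le r$ for $0\le s\le r$. This is the key identity, and it cleanly yields $\arcosh(\cosh(r)/\cosh(s))\ge r-s$.

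Next I would establish the upper bound $\arcosh(\cosh(r)/\cosh(s))\le r-s+\log 2$, equivalently $y+\sqrt{y^2-1}\le 2e^{r-s}$. Bounding crudely, $y+\sqrt{y^2-1}\le 2y=2\cosh(r)/\cosh(s)$, so it suffices to show $\cosh(r)/\cosh(s)\le e^{r-s}$, i.e. $\cosh(r)\le e^{r-s}\cosh(s)=\tfrac12(e^r+e^{r-2s})$. Since $\cosh(r)=\tfrac12(e^r+e^{-r})$ and $e^{-r}\le e^{r-2s}$ for all $s\ge 0$, this holds. Thus both bounds follow from elementary manipulations of $\cosh(r)=\tfrac12(e^r+e^{-r})$ together with the two comparisons $|r-2s|\le r$ and $e^{-r}\le e^{r-2s}$, both valid for $0\le s\le r$.

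The main (and only) obstacle is bookkeeping: making sure the argument of $\arcosh$ is $\ge 1$ so the expression is well defined (handled by monotonicity of $\cosh$), and choosing the cleanest pair of elementary inequalities so that no case distinctions are needed. I expect the whole proof to be three or four lines once the identity $\cosh(r-s)\cosh(s)\le\cosh(r)$ and the trivial bound $y+\sqrt{y^2-1}\le 2y$ are in place; no deeper input is required.
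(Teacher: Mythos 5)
Your final argument is correct, though the reader has to wade through your false start on the lower bound before reaching it. Once cleaned up, your proof of the lower bound is essentially the paper's: the paper verifies $\cosh(r)-\cosh(s)\cosh(r-s)=\sinh(s)\sinh(r-s)\ge 0$ directly from the addition formula, while you obtain the same inequality $\cosh(s)\cosh(r-s)\le\cosh(r)$ from the product-to-sum identity $\cosh(s)\cosh(r-s)=\tfrac12\big(\cosh(r)+\cosh(r-2s)\big)$ together with $|r-2s|\le r$; both are two-line verifications of the same fact, and you then conclude by monotonicity of $\arcosh$ rather than by rearranging the defining inequality, which is a cosmetic difference. For the upper bound you genuinely diverge from the paper: the paper substitutes the logarithmic form of $\arcosh$ and grinds through an explicit simplification of
$\frac{\cosh(r)}{\cosh(s)}+\sqrt{\frac{\cosh^2(r)}{\cosh^2(s)}-1}$
in terms of $e^{-2r}$, $e^{-2s}$, showing each of the two resulting terms is at most $1$, whereas you use the crude but clean chain $y+\sqrt{y^2-1}\le 2y$ followed by $y=\cosh(r)/\cosh(s)\le e^{r-s}$, the latter reducing to $e^{-r}\le e^{r-2s}$, which holds exactly because $s\le r$. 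Your route is shorter and less error-prone; the paper's computation has the minor advantage of exhibiting the exact expression whose logarithm is the quantity being bounded, but nothing downstream in the paper uses that extra information, so your version would serve equally well.
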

\begin{proof}
	We start by proving the lower bound which is equivalent to $\cosh(r)-\cosh(s)\cosh(r-s) \geq 0$. By definition of $\cosh$, $\sinh$ and since $0 \leq s \leq r$ we have
$$\cosh(r)-\cosh(s)\cosh(r-s)=\sinh(s)\sinh(r-s) \geq 0.$$
This yields the lower bound.
	Next, we turn to the upper bound. We use the logarithmic representation $\arcosh(x)=\log(x+\sqrt{x^{2}-1})$ of the $\arcosh$-function and the fact that $\cosh(r)/\cosh(s) \geq 1$ for $r\ge s\ge 0$. Then we get
	\begin{align*}
	\arcosh\left (\frac{\cosh(r)}{\cosh(s)} \right)-(r-s) & = \log \left(\frac{\cosh(r)}{\cosh(s)}+\sqrt{\frac{\cosh^2(r)}{\cosh^2(s)}-1} \right)-(r-s) \\
	&=\log \left(\frac{e^s \cosh(r)}{e^r \cosh(s)}+\sqrt{\frac{e^{2s} \cosh^2(r)}{e^{2r} \cosh^2(s)}-\frac{e^{2s}}{e^{2r}}} \right) \\
	&=\log \left( \frac{e^s (e^r+e^{-r})}{e^r (e^s+e^{-s})}+\sqrt{\frac{e^{2s} (e^r+e^{-r})^2}{e^{2r} (e^s+e^{-s})^2}-\frac{e^{2s}}{e^{2r}}} \right)\\
	&=\log \left(\frac{1+e^{-2r}}{1+e^{-2s}}+\sqrt{\frac{e^{2s} (e^{2r}+2+e^{-2r}-e^{2s}-2-e^{-2s})}{e^{2r} (e^{2s}+2+e^{-2s})}} \right) \\
	&=\log \left(\frac{1+e^{-2r}}{1+e^{-2s}}+\sqrt{\frac{1+e^{-4r}-e^{2s-2r}-e^{-4s}}{1+2e^{-2s}+e^{-2s-2r}}} \right)\\
	& \leq \log(2),
	\end{align*}
	where the last inequality holds because both terms in the argument of the $\log$ function are bounded from above by $1$ for $s \in [0,r]$.
\end{proof}

Moreover, we frequently apply the following upper and lower bounds for $\mathcal{H}^{i}(L_i(s)\cap B_r)$. As before, let  $L_{i}(s)$ denote an arbitrary  measurable choice of an $i$-dimensional totally geodesic subspace satisfying $d_h(L_{i}(s),p)=s$, $i\in\{1,\ldots,d-1\}$. The following lemma concerns the case
$i\in\{2,\ldots,d-1\}$.

\begin{Lemma}\label{lem:H_d_bounds}
If $i \in\{2,\ldots,d-1\}$ and $0 \leq s \leq r$, then
$$
\mathcal{H}^{i}(L_i(s)\cap B_r)\leq \frac{\omega_i}{i-1}  e^{(r-s)(i-1)}.
$$
If, in addition, $0\le s \leq r-1/2$ then
$$
\frac{\omega_i}{e^{3(i-1)}(i-1)}e^{(r-s)(i-1)} \leq \mathcal{H}^{i}(L_i(s)\cap B_r).
$$
\end{Lemma}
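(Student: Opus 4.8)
The plan is to reduce $\mathcal{H}^{i}(L_i(s)\cap B_r)$ to an elementary one–dimensional integral by exploiting that $L_i(s)$ is totally geodesic, and then to estimate that integral with Lemmas \ref{lem:inequalities} and \ref{lem:CoshBound}. First I would recall that $L_i(s)$, with the induced metric, is isometric to $\mathbb{H}^{i}$, and that if $z$ denotes the point of $L_i(s)$ nearest to $p$ then $d_h(p,z)=s$ and the geodesic segment $[p,z]$ meets $L_i(s)$ orthogonally at $z$. For $x\in L_i(s)$ the geodesic triangle with vertices $p,z,x$ has a right angle at $z$, so the hyperbolic Pythagorean theorem (see \cite{Ratcliffe}) gives $\cosh d_h(p,x)=\cosh(s)\cosh d_h(z,x)$. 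Since $\cosh(r)/\cosh(s)\ge 1$ for $r\ge s$, it follows that $x\in B_r$ exactly when $d_h(z,x)\le\rho$, where $\rho:=\arcosh(\cosh(r)/\cosh(s))$. Hence $L_i(s)\cap B_r$ is the closed metric ball of radius $\rho$ around $z$ inside $L_i(s)$, and the volume formula \eqref{volbalr}, applied in dimension $i$, gives
$$
\mathcal{H}^{i}(L_i(s)\cap B_r)=\omega_i\int_0^{\rho}\sinh^{i-1}(u)\,du .
$$
By Lemma \ref{lem:CoshBound} one has $r-s\le\rho\le (r-s)+\log 2$, so the whole statement reduces to bounding this integral from above and below in terms of $e^{(i-1)(r-s)}$.

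For the upper bound I would use $\sinh(u)\le e^{u}/2$ to obtain $\int_0^{\rho}\sinh^{i-1}(u)\,du\le e^{(i-1)\rho}/(2^{i-1}(i-1))$ and then insert $\rho\le (r-s)+\log 2$; the factor $2^{i-1}$ cancels and the claimed bound $\omega_i e^{(i-1)(r-s)}/(i-1)$ drops out. For the lower bound, now with $r-s\ge 1/2$ (this is exactly where the hypothesis $s\le r-1/2$ enters), I would first discard the part of the integral beyond $r-s$, using $\rho\ge r-s$, and then examine $\Phi(b):=\int_0^{b}\sinh^{i-1}(u)\,du-e^{(i-1)(b-3)}/(i-1)$. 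Because $\Phi'(b)=\sinh^{i-1}(b)-e^{(i-1)(b-3)}\ge 0$ for $b\ge 0.1$ by Lemma \ref{lem:inequalities}(a) (raised to the power $i-1\ge 1$), the function $\Phi$ is nondecreasing on $[0.1,\infty)$, so $\Phi(r-s)\ge\Phi(1/2)$. It then remains to verify $\Phi(1/2)\ge 0$: using $\sinh(u)\ge u$ (Lemma \ref{lem:inequalities}(b)) one has $\int_0^{1/2}\sinh^{i-1}(u)\,du\ge\int_0^{1/2}u^{i-1}\,du=1/(i\,2^{i})$, and the elementary inequality $1/(i\,2^{i})\ge e^{-\frac52(i-1)}/(i-1)$ holds for every $i\ge 2$ (its right–hand side decays geometrically in $i$, while the left–hand side tends to $1$). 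Altogether $\int_0^{r-s}\sinh^{i-1}(u)\,du\ge e^{(i-1)(r-s-3)}/(i-1)$, and multiplying by $\omega_i$ yields the asserted lower bound.

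The only genuinely delicate point is obtaining the clean constant $e^{3(i-1)}$ in the lower bound rather than a strictly larger one. Applying $\sinh(u)\ge e^{u-3}$ directly on all of $[0.1,r-s]$ produces, after integration, an additive remainder of order $e^{0.1(i-1)}$ that cannot be absorbed into $e^{(i-1)(r-s)}$; indeed it already destroys the bound in the limiting case $s=0$, where $\rho=r-s$ and there is no slack whatsoever. The monotonicity argument — reducing the estimate to the single value $b=1/2$ and there invoking the sharper bound $\sinh(u)\ge u$ on the short interval $[0,1/2]$ — is precisely what repairs this; everything else is routine calculus.
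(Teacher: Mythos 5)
Your proof is correct and follows essentially the same route as the paper: the same identification of $L_i(s)\cap B_r$ as an $i$-dimensional ball of radius $\arcosh(\cosh(r)/\cosh(s))$, the same upper bound via $\sinh(u)\le e^u/2$ and Lemma \ref{lem:CoshBound}, and a lower bound whose monotonicity reformulation ($\Phi(r-s)\ge\Phi(1/2)\ge 0$) reduces to exactly the paper's key inequality $1/(i\,2^i)\ge e^{-\frac52(i-1)}/(i-1)$. One cosmetic slip: your parenthetical claim that the left-hand side of that inequality tends to $1$ is false (it tends to $0$), but the inequality itself does hold for all $i\ge 2$, since at $i=2$ it reads $1/8\ge e^{-5/2}$ and the right-hand side decays by a factor $e^{5/2}$ per step while the left-hand side decays only by roughly a factor $2$.
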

\begin{proof}
We start by noting that $L_{i}(s) \cap B_r$ is an $i$-dimensional hyperbolic ball of radius $\arcosh \big(\frac{\cosh(r)}{\cosh(s)} \big)$ for $i\in \{1,\ldots,d-1\}$, see {\cite[Theorem 3.5.3]{Ratcliffe}}. Thus we get
\begin{align}\label{eq:volume_ball}
 \mathcal{H}^{i}(L_i(s)\cap B_r) & = \omega_i \int_{0}^{\arcosh \left( \frac{\cosh(r)}{\cosh(s)} \right)} \sinh^{i-1}(u) \ du
\end{align}
for $i\in \{1,\ldots,d-1\}$. Hence, using Lemma \ref{lem:CoshBound} and for $i \in\{2,\ldots,d-1\}$ we get
  \begin{align*}
    \mathcal{H}^{i}(L_i(s)\cap B_r)      & \leq \omega_i \int_{0}^{r-s+\log(2)} \sinh^{i-1}(u) \ du \\
     & \leq \frac{\omega_i}{2^{i-1}} \int_{0}^{r-s+\log(2)} e^{u(i-1)} \ du \\
     & \leq \frac{\omega_i}{2^{i-1}(i-1)}  2^{i-1}e^{(r-s)(i-1)} \\
     & = \frac{\omega_i}{i-1} e^{(r-s)(i-1)}.
  \end{align*}
  On the other hand, Lemma \ref{lem:CoshBound} and Lemma \ref{lem:inequalities} imply that
  \begin{align*}
    \mathcal{H}^{i}(L_i(s)\cap B_r)     & \geq \omega_i \int_{0}^{r-s} \sinh^{i-1}(u) \ du \\
    & =\omega_i \left( \int_{1/2}^{r-s} \sinh^{i-1}(u) \ du +\int_{0}^{1/2} \sinh^{i-1}(u) \ du \right) \\
    & \geq \omega_i \left( \int_{1/2}^{r-s} \left(\frac{e^{u}}{e^3} \right)^{i-1} \ du +\int_{0}^{1/2} u^{i-1} \ du \right) \\
    & = \frac{\omega_i}{e^{3(i-1)}(i-1)} \left(e^{(r-s)(i-1)}-e^{(i-1)/2} \right)+ \frac{1}{2^{i}} \, \frac{\omega_i}{i } \\
    & \geq \frac{\omega_i}{e^{3(i-1)}(i-1)} e^{(r-s)(i-1)},
  \end{align*}
  where we used that $s \leq r-1/2$ to obtain the equality in the third line. The last inequality is due to
  \begin{align*}
    \frac{1}{2^{i}} \, \frac{\omega_i}{i }- \frac{\omega_i}{e^{(5/2)(i-1)}(i-1)}
    &  =  \omega_i \left(\frac{1}{i \, 2^{i}} - \frac{1}{e^{(5/2)(i-1)}(i-1)} \right) \geq 0.
  \end{align*}
  The positivity of the last term holds for $i \geq 2$, since $2^{i+1}  \leq e^{(5/2)(i-1)}$ implies that
  $$2^{i}  \leq \frac{i-1}{i} \, e^{(5/2)(i-1)},$$
  which is equivalent to the desired inequality.
\end{proof}

We will need later the following lemma.

\begin{Lemma}\label{lem:lines_intersecting_ball_inequality}
Let $r\geq 1 $. For $k\in\{0,1,\ldots,d-1\}$ and $ 0 \leq s \leq r$, let $L_k(s) \in A_h(d,k)$ be a $k$-dimensional totally geodesic subspace such that $d_h(L_k(s),p)=s$. Then for any $ l \in \mathbb{N}$ there exist constants $c,C>0$,  depending only on $k, l$ and $d$, such that
\begin{align*}
c \,g(k,l,d,r)&\leq \omega_{d-k} \int_{0}^{r} \cosh^{k}(s) \sinh^{d-1-k}(s) \, \mathcal{H}^{k}(L_k(s) \cap B_r)^{l}\, ds\\
&\qquad \qquad = \int_{A_h(d,k)}\mathcal{H}^k (H\cap B_r)^l\, \mu_k(dH)
 \leq C \, g(k,l,d,r)
 \end{align*}
 with
 $$g(k,l,d,r)= \begin{cases}
              \exp(r(d-1)) &: l(k-1)<d-1, \\
                 r \,  \exp(r(d-1)) &: l(k-1)=d-1, \\
                 \exp(r \, l(k-1)) &: l(k-1)>d-1.
               \end{cases}$$
\end{Lemma}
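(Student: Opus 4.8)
The plan is to first reduce the integral over the space of $k$-planes to a one-dimensional integral, and then to estimate that integral by pinching the integrand between explicit exponentials.

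\emph{Reduction to a one-dimensional integral.} This is the identity stated in the middle of the lemma. Inserting the representation \eqref{eq:Croftonmass} of $\mu_k$ and passing to geodesic spherical coordinates centred at $p$ inside each $(d-k)$-dimensional totally geodesic subspace $L\in G_h(d,d-k)$ — exactly as in the second part of the proof of Corollary \ref{cor:VarianceExact}, using \cite[Proposition 3.1 and (3.22)]{Chavel} — one obtains
$$
\int_{A_h(d,k)}\mathcal H^k(H\cap B_r)^l\,\mu_k(dH)=\omega_{d-k}\int_0^r\cosh^k(s)\,\sinh^{d-1-k}(s)\,\mathcal H^k(L_k(s)\cap B_r)^l\,ds,
$$
since $\mathcal H^k(H(L,x)\cap B_r)$ depends on $(L,x)$ only through $s=d_h(x,p)$, vanishes for $s>r$, and $\nu_{d-k}$ is a probability measure. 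It then remains to bound $I(r):=\int_0^r\cosh^k(s)\sinh^{d-1-k}(s)\,\mathcal H^k(L_k(s)\cap B_r)^l\,ds$ above and below by a positive constant times $g(k,l,d,r)$. For the volume element, $\tfrac12 e^s\le\cosh(s)\le e^s$, $\sinh(s)\le e^s$ and Lemma \ref{lem:inequalities}(a) give, with constants depending only on $d,k$, that $\cosh^k(s)\sinh^{d-1-k}(s)\le e^{(d-1)s}$ for all $s\ge0$ and $\cosh^k(s)\sinh^{d-1-k}(s)\ge c_1 e^{(d-1)s}$ for $s\ge 0.1$. For the slice volume: if $k=0$ then $\mathcal H^0(L_0(s)\cap B_r)=\mathbf 1\{s\le r\}$, so $I(r)=\int_0^r\sinh^{d-1}(s)\,ds\in\Theta(e^{(d-1)r})$, which is $g(0,l,d,r)$ because $l(0-1)<0<d-1$; if $k=1$, then \eqref{eq:volume_ball} with Lemma \ref{lem:CoshBound} yields $2(r-s)\le\mathcal H^1(L_1(s)\cap B_r)\le 2(r-s)+2\log 2$ for $0\le s\le r$; if $k\ge 2$, Lemma \ref{lem:H_d_bounds} gives $c_2 e^{(r-s)(k-1)}\le\mathcal H^k(L_k(s)\cap B_r)\le c_3 e^{(r-s)(k-1)}$, the lower estimate valid for $0\le s\le r-\tfrac12$.

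\emph{The upper bound.} Assume $k\ge1$. Using $\cosh^k(s)\sinh^{d-1-k}(s)\le e^{(d-1)s}$ together with the upper slice bounds, $I(r)$ is dominated by a constant times $\int_0^r e^{(d-1)s}\,e^{(r-s)l(k-1)}\,ds=e^{rl(k-1)}\int_0^r e^{(d-1-l(k-1))s}\,ds$ (for $k=1$ an extra factor $(1+(r-s))^l$ appears, which after the substitution $u=r-s$ is absorbed into $\int_0^\infty e^{-(d-1)u}(1+u)^l\,du<\infty$ since $l(1-1)=0$). Evaluating $\int_0^r e^{(d-1-l(k-1))s}\,ds$ according to whether $l(k-1)$ is less than, equal to, or greater than $d-1$ produces a constant multiple of $e^{(d-1)r}$, of $r\,e^{(d-1)r}$, and of $e^{rl(k-1)}$ respectively (in the last regime the integral stays bounded as $r\to\infty$; in the middle one it equals $r$), i.e. a constant times $g(k,l,d,r)$.

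\emph{The lower bound.} Let $k\ge1$ and restrict the integration to $s\in[0.1,\,r-\tfrac12]$, which is non-empty because $r\ge1$ and on which all lower estimates above hold; in particular $\mathcal H^k(L_k(s)\cap B_r)^l\ge c_4\,e^{(r-s)l(k-1)}$ there (for $k=1$ this reads $(2(r-s))^l\ge1$). Hence $I(r)\ge c_5\,e^{rl(k-1)}\int_{0.1}^{r-1/2}e^{(d-1-l(k-1))s}\,ds$. If $l(k-1)=d-1$ the last integral equals $r-0.6\ge0.4\,r$; if $l(k-1)>d-1$ it is at least $\int_{0.1}^{0.5}e^{(d-1-l(k-1))s}\,ds>0$ since $r-\tfrac12\ge\tfrac12$; if $l(k-1)<d-1$ it is at least $c_6\,e^{(d-1-l(k-1))r}$ for all $r\ge1$ (immediate for large $r$; the remaining bounded range of $r$ is covered by positivity and continuity of $I(r)$ and $g(k,l,d,r)$). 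Multiplying by $e^{rl(k-1)}$ gives $I(r)\ge c\,g(k,l,d,r)$ in every case, completing the proof. The only genuinely delicate point is the competition between the rate $e^{(d-1)s}$ coming from the hyperbolic volume element and the rate $e^{(r-s)l(k-1)}$ coming from the $k$-volume of the slices; this is what forces the three-regime answer and, in the borderline case $l(k-1)=d-1$, the extra factor $r$ on both sides, and it is precisely where the quantitative bounds of Lemmas \ref{lem:inequalities}, \ref{lem:CoshBound} and \ref{lem:H_d_bounds} near $s\approx0$ and $s\approx r$ are needed. The hypothesis $r\ge1$ is exactly what makes the truncated interval $[0.1,r-\tfrac12]$ large enough for the lower bound to be clean.
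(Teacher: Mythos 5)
Your proposal is correct and follows essentially the same route as the paper's proof: the same reduction to the one-dimensional integral via \eqref{eq:Croftonmass} and geodesic spherical coordinates, the same quantitative inputs (Lemmas \ref{lem:inequalities}, \ref{lem:CoshBound}, \ref{lem:H_d_bounds}), the substitution $u=r-s$, and the same three-regime comparison of $e^{(d-1)s}$ against $e^{l(k-1)(r-s)}$, with the lower bound obtained by restricting to $[0.1,\,r-\tfrac12]$. Your treatment of $k=1$ via the polynomial factor $(r-s)^l$ is slightly more explicit than the paper's remark that this case is "almost the same", but it is the same argument.
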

\begin{proof}
The asserted equality of the two integral expressions is clear from the argument for the second claim in Corollary \ref{cor:VarianceExact}.

For $k=0$ the integral is just the volume of a geodesic ball of radius $r$ which can be bounded from above and below
by a positive constant times $\exp(r(d-1))$.

In the following, we repeatedly use that the intersection $L_k(s) \cap B_r$ is a $k$-dimensional hyperbolic ball of radius $\arcosh(\cosh(r)/\cosh(s))$. The constants $c$ and $C$ used in the calculations below only depend on $k,l,d$ and may vary from line to line. Suppose that $k\ge 2$. The substitution $u=r-s$ and an application of Lemma \ref{lem:CoshBound} yield
\begin{align*}
    &\int_{0}^{r} \cosh^{k}(s) \sinh^{d-1-k}(s) \, \mathcal{H}^{k}(L_k(s) \cap B_r)^{l} \, ds\\
    &\qquad=  \int_{0}^{r} \cosh^{k}(r-u)\, \sinh^{d-1-k}(r-u) \, \mathcal{H}^{k}(L_k(r-u) \cap B_r)^{l} \, du  \\
    &\qquad=  \int_{0}^{r} \cosh^{k}(r-u)\,\sinh^{d-1-k}(r-u) \, \left( \omega_k \int_{0}^{\arcosh \left(\frac{\cosh(r)}{\cosh(r-u)}\right)} \sinh^{k-1}(s) \ ds \right)^{l} \, du  \\
    &\qquad \leq  C \int_{0}^{r} e^{k(r-u)}\,e^{(d-1-k)(r-u)} \, \left( 2^{-(k-1)}\int_{0}^{u+\log(2)} e^{(k-1)s} \ ds \right)^{l} \, du  \\
    &\qquad \leq  C \int_{0}^{r} e^{(d-1)(r-u)} \, \left(\frac{1}{k-1} e^{(u +\log(2))(k-1)}\right)^{l} \, du  \\
    &\qquad \leq  C e^{r(d-1)}\int_{0}^{r} e^{u(l(k-1)-(d-1))} \, du \\
    &\qquad \leq  C g(k,l,d,r).
\end{align*}
To obtain the lower bound, we first show for $u \geq 0.2$ that
  \begin{align*}
    \int_{0}^{u} \sinh^{k-1}(s) &\geq \int_{0.1}^{u} \sinh^{k-1}(s) \ ds  \geq \int_{0.1}^{u} e^{(k-1)(s-3)} \ ds \\
    & \geq \frac{e^{-3(k-1)}}{k-1}\left(e^{(k-1)u}-e^{0.1(k-1)}\right)\\
    & \geq \frac{e^{0.1(k-1)}}{k-1} \, e^{-3(k-1)}\left(e^{(k-1)(u-0.1)}-1\right)\\
    & \geq \frac{e^{0.1(k-1)}}{k-1} \, e^{-3(k-1)} \frac{1}{20}e^{(k-1)(u-0.1)}.
  \end{align*}
Now we  substitute again $u=r-s$. An application of Lemma \ref{lem:inequalities} and the lower bound from Lemma \ref{lem:CoshBound} then yield
  \begin{align*}
    &\int_{0}^{r} \cosh^{k}(s) \sinh^{d-1-k}(s) \, \mathcal{H}^{k}(L_k(s) \cap B_r)^{l} \, ds\\
    &\qquad=  \int_{0}^{r} \cosh^{k}(r-u)\, \sinh^{d-1-k}(r-u) \, \mathcal{H}^{k}(L_k(r-u) \cap B_r)^{l} \, du  \\
    &\qquad=  \int_{0}^{r} \cosh^{k}(r-u)\,\sinh^{d-1-k}(r-u) \, \left( \omega_k \int_{0}^{\arcosh \left(\frac{\cosh(r)}{\cosh(r-u)}\right)} \sinh^{k-1}(s) \ ds \right)^{l} \, du  \\
    &\qquad \geq  c\int_{0}^{r-0.1} e^{k(r-u)}\,e^{(d-1-k)(r-u-3)} \, \left( \int_{0}^{u} \sinh^{k-1}(s) \ ds \right)^{l} \, du  \allowdisplaybreaks\\
    &\qquad \geq  c e^{r(d-1)}\int_{0.2}^{r-0.1} e^{-u(d-1)} \, e^{l(k-1)(u-0.1)} \, du  \\
    &\qquad =  c e^{r(d-1)}\int_{0.2}^{r-0.1} e^{u(l(k-1)-(d-1))} \, du  \\
    &\qquad \geq  c g(k,l,d,r).
  \end{align*}
For $k=1$, the proof is almost the same except that we simply use that $\int_0^a\sinh^{k-1}(s)\, ds=a$ for $a\ge 0$.
\end{proof}

\subsubsection{The planar case $d=2$}

Although we present a very detailed covariance analysis in Section \ref{sec:4.5} we will separately investigate the asymptotic behaviour of the variances in Lemmas \ref{lem:VarianceBoundd=2} -- \ref{lem:VarianceBoundd>=4}. In fact while the results of Section \ref{sec:4.5} are necessary for the multivariate central limit theorems, the variance analysis we carry out here is already sufficient for the unvariate cases. In this and the following two sections, $c_i$ will denote a positive constant only depending on the dimension and a counting parameter $i \in \N_0$. If it additionally depends on another parameter $n \in \N_0$, we indicate this by writing, for instance, $c_{i,n}$ or $c_i(n)$.  The value of this constant may change from occasion to occasion.

\begin{Lemma}\label{lem:VarianceBoundd=2}
Let $d=2$, $i\in\{0,1\}$ and $t\ge t_0>0$. Then there are constants $c^{(i)}(2,t_0), C^{(i)}(2,t_0)\in(0,\infty)$ such that for all $r\geq 1$,
\begin{align*}
  c^{(i)}(2,t_0)\,t^{3-2i}\,e^{r} \leq \Var(F_{r,t}^{(i)}) \leq C^{(i)}(2,t_0)\,t^{3-2i} \,e^{r}.
\end{align*}
\end{Lemma}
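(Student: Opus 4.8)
The plan is to use the exact variance formula from Theorem \ref{thm:Variance} (equivalently, the Fock space representation \eqref{eq:variance_general} together with Corollary \ref{cor:VarianceExact}) and then to estimate each of the finitely many summands asymptotically as $r\to\infty$ via Lemma \ref{lem:lines_intersecting_ball_inequality}. Concretely, for $d=2$ and $i\in\{0,1\}$ we have
$$
\Var(F_{r,t}^{(i)}) = \sum_{n=1}^{2-i} c_{i,i,n,2}\, t^{2(2-i)-n}\int_{A_h(2,2-n)}\cH^{2-n}(E\cap B_r)^2\,\mu_{2-n}(dE),
$$
so that for $i=1$ there is a single term with $n=1$ (the integral over lines, i.e.\ $k=2-n=1$), and for $i=0$ there are two terms, $n=1$ (lines, $k=1$) and $n=2$ (points, $k=0$). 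Since all coefficients $c_{i,i,n,2}$ are strictly positive, the claimed two-sided bound will follow once I control the growth of each integral.

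First I would apply Lemma \ref{lem:lines_intersecting_ball_inequality} with $d=2$ and $l=2$. For $k=1$ (the line term) we have $l(k-1)=0<1=d-1$, so $g(1,2,2,r)=e^r$, giving constants $c,C>0$ with
$$
c\,e^r \le \int_{A_h(2,1)}\cH^1(H\cap B_r)^2\,\mu_1(dH) \le C\,e^r,\qquad r\ge 1.
$$
For $k=0$ (the point term, relevant only when $i=0$) the integral is just $\mu_0(A_h(2,0)\cap B_r)=\cH^2(B_r)=2\pi(\cosh r-1)\in\Theta(e^r)$, which is again of order $e^r$. Hence every summand in $\Var(F_{r,t}^{(i)})$ is, up to positive multiplicative constants depending only on $d=2$, equal to $t^{2(2-i)-n}e^r$. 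Collecting terms and using $t\ge t_0\ge 1$ (so that the highest power of $t$, namely $t^{3-2i}$ corresponding to $n=1$, dominates the lower ones up to a factor depending only on $t_0$), I obtain both the lower bound (from the $n=1$ term alone, since all terms are nonnegative) and the upper bound (by summing the $\le 2$ terms), with constants $c^{(i)}(2,t_0),C^{(i)}(2,t_0)$ as claimed.

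There is essentially no hard obstacle here: the statement is a direct corollary of the exact covariance formula and the integral-geometric asymptotics of Lemma \ref{lem:lines_intersecting_ball_inequality}, which is the genuinely substantive input and has already been established. The only points requiring a line of care are (i) checking that the coefficients $c_{i,i,n,2}$ are strictly positive (immediate from the formula for $c_{i,j,n,d}$, since all $\omega_k>0$ and all factorials are finite), (ii) handling the $t$-dependence by isolating the dominant power $t^{3-2i}$ and absorbing the remaining powers into the constants via $t\ge t_0$, and (iii) for $i=0$ noting that the $k=0$ integral equals $\cH^2(B_r)\in\Theta(e^r)$ so it does not change the order. I would record the result of Lemma \ref{lem:VarianceBoundd=2} in precisely the stated form, with the understanding that the proof is ``This is a direct consequence of Theorem \ref{thm:Variance} (or \eqref{eq:variance_general} and Corollary \ref{cor:VarianceExact}), Lemma \ref{lem:lines_intersecting_ball_inequality}, and the elementary estimate $\cH^2(B_r)\in\Theta(e^r)$.''
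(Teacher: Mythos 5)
Your proposal is correct and follows essentially the same route as the paper: the variance decomposition \eqref{eq:variance_general} with Corollary \ref{cor:VarianceExact}, the bound $\int_{A_h(2,1)}\cH^1(H\cap B_r)^2\,\mu_1(dH)\in\Theta(e^r)$ from Lemma \ref{lem:lines_intersecting_ball_inequality}, the observation that the $n=2$ term for $i=0$ equals a constant times $\cosh(r)-1\in\Theta(e^r)$, and absorption of the lower power of $t$ into the constants via $t\ge t_0$. The only nit is that you write $t\ge t_0\ge 1$, whereas the hypothesis is only $t_0>0$; the absorption argument works for any $t_0>0$ since $t^{2}\le t^{3}/t_0$, so nothing breaks.
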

\begin{proof}
For $i\in\{0,1\}$ and $n=1$, Corollary \ref{cor:VarianceExact} and  Lemma \ref{lem:lines_intersecting_ball_inequality} yield
  \begin{align*}
 c_i\, e^r\le \|f_1^{(i)}\|_1^{2}= c_i \int_{0}^{r} \cosh(s)\, \mathcal{H}^{1}(L_{1}(s) \cap B_r)^2 \ ds \leq C_i \, e^{r}.
  \end{align*}
Similarly, for $i=0$ and $n=2$ we obtain
  \begin{align*}
  \|f_2^{(0)}\|_2^{2}&= c_0 \int_{0}^{r} \sinh(s) \mathcal{H}^{0}(L_{1}(s) \cap B_r)^2 \ ds
 = c_0  \int_{0}^{r} \sinh(s) \ ds
 = c_0 \big(\cosh(r)-1\big).
  \end{align*}
From (\ref{eq:variance_general}) we now deduce that
$$
c(t^2+t^3)e^r\le c_1t^3e^r+c_2t^2e^r\le \Var(F_{r,t}^{(0)})\le c_1t^3e^r+c_2t^2e^r\le C(t^2+t^3)e^r.
$$
Using that $t\ge t_0>0$, the assertion follows for $i=0$. The case $i=1$ is obtained in the same way, but requires
bounds for only one summand in (\ref{eq:variance_general}).
\end{proof}

\subsubsection{The spatial case $d=3$}

\begin{Lemma}\label{lem:var_d=3}\label{lem:VarianceBoundd=3}
Let $d=3$, $i\in\{0,1,2\}$ and $t\ge t_0>0$. Then there are constants $c^{(i)}(3,t_0),C^{(i)}(3,t_0)\in(0,\infty)$ such that for all $r\geq 1$,
\begin{align*}
  c^{(i)}(3,t_0)\,t^{5-2i} \,re^{2r} \leq \Var(F_{r,t}^{(i)}) \leq C^{(i)}(3,t_0)\,t^{5-2i} \,re^{2r}.
\end{align*}
\end{Lemma}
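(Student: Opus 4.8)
The plan is to follow the strategy used for Lemma~\ref{lem:VarianceBoundd=2}: start from the exact variance formula \eqref{eq:variance_general}, which for $d=3$ reads
$$
\Var(F_{r,t}^{(i)}) = \sum_{n=1}^{3-i} t^{6-2i-n}\,n!\,\|f_n^{(i)}\|_n^2,
$$
and estimate each of the finitely many coefficients $\|f_n^{(i)}\|_n^2$ by means of Corollary~\ref{cor:VarianceExact} and Lemma~\ref{lem:lines_intersecting_ball_inequality}. By Corollary~\ref{cor:VarianceExact} (with $W=B_r$ and $j=i$), $\|f_n^{(i)}\|_n^2$ equals a positive constant depending only on $d,n,i$ times
$$
\int_0^r \cosh^{3-n}(s)\sinh^{n-1}(s)\,\cH^{3-n}(L_{3-n}(s)\cap B_r)^2\,ds,
$$
which is precisely the integral appearing in Lemma~\ref{lem:lines_intersecting_ball_inequality} with $k=3-n$ and $l=2$. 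So the one thing to determine is into which of the three regimes of that lemma we fall for each admissible $n$.

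Reading off Lemma~\ref{lem:lines_intersecting_ball_inequality} with $d=3$ and $l=2$: for $n=1$ we have $k=2$, hence $l(k-1)=2=d-1$, the critical case, so $\|f_1^{(i)}\|_1^2$ is bounded above and below by positive constants times $r\,e^{2r}$; for $n=2$ we have $k=1$, hence $l(k-1)=0<d-1$, giving $\|f_2^{(i)}\|_2^2\asymp e^{2r}$; and for $n=3$ (needed only when $i=0$) we have $k=0$, where $\cH^0(L_0(s)\cap B_r)=1$ for $0\le s\le r$, so $\|f_3^{(0)}\|_3^2$ is a constant times $\int_0^r\sinh^2(s)\,ds=\tfrac14\sinh(2r)-\tfrac r2\asymp e^{2r}$ (this is also just the $k=0$ case of Lemma~\ref{lem:lines_intersecting_ball_inequality}). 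Here $\asymp$ denotes two-sided estimates by positive constants depending only on the dimension; all of this is immediate from $\cosh(s),\sinh(s)\in\Theta(e^s)$ together with Lemmas~\ref{lem:CoshBound}, \ref{lem:inequalities} and \ref{lem:H_d_bounds}, exactly as in the proof of Lemma~\ref{lem:lines_intersecting_ball_inequality}.

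It then remains to assemble the three cases and discard subleading powers of $t$ using $t\ge t_0>0$ and $r\ge1$. For $i=2$ there is a single summand, $\Var(F_{r,t}^{(2)})=t\,\|f_1^{(2)}\|_1^2\asymp t\,r\,e^{2r}$, as claimed since $5-2i=1$. For $i=1$ one has $\Var(F_{r,t}^{(1)})=t^3\|f_1^{(1)}\|_1^2+2t^2\|f_2^{(1)}\|_2^2$, bounded below by $c\,t^3 r e^{2r}$ and above by $C(t^3 r+t^2)e^{2r}$; since $t^2e^{2r}\le t_0^{-1}t^3re^{2r}$, the upper bound collapses to a $t_0$-dependent constant times $t^3 r e^{2r}$, and $5-2i=3$. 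For $i=0$ one has $\Var(F_{r,t}^{(0)})=t^5\|f_1^{(0)}\|_1^2+2t^4\|f_2^{(0)}\|_2^2+6t^3\|f_3^{(0)}\|_3^2$, which is at least $c\,t^5 r e^{2r}$ and at most $C(t^5 r+t^4+t^3)e^{2r}$; absorbing $t^4e^{2r}$ and $t^3e^{2r}$ into $t^5re^{2r}$ as before gives the bound with exponent $5=5-2i$. This yields the constants $c^{(i)}(3,t_0),C^{(i)}(3,t_0)$.

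There is no serious obstacle here; the only point genuinely new compared with $d=2$ --- and the source of the extra factor $r$ in the bound --- is that in dimension $3$ the leading summand $n=1$ lands in the critical regime $l(k-1)=d-1$ of Lemma~\ref{lem:lines_intersecting_ball_inequality}. The rest is the same bookkeeping with powers of $t$ as in Lemma~\ref{lem:VarianceBoundd=2}.
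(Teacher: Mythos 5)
Your proposal is correct and follows essentially the same route as the paper: the exact variance formula \eqref{eq:variance_general}, Corollary \ref{cor:VarianceExact} to reduce each $\|f_n^{(i)}\|_n^2$ to the integral of Lemma \ref{lem:lines_intersecting_ball_inequality} with $k=3-n$, $l=2$, the observation that $n=1$ falls in the critical regime $l(k-1)=d-1$ (yielding the factor $r\,e^{2r}$) while $n\in\{2,3\}$ contribute only $O(e^{2r})$, a lower bound from the $n=1$ term alone, and absorption of the subleading powers of $t$ using $t\ge t_0$ and $r\ge 1$. No gaps.
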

\begin{proof}
Corollary \ref{cor:VarianceExact} and Lemma \ref{lem:lines_intersecting_ball_inequality} imply the upper bound
  \begin{align*}
   \Var(F_{r,t}^{(i)})-\sum_{n=2}^{3-i} t^{6-2i-n} n! \|f_n^{(i)}\|_n^{2}
    &=  t^{5-2i} \|f_1^{(i)}\|_1^{2} \le c_i \, t^{5-2i}  \, re^{2r}.
  \end{align*}
It remains to determine the asymptotic behaviour in $r$ of the terms $\|f_{2}^{(i)}\|_{2}^2$ and $\|f_{3}^{(i)}\|_{3}^2$.  Corollary \ref{cor:VarianceExact} and  Lemma \ref{lem:lines_intersecting_ball_inequality} yield
$$
 c_i\,  e^{2r}\le    \|f_{2}^{(i)}\|_{2}^2\leq C_i \, e^{2r}\qquad\text{and}\qquad c_i\,  e^{2r}\le\|f_{3}^{(i)}\|_{3}^2
      \leq C_i \, e^{2r}.
$$
To deduce the lower bound, it is sufficient to derive a lower bound for the term $\|f_{1}^{(i)}\|_{1}^2$. But
$$
   \Var(F_{r,t}^{(i)})\ge t^{5-2i} \|f_1^{(i)}\|_1^{2}\ge c_i \, t^{5-2i}  \, re^{2r}.
$$
This completes the proof.
\end{proof}

\subsubsection{The higher dimensional case $d\geq 4$}

\begin{Lemma}\label{lem:VarianceBoundd>=4}
Let $d\geq 4$, $i\in\{0,1,\ldots,d-1\}$, and $t\ge t_0>0$. Then there are positive constants $c^{(i)}(d,t_0),C^{(i)}(d,t_0)\in(0,\infty)$ such that for all $r\geq 1$,
\begin{align*}
  c^{(i)}(d,t_0)\, t^{2(d-i)-1} \, e^{2r(d-2)} \leq \Var(F_{r,t}^{(i)}) \leq C^{(i)}(d,t_0)\,t^{2(d-i)-1} \,e^{2r(d-2)}.
\end{align*}
\end{Lemma}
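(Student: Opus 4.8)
The plan is to follow the same route as in the proofs of Lemmas \ref{lem:VarianceBoundd=2} and \ref{lem:VarianceBoundd=3}: start from the Fock-space representation \eqref{eq:variance_general},
\[
\Var(F_{r,t}^{(i)}) = \sum_{n=1}^{d-i} t^{2(d-i)-n}\, n!\, \|f_n^{(i)}\|_n^2,
\]
evaluate each norm $\|f_n^{(i)}\|_n^2$ by means of Corollary \ref{cor:VarianceExact}, and then read off its exact order of growth in $r$ from Lemma \ref{lem:lines_intersecting_ball_inequality}, applied with the choice $k = d-n$ and $l = 2$. The key observation is that the exponent $l(k-1) = 2(d-n-1)$ governing the trichotomy in Lemma \ref{lem:lines_intersecting_ball_inequality} is strictly decreasing in $n$, so that the term $n=1$ dictates the asymptotics: there $2(d-n-1) = 2(d-2)$, and $2(d-2) > d-1$ precisely because $d \ge 4$, whence $\|f_1^{(i)}\|_1^2 \asymp g(d-1,2,d,r) = e^{2r(d-2)}$, with both implied constants strictly positive (they are products of the positive constants from Corollary \ref{cor:VarianceExact} and Lemma \ref{lem:lines_intersecting_ball_inequality}).

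For the lower bound I would simply discard all summands with $n \ge 2$, which are non-negative, and keep only the $n=1$ term: this already gives $\Var(F_{r,t}^{(i)}) \ge t^{2(d-i)-1}\|f_1^{(i)}\|_1^2 \ge c^{(i)}(d,t_0)\, t^{2(d-i)-1} e^{2r(d-2)}$ for every $t > 0$ and $r \ge 1$.

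For the upper bound I would check that every summand with $n \ge 2$ is of strictly smaller exponential order in $r$. Indeed, for $n \ge 2$ the exponent governing $g(d-n,2,d,r)$ is at most $\max\{d-1,\,2(d-3)\}$, and both of these are strictly smaller than $2(d-2)$ when $d \ge 4$ (with gaps $d-3 \ge 1$ and $2$, respectively); the possible polynomial prefactor $r$ that occurs in the borderline case $2(d-n-1) = d-1$ is harmless since $r \le e^{(d-3)r}$ for $r \ge 1$. Hence $g(d-n,2,d,r) \le C\, e^{2r(d-2)}$ for all $n \in \{1,\dots,d-i\}$ and $r \ge 1$, with $C$ depending only on $d$. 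Combining this with Corollary \ref{cor:VarianceExact} and the crude bound $t^{2(d-i)-n} = t^{2(d-i)-1}\, t^{-(n-1)} \le t^{2(d-i)-1}\, t_0^{-(n-1)}$, valid because $t \ge t_0 > 0$, and summing the finitely many terms $n = 1,\dots,d-i$, yields $\Var(F_{r,t}^{(i)}) \le C^{(i)}(d,t_0)\, t^{2(d-i)-1} e^{2r(d-2)}$, completing the proof.

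There is no serious obstacle here; the argument is a routine adaptation of the planar and spatial cases. The only point that deserves attention is the bookkeeping confirming that $d \ge 4$ is exactly the regime in which the single-hyperplane ($n=1$) contribution strictly dominates all higher intersection orders: for $d = 3$ one lands in the equality case $2(d-2) = d-1$ of Lemma \ref{lem:lines_intersecting_ball_inequality}, which is the source of the extra factor $r$ in Lemma \ref{lem:VarianceBoundd=3}, while for $d = 2$ the order would instead be governed by the $n=2$ (number of vertices) term. No ingredients beyond \eqref{eq:variance_general}, Corollary \ref{cor:VarianceExact} and Lemma \ref{lem:lines_intersecting_ball_inequality} are required.
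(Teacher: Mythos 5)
Your proposal is correct and follows essentially the same route as the paper: both isolate the $n=1$ term of the Fock-space expansion, obtain the lower bound from it alone via Lemma \ref{lem:lines_intersecting_ball_inequality} (using $2(d-2)>d-1$ for $d\ge 4$), and show all summands with $n\ge 2$ are of strictly smaller order $\max\{e^{(d-1)r},\,re^{(d-1)r},\,e^{2(d-3)r}\}\le Ce^{2(d-2)r}$. Your explicit handling of the $t$-dependence via $t^{2(d-i)-n}\le t^{2(d-i)-1}t_0^{-(n-1)}$ is a detail the paper leaves implicit but is exactly why its constants depend on $t_0$.
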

\begin{proof}
Combining Corollary \ref{cor:VarianceExact} with Lemma  \ref{lem:lines_intersecting_ball_inequality}, we obtain
 $$
   \Var(F_{r,t}^{(i)})- \sum_{n=d-1}^{d-i} t^{2(d-i)-n} n! \|f_n^{(i)}\|_n^{2}
  \le \sum_{n=1}^{\min \{d-2, \ d-i\} } c_{i,n} \, t^{2(d-i)-n}\, g(d-n,2,d,r).
 $$
For $n=1\le \min \{d-2, \ d-i\}$, we have $g(d-1,2,d,r)\le C_i\,\exp(r2(d-2))$, since $2(d-2)-(d-1)=d-3>0$.
If $2(d-n-1)-(d-1)=d-1-2n>0$, then $g(d-n,2,d,r)\le g(d-1,2,d,r)$. For the remaining cases, we use that
  $\exp(r(d-1))$   is of lower than $\exp(2r(d-2))$ for $d\ge 4$. Moreover, as in the case $d=3$ it follows that $\|f_{d-1}^{(i)}\|_{d-1}^2$ and $\|f_{d}^{(i)}\|_{d}^2$ are of order at most $e^{r(d-1)}$. This yields the upper bound.

  The lower bound is again derived by just taking into account $\|f_{1}^{(i)}\|_{1}^2$ and by applying the lower bound   $g(d-1,2,d,r)\ge c_i\, \exp(r2(d-2))$ from Lemma  \ref{lem:lines_intersecting_ball_inequality}.
\end{proof}

\subsection{Covariance analysis}\label{sec:4.5}

In this section we prepare the proof of Theorem \ref{thm:CLTMultivariate} by an asymptotic analysis of the covariance structure of the random vector ${\textbf{F}}_{r,t}$ in dimensions $d=2$ and $d=3$.

\subsubsection{The planar case $d=2$}

The following lemma describes the rate of convergence, as $r\to\infty$, of the suitably scaled covariances to the asymptotic covariance matrix $\Sigma_d=(\sigma^{i,j}_d)_{i,j=0}^{d-1}$ for $d=2$.

\begin{Lemma}\label{lem:dist_cov_d=2}
Let $d=2$ and $t\ge t_0>0$. There is a positive constant $c_{\rm cov}(2,t_0)\in(0,\infty)$ such that if $r\geq 1$, then
$$
\Bigg|\sigma^{i,j}_2-\Cov\Bigg({F_{r,t}^{(i)}-\E F_{r,t}^{(i)}\over e^{r/2}},{F_{r,t}^{(j)}-\E F_{r,t}^{(j)}\over e^{r/2}}\Bigg)\Bigg| \leq c_{\rm cov}(2,t_0)\, t^{3-i-j} r^{2}e^{-r},\qquad i,j\in\{0,1\}.
$$
Moreover,
\begin{align}\label{eq:Sigma_2}
 \Sigma_2= \begin{pmatrix}
    t^2\left(\left(\frac{4}{\pi}\right)^2ta+\frac{1}{\pi}\right) & \frac{8}{\pi}\,t^2a \\
    \frac{8}{\pi}\,t^2a & 4ta
  \end{pmatrix}
\end{align}
and $a=4\cdot\text{\rm G}$ with  Catalan's constant  $\text{\rm G}\approx 0.915965594$. In particular, $\Sigma_2$ is positive definite with $\det(\Sigma_2)=\frac{4}{\pi}t^3a$.
\end{Lemma}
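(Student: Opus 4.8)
The plan is to read off the scaled covariances from the Fock space representation \eqref{eq:covariance_general} together with the explicit one-dimensional integral formulas in Corollary \ref{cor:VarianceExact}, and then to extract their precise leading-order behaviour in $r$. For $d=2$ and $i,j\in\{0,1\}$, \eqref{eq:covariance_general} writes $\Cov(F^{(i)}_{r,t},F^{(j)}_{r,t})$ as a finite sum of terms $t^{4-i-j-n}n!\langle f_n^{(i)},f_n^{(j)}\rangle_n$ over $n\in\{1,\ldots,\min\{2-i,2-j\}\}$, so the only indices occurring are $n=1$ (always) and $n=2$ (only when $i=j=0$). By Corollary \ref{cor:VarianceExact} and \eqref{eq:volume_ball} (which gives $\mathcal H^1(L_1(s)\cap B_r)=2\arcosh(\cosh(r)/\cosh(s))$), the term $\langle f_1^{(i)},f_1^{(j)}\rangle_1$ is a combinatorial constant times $\int_0^r\cosh(s)\arcosh^2(\cosh(r)/\cosh(s))\,ds$, while $\langle f_2^{(0)},f_2^{(0)}\rangle_2$ is a constant times $\int_0^r\sinh(s)\,ds=\cosh(r)-1$. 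After dividing by $e^r$ (the square of the normalization $e^{r/2}$), the $n=2$ contribution is immediate, since $e^{-r}(\cosh(r)-1)=\tfrac12-e^{-r}+\tfrac12 e^{-2r}$ converges to $\tfrac12$ with exponential rate.

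The analytic core is therefore the asymptotics of $I(r):=\int_0^r\cosh(s)\arcosh^2(\cosh(r)/\cosh(s))\,ds$. I would substitute $u=r-s$ to get $I(r)=\int_0^r\cosh(r-u)\arcosh^2(\cosh(r)/\cosh(r-u))\,du$; then $e^{-r}\cosh(r-u)=\tfrac12(e^{-u}+e^{u-2r})\to\tfrac12 e^{-u}$ and $\cosh(r)/\cosh(r-u)\to e^{u}$ pointwise, so $e^{-r}$ times the integrand converges pointwise on $[0,\infty)$ to $\tfrac12 e^{-u}\arcosh^2(e^u)$. Dominating the integrand uniformly in $r$ via Lemma \ref{lem:CoshBound} (which yields $\arcosh(\cosh(r)/\cosh(r-u))\le u+\log 2$) and applying dominated convergence gives $e^{-r}I(r)\to\tfrac12\int_0^\infty e^{-u}\arcosh^2(e^u)\,du$. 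I would then evaluate this constant: the substitutions $v=e^u$ and $v=\cosh(w)$ transform it into $\tfrac12\int_0^\infty \frac{w^2\sinh(w)}{\cosh^2(w)}\,dw$, and one integration by parts against $-1/\cosh(w)$ reduces it to $\int_0^\infty \frac{w}{\cosh(w)}\,dw=2\sum_{k\ge 0}(-1)^k(2k+1)^{-2}=2G$. Hence $e^{-r}I(r)\to 2G$, which is where Catalan's constant enters (so $a=4G$ in the final formulas).

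For the quantitative rate $O(r^2 e^{-r})$ I would split the $u$-integral at $u=r-1$: on $[r-1,r]$ the factor $\cosh(s)$ is bounded and $\arcosh(\cosh(r)/\cosh(s))\le \arcosh(\cosh(r))=r$, so this piece is $O(r^2)$; on $[0,r-1]$ the deviation of the (scaled) integrand from $\tfrac12 e^{-u}\arcosh^2(e^u)$ can be bounded using the logarithmic representation of $\arcosh$ together with Lemma \ref{lem:CoshBound} by a multiple of $e^{-u}(e^{2(u-r)}+e^{-2r})\,(u+\log 2)^2$, which integrates to $O(r^2 e^{-r})$; finally $\int_r^\infty \tfrac12 e^{-u}\arcosh^2(e^u)\,du=O(r^2 e^{-r})$ accounts for the missing tail. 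Combining these and using $t\ge t_0$ (to absorb the purely exponential $n=2$ error into a constant depending on $t_0$) gives the stated bound $c_{\rm cov}(2,t_0)\,t^{3-i-j}r^2 e^{-r}$.

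It then remains to assemble the explicit matrix. Plugging in the combinatorial constants $c(d,n,i,j)=c(d,d-n)c(i,n,d)c(j,n,d)$ from Lemmas \ref{lem:mu_almost_surely_d-n_hyperplane} and \ref{lem:function_f_n}, the surface areas $\omega_1=2,\ \omega_2=2\pi,\ \omega_3=4\pi$, and $\mathcal H^2(B_r)=2\pi(\cosh(r)-1)$, I obtain the three entries of $\Sigma_2$ in closed form. Positive definiteness follows by a direct $2\times 2$ computation: the two $t^4$-terms in $\sigma^{0,0}_2\sigma^{1,1}_2$ and $(\sigma^{0,1}_2)^2$ cancel, leaving $\det(\Sigma_2)=\tfrac{4}{\pi}\,t^3 a>0$, and since the diagonal entries are positive, $\Sigma_2$ is positive definite. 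I expect the main obstacle to be the asymptotic analysis of $I(r)$ — namely, pinning down that the limit constant is exactly $\tfrac12\int_0^\infty e^{-u}\arcosh^2(e^u)\,du=2G$ and controlling the convergence rate uniformly in $r$; the rest is bookkeeping with Gamma-function identities and the already established formulas.
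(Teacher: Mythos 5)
Your proposal follows the same route as the paper's proof: the covariance representation \eqref{eq:covariance_general} with Corollary \ref{cor:VarianceExact} reduces everything to $\int_0^r\cosh(s)\arcosh^2(\cosh(r)/\cosh(s))\,ds$ plus the elementary $n=2$ term, the limit is identified by dominated convergence using the bound $\arcosh(\cosh(r)/\cosh(r-u))\le u+\log 2$ from Lemma \ref{lem:CoshBound}, and the $O(r^2e^{-r})$ rate comes from a mean-value-type estimate on the difference of the $\arcosh^2$ terms together with tail and boundary contributions (your split at $u=r-1$ versus the paper's three-term decomposition \eqref{Daa}--\eqref{Dac} is only a cosmetic reorganization). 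The one genuinely different step is your evaluation of $a=\int_0^\infty e^{-u}\arcosh^2(e^u)\,du$: you substitute $v=e^u$, $v=\cosh(w)$ to get $\int_0^\infty w^2\sinh(w)\cosh^{-2}(w)\,dw$ and integrate by parts down to the classical representation $\int_0^\infty w/\cosh(w)\,dw=2\mathrm{G}$, whereas the paper (Remark \ref{remarkn7}) substitutes $\tau=\exp(-\arcosh(e^s))$ and expands $(1+\tau^2)^{-2}$ into a series. Your route is arguably cleaner since it lands directly on a standard integral for Catalan's constant rather than resumming two alternating series; both give $a=4\mathrm{G}$, and the remaining bookkeeping with the constants $c_1^{(i,j)}$, $c_2^{(0,0)}$ and the cancellation of the $t^4a^2$ terms in $\det(\Sigma_2)$ is identical.
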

\begin{proof}
Since ${\textbf{F}}_{r,t}$ is a vector of Poisson U-statistics the covariance representation \eqref{eq:covariance_general} shows that, for $i,j\in\{0,1\}$,
$$
\Cov\Bigg({F_{r,t}^{(i)}-\E F_{r,t}^{(i)}\over e^{r/2}},{F_{r,t}^{(j)}-\E F_{r,t}^{(j)}\over e^{r/2}}\Bigg) = e^{-r}\sum_{n=1}^{\min \{2-i,2-j\}} t^{4-i-j-n}n!\langle f_n^{(i)},f_n^{(j)} \rangle_{n}
$$
and it remains to compute the scalar products. Using \eqref{eq:volume_ball} and Corollary \ref{cor:VarianceExact} we get
\begin{align*}
\langle {f}_1^{(i)},{f}_1^{(j)} \rangle_{1}
    %&=\int_{A_h(2,1)} {f}_1^{(i)}(H_1)\, {f}_1^{(j)}(H_1) \, \mu_1(d H_1) \\
    %&= c  \int_{A_h(2,1)} \mathcal{H}^{1}(H_1 \cap B_r)^{2} \, \mu_1(d H_1)  \\
    &= c(2,1,i,j)\cdot 2\cdot 4 \int_{0}^{r} \cosh(s) \, \arcosh^{2} \left(\frac{\cosh(r)}{\cosh(s)}\right) \, ds \\
    &= c(2,1,i,j)\cdot 2\cdot 4 \int_{0}^{r} \cosh(r-s) \, \arcosh^{2} \left(\frac{\cosh(r)}{\cosh(r-s)}\right)  \, ds\\
    &= c_1^{(i,j)} \int_{0}^{r} (e^{r-s}+e^{s-r}) \arcosh^{2} \left(e^{s}\left(\frac{1+e^{-2r}}{1+e^{2(s-r)}}\right)\right)\,ds
  \end{align*}
with $c_1^{(i,j)}=4\cdot c(i,1,2)c(j,1,2)$. We have $c(0,1,2)=2/\pi$ and $c(1,1,2)=1$, and hence
$$
c_1^{(0,0)}=4\left(\frac{2}{\pi}\right)^2=\left(\frac{4}{\pi}\right)^2,\quad  c_1^{(1,1)}=4,\quad c_1^{(1,0)}=c_1^{(0,1)}=4\cdot \frac{2}{\pi}=\frac{8}{\pi}.
$$
Furthermore, again by Corollary \ref{cor:VarianceExact}
\begin{align*}
\langle {f}_2^{(i)},{f}_2^{(j)} \rangle_{2}
    %&=\int_{A_h(2,1)^{2}} {f}_2^{(i)}(H_1,H_2)\,{f}_2^{(j)}(H_1,H_2) \, \mu_1^{2}(d (H_1, H_2)) \\
    %&= c \int_{A_h(2,1)^{2}} \mathcal{H}^{0}(H_1 \cap H_2 \cap B_r)^{2}  \, \mu_1^{2}(d (H_1,H_2))  \\
    &= c(2,2,i,j)\cdot 2 \int_{0}^{r} \sinh(s)\, ds
    = c_2^{(i,j)} (e^r+e^{-r}-2)
  \end{align*}
with $c_2^{(i,j)}=(2/\pi)c(i,2,2)c(j,2,2)$. In particular, $c_2^{(0,0)}=1/(2\pi)$.

In the following, we use that
\begin{equation}\label{absch}
\arcosh \left(e^{s}\left(\frac{1+e^{-2r}}{1+e^{2(s-r)}}\right)\right)\le \arcosh \left(e^{s}\right)\le s+\log(2).
\end{equation}
For $(i,j) \in \{(0,1),(1,0),(1,1)\}$ we then deduce from the dominated convergence theorem that
  \begin{align*}
    \sigma^{i,j}_2 & = \lim_{r \rightarrow \infty} c_1^{(i,j)}t^{3-i-j} \int_{0}^{r} ({e^{-s}+e^{-2r+s}})\arcosh^{2} \left(e^{s}\left(\frac{1+e^{-2r}}{1+e^{2(s-r)}}\right)\right) \ ds\\
    &=c_1^{(i,j)} t^{3-i-j}\int_{0}^{\infty} e^{-s} \arcosh^{2}(e^{s}) \ ds=:c_1^{(i,j)} t^{3-i-j}\cdot a
  \end{align*}
  and, in addition we have
  \begin{align*}
    \sigma^{0,0}_2 & =  c_1^{(0,0)} t^3\cdot a+2t^2\,c_2^{(0,0)}.
  \end{align*}
Since $a=4\cdot \text{G}$ by the following Remark \ref{remarkn7},
we obtain the specific values of $\sigma^{i,j}_2$ for $i,j\in\{0,1\}$,
and hence the determinant of the asymptotic covariance matrix $\Sigma_2$ given in \eqref{eq:Sigma_2}.

Next we prove the asserted rates of convergence. For $(i,j) \in \{(0,1),(1,0),(1,1)\}$, we
   get
  \begin{align}
    & \Bigg|\sigma^{i,j}_2-\Cov\Bigg({F_{r,t}^{(i)}-\E F_{r,t}^{(i)}\over e^{r/2}},{F_{r,t}^{(j)}-\E F_{r,t}^{(j)}\over e^{r/2}}\Bigg)\Bigg|\nonumber \\
     &\qquad =   \left |c_1^{(i,j)}t^{3-i-j}\cdot a-c_1^{(i,j)}t^{3-i-j} \int_{0}^{r} (e^{-s}+e^{-2r+s}) \arcosh^{2} \left(e^{s}\left(\frac{1+e^{-2r}}{1+e^{2(s-r)}}\right)\right) \ ds \right| \nonumber  \\
     &\qquad \leq  c_1^{(i,j)}t^{3-i-j}\int_{0}^{r}  e^{-s} \left(\arcosh^{2}(e^{s})- \arcosh^{2} \left(e^{s}\left(\frac{1+e^{-2r}}{1+e^{2(s-r)}}\right)\right) \right)  \ ds  \label{Daa}\\
     &\qquad\qquad + c_1^{(i,j)}t^{3-i-j}\int_{0}^{r}  e^{-2r+s} \arcosh^{2} \left(e^{s}\left(\frac{1+e^{-2r}}{1+e^{2(s-r)}}\right)\right)  \ ds  \label{Dab}\\
     &\qquad\qquad + c_1^{(i,j)} t^{3-i-j}\int_{r}^{\infty} e^{-s} \arcosh^{2}(e^{s}) \ ds .\label{Dac}
  \end{align}
  Applying \eqref{absch} to  the expression in \eqref{Dac} we get
  \begin{align}\label{eq:dist_cov_3}
    \int_{r}^{\infty} {e^{-s}} \arcosh^{2}(e^{s}) \ ds
      \leq  \int_{r}^{\infty} e^{-s} (\log(2)+s)^{2} \ ds \leq c \,r^{2} e^{-r}.
  \end{align}
 Using \eqref{absch} for the expression in \eqref{Dab} we obtain
  \begin{align}\label{eq:dist_cov_2}
     \int_{0}^{r}  e^{-2r+s} \arcosh^{2} \left(e^{s}\left(\frac{1+e^{-2r}}{1+e^{2(s-r)}}\right)\right)  \, ds
    &\leq  \int_{0}^{r}  e^{-2r+s} \arcosh^{2} \left(e^{s}\right)  \, ds  \nonumber\\
    &\leq  \int_{0}^{r}  e^{-2r+s} (s+\log(2))^{2}  \, ds  \nonumber\\
    &\leq  c\,r^{2}e^{-r}.
  \end{align}
  Finally, we treat the expression in \eqref{Daa}. An application of the mean value theorem in the first and \eqref{absch} in the second to last step yields
  \begin{align}\label{eq:dist_cov_1}
     &\int_{0}^{r}  e^{-s} \left(\arcosh^{2}(e^{s})- \arcosh^{2} \left(e^{s}\left(\frac{1+e^{-2r}}{1+e^{2(s-r)}}\right)\right) \right)  \ ds \nonumber\allowdisplaybreaks\\
    &\qquad\leq \int_{0}^{r}  e^{-s} \left( \left(e^{s}- e^{s}\left(\frac{1+e^{-2r}}{1+e^{2(s-r)}}\right) \right) \, \underset{z \in \left[e^{s}\left(\frac{1+e^{-2r}}{1+e^{2(s-r)}}\right),e^{s}\right]}\max \frac{d}{d z} (\arcosh^{2}(z)) \right)  \ ds  \nonumber\allowdisplaybreaks\\
    &\qquad\leq \int_{0}^{r}   \left(\frac{e^{2(s-r)}-e^{-2r}}{1+e^{2(s-r)}} \right) \frac{2 \arcosh(e^{s})}{\sqrt{\left(\frac{e^{s}+e^{-2r+s}}{1+e^{2(s-r)}} \right)^{2}-1}}  \ ds  \nonumber\allowdisplaybreaks\\
    &\qquad=  \int_{0}^{r}    e^{-2r}\left(e^{2s}-1 \right) \frac{2 \arcosh(e^{s})}{\sqrt{e^{2s}-1+e^{2(s-2r)}-e^{-4(r-s)}}}  \ ds \nonumber \allowdisplaybreaks\\
    &\qquad\leq \frac{1}{\sqrt{1-e^{-2r}}}\int_{0}^{r}    e^{-2r}\left(e^{2s}-1 \right) \frac{2 \arcosh(e^{s})}{\sqrt{e^{2s}-1}}   \ ds  \nonumber\allowdisplaybreaks\\
    &\qquad\leq c e^{-2r}\int_{0}^{r}    \sqrt{e^{2s}-1} \,\arcosh(e^{s})   \ ds \nonumber \allowdisplaybreaks\\
    &\qquad\leq c e^{-2r}\int_{0}^{r}    e^{s} (s+\log(2))   \ ds  \nonumber\allowdisplaybreaks\\
    &\qquad\leq c\,re^{-r}.
  \end{align}
  Thus, a combination of \eqref{eq:dist_cov_3}, \eqref{eq:dist_cov_2} and \eqref{eq:dist_cov_1} yields the result for $(i,j)\in\{(0,1),(1,0),(1,1)\}$. Finally, if $(i,j)=(0,0)$ we obtain the result by additionally taking into account that
  $$|c_2^{(0,0)} (1+e^{-2r}-2e^{-r})-c_2^{(0,0)}| \leq c \,e^{-r}.$$
  This completes the proof.
\end{proof}

\begin{remark}\label{remarkn7}
The relation $a=4\text{G}$ can be confirmed by Maple. It is not clear to us how Maple verifies this relation.
Since we could not find the current integral representation of the Catalan constant in one of the lists available to us, we provide
a short derivation. We first use
the substitution $t=\exp(-\arcosh(e^s))$ or $e^s=\frac{1}{2}(t^{-1}+t)$ and then expand $(1+t^2)^{-2}$ into a Taylor series under the integral sign. This leads to
\begin{align*}
a&=\int_{0}^{\infty} e^{-s} \arcosh^{2}(e^{s})=\int_0^1\frac{1-t^2}{(1+t^2)^2}(\ln t)^2\, dt=2\int_0^1\sum_{i=0}^\infty (-1)^i(i+1)t^{2i}(1-t^2)(\ln t)^2\, dt.
\end{align*}
By the substitution $t=e^y$ we obtain
$$
\int_0^1 t^{2i}(\ln t)^2\, dt=\frac{2}{(2i+1)^3}.
$$
Hence we can interchange summation and integration to get
\begin{align*}
a&=4\left(\sum_{i=0}^\infty (-1)^i(i+1)\frac{1}{(2i+1)^3}-\sum_{i=0}^\infty (-1)^i(i+1)\frac{1}{(2i+3)^3}\right)\\
&=4\left(\frac{1}{2}\text{G}+\frac{1}{2}\sum_{i=0}^\infty (-1)^i \frac{1}{(2i+1)^3}-\frac{1}{2}(-\text{G}+1)+
\frac{1}{2}\sum_{i=0}^\infty (-1)^i \frac{1}{(2i+3)^3}\right)\\
&=4\left(\frac{1}{2}\text{G}+\frac{1}{2}\text{G} +\frac{1}{2}-\frac{1}{2}\right)=4\text{G}.
\end{align*}
\end{remark}

\subsubsection{The spatial case $d=3$}

Now we turn to the case $d=3$ and again describes the rate of convergence, as $r\to\infty$, of the suitably scaled covariances to the asymptotic covariance matrix $\Sigma_d=(\sigma^{i,j}_d)_{i,j=0}^{d-1}$.

%\newpage
\begin{Lemma}\label{lem:dist_cov_d=3}
Let $d=3$ and $t\ge t_0>0$.  There exists a positive constant $c_{\rm cov}(3,t_0)\in(0,\infty)$ such that
$$
\Bigg|\sigma^{i,j}_3-\Cov\Bigg({F_{r,t}^{(i)}-\E F_{r,t}^{(i)}\over \sqrt{r}\,e^{r}},{F_{r,t}^{(j)}-\E F_{r,t}^{(j)}\over \sqrt{r}\,e^{r}}\Bigg)\Bigg| \leq c_{\rm cov}(3,t_0)\, t^{5-i-j} \, r^{-1}, \qquad i,j\in\{0,1,2\},$$
for $r\geq 1$. The matrix $\Sigma_3$ has rank one and is explicitly given by
\begin{equation}\label{eq:Sigma_3}
  \Sigma_3= 2 \pi^2\, \begin{pmatrix}
                \frac{\pi^{2}}{2^8}t^{5} & \frac{\pi^{2}}{2^6}t^{4} & \frac{\pi}{2^4}t^{3} \\
                \frac{\pi^{2}}{2^6}t^{4} & \frac{\pi^{2}}{2^4}t^{3} & \frac{\pi}{2^2}t^{2} \\
                \frac{\pi}{2^4}t^{3} & \frac{\pi}{2^2}t^{2} & t
              \end{pmatrix}.
  \end{equation}
\end{Lemma}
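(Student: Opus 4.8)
The plan is to read off both the limiting covariances and the convergence rate directly from the finite chaos expansion \eqref{eq:covariance_general}, which for $d=3$ and $i,j\in\{0,1,2\}$ reads
\[
\Cov\Bigg({F_{r,t}^{(i)}-\E F_{r,t}^{(i)}\over \sqrt{r}\,e^{r}},{F_{r,t}^{(j)}-\E F_{r,t}^{(j)}\over \sqrt{r}\,e^{r}}\Bigg)={1\over r\,e^{2r}}\sum_{n=1}^{\min\{3-i,3-j\}}t^{\,6-i-j-n}\,n!\,\langle f_n^{(i)},f_n^{(j)}\rangle_n .
\]
I would argue that only the innermost term $n=1$ survives after dividing by $r\,e^{2r}$, while each term with $n\ge 2$ is $O(t^{\,5-i-j}r^{-1})$; this yields simultaneously the value of $\sigma^{i,j}_3$ and the stated rate.

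For the term $n=1$ I would use that $L_2(s)\cap B_r$ is a two-dimensional hyperbolic ball of radius $\arcosh(\cosh(r)/\cosh(s))$, hence of area $2\pi\big(\cosh(r)/\cosh(s)-1\big)$. Together with $\omega_1=2$, Corollary \ref{cor:VarianceExact} then gives
\[
\langle f_1^{(i)},f_1^{(j)}\rangle_1 = 8\pi^2\,c(3,1,i,j)\int_0^r\big(\cosh(r)-\cosh(s)\big)^2\,ds ,
\]
and the integral is elementary: expanding the square and integrating yields $r\cosh^2(r)-2\cosh(r)\sinh(r)+\tfrac r2+\tfrac14\sinh(2r)$. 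Here $r\cosh^2(r)=\tfrac14 r\,e^{2r}+\tfrac14 r(2+e^{-2r})$ is the dominant term, while $2\cosh(r)\sinh(r)=\sinh(2r)$ and $\tfrac14\sinh(2r)$ are $O(e^{2r})$ and $\tfrac r2$ is negligible. Dividing by $r\,e^{2r}$, the $n=1$ contribution converges to $2\pi^2\,c(3,1,i,j)\,t^{\,5-i-j}$ with remainder bounded by $c\,t^{\,5-i-j}r^{-1}$ for $r\ge 1$ (the leftover corrections being of order $e^{-2r}\le e^{-2}/r$), where $t\ge t_0$ is used only to absorb all powers of $t$ below $t^{\,5-i-j}$.

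For the terms with $n\in\{2,3\}$ I would combine Corollary \ref{cor:VarianceExact} with Lemma \ref{lem:lines_intersecting_ball_inequality} applied with $k=3-n\in\{0,1\}$ and $l=2$: since $l(k-1)\le 0<2=d-1$ in both cases, $\langle f_n^{(i)},f_n^{(j)}\rangle_n\le c\,e^{2r}$. As $t^{\,6-i-j-n}\le t_0^{\,1-n}\,t^{\,5-i-j}$ for $t\ge t_0$ and $n\ge 2$, each such term divided by $r\,e^{2r}$ is at most $c(t_0)\,t^{\,5-i-j}r^{-1}$, and summing the at most three contributions gives the asserted bound with a constant $c_{\rm cov}(3,t_0)$ depending only on $t_0$ and the fixed dimension.

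It remains to identify $\Sigma_3$ and its rank. Inserting $\omega_1=2,\ \omega_2=2\pi,\ \omega_3=4\pi,\ \omega_4=2\pi^2$ into the definitions of $c(d,k)$ and $c(i,n,d)$ gives $c(3,2)=1$, $c(0,1,3)=\pi/16$, $c(1,1,3)=\pi/4$, $c(2,1,3)=1$, hence $c(3,1,i,j)=c(i,1,3)c(j,1,3)$ and $\sigma^{i,j}_3=2\pi^2\,t^{\,5-i-j}\,c(i,1,3)\,c(j,1,3)$, which is exactly \eqref{eq:Sigma_3}. The rank-one property is then immediate: by Lemma \ref{lem:function_f_n} the kernel $f_1^{(i)}$ is the single function $H\mapsto\mathcal H^2(H\cap B_r)$ times the positive scalar $c(i,1,3)$, so the only chaos surviving in the limit is one-dimensional; concretely $\Sigma_3=v\,v^{\top}$ with $v=\sqrt{2\pi^2}\,t^{5/2}\big(c(0,1,3),\,t^{-1}c(1,1,3),\,t^{-2}c(2,1,3)\big)^{\top}$, equivalently one checks directly that the three rows of the matrix in \eqref{eq:Sigma_3} are pairwise proportional. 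I do not expect a genuine obstacle in this lemma — it is essentially careful bookkeeping; the only point needing real care is to extract the rate $r^{-1}$, rather than a merely qualitative limit, simultaneously from the sub-leading terms of the explicit $n=1$ integral and from the $n\ge 2$ chaoses, while keeping the $t$-dependence confined to the prescribed prefactor $t^{\,5-i-j}$, and this is exactly where $t\ge t_0$ and the sharp form of Lemma \ref{lem:lines_intersecting_ball_inequality} are invoked.
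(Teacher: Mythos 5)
Your proposal is correct and follows essentially the same route as the paper: the covariance chaos decomposition \eqref{eq:covariance_general}, the explicit evaluation of the $n=1$ term via Corollary \ref{cor:VarianceExact} and \eqref{eq:volume_ball} reducing to $\int_0^r(\cosh r-\cosh s)^2\,ds=\tfrac12(r+2r\cosh^2 r-3\sinh r\cosh r)$, the $O(e^{2r})$ bound on the $n\in\{2,3\}$ terms from Lemma \ref{lem:lines_intersecting_ball_inequality}, and the same constants $c(0,1,3)=\pi/16$, $c(1,1,3)=\pi/4$, $c(2,1,3)=1$ yielding \eqref{eq:Sigma_3} and the rank-one structure. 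Your identification of the $-3\sinh r\cosh r$ term as the source of the $r^{-1}$ rate and of $t\ge t_0$ as the device for confining the $t$-dependence to $t^{5-i-j}$ matches the paper's argument exactly.
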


\begin{proof}
For  $i,j\in\{0,1,2\}$, the covariance formula for Poisson U-statistics yields that
$$
\Cov\Bigg({F_{r,t}^{(i)}-\E F_{r,t}^{(i)}\over \sqrt{r}\,e^{r}},{F_{r,t}^{(j)}-\E F_{r,t}^{(j)}\over \sqrt{r}\,e^{r}}\Bigg) = r^{-1}e^{-2r}\sum_{n=1}^{\min \{3-i,3-j\}} t^{6-i-j-n}n!\langle f_n^{(i)},f_n^{(j)} \rangle_{n}.
$$
As in the planar case $d=2$ we compute the scalar products. We let $L_2(s)$ be a $2$-dimensional subspace in $\mathbb{H}^3$ having distance $s\geq 0$ from the origin $p$. For $n=1$ Corollary \ref{cor:VarianceExact} and Equation \eqref{eq:volume_ball} yield
\begin{align*}
\langle {f}_1^{(i)},{f}_1^{(j)} \rangle_{1}
    &= \omega_1 c(3,1,i,j)  \int_{0}^{r} \cosh^{2}(s) \,\mathcal{H}^{2}(L_2(s)\cap B_r)^{2} \ ds  \\
    &= \omega_2^{2} \omega_1  c(3,1,i,j) \int_{0}^{r} \cosh^{2}(s) \left( \int_{0}^{\arcosh \left(\frac{\cosh(r)}{\cosh(s)}\right)}\sinh(u) \ du \right)^{2}  \ ds  \\
    &= \omega_2^{2} \omega_1 c(3,1,i,j) \int_{0}^{r} \cosh^{2}(s) \left( \frac{\cosh(r)}{\cosh(s)}-1 \right)^{2} \ ds \\
    &= \omega_2^{2} \omega_1 c(3,1,i,j)  \int_{0}^{r} \ \left( \cosh(r)-\cosh(s)\right)^{2} \ ds \\
    &= {\omega_2^{2} \omega_1 c(3,1,i,j)}\frac{1}{2} \,\big(r+2r\cosh^2(r)-3\sinh(r)\cosh(r)\big).
  \end{align*}
In addition, using Lemma \ref{lem:mu_almost_surely_d-n_hyperplane} and Lemma \ref{lem:lines_intersecting_ball_inequality}, we obtain
\begin{align}
\langle {f}_2^{(i)},{f}_2^{(j)} \rangle_{2} &\leq c \,e^{2r} \qquad\text{and}\qquad \langle {f}_3^{(i)},{f}_3^{(j)} \rangle_{3} \leq c \,e^{2r}. \notag
\end{align}
Since $c(3,2)=1$, $c(0,1,3)=\pi/16$, $c(1,1,3)=\pi/4$ and $c(2,1,3)=1$, we obtain $c(3,1,0,0)=\pi^2/2^8$, $c(3,1,0,1)=\pi^2/2^6$, $c(3,1,0,2)=\pi/2^4$, $c(3,1,1,1)=\pi^2/2^4$, $c(3,1,1,2)=\pi/2^2$ and $c(3,1,2,2)=1$.
Moreover, we have
  \begin{align*}
    \sigma^{i,j}_3 & = \lim_{r \rightarrow \infty} t^{5-i-j}\,{\omega_2^{2} \omega_1 c(3,1,i,j)}\frac{1}{2} \,r^{-1}e^{-2r} \,\big(r+2r\cosh^2(r)-3\sinh(r)\cosh(r)\big)\\
    &= t^{5-i-j}\, {\omega_2^{2} \omega_1 c(3,1,i,j)}\frac{1}{4} \\
    &=t^{5-i-j}\, 2 \pi^{2} c(3,1,i,j).
  \end{align*}
  Therefore we conclude that the asymptotic covariance matrix $\Sigma_3$ is given by \eqref{eq:Sigma_3}.
  Clearly, $\Sigma_3$ has rank one. Moreover, we obtain
  \begin{align*}
    & \Bigg|\sigma^{i,j}_3-\Cov\Bigg({F_{r,t}^{(i)}-\E F_{r,t}^{(i)}\over \sqrt{r}\,e^{r}},{F_{r,t}^{(j)}-\E F_{r,t}^{(j)}\over \sqrt{r}\,e^{r}}\Bigg)\Bigg| \\
      &\qquad \leq t^{5-i-j}\, 4 \pi^{2}c(3,1,i,j) \left| 1/2-r^{-1}e^{-2r}\big(r+2r\cosh^2(r)-3\sinh(r)\cosh(r)\big)\right| \\
      &\hspace{4cm}+ r^{-1}e^{-2r}\sum_{n=2}^{\min\{3-i,3-j \}} t^{6-i-j-n}\,n! \langle {f}_n^{(i)},{f}_n^{(j)} \rangle_{n} \\
     &\qquad \leq  c_{\rm cov}(3,t_0)\,t^{5-i-j}\,  r^{-1},
  \end{align*}
  where we used that $|1/2-r^{-1}e^{-2r}\big(r+2r\cosh^2(r)-3\sinh(r)\cosh(r)\big)|$ is bounded from above by a constant multiple of $r^{-1}$ as $r\to\infty$. This completes the proof.
\end{proof}

\subsubsection{The case $d\geq 4$}

In order to describe explicitly the limit covariance matrix $\Sigma(d)$ for $d \geq 4$  we  need the following lemma.

\begin{Lemma}\label{lem:integral_calculation}
For $\alpha >0$,
$$\int_{0}^{\infty} \cosh^{-\alpha}(x) \ dx=\frac{\sqrt{\pi}}{2} \frac{\Gamma(\frac{\alpha}{2})}{\Gamma(\frac{\alpha+1}{2})}.$$
\end{Lemma}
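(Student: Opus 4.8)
The plan is to reduce the integral to an Euler Beta integral by two elementary substitutions, and then to use the relation between the Beta and Gamma functions. First I would note that the integral converges: on $[1,\infty)$ the integrand is bounded by a constant times $e^{-\alpha x}$ (since $\cosh x\ge e^x/2$), so the hypothesis $\alpha>0$ guarantees integrability, while near $0$ the integrand is continuous.

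Next I would substitute $u=\tanh(x)$, which maps $[0,\infty)$ bijectively onto $[0,1)$. Since $du=\cosh^{-2}(x)\,dx$ and $\cosh^{-2}(x)=1-\tanh^2(x)=1-u^2$, we have $dx=(1-u^2)^{-1}\,du$ and $\cosh^{-\alpha}(x)=(1-u^2)^{\alpha/2}$, whence
$$
\int_{0}^{\infty}\cosh^{-\alpha}(x)\,dx=\int_{0}^{1}(1-u^2)^{\alpha/2-1}\,du.
$$
Then the further substitution $v=u^2$, $du=\tfrac12 v^{-1/2}\,dv$, turns the right-hand side into
$$
\frac12\int_{0}^{1}v^{-1/2}(1-v)^{\alpha/2-1}\,dv=\frac12\,B\!\left(\tfrac12,\tfrac{\alpha}{2}\right)=\frac12\,\frac{\Gamma(1/2)\,\Gamma(\alpha/2)}{\Gamma\!\left(\tfrac{\alpha+1}{2}\right)}.
$$

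Finally, inserting $\Gamma(1/2)=\sqrt{\pi}$ gives exactly the claimed value $\tfrac{\sqrt{\pi}}{2}\,\Gamma(\alpha/2)/\Gamma((\alpha+1)/2)$. All steps are routine; the only point requiring the hypothesis $\alpha>0$ is the convergence of the integral (equivalently, the convergence of the Beta integral $\int_0^1 v^{-1/2}(1-v)^{\alpha/2-1}\,dv$ at the endpoint $v=1$), so there is no real obstacle — this lemma is purely a computational preparation for the covariance analysis in dimension $d\ge 4$.
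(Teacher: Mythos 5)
Your proposal is correct: the substitution $u=\tanh(x)$ does give $du=\cosh^{-2}(x)\,dx$ and $\cosh^{-\alpha}(x)=(1-u^2)^{\alpha/2}$, so the integral becomes $\int_0^1(1-u^2)^{\alpha/2-1}\,du$, and the further substitution $v=u^2$ yields $\tfrac12 B(\tfrac12,\tfrac{\alpha}{2})$, which is the claimed value. The route differs slightly from the paper's: there the authors substitute $u=e^x$ and then $\tan(z)=u$, arriving at $2^\alpha\int_{\pi/4}^{\pi/2}\sin^{\alpha-1}(z)\cos^{\alpha-1}(z)\,dz$, and then use the double-angle identity and $y=2z$ to reduce to the Wallis-type integral $\int_0^{\pi/2}\sin^{\alpha-1}(y)\,dy$, whose value is again the same Beta/Gamma expression. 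Both arguments are elementary reductions to the Beta function; yours is arguably the more direct of the two, since the hyperbolic-tangent substitution lands immediately on the algebraic Beta integral and avoids the trigonometric detour, while the paper's version has the minor appeal of ending at the classical sine-power integral. Your remark on where $\alpha>0$ is needed (convergence at $x=\infty$, equivalently at the endpoint $v=1$ of the Beta integral) is also accurate.
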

\begin{proof}
  Substituting first $u=e^{x}$ and then $\tan(z)=u$, and using $(\tan^{2}(x)+1)^{-1}=\cos^{2}(x)$, we get
  \begin{align*}
    \int_{0}^{\infty} \cosh^{-\alpha}(x) \ dx
    = 2^{\alpha} \int_{1}^{\infty} \frac{u^{\alpha-1}}{(u^{2}+1)^{\alpha}}  \ du
    = 2^{\alpha} \int_{\pi/4}^{\pi/2} \sin^{{\alpha-1}}(z) \cos^{\alpha-1}(z)  \ dz=:I_\alpha.
  \end{align*}
The trigonometric identity $2\sin\alpha \cos\alpha=\sin(2\alpha)$ and the substitution $y=2z$ yield
  \begin{align*}
    I_\alpha
    =2 \int_{\pi/4}^{\pi/2} \sin^{{\alpha-1}}(2z) \ dz
    = \int_{0}^{\pi/2} \sin^{{\alpha-1}}(y) \ dy
    =\frac{\sqrt{\pi}}{2}\frac{\Gamma(\frac \alpha 2)}{\Gamma(\frac{\alpha+1}{2})}.
  \end{align*}
  This completes the argument.
\end{proof}

Depending on the dimension, we will bound the speed of convergence by means of the function
$$
h(d,r)=\begin{cases}
          e^{-r} &: d=4,\\
re^{-2r} &: d=5,\\
e^{-2r} &: d\geq 6.
         \end{cases}
$$

\begin{Lemma}\label{lem:dist_cov_d>=3}
Let $d\geq 4$ and $t\ge t_0>0$.  There exists a positive constant $c_{\rm cov}(d,t_0)\in(0,\infty)$ such that
$$
\Bigg|\sigma^{i,j}_d-\Cov\Bigg({F_{r,t}^{(i)}-\E F_{r,t}^{(i)}\over e^{r(d-2)}},{F_{r,t}^{(j)}-\E F_{r,t}^{(j)}\over e^{r(d-2)}}\Bigg)\Bigg| \leq c_{\rm cov}(d,t_0)\, t^{2d-1-i-j} \, h(d,r), \qquad i,j\in\{0,\ldots
,d-1\},$$
for $r\geq 1$.
The matrix $\Sigma_d$ has rank one and its entries are explicitly given by
\begin{align}\label{eq:Sigma_4}
  \sigma^{i,j}_d=t^{2d-1-i-j} \, c(i,1,d) \,c(j,1,d) \, \frac{\omega_{d-1}\omega_d }{ 4^{d-2}(d-3)(d-2) }, \qquad i,j \in \{0,\ldots,d-1\},
\end{align}
where the constants $c(i,1,d), c(j,1,d)$ are introduced in Lemma \ref{lem:function_f_n}.
\end{Lemma}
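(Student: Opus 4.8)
The plan is to extract from the covariance representation \eqref{eq:covariance_general} the single dominant chaos of order $n=1$ and to absorb everything else into the error term $h(d,r)$. Writing, for $i,j\in\{0,\ldots,d-1\}$,
$$
\Cov\Bigg({F_{r,t}^{(i)}-\E F_{r,t}^{(i)}\over e^{r(d-2)}},{F_{r,t}^{(j)}-\E F_{r,t}^{(j)}\over e^{r(d-2)}}\Bigg) = e^{-2r(d-2)}\sum_{n=1}^{\min\{d-i,d-j\}} t^{2d-i-j-n}\,n!\,\langle f_n^{(i)},f_n^{(j)}\rangle_n,
$$
I would first note, combining Corollary \ref{cor:VarianceExact} with Lemma \ref{lem:lines_intersecting_ball_inequality} (applied with $k=d-n$ and $l=2$), that $\langle f_n^{(i)},f_n^{(j)}\rangle_n$ is of exact order $g(d-n,2,d,r)$. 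For $n=1$ one has $2(d-2)>d-1$ precisely because $d\ge 4$, so $\langle f_1^{(i)},f_1^{(j)}\rangle_1\asymp e^{2r(d-2)}$ dominates. For $n\ge 2$ the largest contribution is the one with $n=2$ (i.e.\ $k=d-2$); it is of order $e^{2r(d-3)}$ if $d\ge 6$, of order $r\,e^{r(d-1)}$ if $d=5$, and of order $e^{r(d-1)}$ if $d=4$ (the terms $\langle f_{d-1}^{(i)},f_{d-1}^{(j)}\rangle_{d-1}$ and $\langle f_d^{(i)},f_d^{(j)}\rangle_d$, corresponding to $k\in\{0,1\}$ in Lemma \ref{lem:lines_intersecting_ball_inequality}, are of the even smaller order $e^{r(d-1)}$). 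In each of the three regimes the ratio of this order to $e^{2r(d-2)}$ is exactly $h(d,r)$, so $e^{-2r(d-2)}\langle f_n^{(i)},f_n^{(j)}\rangle_n=O(h(d,r))$ for $n\ge 2$; since $t\ge t_0$, the factor $t^{2d-i-j-n}$ with $n\ge 2$ is at most $t_0^{1-n}t^{2d-1-i-j}$, so the whole tail $n\ge 2$ contributes at most $c_{\rm cov}(d,t_0)\,t^{2d-1-i-j}\,h(d,r)$ to the difference in the statement.

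It then remains to handle the $n=1$ term, i.e.\ to show $e^{-2r(d-2)}\,t^{2d-i-j-1}\langle f_1^{(i)},f_1^{(j)}\rangle_1\to\sigma^{i,j}_d$ with error $O(t^{2d-1-i-j}h(d,r))$. Using Corollary \ref{cor:VarianceExact} together with $c(d,d-1)=1$,
$$
\langle f_1^{(i)},f_1^{(j)}\rangle_1 = 2\,c(i,1,d)\,c(j,1,d)\int_0^r \cosh^{d-1}(s)\,\mathcal{H}^{d-1}\big(L_{d-1}(s)\cap B_r\big)^2\,ds,
$$
and, since $L_{d-1}(s)\cap B_r$ is a $(d-1)$-dimensional hyperbolic ball of radius $\rho(s):=\arcosh(\cosh(r)/\cosh(s))$, one has $\mathcal{H}^{d-1}(L_{d-1}(s)\cap B_r)=\omega_{d-1}\int_0^{\rho(s)}\sinh^{d-2}(v)\,dv$ by \eqref{eq:volume_ball}. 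The key elementary estimate I would establish is that, uniformly in $0\le s\le r$ and writing $x(s):=\cosh(r)/\cosh(s)\ge 1$,
$$
\mathcal{H}^{d-1}\big(L_{d-1}(s)\cap B_r\big) = \frac{\omega_{d-1}}{d-2}\,x(s)^{\,d-2}\Big(1+O\big((1+r-s)\,x(s)^{-2}\big)\Big),
$$
which follows from $e^{\rho(s)}=x(s)+\sqrt{x(s)^{2}-1}$, Lemma \ref{lem:CoshBound} (bounding $\rho(s)$ between $r-s$ and $r-s+\log 2$), and integrating the reduction forms of $\int_0^{\rho}\sinh^{d-2}(v)\,dv$; the extra factor $1+r-s$ occurs only for $d=4$, arising from a $\tfrac12\rho$-term, and turns out to be harmless because the integration weight $\cosh^{d-1}(s)$ decays exponentially in $r-s$. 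Substituting this estimate and using $e^{-2r(d-2)}\cosh^{2(d-2)}(r)=4^{-(d-2)}(1+O(e^{-2r}))$ gives
$$
e^{-2r(d-2)}\langle f_1^{(i)},f_1^{(j)}\rangle_1 = \frac{2\,c(i,1,d)\,c(j,1,d)\,\omega_{d-1}^{2}}{(d-2)^{2}\,4^{d-2}}\int_0^r \cosh^{3-d}(s)\,ds \;+\; R_r,
$$
where $R_r$ collects the prefactor error $O(e^{-2r})$, the tail $\int_r^\infty\cosh^{3-d}(s)\,ds=O(e^{-(d-3)r})$, and the relative-error contribution which, after all normalizations, is $O(e^{-r})$ for $d=4$, $O(r\,e^{-2r})$ for $d=5$, and $O(e^{-2r})$ for $d\ge 6$; in each regime $R_r=O(h(d,r))$. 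Since $d\ge 4$ forces $3-d\le -1$, Lemma \ref{lem:integral_calculation} with $\alpha=d-3>0$ yields $\int_0^\infty\cosh^{3-d}(s)\,ds=\tfrac{\sqrt\pi}{2}\,\Gamma(\tfrac{d-3}{2})/\Gamma(\tfrac{d-2}{2})$, and multiplying by $t^{2d-i-j-1}$ produces $\sigma^{i,j}_d$.

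Finally, I would verify the closed form \eqref{eq:Sigma_4} by simplifying the constant: the identity $\tfrac{\omega_{d-1}}{d-2}\sqrt\pi\,\Gamma(\tfrac{d-3}{2})/\Gamma(\tfrac{d-2}{2})=\tfrac{\omega_d}{d-3}$, which follows from $\Gamma(\tfrac{d-1}{2})=\tfrac{d-3}{2}\Gamma(\tfrac{d-3}{2})$, $(d-2)\Gamma(\tfrac{d-2}{2})=2\Gamma(\tfrac d2)$ and $\omega_k=2\pi^{k/2}/\Gamma(\tfrac k2)$, turns $t^{2d-i-j-1}\cdot\tfrac{2\,c(i,1,d)c(j,1,d)\,\omega_{d-1}^{2}}{(d-2)^{2}4^{d-2}}\cdot\tfrac{\sqrt\pi}{2}\tfrac{\Gamma((d-3)/2)}{\Gamma((d-2)/2)}$ into $t^{2d-1-i-j}c(i,1,d)c(j,1,d)\tfrac{\omega_{d-1}\omega_d}{4^{d-2}(d-3)(d-2)}$, which is \eqref{eq:Sigma_4}. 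Since the latter factors as $\sigma^{i,j}_d=v_iv_j$ with $v_i:=t^{d-\frac12-i}c(i,1,d)\big(\tfrac{\omega_{d-1}\omega_d}{4^{d-2}(d-3)(d-2)}\big)^{1/2}$, and this constant is strictly positive for $d\ge 4$, the matrix $\Sigma_d=(v_iv_j)_{i,j=0}^{d-1}$ has rank one. I expect the main obstacle to be the uniform-in-$s$ sharp asymptotic for $\mathcal{H}^{d-1}(L_{d-1}(s)\cap B_r)$ with the correct error order, together with the bookkeeping needed to confirm that each of the several error contributions in $R_r$ is genuinely $O(h(d,r))$ in all three dimension regimes $d=4$, $d=5$ and $d\ge 6$.
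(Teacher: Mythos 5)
Your proposal is correct and follows essentially the same route as the paper: isolate the first chaos, bound the $n\ge 2$ terms via Lemma \ref{lem:lines_intersecting_ball_inequality} against $h(d,r)$, reduce the $n=1$ term to $\int_0^\infty\cosh^{-(d-3)}(s)\,ds$ evaluated by Lemma \ref{lem:integral_calculation}, and simplify the Gamma factors to reach \eqref{eq:Sigma_4}. The only difference is organizational — you package the expansion as a uniform multiplicative asymptotic for $\mathcal{H}^{d-1}(L_{d-1}(s)\cap B_r)$, whereas the paper expands $\sinh^{d-2}$ binomially and bounds the cross terms $(k_1,k_2)\neq(0,0)$ individually — and your stated error orders in each of the three regimes check out.
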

\begin{proof}
Recall that
\begin{equation}\label{eq:290919}
\Cov\Bigg({F_{r,t}^{(i)}-\E F_{r,t}^{(i)}\over e^{(d-2)r}},{F_{r,t}^{(j)}-\E F_{r,t}^{(j)}\over e^{(d-2)r}}\Bigg)=e^{-2(d-2)r} \sum_{n=1}^{\min\{d-i,d-j\}} t^{2d-i-j-n}n! \langle f_n^{(i)},f_{n}^{(j)} \rangle_n
\end{equation}
for $r\ge 1$.
In a first step, we bound from above the summands with $n\ge 2$. For this, let $n \in \{2, \ldots , \min\{d-i,d-j\}\}$.  Lemma \ref{lem:lines_intersecting_ball_inequality} implies that
\begin{align*}
  e^{-2(d-2)r} \,\langle f_n^{(i)},f_{n}^{(j)} \rangle_n \leq c  \, e^{-2(d-2)r} \, g(d-n,2,d,r)
\end{align*}
with some constant $c$, not depending on $r$. For $2(d-n-1) < d-1$ we obtain from Lemma \ref{lem:lines_intersecting_ball_inequality} that
$$c \,  e^{-2(d-2)r}  g(d-n,2,d,r) \leq c \, e^{r(-2d+4)}\, e^{r(d-1)}\le c  \, e^{r(-d+3)}\le c\, h(d,r).$$
Note that  $2(d-n-1) = d-1$ implies that $d$ is odd, hence $d\ge 5$, and therefore
$$c \,  e^{-2(d-2)r}  g(d-n,2,d,r) \leq c \, e^{r(-2d+4)}\, r \, e^{r(d-1)}\le c \, r \, e^{r(-d+3)}\le c\, h(d,r).$$
For $2(d-n-1)>d-1$ we get
$$c \,  e^{-2(d-2)r} g(d-n,2,d,r) \leq c \, e^{r(-2d+4)} \, e^{2r(d-n-1)}\le c \, e^{r(-2n+2)}\le c\, h(d,r),$$
since $n\geq 2$.

Now we examine the remaining term corresponding to $n=1$ in \eqref{eq:290919}. By Corollary \ref{cor:VarianceExact} and \eqref{eq:volume_ball} we get
\begin{equation}\label{eq:dist_cov_d>=3_1}
\begin{split}
  &e^{-2(d-2)r} \, \langle f_1^{(i)},f_{1}^{(j)} \rangle_1 \\
  & \quad = \frac{c(d,1,i,j) \, \omega_1}{e^{2(d-2)r}} \int_{0}^{r} \cosh^{d-1}(s) \, \mathcal{H}^{d-1}(L_{d-1}(s)\cap B_r)^{2} \ ds  \\
  & \quad = \frac{c(d,1,i,j) \, \omega_1}{e^{2(d-2)r}} \int_{0}^{r} \cosh^{d-1}(s) \, \left(\omega_{d-1} \int_{0}^{\arcosh\left(\frac{\cosh(r)}{\cosh(s)} \right)} \sinh^{d-2}(u) \ du \right)^{2} \ ds  \\
  & \quad = \frac{c(d,1,i,j) \, \omega_1 \, \omega_{d-1}^{2}}{e^{2(d-2)r}}  \int_{0}^{r} \cosh^{d-1}(s) \\
  &\qquad\qquad\qquad\qquad\times\left( \int_{0}^{\arcosh\left(\frac{\cosh(r)}{\cosh(s)} \right)} \sum_{k=0}^{d-2} \frac{(-1)^{k}}{2^{d-2}} \, \binom{d-2}{k} \, e^{u(d-2-2k)} \ du \right)^{2} \ ds  \\
  & \quad = \frac{c(d,1,i,j) \, \omega_1 \, \omega_{d-1}^{2}}{4^{d-2}e^{2(d-2)r}}  \int_{0}^{r} \cosh^{d-1}(s) \\
  &\qquad\qquad\qquad\qquad\times \left(\sum_{k=0}^{d-2} (-1)^{k} \, \binom{d-2}{k} \, \int_{0}^{\arcosh\left(\frac{\cosh(r)}{\cosh(s)} \right)}  e^{u(d-2-2k)} \ du \right)^{2} \ ds.
\end{split}
\end{equation}
The quadratic term in brackets in \eqref{eq:dist_cov_d>=3_1} is given by
\begin{align*}
  \sum_{(k_1,k_2) \in \{0, \ldots,d-2\}^{2}}(-1)^{k_1+k_2} \, \binom{d-2}{k_1} \, \binom{d-2}{k_2} \,  &\int_{0}^{\arcosh\left(\frac{\cosh(r)}{\cosh(s)} \right)}  e^{u_1(d-2-2 k_1)} \ du_1 \\
& \times\int_{0}^{\arcosh\left(\frac{\cosh(r)}{\cosh(s)} \right)}  e^{u_2(d-2-2k_2)} \ du_2.
\end{align*}
Next, we provide and upper bound for the summands obtained for  $(k_1,k_2) \in \{0, \ldots,d-2\}^{2} \setminus\{(0,0)\} $.
Without loss of generality we assume $k_2 \geq 1$. Then we get
\begin{align}\label{eq:dist_cov_d>=3_2}
  &e^{-2(d-2)r}  \int_{0}^{r} \cosh^{d-1}(s) \,
  \int_{0}^{\arcosh\left(\frac{\cosh(r)}{\cosh(s)} \right)}  e^{u_1(d-2-2k_1)} \ du_1
  \int_{0}^{\arcosh\left(\frac{\cosh(r)}{\cosh(s)} \right)}  e^{u_2(d-2-2k_2)} \ du_2  \ ds\\
  & \qquad \leq c \, e^{-2r(d-2)}  \int_{0}^{r} e^{s(d-1)} \,
  \int_{0}^{r-s+\log(2)}  e^{u_1(d-2-2k_1)} \ du_1
  \int_{0}^{r-s+\log(2)}  e^{u_2(d-2-2k_2)} \ du_2 \ ds \notag \\
  & \qquad \leq c \, e^{-2r(d-2)}  \int_{0}^{r} e^{s(d-1)} \,
  e^{(r-s)(d-2)} \, e^{(r-s)(d-4)} \ ds \notag \\
  & \qquad \leq c \, e^{-2r}  \int_{0}^{r} e^{s(-d+5)} \ ds\le c\, h(d,r). \notag
\end{align}
for $d\ge 5$. For $d=4$ the third line is
$$c \, e^{-4r}  \int_{0}^{r} e^{3s} \, e^{2(r-s)} (r-s+\log(2)) \ ds
   = c  \, e^{-2r}  \int_{0}^{r} (r-s+\log(2)) \, e^{s} \ ds\le c\, h(4,r).$$
Therefore we can concentrate on the summand corresponding to $k=0$ in \eqref{eq:dist_cov_d>=3_1}.
In the following we will make use of the logarithmic representation $\arcosh(x)=\log(x+\sqrt{x^{2}-1})$ of the $\arcosh$-function in order to evaluate the inner integral. Then we get
\begin{align}
&   \cosh^{-2(d-2)}(r)  \int_{0}^{r} \cosh^{d-1}(s) \, \left( \int_{0}^{\arcosh\left(\frac{\cosh(r)}{\cosh(s)} \right)}  e^{u(d-2)} \ du \right)^{2} \ ds \notag \\
  & \qquad = \frac{\cosh^{-2(d-2)}(r)}{(d-2)^{2}}   \int_{0}^{r} \cosh^{d-1}(s) \, \left( \left(\frac{\cosh(r)}{\cosh(s)}+\sqrt{\frac{\cosh^{2}(r)}{\cosh^{2}(s)}-1} \right)^{d-2}-1 \right)^{2} \ ds \label{eq:dist_cov_d>=3_3}\\
  & \qquad = (d-2)^{-2} \, \int_{0}^{r} \cosh^{-(d-3)}(s) \, \left( \left(1+\sqrt{1-\frac{\cosh^{2}(s)}{\cosh^{2}(r)}} \right)^{d-2}-\left(\frac{\cosh(s)}{\cosh(r)} \right)^{d-2} \right)^{2} \ ds. \notag
\end{align}
For $r\to\infty$ this expression converges to a constant. To get the correct rate stated in the lemma we observe that
\begin{align*}
    & \Bigg|\sigma^{i,j}_d-\Cov\Bigg({F_{r,t}^{(i)}-\E F_{r,t}^{(i)}\over e^{r(d-2)}},{F_{r,t}^{(j)}-\E F_{r,t}^{(j)}\over e^{r(d-2)}}\Bigg)\Bigg| \\
      &\qquad \leq \left|\sigma^{i,j}_d-e^{-2(d-2)r} \, t^{2d-1-i-j}\langle f_1^{(i)},f_{1}^{(j)} \rangle_1\right|
      +e^{-2(d-2)r} \sum_{n=2}^{\min\{d-i,d-j\}} t^{2d-i-j-n}n! \langle f_n^{(i)},f_{n}^{(j)} \rangle_n.
  \end{align*}
  We have already shown that the second summand satisfies the asserted upper bound. It follows from \eqref{eq:dist_cov_d>=3_2}  that it remains to consider
  \begin{align}
    & \left|\sigma^{i,j}_d- \frac{\beta}{e^{2r(d-2)}}  \int_{0}^{r} \cosh^{d-1}(s) \, \left( \int_{0}^{\arcosh\left(\frac{\cosh(r)}{\cosh(s)} \right)}  e^{u(d-2)} \ du \right)^{2} \ ds\right|\notag \\
    & \qquad\leq \left|\sigma^{i,j}_d- \frac{\beta}{4^{d-2}\cosh^{2(d-2)}(r)}  \int_{0}^{r} \cosh^{d-1}(s) \, \left( \int_{0}^{\arcosh\left(\frac{\cosh(r)}{\cosh(s)} \right)}  e^{u(d-2)} \ du \right)^{2} \ ds\right|  \label{eq:dist_cov_d>=3_4}\\
    & \qquad \qquad  + \beta
    \left|\frac{1}{4^{d-2} \, \cosh^{2(d-2)}(r)}- \frac{1}{e^{2r(d-2)}} \right|
    \int_{0}^{r} \cosh^{d-1}(s) \, \left( \int_{0}^{\arcosh\left(\frac{ \cosh(r)}{\cosh(s)} \right)}  e^{u(d-2)} \ du \right)^{2} \ ds, \notag
  \end{align}
  where we set
$$\beta \defeq t^{2d-1-i-j} \, \frac{c(d,1,i,j) \, \omega_1 \, \omega_{d-1}^{2}}{4^{d-2}}.$$
For the second summand, observe that
  \begin{align*}
    \left|\frac{1}{4^{d-2} \, \cosh^{2(d-2)}(r)}- \frac{1}{e^{2r(d-2)}} \right|
    \leq e^{-2r(d-2)} \left(1-(1+e^{-2r})^{-2(d-2)} \right)
    \leq c \, e^{-2r(d-1)}.
  \end{align*}
Since by \eqref{eq:dist_cov_d>=3_3} the integral in the second summand of \eqref{eq:dist_cov_d>=3_4} is of the order $e^{2r(d-2)}$,
the second summand is at most of the order $\beta\, e^{-2r}$.

It remains to show the decay of the first summand in \eqref{eq:dist_cov_d>=3_4}. This is done by using the same steps as in \eqref{eq:dist_cov_d>=3_3} and by splitting up the limit covariance $\sigma^{i,j}_d$. Lemma \ref{lem:integral_calculation} and basic calculus show that the asserted entries of the asymptotic covariance matrix can be written in the form
$$
   \sigma^{i,j}_d
   = \frac{\beta}{(d-2)^{2}} \int_{0}^{\infty} \cosh^{-(d-3)}(s) \ ds.
$$
Then we get
 $$
   \left|\sigma^{i,j}_d-  \frac{\beta}{4^{d-2}\cosh^{2(d-2)}(r)}  \int_{0}^{r} \cosh^{d-1}(s) \, \left( \int_{0}^{\arcosh\left(\frac{\cosh(r)}{\cosh(s)} \right)}  e^{u(d-2)} \ du \right)^{2} \ ds\right|\le I_1+I_2,
 $$
where
$$
I_1:=  \frac{\beta}{(d-2)^{2}} \int_{r}^{\infty} \cosh^{-(d-3)}(s) \ ds \le c\, \beta\, e^{-(d-3)r}.
$$
and
\begin{align*}
I_2:&=
    \frac{\beta}{(d-2)^{2} \, 4^{d-2}} \int_{0}^{r} \cosh^{-(d-3)}(s) \\
    &\qquad\qquad  \times \,\left | 2^{2(d-2)}- \left( \left(1+\sqrt{1-\frac{\cosh^{2}(s)}{\cosh^{2}(r)}} \right)^{d-2}-\left(\frac{\cosh(s)}{\cosh(r)} \right)^{d-2} \right)^{2}\right| \ ds.
\end{align*}
It remains to provide an upper bound for $I_2$. For this we expand the square and use the triangle inequality to get
$I_2\le I_3+I_4$, where
  \begin{align*}
   I_3&\le \beta \int_{0}^{r} \cosh^{-(d-3)}(s) \,\left( 2 \,  \left(1+\sqrt{1-\frac{\cosh^{2}(s)}{\cosh^{2}(r)}} \right)^{d-2} \left(\frac{\cosh(s)}{\cosh(r)} \right)^{d-2}+ \left(\frac{\cosh(s)}{\cosh(r)} \right)^{2(d-2)}\right) \ ds \\
  & \leq  c \,\beta\, \int_{0}^{r} e^{-s(d-3)}\,\left( e^{(s-r)(d-2)}+ e^{(s-r)(2d-4)}\right) \ ds \\
  & \leq  c \,\beta\, e^{-r(d-2)} \int_{0}^{r} e^{s} \ ds
   +c \,\beta\, e^{-r(2d-4)} \int_{0}^{r} e^{s(d-1)} \ ds \leq c \,\beta \, h(d,r),
  \end{align*}
  with some constant $c$. Here we also used that
\begin{equation}\label{uppercb}
\frac{\cosh(s)}{\cosh(r)}= \frac{e^{s}+e^{-s}}{e^{r}+e^{-r}} \leq \frac{2 \,e^{s}}{e^{r}}=2 \, e^{s-r},\qquad  0 \leq s \leq r.
\end{equation}
In order to provide an upper bound for $I_4$, we use the mean value theorem and the inequality $1-\sqrt{1-x} \leq x$, for $ x \in [0,1]$, to get
$$
\left|2^{2(d-2)}-\left(1+\sqrt{1-x}\right)^{2(d-2)}\right|\le 2(d-2)2^{2d-5}x.
$$
Hence we obtain
\begin{align*}
I_4   &\le \beta\int_{0}^{r}\cosh^{-(d-3)}(s)\,\left|2^{2(d-2)}-
\left(1+\sqrt{1-\frac{\cosh^{2}(s)}{\cosh^{2}(r)}} \right)^{2(d-2)}\right|\, ds\\
&\le c\, \beta\, \int_0^r \cosh^{-(d-3)}(s)\frac{\cosh^{2}(s)}{\cosh^{2}(r)}\, ds
  \leq c \,\beta  e^{-2r} \, \int_{0}^{r} e^{s(-d+5)} \ ds \leq c \,\beta\, h(d,r),
  \end{align*}
where also \eqref{uppercb} was used.  This concludes the proof.
\end{proof}
%%%%%%%%%%%%%%%%%%%%%%%%%%%%%%%%%%

\section{Proofs II -- Mixed K-function and mixed pair-correlation function}\label{sec:ProofKg}

Let $r>0$, $ i, j \in \{0,...,d-1\}$ and let $B\subset\Hd$ be measurable with $\cH^d(B)=1$. Then
\begin{align*}
K_{ij}(r)
& = {1\over\lambda_i \lambda_j}\,\E\int_{{\rm skel}_i}\int_{{\rm skel}_j\cap B}\1\{0<d_h(x,y)\leq r\}\,\cH^j(dy)\,\cH^i(dx).
\end{align*}
Already at this point we see that the condition $0<d_h(x,y)$ can be omitted if $i \ge 1$ or $j\ge 1$.
Requiring that $x\in{\rm skel}_i$ and $y\in{\rm skel}_j$ means that there exist
$$
(H_1,\ldots,H_{d-i})\in\eta_{t,\neq}^{d-i}\qquad\text{and}\qquad(G_1,\ldots,G_{d-j})\in\eta_{t,\neq}^{d-j}
$$
such that $x\in H_1\cap\ldots\cap H_{d-i}$ and $y\in G_1\cap\ldots\cap G_{d-j}$. However, some of the hyperplanes of the first $(d-i)$-tuple may coincide with some of the hyperplanes of the second $(d-j)$-tuple. We denote by $n\in\{0,1,\ldots,d-i\}$ the number of common hyperplanes. Then we obtain the representation
\begin{align*}
K_{ij}(r) &= {1\over\lambda_i \lambda_j}\,\sum_{n=0}^{\min\{d-i,d-j\} }\alpha(d,i,j,n)\,\E\sum_{(H_1,\ldots,H_{d-i},G_1,\ldots,G_{d-j-n})\in\eta_{t,\neq}^{2d-i-j-n}}\int_{H_1\cap\ldots\cap H_{d-i}}\\
&\hspace{4cm}\times\int_{H_1\cap\ldots\cap H_n\cap G_1\cap\ldots G_{d-j-n}\cap B}\1\{0<d_h(x,y)\leq r\}\,\cH^j(dy)\,\cH^i(dx)
\end{align*}
with the combinatorial coefficient given by
$$
\alpha(d,i,j,n) ={1\over n!(d-i-n)!(d-j-n)!} .
$$
Note that if $n=0$ we interpret the second integral as an integral over the set $G_1\cap\ldots\cap G_{d-j}\cap B$ and if $n=d-j$ we understand that the integral ranges over $H_1\cap\ldots\cap H_{d-j}\cap B$. Moreover,  if $i=j=0$, then the summand obtained for $n=d$
is zero, since almost surely $x,y\in H_1\cap \ldots\cap H_d$ and $d_h(x,y)>0$ cannot be satisfied simultaneously. Hence the summation can
be restricted to $n\le m(d,i,j)$ in the following.

An application of \eqref{eq:Mecke} leads to
\begin{align*}
&K_{ij}(r) \\
&= {1\over\lambda_i \lambda_j}\,\sum_{n=0}^{m(d,i,j)}\alpha(d,i,j,n)t^{2d-i-j-n}\int_{A_h(d,d-1)^{2d-i-j-n}}\int_{H_1\cap\ldots\cap H_{d-i}}\int_{H_1\cap\ldots\cap H_n\cap G_1\cap\ldots G_{d-j-n}\cap B}\\
&\qquad\times\1\{0<d_h(x,y)\leq r\}\,\cH^j(dy)\,\cH^i(dx)\,\mu_{d-1}^{2d-i-j-n}(d(H_1,\ldots,H_{d-i},G_1,\ldots,G_{d-j-n}))\\
&={1\over\lambda_i \lambda_j}\,\sum_{n=0}^{m(d,i,j)}\alpha(d,i,j,n)t^{2d-i-j-n}\int_{A_h(d,d-1)^{n}}\int_{A_h(d,d-1)^{d-i-n}}\int_{A_h(d,d-1)^{d-j-n}}\\
&\qquad\times\int_{H_1\cap\ldots\cap H_{d-i}}\int_{H_1\cap\ldots\cap H_n\cap G_{1}\cap\ldots G_{d-j-n}\cap B}\1\{0<d_h(x,y)\leq r\}\,\cH^j(dy)\,\cH^i(dx)\\
&\qquad\times\mu_{d-1}^{d-j-n}(d(G_{1},\ldots,G_{d-j-n}))\,\mu_{d-1}^{d-i-n}(d(H_{n+1},\ldots,H_{d-i}))\,\mu_{d-1}^{n}(d(H_1,\ldots,H_n)),
\end{align*}
where we have used Fubini's theorem to split the integration over $A_h(d,d-1)^{2d-i-j-n}$ in the form $A_h(d,d-1)^{n}\times A_h(d,d-1)^{d-i-n}\times A_h(d,d-1)^{d-j-n}$. The first group of hyperplanes comprises the $n$ common hyperplanes $H_1,\ldots,H_n$, while the second and the third group is associated with the $(d-i-n)$-tuple $H_{n+1},\ldots,H_{d-i}$ and the $(d-j-n)$-tuple $G_{1},\ldots,G_{d-j-n}$, respectively.
We now apply Lemma \ref{lem:mu_almost_surely_d-n_hyperplane} successively to each of the three outer integrals. Together with Fubini's theorem this gives
\begin{align*}
K_{ij}(r) &= {1\over\lambda_i \lambda_j}\,\sum_{n=0}^{m(d,i,j)}\alpha(d,i,j,n)\beta(d,i,j,n) t^{2d-i-j-n}\int_{A_h(d,d-n)}\int_{A_h(d,i+n)}\int_{A_h(d,j+n)}\\
&\qquad\times\int_{E\cap F}\int_{B\cap E\cap G}\1\{0<d_h(x,y)\leq r\}\,\cH^j(dy)\,\cH^i(dx)\,\mu_{j+n}(dG)\,\mu_{i+n}(dF)\,\mu_{d-n}(dE)\\
&= {1\over\lambda_i \lambda_j}\,\sum_{n=0}^{m(d,i,j)}\alpha(d,i,j,n)\beta(d,i,j,n) t^{2d-i-j-n}\int_{A_h(d,d-n)}\int_{A_h(d,j+n)}\int_{B\cap E\cap G}\\
&\qquad\times\int_{A_h(d,i+n)}\int_{E\cap F}\1\{0<d_h(x,y)\leq r\}\,\cH^i(dx)\,\mu_{i+n}(dF)\,\cH^j(dy)\,\mu_{j+n}(dG)\,\mu_{d-n}(dE),
\end{align*}
where $\beta(d,i,j,n) \defeq c(d,d-n)c(d,i+n)c(d,j+n)$.

For the two innermost integrals we get
\begin{align*}
&\int_{A_h(d,i+n)}\int_{E\cap F}\1\{0<d_h(x,y)\leq r\}\,\cH^i(dx)\,\mu_{i+n}(dF)\\
&\qquad =\int_{A_h(d,i+n)}\cH^i(\{x\in E\cap F:0<d_h(x,y)\leq r\})\,\mu_{i+n}(dF)\\
&\qquad =\int_{A_h(d,i+n)}\cH^i(E\cap (B(y,r)\setminus\{y\})\cap F)\,\mu_{i+n}(dF).
\end{align*}
Since $y\in E$, the intersection $E\cap (B(y,r)\setminus\{y\})$ has dimension $d-n$ and we can apply Crofton's formula to conclude that
\begin{align*}
\int_{A_h(d,i+n)}\int_{E\cap F}\1\{0<d_h(x,y)\leq r\}\,\cH^i(dx)\,\mu_{i+n}(dF)={\omega_{d+1}\omega_{i+1}\over\omega_{i+n+1}\omega_{d-n+1}}\cH^{d-n}(E\cap B(y,r)).
\end{align*}
Here we also used that $\cH^{d-n}(E\cap (B(y,r)\setminus\{y\}))=\cH^{d-n}(E\cap B(y,r))$, since $d-n\ge 1$.
Moreover, since $y\in E$ the value of $\cH^{d-n}(E\cap B(y,r))$ is independent of the choice of $E$ and $y$, and is given by the $(d-n)$-dimensional Hausdorff measure
$$
\cH^{d-n}(B_r^{d-n}) = \omega_{d-n}\int_0^r\sinh^{d-n-1}(s)\,ds
$$
of a $(d-n)$-dimensional geodesic ball $B_r^{d-n}$ of radius $r$. We thus arrive at
\begin{align*}
K_{ij}(r) &= {1\over\lambda_i \lambda_j}\,\sum_{n=0}^{m(d,i,j)}\alpha(d,i,j,n)\beta(d,i,j,n){\omega_{d+1}\omega_{i+1}\over\omega_{i+n+1}\omega_{d-n+1}}\cH^{d-n}(B_r^{d-n})t^{2d-i-j-n}\\
&\qquad\times \int_{A_h(d,d-n)}\int_{A_h(d,j+n)}\int_{B\cap E\cap G}1 \ \cH^j(dy)\,\mu_{j+n}(dG)\,\mu_{d-n}(dE)\\
&= {1\over\lambda_i \lambda_j}\,\sum_{n=0}^{m(d,i,j)}\alpha(d,i,j,n)\beta(d,i,j,n){\omega_{d+1}\omega_{i+1}\over\omega_{i+n+1}\omega_{d-n+1}}\cH^{d-n}(B_r^{d-n})t^{2d-i-j-n}\\
&\qquad\times \int_{A_h(d,d-n)}\int_{A_h(d,j+n)}\cH^j(B\cap E\cap G)\,\mu_{j+n}(dG)\,\mu_{d-n}(dE).
\end{align*}
The two remaining integrals can be evaluated by using twice the Crofton formula. Indeed, noting that
 for $\mu_{d-n}$-almost all $E\in A_h(d,d-n)$ the set $B\cap E$ is either empty or has dimension $d-n$ we find that
\begin{align*}
&\int_{A_h(d,d-n)}\int_{A_h(d,j+n)}\cH^j(B\cap E\cap G)\,\mu_{j+n}(dG)\,\mu_{d-n}(dE)\\
&\qquad ={\omega_{d+1}\omega_{j+1}\over\omega_{j+n+1}\omega_{d-n+1}}\int_{A_h(d,d-n)}\cH^{d-n}(B\cap E)\,\mu_{d-n}(dE)\\
&\qquad ={\omega_{d+1}\omega_{j+1}\over\omega_{j+n+1}\omega_{d-n+1}}\cH^d(B).
\end{align*}
Since $\cH^d(B)=1$ we finally conclude that
\begin{align*}
K_{ij}(r) &={1\over\lambda_i \lambda_j}\,\sum_{n=0}^{m(d,i,j)}\alpha(d,i,j,n)\beta(d,i,j,n){\omega_{d+1}^{2}\omega_{i+1}\omega_{j+1}\over\omega_{d-n+1}^{2}\omega_{i+n+1}\omega_{j+n+1}} t^{2d-i-j-n}\cH^{d-n}(B_r^{d-n})\\
&={1\over\lambda_i \lambda_j}\,\sum_{n=0}^{m(d,i,j)}\alpha(d,i,j,n)\beta(d,i,j,n){\omega_{d+1}^{2}\omega_{i+1}\omega_{j+1}\over\omega_{d-n+1}^{2}\omega_{i+n+1}\omega_{j+n+1}}\\
&\hspace{6cm}\times\omega_{d-n}t^{2d-i-j-n}\int_0^r\sinh^{d-n-1}(s)\,ds.
\end{align*}
Simplification of the constant by means of the constants given in \eqref{lambdak} and Lemma \ref{lem:mu_almost_surely_d-n_hyperplane}  completes the proof for the mixed K-function $K_{ij}$. The formula for the mixed pair-correlation function follows by differentiation. This completes the proof of Theorem \ref{thm:KFunctionAndPCF}.\hfill $\Box$

%%%%%%%%%%%%%%%%%%%%%%%%%%%%%%%%%%

\section{Proofs III -- Univariate limit theorems}\label{sec:ProofUni}

\subsection{The case of growing intensity: Proof of Theorem \ref{thm:CLTtToInfinity}}\label{sec:growing_intensity}

The central limit theorem is in this case a direct consequence of the central limit theorem for general Poisson U-statistics stated as Corollary 4.3 in \cite{SchulteKolmogorov} (see also \cite{EichelsbacherThaele14}).\hfill $\Box$

\subsection{The case of growing windows: Proof of Theorem \ref{thm:CLTrToInfinity}}

Our strategy in the proof of Theorem \ref{thm:CLTrToInfinity} (a) and (b) can be summarized as follows. The normal approximation bound \eqref{eq:Kolmogorov} for general U-statistics of Poisson processes is given by a sum involving terms of the type $M_{u,v}$, which are defined in \eqref{eq:defMij} and \eqref{eq:ChaosKernels} and which in turn are given as sums of integrals over partitions $\sigma\in\Pi_{\geq 2}^{\rm con}(u,u,v,v)$.
 In applying these normal approximation bounds to the Euclidean counterparts of the functionals $F_{r,t}^{(i)}$ it was possible to extract a common scaling factor from each of the integrals in $M_{u,v}$ and to treat the number of terms, that is, the number of elements of $\Pi_{\geq 2}^{\rm con}(u,u,v,v)$ as a constant, see \cite{LPST,ReitznerSchulteCLT}. In the hyperbolic set-up this is no longer possible and each integral in the definition of $M_{u,v}$ needs a separate treatment. In fact, it will turn out that these integrals exhibit  different asymptotic behaviours as  functions of $r$, as $r\to\infty$. For the analysis, we have to determine explicitly the partitions in $\Pi_{\geq 2}^{\rm con}(u,u,v,v)$ and for each such partition we have to provide a bound for the resulting integral. Since $\mu=t\mu_{d-1}$, we can bound the dependence with respect to
  the intensity $t\ge 1$ by $4(d-i)-2(u+v)+|\sigma|$ for each $\sigma \in\Pi_{\geq 2}^{\rm con}(u,u,v,v)$.

  To show that a central limit theorem fails in higher space dimensions $d\geq 4$ is the most technical part in the proof of Theorem \ref{thm:CLTrToInfinity}. We do this by showing that the fourth cumulant of the centred and normalized total volume $F_{r,t}^{(i)}$ is bounded away from $0$ by an absolute and strictly positive constant and hence does not converge to $0$. The latter in turn is the fourth cumulant of a standard Gaussian random variable. However, in view of the well known expression of the fourth cumulant in terms of the first four centred moments this approach can only work if we can ensure that the sequence of random variables
$$
\Bigg({F_{r,t}^{(i)}-\E F_{r,t}^{(i)}\over \sqrt{\Var(F_{r,t}^{(i)})}}\Bigg)^4
$$
is uniformly integrable. We will prove that this is indeed the case by showing that their fifths moments are uniformly bounded. This requires a very careful analysis of the combinatorial formula \eqref{eq:MomentsUstatistic} for the centred moments of U-statistics of Poisson processes.

The representation of a U-statistic will be as in Section \ref{sec:4.1}.
In the following computations, $c$ will be a positive constant only depending on the dimension and whose value may change from occasion to occasion.

\subsubsection{The planar case $d=2$: Proof of Theorem \ref{thm:CLTrToInfinity} (a)}\label{sec:growing_window_d=2}

As indicated above, we will use the bound \eqref{eq:Kolmogorov} in combination with \eqref{eq:defMij} and \eqref{eq:ChaosKernels}. We distinguish the cases $i=0$ and $i=1$. In the following, we can assume that $r,t\ge 1$.

For $i=1$, which corresponds to the total edge length in $B_r$, it is enough to bound $M_{1,1}(f^{(1)})$. For this we note that $\Pi_{\geq 2}^{\rm con}(1,1,1,1)$ only consists of the trivial partition $\sigma_1=\{1,2,3,4\}$, see Figure \ref{fig:proofd=2} (left panel).  Thus, using Lemma \ref{lem:lines_intersecting_ball_inequality}, we have that
  \begin{align*}
    M_{1,1}(f^{(1)}) =c\, t\int_{A_h(2,1)} \cH^1(H\cap B_r)^4\,\mu_1(d H) \leq c\,t \, e^{r}.
  \end{align*}
  Together with the lower variance bound from Lemma \ref{lem:VarianceBoundd=2} this yields
 \begin{equation}\label{eq:Boundd=2i=1}
  d\left(\frac{F_{r,t}^{(1)}-\mathbb{E}F_{r,t}^{(1)}}{\sqrt{\Var(F_{r,t}^{(1)})}},N \right) \leq c\,\frac{\sqrt{te^{r}}}{tc^{(1)}(2)\,e^r} \leq c\,t^{-1/2}\, e^{-r/2}.
 \end{equation}
 Here we used that the exponent of $t$ is given by $4(2-1)-2(1+1)+1=1$.

  Next, we deal with the case $i=0$, which corresponds to the total vertex count in $B_r$. In this situation, we need to bound the terms $M_{1,1}(f^{(0)}), M_{1,2}(f^{(0)}), M_{2,2}(f^{(0)})$. For $M_{1,1}(f^{(0)})$ we can argue as in the case $i=1$, since $\Pi_{\geq 2}^{\rm con}(1,1,1,1)$ only consists of the single partition $\sigma_1$, see Figure \ref{fig:proofd=2} (left panel). This allows us to conclude that
  \begin{align*}
    M_{1,1}(f^{(0)}) & =c \,t^5\int_{A_h(2,1)} \mathcal{H}^1(H \cap B_r)^4 \ \mu_1(dH) \leq c\,t^5\, e^{r},
    \end{align*}
where we used that the exponent of $t$ is given by $4(2-0)-2(1+1)+1=5$.
%%%%%%%%%%%%%%%%%%%%%%%%%%%%%%
%%%%% arxiv-Version  %%%%%%%%%
%%%%%%%%%%%%%%%%%%%%%%%%%%%%%%
    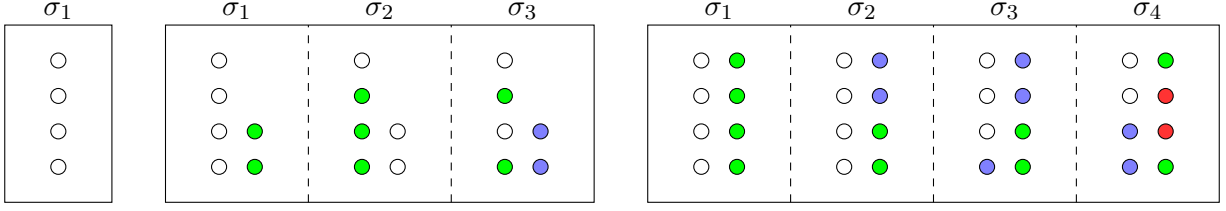
\begin{figure}[t]
    \begin{center}
    \begin{tikzpicture}[scale=0.94]
    \draw (-3.25,-0.5) rectangle (-1.75,2);
    \node at (-2.5,2.2) {$\sigma_1$};
    %\node at (1.3-3.75,-1) {$\Pi_{\geq 2}^{\rm con}(1,1,1,1)$};
    \filldraw[fill=white, draw=black]
        (-2.5,0) circle (3pt)
        (-2.5,0.5) circle (3pt)
        (-2.5,1) circle (3pt)
        (-2.5,1.5) circle (3pt);

    % % % % % % % % % % % % % % % % %

    \draw (-1,-0.5) rectangle (5,2);
    \draw[dashed] (1,-0.5) -- (1,2);
    \draw[dashed] (3,-0.5) -- (3,2);
    %\draw[very thick] (3.25,-0.5) -- (3.25,2);
    %\node at (1.3,-1) {$\Pi_{\geq 2}^{\rm con}(1,1,2,2)$};
    \node at (0,2.2) {$\sigma_1$};
    \node at (2,2.2) {$\sigma_2$};
    \node at (4,2.2) {$\sigma_3$};

    \filldraw[fill=white, draw=black]
    (-0.25,0) circle (3pt)
    (-0.25,0.5) circle (3pt)
    (-0.25,1) circle (3pt)
    (-0.25,1.5) circle (3pt);
    \filldraw[fill=green, draw=black]
    (0.25,0) circle (3pt)
    (0.25,0.5) circle (3pt);

    \filldraw[fill=green, draw=black]
    (1.75,0) circle (3pt)
    (1.75,0.5) circle (3pt)
    (1.75,1) circle (3pt);
    \filldraw[fill=white, draw=black]
    (1.75,1.5) circle (3pt)
    (2.25,0) circle (3pt)
    (2.25,0.5) circle (3pt);

    \filldraw[fill=green, draw=black]
    (3.75,0) circle (3pt)
    (3.75,1) circle (3pt);
    \filldraw[fill=white, draw=black]
     (3.75,0.5) circle (3pt)
    (3.75,1.5) circle (3pt);
    \filldraw[fill=blue!50!white, draw=black]
    (4.25,0) circle (3pt)
    (4.25,0.5) circle (3pt);

    % % % % % % % % % % % % % % % % % % % % % %

    \filldraw[fill=white, draw=black]
        (6.5,0) circle (3pt)
        (6.5,0.5) circle (3pt)
        (6.5,1) circle (3pt)
        (6.5,1.5) circle (3pt);
        \filldraw[fill=green, draw=black]
        (7,0) circle (3pt)
        (7,0.5) circle (3pt)
        (7,1) circle (3pt)
        (7,1.5) circle (3pt);

    \filldraw[fill=white, draw=black]
        (8.5,0) circle (3pt)
        (8.5,0.5) circle (3pt)
        (8.5,1) circle (3pt)
        (8.5,1.5) circle (3pt);
        \filldraw[fill=green, draw=black]
        (9,0) circle (3pt)
        (9,0.5) circle (3pt);
        \filldraw[fill=blue!50!white, draw=black]
        (9,1) circle (3pt)
        (9,1.5) circle (3pt);

    \filldraw[fill=white, draw=black]
        (10.5,0.5) circle (3pt)
        (10.5,1) circle (3pt)
        (10.5,1.5) circle (3pt);
        \filldraw[fill=green, draw=black]
        (11,0) circle (3pt)
        (11,0.5) circle (3pt);
        \filldraw[fill=blue!50!white, draw=black]
        (10.5,0) circle (3pt)
        (11,1) circle (3pt)
        (11,1.5) circle (3pt);

    \filldraw[fill=white, draw=black]
        (12.5,1) circle (3pt)
        (12.5,1.5) circle (3pt);
        \filldraw[fill=green, draw=black]
        (13,1.5) circle (3pt)
        (13,0) circle (3pt);
        \filldraw[fill=blue!50!white, draw=black]
        (12.5,0) circle (3pt)
		(12.5,0.5) circle (3pt);
        \filldraw[fill=red!80!white, draw=black]		
        (13,1) circle (3pt)
        (13,0.5) circle (3pt);

    \draw (5.75,-0.5) rectangle (13.75,2);
    \draw[dashed] (7.75,-0.5) -- (7.75,2);
    \draw[dashed] (9.75,-0.5) -- (9.75,2);
    \draw[dashed] (11.75,-0.5) -- (11.75,2);
    %\node at (8.3,-1) {$\Pi_{\geq 2}^{\rm con}(2,2,2,2)$};
    \node at (6.75,2.2) {$\sigma_1$};
    \node at (8.75,2.2) {$\sigma_2$};
    \node at (10.75,2.2) {$\sigma_3$};
    \node at (12.75,2.2) {$\sigma_4$};
    \end{tikzpicture}
    \end{center}
    \caption{Left panel: Illustration of the partition in $\Pi_{\geq 2}^{\rm con}(1,1,1,1)$. Middle panel: Illustration of the partitions in $\Pi_{\geq 2}^{\rm con}(1,1,2,2)$. Right panel: Illustration of the partitions in $\Pi_{\geq 2}^{\rm con}(2,2,2,2)$.}
    \label{fig:proofd=2}
    \end{figure}

    To deal with $M_{1,2}(f^{(0)})$ we observe that, up to renumbering of the elements, $\Pi_{\geq 2}^{\rm con}(1,1,2,2)$ consists of precisely three partitions $\sigma_1, \ \sigma_2$ and $\sigma_3$, which are illustrated in Figure \ref{fig:proofd=2} (middle panel). For $\sigma_1$ we obtain, using Crofton's formula and Lemma \ref{lem:lines_intersecting_ball_inequality},
    \begin{align}
	\nonumber &\int_{A_h(2,1)^2}\cH^1(H_1\cap B_r)^2\,\cH^0(H_1\cap H_2\cap B_r)^2\,\mu_1^2(d(H_1,H_2))\\
	\nonumber &\qquad=\int_{A_h(2,1)^2}\cH^1(H_1\cap B_r)^2\,\cH^0(H_1\cap H_2\cap B_r)\,\mu_1^2(d(H_1,H_2))\\
	&\qquad= c\int_{A_h(2,1)}\cH^1(H_1\cap B_r)^3\,\mu_1(dH_1) \leq c\,e^r.\label{eq:26-06-19-1}
    \end{align}
    Moreover, for the partition $\sigma_2$ we compute, using twice that $\cH^1(H\cap B_r)\leq 2r$ for each $H\in A_h(2,1)$ and again Crofton's formula,
    \begin{align}
	\nonumber &\int_{A_h(2,1)^2}\cH^1(H_1\cap B_r)\, \cH^1(H_2\cap B_r)\, \cH^0(H_1\cap H_2\cap B_r)^2\,\mu_1^2(d(H_1,H_2))\\
	&\qquad \leq 4r^{2} \int_{A_h(2,1)^2}\cH^0(H_1\cap H_2\cap B_r)\,\mu_1^2(d(H_1,H_2))\leq c\, r^{2} \, e^r,\label{eq:08-07-19-1}
    \end{align}
    and for partition $\sigma_3$ we get
    \begin{align}
	\nonumber &\int_{A_h(2,1)^3} \cH^1(H_1\cap B_r)\,\cH^1(H_2\cap B_r)\,\cH^0(H_1\cap H_3\cap B_r)\,\cH^0(H_2\cap H_3\cap B_r)\,\mu_1^3(d(H_1,H_2,H_3))\\
	\nonumber &\qquad\leq 2r\int_{A_h(2,1)^2}\cH^1(H_2\cap B_r)\,\cH^1(H_3\cap B_r)\,\cH^0(H_2\cap H_3\cap B_r)\,\mu_1^2(d(H_2,H_3))\\
	&\qquad\leq 4r^2\int_{A_h(2,1)} \cH^1(H_3\cap B_r)^2\,\mu_1(dH_3) \leq c\,r^2\,e^r.\label{eq:26-06-19-2}
    \end{align}
    This yields that $M_{1,2}(f^{(0)})\leq c\,t^5\,(e^r+2r^2e^r)\leq c\,t^5\,r^2e^r$ (recall that $r,t\ge 1$). Here we used that the exponent of
    $t$ is given by $4(2-0)-2 (2+1)+\max\{2,3\}=5$.

	Now we deal with the term $M_{2,2}(f^{(0)})$, which involves a summation over partitions in $\Pi_{\geq 2}^{\rm con}(2,2,2,2)$. Up to renumbering of the elements, there are precisely four such partitions $\sigma_1$, $\sigma_2$, $\sigma_3$ and $\sigma_4$, which are illustrated in Figure \ref{fig:proofd=2} (right panel). For $\sigma_1$ we compute
    \begin{align*}
	\int_{A_h(2,1)^2}\cH^0(H_1\cap H_2\cap B_r)^4\,\mu_1^2(d(H_1,H_2)) &= \int_{A_h(2,1)^2}\cH^0(H_1\cap H_2\cap B_r)\,\mu_1^2(d(H_1,H_2))\\
	& = c\int_{A_h(2,1)}\cH^1(H_1\cap B_r)\,\mu_1(dH_1) \leq c\,e^r,
    \end{align*}
    where we used Crofton's formula and Lemma \ref{lem:lines_intersecting_ball_inequality}. Similarly, for $\sigma_2$ and $\sigma_3$ we get
    \begin{align*}
	&\int_{A_h(2,1)^3}\cH^0(H_1\cap H_2\cap B_r)^2\,\cH^0(H_1\cap H_3\cap B_r)^2\,\mu_1^3(d(H_1,H_2,H_3))\\
	&\qquad=\int_{A_h(2,1)^3}\cH^0(H_1\cap H_2\cap B_r)\,\cH^0(H_1\cap H_3\cap B_r)\,\mu_1^3(d(H_1,H_2,H_3))\\
	&\qquad=c\int_{A_h(2,1)} \cH^1(H_1\cap B_r)^2\,\mu_1(dH_1) \leq c\,e^r,
    \end{align*}
    and, additionally using that $\cH^0(H_1\cap H_2\cap B_r)\leq 1$ for $\mu_1^2$-almost all $(H_1,H_2)\in A_h(2,1)^2$,
    \begin{align*}
    &\int_{A_h(2,1)^3}\cH^0(H_1\cap H_2\cap B_r)^2\,\cH^0(H_1\cap H_3\cap B_r)\,\cH^0(H_2\cap H_3\cap B_r)\,\mu_1^3(d(H_1,H_2,H_3))\\
    &\qquad\leq\int_{A_h(2,1)^3}\cH^0(H_1\cap H_3\cap B_r)\,\cH^0(H_2\cap H_3\cap B_r)\,\mu_1^3(d(H_1,H_2,H_3))\\
    &\qquad=c\int_{A_h(2,1)}\cH^1(H_3\cap B_r)^2\,\mu_1(dH_3) \leq c\,e^r.
    \end{align*}
    Finally, we deal with $\sigma_4$. Using once more that $\cH^0(H_1\cap H_2\cap B_r)\leq 1$ for $\mu_1^2$-almost all $(H_1,H_2)\in A_h(2,1)^2$ and also that $\cH^1(H\cap B_r)\leq 2r$ for each $H\in A_h(2,1)$, and again Crofton's formula together with Lemma \ref{lem:lines_intersecting_ball_inequality}, we obtain
    \begin{align*}
	&\int_{A_h(2,1)^4}\cH^0(H_1\cap H_2\cap B_r)\,\cH^0(H_1\cap H_3\cap B_r)\,\cH^0(H_3\cap H_4\cap B_r)\\
	&\hspace{6cm}\times\cH^0(H_2\cap H_4\cap B_r)\,\mu_1^4(d(H_1,H_2,H_3,H_4))\\
	&\qquad\leq c\int_{A_h(2,1)^2}\cH^1(H_3\cap B_r)\,\cH^0(H_3\cap H_4\cap B_r)\,\cH^1(H_4\cap B_r)\,\mu_1^2(d(H_3,H_4))\\
	&\qquad\leq c\,r\int_{A_h(2,1)}\cH^1(H_4\cap B_r)^2\,\mu_1(dH_4)\leq c\,r\,e^r.
    \end{align*}
    Altogether, this yields that $M_{2,2}(f^{(0)})\leq c\,t^4\,(e^r+e^r+e^r+re^r)\leq c\,t^4\,re^r$, where the exponent of
    $t$ follows from $4\cdot 2-2\cdot 4+\max\{2,3,4\}=4$.

  Combining the bounds for $M_{1,1}(f^{(0)})$, $M_{1,2}(f^{(0)})$ and $M_{2,2}(f^{(0)})$ with the lower variance bound provided by Lemma \ref{lem:VarianceBoundd=2} we deduce from \eqref{eq:Kolmogorov} that
\begin{equation}\label{eq:Boundd=2i=0}
  d\left(\frac{F_{r,t}^{(0)}-\mathbb{E}F_{r,t}^{(0)}}{\sqrt{\Var(F_{r,t}^{(0)})}},N \right) \leq c\,\frac{\sqrt{t^5e^{r}}+\sqrt{t^5r^2 e^{r}}+\sqrt{t^4re^{r}}}{t^3c^{(0)}(2)e^r} \leq c\,t^{-1/2}\, r e^{-r/2}.
\end{equation}
  This completes the proof of Theorem \ref{thm:CLTrToInfinity} (a). \hfill $\Box$

\subsubsection{The spatial case $d=3$: Proof of Theorem \ref{thm:CLTrToInfinity} (b)}\label{sec:growing_window_d=3}

The following lemma will be used repeatedly in deriving upper bounds for integrals. For $H\in A_h(3,2)$ we write
$L_1(H)$ for an arbitrary $1$-dimensional subspace in $H$ which satisfies $d_h(H,p)=d_h(L_1(H),p)$.

\begin{Lemma}\label{upperboundL1}
Let $d=3$ and $a,b\ge 0$. If $r\ge 1$, then
$$
I(a,b):=\int_{A_h(3,2)}\cH^2(H\cap B_r)^a\,\cH^1(L_1(H)\cap B_r)^b\, \mu_2(dH)\le c\,\begin{cases}
\exp(2r)&: 0\le a<2,\\
r^{b+1}\exp(2r)&: a=2,\\
r^{b}\exp(ar)&: a>2,\\
\end{cases}
$$
where $c=c(a,b)$ is a constant depending only on $a$ and $b$.
\end{Lemma}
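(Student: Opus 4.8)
The plan is to collapse the integral over $A_h(3,2)$ into a one-dimensional integral in the distance variable $s=d_h(H,p)$ and then to estimate the resulting elementary integral separately in the three ranges of $a$ appearing in the statement. Throughout, $c$ denotes a positive constant depending only on $a$ and $b$ whose value may change from line to line. \textbf{Step 1 (Disintegration in the height variable).} First I would apply the representation \eqref{eq:Croftonmass} of $\mu_2$ together with geodesic spherical coordinates on the lines $L\in G_h(3,1)$ through $p$, exactly as in the derivation of the second identity in Corollary \ref{cor:VarianceExact}. The crucial observation is that both $\cH^2(H\cap B_r)$ and $\cH^1(L_1(H)\cap B_r)$ depend on $H$ only through $s:=d_h(H,p)$: the set $H\cap B_r$ is a $2$-dimensional hyperbolic ball of radius $\arcosh(\cosh(r)/\cosh(s))$ by \cite[Theorem 3.5.3]{Ratcliffe}, and since $L_1(H)$, being a geodesic line in $H$ at distance $s$ from $p$, necessarily passes through the foot point of the perpendicular from $p$ to $H$ (which is then also the nearest point of $L_1(H)$ to $p$), the set $L_1(H)\cap B_r$ is a $1$-dimensional ball of the same radius. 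Writing $L_2(s)$ and $L_1(s)$ for arbitrary measurable choices of totally geodesic subspaces at distance $s$ from $p$ as in Lemma \ref{lem:lines_intersecting_ball_inequality}, this yields
\[
I(a,b)=\omega_1\int_0^r\cosh^2(s)\,\cH^2(L_2(s)\cap B_r)^a\,\cH^1(L_1(s)\cap B_r)^b\,ds .
\]

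\textbf{Step 2 (Pointwise bound on the integrand).} Next I would insert the estimates $\cH^2(L_2(s)\cap B_r)\le\omega_2\,e^{r-s}$ (Lemma \ref{lem:H_d_bounds} with $i=2$) and $\cH^1(L_1(s)\cap B_r)=2\arcosh(\cosh(r)/\cosh(s))\le 2(r-s+\log 2)$ (from \eqref{eq:volume_ball} with $i=1$ and Lemma \ref{lem:CoshBound}), together with $\cosh^2(s)\le e^{2s}$. This gives
\[
\cosh^2(s)\,\cH^2(L_2(s)\cap B_r)^a\,\cH^1(L_1(s)\cap B_r)^b\le c\,e^{ar}\,e^{(2-a)s}\,(r-s+1)^b,\qquad 0\le s\le r,
\]
and hence $I(a,b)\le c\,e^{ar}\int_0^r e^{(2-a)s}(r-s+1)^b\,ds$. \textbf{Step 3 (Three elementary cases).} After the substitution $u=r-s$ the remaining integral equals $e^{(2-a)r}\int_0^r e^{(a-2)u}(u+1)^b\,du$, and I would finish by distinguishing three cases. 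If $a<2$, then $\int_0^r e^{(a-2)u}(u+1)^b\,du\le\int_0^\infty e^{(a-2)u}(u+1)^b\,du=c<\infty$, so $I(a,b)\le c\,e^{ar}e^{(2-a)r}=c\,e^{2r}$. If $a=2$, then $\int_0^r(u+1)^b\,du\le c\,r^{b+1}$ for $r\ge 1$, so $I(a,b)\le c\,r^{b+1}e^{2r}$. If $a>2$, then $\int_0^r e^{(a-2)u}(u+1)^b\,du\le(r+1)^b\int_0^r e^{(a-2)u}\,du\le c\,r^b e^{(a-2)r}$, whence $I(a,b)\le c\,e^{ar}\,e^{(2-a)r}\,r^b\,e^{(a-2)r}=c\,r^b e^{ar}$. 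These are exactly the three asserted bounds.

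\textbf{Main obstacle.} The only genuinely delicate point is Step 1, namely the verification that the extra weight $\cH^1(L_1(H)\cap B_r)$ is a function of $d_h(H,p)$ alone — so that it may be pulled through the disintegration \eqref{eq:Croftonmass} — which reduces to identifying $L_1(H)\cap B_r$ as a $1$-dimensional ball of radius $\arcosh(\cosh(r)/\cosh(s))$ centred at the foot point of the perpendicular from $p$ to $H$. Once this is granted, the remaining estimates are routine bookkeeping with exponential integrals, entirely parallel to the proof of Lemma \ref{lem:lines_intersecting_ball_inequality}.
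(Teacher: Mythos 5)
Your proposal is correct and follows essentially the same route as the paper: the paper likewise uses the disintegration \eqref{eq:Croftonmass} together with Lemma \ref{lem:H_d_bounds} and Lemma \ref{lem:CoshBound} to reduce $I(a,b)$ to $c\int_0^r e^{2s}e^{(r-s)a}(r-s+\log 2)^b\,ds$ and then treats the three cases by the same elementary exponential estimates. The point you flag as delicate — that $\cH^1(L_1(H)\cap B_r)$ depends on $H$ only through $s=d_h(H,p)$ — is exactly what the paper's definition of $L_1(H)$ (requiring $d_h(L_1(H),p)=d_h(H,p)$) together with \eqref{eq:volume_ball} guarantees, so your Step 1 matches the paper's implicit argument.
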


\begin{proof} We use the definition \eqref{eq:Croftonmass} of the measure $\mu_2$, Lemma \ref{lem:H_d_bounds} and the argument in the proof of Lemma \ref{lem:lines_intersecting_ball_inequality} to get
$$
I(a,b)\le c\int_0^re^{2s}e^{(r-s)a}(r-s+\log 2)^b\, ds.
$$
If $0\le a<2$, then
$$
I(a,b)\le c\, e^{2r}\int_0^r e^{s(a-2)}(s+\log 2)^b\, ds\le c\, e^{2r}.
$$
This also shows that $I(2,b)\le ce^{2r}r^{b+1}$. For $a>2$, we get
$$
I(a,b)\le c\, e^{ar}\int_0^re^{s(2-a)}(r-s+\log 2)^b\, ds\le c\, r^be^{ar},
$$
which completes the argument.
\end{proof}

For $d=3$ we need to distinguish the cases $i=2$, $i=1$ and $i=0$. If $i=2$ there is only one partition $\sigma_1$ (compare with the left panel of Figure \ref{fig:proofd=2}) and we obtain
\begin{align}\label{eq:M11d=3}
\int_{A_h(3,2)}\cH^2(H\cap B_r)^4\,\mu_2(dH) \leq c\, g(2,4,3,r)\leq c\,e^{4r}.
\end{align}
This proves that $M_{1,1}(f^{(2)})\leq c\,t\,e^{4r}$ and together with the lower variance bound from Lemma \ref{lem:VarianceBoundd=3} and \eqref{eq:Kolmogorov} this yields
\begin{equation}\label{eq:Boundd=3i=2}
d\left(\frac{F_{r,t}^{(2)}-\mathbb{E}F_{r,t}^{(2)}}{\sqrt{\Var(F_{r,t}^{(2)})}},N \right) \leq c\,\frac{\sqrt{te^{4r}}}{tc^{(2)}(3)e^{2r}r} \leq c\, t^{-1/2}\,r^{-1}.
\end{equation}
To deal with the case $i=1$, we need to bound $M_{1,1}(f^{(1)})$, $M_{1,2}(f^{(1)})$ and $M_{2,2}(f^{(1)})$. As in the case $d=2$, to bound $M_{1,1}(f^{(1)})$ we can argue as for $i=2$ to obtain $M_{1,1}(f^{(1)})\leq c\,t^{5}\,e^{4r}$. Next, we consider $M_{1,2}(f^{(1)})$, which requires an analysis of the integrals resulting from the three partitions $\sigma_1, \ \sigma_2$ and $\sigma_3$ shown in the middle panel of Figure \ref{fig:proofd=2}. For $\sigma_1$ we compute
\begin{align}\label{eq:M12d=3a}
&\int_{A_h(3,2)^2}\cH^2(H_1\cap B_r)^2\,\cH^1(H_1\cap H_2\cap B_r)^2\,\mu_2^2(d(H_1,H_2))\nonumber\\
&\qquad\leq \int_{A_h(3,2)^2}\cH^2(H_1\cap B_r)^2\,\cH^1(L_1(H_1)\cap B_r)\cH^1(H_1\cap H_2\cap B_r)\,\mu_2^2(d(H_1,H_2))\nonumber\\
&\qquad\leq c\, I(3,1)\le c \,re^{3r},
\end{align}
where we used the Crofton formula and Lemma \ref{upperboundL1}.
Arguing similarly for the partition $\sigma_2$ from the middle panel of Figure \ref{fig:proofd=2} we obtain
\begin{align}\label{eq:M12d=3b}
\nonumber &\int_{A_h(3,2)^2}\cH^2(H_1\cap B_r)\,\cH^2(H_2\cap B_r)\,\cH^1(H_1\cap H_2\cap B_r)^{2}\,\mu_2^2(d(H_1,H_2))\\
\nonumber &\qquad \leq c \int_{A_h(3,2)^2}\cH^2(H_1\cap B_r)\,\cH^2(H_2\cap B_r)\,\cH^1(L_1(H_1)\cap B_r)\,\cH^1(L_1(H_2)\cap B_r)\,\mu_2^2(d(H_1,H_2))\\
&\qquad \leq c\,I(1,1)^2 \leq c\,e^{4r},
\end{align}
and for $\sigma_3$ we get
\begin{align}\label{eq:M12d=3c}
 &\int_{A_h(3,2)^3}\cH^2(H_1\cap B_r)\,\cH^2(H_2\cap B_r)\,\cH^1(H_1\cap H_3\cap B_r)\,\cH^1(H_2\cap H_3\cap B_r)\,\mu_2^3(d(H_1,H_2,H_3))\nonumber\\
 &\quad \leq\int_{A_h(3,2)^3}\cH^2(H_1\cap B_r)\,\cH^2(H_2\cap B_r)\,\cH^1(L_1(H_1)\cap B_r)\,\cH^1(H_2\cap H_3\cap B_r)\,\mu_2^3(d(H_1,H_2,H_3))\nonumber\\
 &\quad \leq c\int_{A_h(3,2)^2}\cH^2(H_1\cap B_r)\,\cH^2(H_2\cap B_r)^{2}\,\cH^1(L_1(H_1)\cap B_r)\,\mu_2^2(d(H_1,H_2))\nonumber\\
 &\quad \leq c\, I(1,2) \, g(2,2,3,r) \leq c\,r\,e^{4r}.
\end{align}
We thus conclude that $M_{1,2}(f^{(1)})\leq c\,t^5\,(re^{3r}+e^{4r}+re^{4r})\leq c\,t^5\,re^{4r}$.

Finally, we deal with $M_{2,2}(f^{(1)})$ for which an analysis of the four partitions $\sigma_1$, $\sigma_2$, $\sigma_3$ and $\sigma_4$ shown in the right panel of Figure \ref{fig:proofd=2} is necessary. For $\sigma_1$ we have
\begin{align*}
&\int_{A_h(3,2)^2}\cH^1(H_1\cap H_2\cap B_r)^4\,\mu_2^2(d(H_1,H_2))\\
&\qquad\leq\int_{A_h(3,2)^2}\cH^1(H_1\cap H_2\cap B_r)\,\cH^1(L_1(H_1)\cap B_r)^3\,\mu_2^2(d(H_1,H_2))
 \leq c\, I(1,3) \leq c\,e^{2r},
\end{align*}
where we also used Crofton's formula. We continue with $\sigma_2$ and get, by similar arguments,
\begin{align*}
&\int_{A_h(3,2)^3}\cH^1(H_1\cap H_2\cap B_r)^2\,\cH^1(H_1\cap H_3\cap B_r)^2\,\mu_2^3(d(H_1,H_2,H_3))\\
&\qquad\leq \int_{A_h(3,2)^3}\cH^1(L_1(H_1)\cap B_r)^2\,\cH^1(H_1\cap H_2\cap B_r)\,\cH^1(H_1\cap H_3\cap B_r)\,\mu_2^3(d(H_1,H_2,H_3))\\
&\qquad= c\int_{A_h(3,2)}\cH^1(L_1(H_1)\cap B_r)^2\,\cH^2(H_1\cap B_r)^2\,\mu_2(dH_1)\\
&\qquad\leq c\, I(2,2)\leq c\,r^3\,e^{2r}.
\end{align*}
Moreover, for $\sigma_3$ and $\sigma_4$ we have the bounds
\begin{align*}
&\int_{A_h(3,2)^3}\cH^1(H_1\cap H_2\cap B_r)^2\,\cH^1(H_1\cap H_3\cap B_r)\,\cH^1(H_2\cap H_3\cap B_r)\,\mu_2^3(d(H_1,H_2,H_3))\\
&\qquad\leq c\int_{A_h(3,2)^3}\cH^1(L_1(H_1)\cap B_r)^3\,\cH^1(H_2\cap H_3\cap B_r)\,\mu_2^3(d(H_1,H_2,H_3))\\
&\qquad\leq c\,\cH^3(B_r)\, I(0,3) \leq c\,e^{4r}
\end{align*}
and
\begin{align*}
&\int_{A_h(3,2)^4}\cH^1(H_1\cap H_2\cap B_r)\,\cH^1(H_1\cap H_3\cap B_r)\,\cH^1(H_3\cap H_4\cap B_r)\\
&\hspace{4cm}\times\cH^1(H_2\cap H_4\cap B_r)\,\mu_2^4(d(H_1,H_2,H_3,H_4))\\
&\qquad\leq \int_{A_h(3,2)^4}\cH^1(L_1(H_1)\cap B_r)^2\,\cH^1(H_2\cap H_4\cap B_r)\,\cH^1(H_3\cap H_4\cap B_r)\,\mu_2^4(d(H_1,H_2,H_3,H_4))\\
&\qquad= c \int_{A_h(2,3)^2}\cH^1(L_1(H_1)\cap B_r)^2\,\cH^2(H_4\cap B_r)^2\,\mu_2^2(d(H_1,H_4))\\
&\qquad\leq c\, I(0,2)\, g(2,2,3,r)\leq c\,r\,e^{4r}.
\end{align*}
Altogether this gives $M_{2,2}(f^{(1)})\leq c\,t^4\,(e^{2r}+r^3e^{2r}+e^{4r}+re^{4r})\leq c\, t^4\,re^{4r}$. The estimates for $M_{1,1}(f^{(1)})$, $M_{1,2}(f^{(1)})$ and $M_{2,2}(f^{(1)})$ together with Lemma \ref{lem:VarianceBoundd=3} and \eqref{eq:Kolmogorov} show that
\begin{equation}\label{eq:Boundd=3i=1}
  d\left(\frac{F_{r,t}^{(1)}-\mathbb{E}F_{r,t}^{(1)}}{\sqrt{\Var(F_{r,t}^{(1)})}},N \right) \leq c\, \frac{\sqrt{t^5e^{4r}}+\sqrt{t^5re^{4r}}+\sqrt{t^4re^{4r}}}{t^3c^{(1)}(3)e^{2r}r} \leq c\,t^{-1/2}\,r^{-1/2}.
\end{equation}

  %%%%%%%%%%%%%%%%%%%%%%%%%%%%%%
%%%%% arxiv-Version  %%%%%%%%%
%%%%%%%%%%%%%%%%%%%%%%%%%%%%%%
    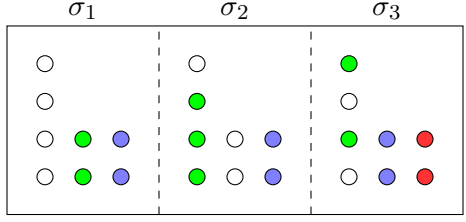
\begin{figure}[t]
    \begin{center}
    \begin{tikzpicture}
    \draw (-0.5,-0.5) rectangle (5.5,2);
    \draw[dashed] (1.5,-0.5) -- (1.5,2);
    \draw[dashed] (3.5,-0.5) -- (3.5,2);
    %\draw[very thick] (3.25,-0.5) -- (3.25,2);
    %\node at (1.3,-1) {$\Pi_{\geq 2}^{\rm con}(1,1,2,2)$};
    \node at (0.5,2.2) {$\sigma_1$};
    \node at (2.5,2.2) {$\sigma_2$};
    \node at (4.5,2.2) {$\sigma_3$};

    \filldraw[fill=white, draw=black]
    (0,0) circle (3pt)
    (0,0.5) circle (3pt)
    (0,1) circle (3pt)
    (0,1.5) circle (3pt);
    \filldraw[fill=green, draw=black]
    (0.5,0) circle (3pt)
    (0.5,0.5) circle (3pt);
    \filldraw[fill=blue!50!white, draw=black]
     (1,0) circle (3pt)
     (1,0.5) circle (3pt);

    \filldraw[fill=white, draw=black]
    (2,1.5) circle (3pt)
    (2.5,0.5) circle (3pt)
    (2.5,0) circle (3pt);
    \filldraw[fill=green, draw=black]
    (2,0) circle (3pt)
    (2,0.5) circle (3pt)
    (2,1) circle (3pt);
    \filldraw[fill=blue!50!white, draw=black]
     (3,0) circle (3pt)
     (3,0.5) circle (3pt);

         \filldraw[fill=white, draw=black]
         (4,0) circle (3pt)
         (4,1) circle (3pt);
         \filldraw[fill=green, draw=black]
         (4,0.5) circle (3pt)
         (4,1.5) circle (3pt);
         \filldraw[fill=blue!50!white, draw=black]
          (4.5,0) circle (3pt)
          (4.5,0.5) circle (3pt);
         \filldraw[fill=red!80!white, draw=black]
          (5,0) circle (3pt)
          (5,0.5) circle (3pt);

    % % % % % % % % % % % % % % % % % % % % % %
    \end{tikzpicture}
    \end{center}
    \caption{Illustration of the partition in $\Pi_{\geq 2}^{\rm con}(1,1,3,3)$. }
    \label{fig:proofd=3}
    \end{figure}

Finally, we need to treat the case of $F_{r,t}^{(0)}$, which requires to find upper bounds for the terms $M_{u,v}(f^{(0)})$ with $(u,v)\in\{(1,1),(1,2),(1,3),(2,2),(2,3),(3,3)\}$. We have $M_{1,1}(f^{(0)})\leq c\,t^9\,e^{4r}$ from \eqref{eq:M11d=3}. To treat $M_{1,2}(f^{(0)})$ we need to consider the partitions $\sigma_1,\sigma_2$ and $\sigma_3$ shown in the middle panel of Figure \ref{fig:proofd=2} and to obtain upper bounds for
the three integrals which are already treated in \eqref{eq:M12d=3a}, \eqref{eq:M12d=3b} and \eqref{eq:M12d=3c}.  This implies that  $M_{1,2}(f^{(0)})\leq c\,t^9\,re^{4r}$. Next, we deal with $M_{1,3}(f^{(0)})$, which can be expressed as a sum over the three partitions $\sigma_1$, $\sigma_2$ and $\sigma_3$ shown in Figure \ref{fig:proofd=3}. For $\sigma_1$, using that $\cH^0(H_1\cap H_2\cap H_3\cap B_r)\leq 1$ for $\mu_2^3$-almost all $(H_1,H_2,H_3)\in A_h(3,2)^3$, we have that
\begin{align*}
&\int_{A_h(3,2)^3}\cH^2(H_1\cap B_r)^2\,\cH^0(H_1\cap H_2\cap H_3\cap B_r)^2\,\mu_2^3(d(H_1,H_2,H_3))\\
&\qquad = \int_{A_h(3,2)^3}\cH^2(H_1\cap B_r)^2\,\cH^0(H_1\cap H_2\cap H_3\cap B_r)\,\mu_2^3(d(H_1,H_2,H_3))\\
&\qquad=c \int_{A_h(3,2)}\cH^2(H_1\cap B_r)^3\,\mu_2(dH_1) \leq c \,g(2,3,3,r) \leq c\,e^{3r},
\end{align*}
where we also used Crofton's formula and Lemma \ref{lem:lines_intersecting_ball_inequality}. Similarly, for $\sigma_2$ we obtain
\begin{align*}
&\int_{A_h(3,2)^3}\cH^2(H_1\cap B_r)\,\cH^2(H_2\cap B_r)\,\cH^0(H_1\cap H_2\cap H_3\cap B_r)^2\,\mu_2^3(d(H_1,H_2,H_3))\\
&\qquad = \int_{A_h(3,2)^3}\cH^2(H_1\cap B_r)\,\cH^2(H_2\cap B_r)\,\cH^0(H_1\cap H_2\cap H_3\cap B_r)\,\mu_2^3(d(H_1,H_2,H_3))\\
&\qquad\leq c\,\cH^3(B_r) \, I(1,1)\leq c\,e^{4r},
\end{align*}
and for $\sigma_3$ we have that
\begin{align*}
&\int_{A_h(3,2)^4}\cH^2(H_1\cap B_r)\,\cH^2(H_2\cap B_r)\,\cH^0(H_1\cap H_3\cap H_4\cap B_r)\\
&\hspace{4cm}\times\cH^0(H_2\cap H_3\cap H_4\cap B_r)\,\mu_2^4(d(H_1,H_2,H_3,H_4))\\
&\qquad\leq \int_{A_h(3,2)^4}\cH^2(H_1\cap B_r)\,\cH^2(H_2\cap B_r)\,\cH^0(H_1\cap H_3\cap H_4\cap B_r)\,\mu_2^4(d(H_1,H_2,H_3,H_4))\\
&\qquad=c\,\cH^3(B_r)\int_{A_h(3,2)}\cH^2(H_1\cap B_r)^2\,\mu_2(dH_1)\leq c\, e^{2r}\, g(2,2,3,r)\le  c\,re^{4r}.
\end{align*}
This proves that $M_{1,3}(f^{(0)})\leq c\,t^8\,(e^{3r}+e^{4r}+re^{4r})\leq c\,t^8\,re^{4r}$.

The next term is $M_{2,2}(f^{(0)})$. However, up to a constant, this term is the same as $M_{2,2}(f^{(1)})$, which was already bounded above. This yields that $M_{2,2}(f^{(0)})\leq c\,t^8\, re^{4r}$ and it remains to consider $M_{2,3}(f^{(0)})$ and $M_{3,3}(f^{(0)})$.
%\newpage
 %%%%%%%%%%%%%%%%%%%%%%%%%%%%%%
%%%%% arxiv-Version  %%%%%%%%%
%%%%%%%%%%%%%%%%%%%%%%%%%%%%%%
    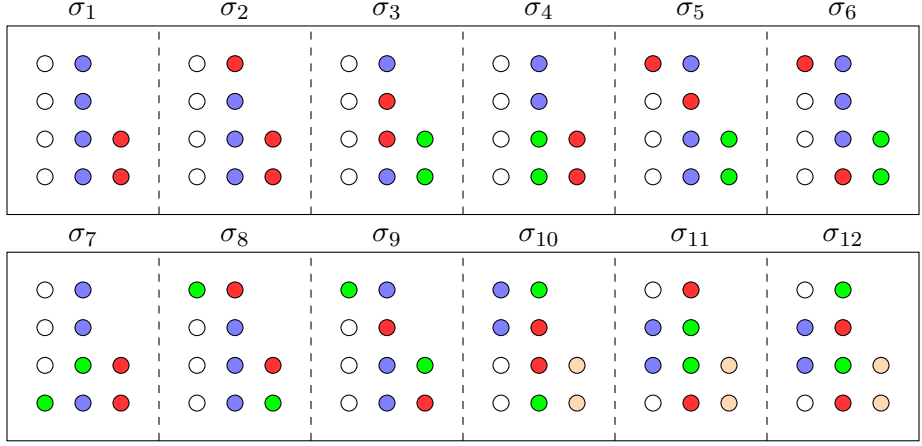
\begin{figure}[t]
    \begin{center}
    \begin{tikzpicture}
    \draw (-0.5,-0.5) rectangle (13.5-2,2);
    \draw[dashed] (1.5,-0.5) -- (1.5,2);
    \draw[dashed] (3.5,-0.5) -- (3.5,2);
    \draw[dashed] (5.5,-0.5) -- (5.5,2);
    \draw[dashed] (7.5,-0.5) -- (7.5,2);
    \draw[dashed] (9.5,-0.5) -- (9.5,2);
    %\draw[very thick] (3.25,-0.5) -- (3.25,2);
    %\node at (1.3,-1) {$\Pi_{\geq 2}^{\rm con}(1,1,2,2)$};
    \node at (0.5,2.2) {$\sigma_1$};
    \node at (2.5,2.2) {$\sigma_2$};
    \node at (4.5,2.2) {$\sigma_3$};
    \node at (6.5,2.2) {$\sigma_4$};
    \node at (8.5,2.2) {$\sigma_5$};
    \node at (10.5,2.2) {$\sigma_6$};

	\filldraw[fill=white!50!white, draw=black] (0,0) circle (3pt);
	\filldraw[fill=white!50!white, draw=black] (0,0.5) circle (3pt);
	\filldraw[fill=white!50!white, draw=black] (0,1) circle (3pt);
	\filldraw[fill=white!50!white, draw=black] (0,1.5) circle (3pt);
	
	\filldraw[fill=blue!50!white, draw=black] (0.5,0) circle (3pt);
	\filldraw[fill=blue!50!white, draw=black] (0.5,0.5) circle (3pt);
	\filldraw[fill=blue!50!white, draw=black] (0.5,1) circle (3pt);
	\filldraw[fill=blue!50!white, draw=black] (0.5,1.5) circle (3pt);
	
	\filldraw[fill=red!80!white, draw=black] (1,0) circle (3pt);
	\filldraw[fill=red!80!white, draw=black] (1,0.5) circle (3pt);

	 \filldraw[fill=white!50!white, draw=black] (2,0) circle (3pt);
	 \filldraw[fill=white!50!white, draw=black] (2,0.5) circle (3pt);
	 \filldraw[fill=white!50!white, draw=black] (2,1) circle (3pt);
	 \filldraw[fill=white!50!white, draw=black] (2,1.5) circle (3pt);
	
	 \filldraw[fill=blue!50!white, draw=black] (2.5,0) circle (3pt);
	 \filldraw[fill=blue!50!white, draw=black] (2.5,0.5) circle (3pt);
	 \filldraw[fill=blue!50!white, draw=black] (2.5,1) circle (3pt);
   	  \filldraw[fill=red!80!white, draw=black]  (2.5,1.5) circle (3pt);
	
	 \filldraw[fill=red!80!white, draw=black] (3,0) circle (3pt);
	 \filldraw[fill=red!80!white, draw=black] (3,0.5) circle (3pt);

	 \filldraw[fill=white!50!white, draw=black] (4,0) circle (3pt);
	 \filldraw[fill=white!50!white, draw=black] (4,0.5) circle (3pt);
	 \filldraw[fill=white!50!white, draw=black] (4,1) circle (3pt);
	 \filldraw[fill=white!50!white, draw=black] (4,1.5) circle (3pt);
	
	 \filldraw[fill=blue!50!white, draw=black] (4.5,0) circle (3pt);
	 \filldraw[fill=red!80!white, draw=black] (4.5,0.5) circle (3pt);
	 \filldraw[fill=red!80!white, draw=black] (4.5,1) circle (3pt);
	 \filldraw[fill=blue!50!white, draw=black] (4.5,1.5) circle (3pt);
	
	 \filldraw[fill=green, draw=black] (5,0) circle (3pt);
	 \filldraw[fill=green, draw=black] (5,0.5) circle (3pt);

	 \filldraw[fill=white!50!white, draw=black] (6,0) circle (3pt);
	 \filldraw[fill=white!50!white, draw=black] (6,0.5) circle (3pt);
	 \filldraw[fill=white!50!white, draw=black] (6,1) circle (3pt);
	 \filldraw[fill=white!50!white, draw=black] (6,1.5) circle (3pt);
	
	 \filldraw[fill=green, draw=black] (6.5,0) circle (3pt);
	 \filldraw[fill=green, draw=black] (6.5,0.5) circle (3pt);
	 \filldraw[fill=blue!50!white, draw=black] (6.5,1) circle (3pt);
	 \filldraw[fill=blue!50!white, draw=black] (6.5,1.5) circle (3pt);
	
	 \filldraw[fill=red!80!white, draw=black] (7,0) circle (3pt);
	 \filldraw[fill=red!80!white, draw=black] (7,0.5) circle (3pt);

	 \filldraw[fill=white!50!white, draw=black] (8,0) circle (3pt);
	 \filldraw[fill=white!50!white, draw=black] (8,0.5) circle (3pt);
	 \filldraw[fill=white!50!white, draw=black] (8,1) circle (3pt);
	 \filldraw[fill=red!80!white, draw=black] (8,1.5) circle (3pt);
	
	 \filldraw[fill=blue!50!white, draw=black] (8.5,0) circle (3pt);
	 \filldraw[fill=blue!50!white, draw=black] (8.5,0.5) circle (3pt);
	 \filldraw[fill=red!80!white, draw=black] (8.5,1) circle (3pt);
	 \filldraw[fill=blue!50!white, draw=black] (8.5,1.5) circle (3pt);
	
	 \filldraw[fill=green, draw=black] (9,0) circle (3pt);
	 \filldraw[fill=green, draw=black] (9,0.5) circle (3pt);

	 \filldraw[fill=white!50!white, draw=black] (10,0) circle (3pt);
	 \filldraw[fill=white!50!white, draw=black] (10,0.5) circle (3pt);
	 \filldraw[fill=white!50!white, draw=black] (10,1) circle (3pt);
	 \filldraw[fill=red!80!white, draw=black] (10,1.5) circle (3pt);
	
	 \filldraw[fill=red!80!white, draw=black] (10.5,0) circle (3pt);
	 \filldraw[fill=blue!50!white, draw=black] (10.5,0.5) circle (3pt);
	 \filldraw[fill=blue!50!white, draw=black] (10.5,1) circle (3pt);
	 \filldraw[fill=blue!50!white, draw=black] (10.5,1.5) circle (3pt);
	
	 \filldraw[fill=green, draw=black] (11,0) circle (3pt);
	 \filldraw[fill=green, draw=black] (11,0.5) circle (3pt);

	     % % % % % % % % % %
	
    \draw (-0.5,-3.5) rectangle (13.5-2,-1);
    \draw[dashed] (1.5,-3.5) -- (1.5,-1);
    \draw[dashed] (3.5,-3.5) -- (3.5,-1);
    \draw[dashed] (5.5,-3.5) -- (5.5,-1);
    \draw[dashed] (7.5,-3.5) -- (7.5,-1);
    \draw[dashed] (9.5,-3.5) -- (9.5,-1);
    %\draw[very thick] (3.25,-0.5) -- (3.25,2);
    %\node at (1.3,-1) {$\Pi_{\geq 2}^{\rm con}(1,1,2,2)$};
    \node at (0.5,2.2-3) {$\sigma_7$};
    \node at (2.5,2.2-3) {$\sigma_8$};
    \node at (4.5,2.2-3) {$\sigma_{9}$};
    \node at (6.5,2.2-3) {$\sigma_{10}$};
    \node at (8.5,2.2-3) {$\sigma_{11}$};
    \node at (10.5,2.2-3) {$\sigma_{12}$};

	\filldraw[fill=green, draw=black] (0,-3) circle (3pt);
	\filldraw[fill=white, draw=black] (0,-2.5) circle (3pt);
	\filldraw[fill=white, draw=black] (0,-2) circle (3pt);
	\filldraw[fill=white, draw=black] (0,-1.5) circle (3pt);
	
	\filldraw[fill=blue!50!white, draw=black] (0.5,-3) circle (3pt);
	\filldraw[fill=green, draw=black] (0.5,-2.5) circle (3pt);
	\filldraw[fill=blue!50!white, draw=black] (0.5,-2) circle (3pt);
	\filldraw[fill=blue!50!white, draw=black] (0.5,-1.5) circle (3pt);
	
	\filldraw[fill=red!80!white, draw=black] (1,-3) circle (3pt);
	\filldraw[fill=red!80!white, draw=black] (1,-2.5) circle (3pt);

	\filldraw[fill=white, draw=black] (0+2,-3) circle (3pt);
	\filldraw[fill=white, draw=black] (0+2,-2.5) circle (3pt);
	\filldraw[fill=white, draw=black] (0+2,-2) circle (3pt);
	\filldraw[fill=green, draw=black] (0+2,-1.5) circle (3pt);
	
	\filldraw[fill=blue!50!white, draw=black] (0.5+2,-3) circle (3pt);
	\filldraw[fill=blue!50!white, draw=black] (0.5+2,-2.5) circle (3pt);
	\filldraw[fill=blue!50!white, draw=black] (0.5+2,-2) circle (3pt);
	\filldraw[fill=red!80!white, draw=black] (0.5+2,-1.5) circle (3pt);
	
	\filldraw[fill=green, draw=black] (1+2,-3) circle (3pt);
	\filldraw[fill=red!80!white, draw=black] (1+2,-2.5) circle (3pt);

	\filldraw[fill=white, draw=black] (0+4,-3) circle (3pt);
	\filldraw[fill=white, draw=black] (0+4,-2.5) circle (3pt);
	\filldraw[fill=white, draw=black] (0+4,-2) circle (3pt);
	\filldraw[fill=green, draw=black] (0+4,-1.5) circle (3pt);
	
	\filldraw[fill=blue!50!white, draw=black] (0.5+4,-3) circle (3pt);
	\filldraw[fill=blue!50!white, draw=black] (0.5+4,-2.5) circle (3pt);
	\filldraw[fill=red!80!white, draw=black] (0.5+4,-2) circle (3pt);
	\filldraw[fill=blue!50!white, draw=black] (0.5+4,-1.5) circle (3pt);
	
	\filldraw[fill=red!80!white, draw=black] (1+4,-3) circle (3pt);
	\filldraw[fill=green, draw=black] (1+4,-2.5) circle (3pt);

	\filldraw[fill=white, draw=black] (0+6,-3) circle (3pt);
	\filldraw[fill=white, draw=black] (0+6,-2.5) circle (3pt);
	\filldraw[fill=blue!50!white, draw=black] (0+6,-2) circle (3pt);
	\filldraw[fill=blue!50!white, draw=black] (0+6,-1.5) circle (3pt);
	
	\filldraw[fill=green, draw=black] (0.5+6,-3) circle (3pt);
	\filldraw[fill=red!80!white, draw=black] (0.5+6,-2.5) circle (3pt);
	\filldraw[fill=red!80!white, draw=black] (0.5+6,-2) circle (3pt);
	\filldraw[fill=green, draw=black] (0.5+6,-1.5) circle (3pt);
	
	\filldraw[fill=orange!30!white, draw=black] (1+6,-3) circle (3pt);
	\filldraw[fill=orange!30!white, draw=black] (1+6,-2.5) circle (3pt);

	\filldraw[fill=white, draw=black] (2+6,-3) circle (3pt);
	\filldraw[fill=blue!50!white, draw=black] (2+6,-2.5) circle (3pt);
	\filldraw[fill=blue!50!white, draw=black] (2+6,-2) circle (3pt);
	\filldraw[fill=white, draw=black] (2+6,-1.5) circle (3pt);
	
	\filldraw[fill=red!80!white, draw=black] (2.5+6,-3) circle (3pt);
	\filldraw[fill=green, draw=black] (2.5+6,-2.5) circle (3pt);
	\filldraw[fill=green, draw=black] (2.5+6,-2) circle (3pt);
	\filldraw[fill=red!80!white, draw=black] (2.5+6,-1.5) circle (3pt);
	
	\filldraw[fill=orange!30!white, draw=black] (3+6,-3) circle (3pt);
	\filldraw[fill=orange!30!white, draw=black] (3+6,-2.5) circle (3pt);

	\filldraw[fill=white, draw=black] (4+6,-3) circle (3pt);
	\filldraw[fill=blue!50!white, draw=black] (4+6,-2.5) circle (3pt);
	\filldraw[fill=blue!50!white, draw=black] (4+6,-2) circle (3pt);
	\filldraw[fill=white, draw=black] (4+6,-1.5) circle (3pt);
	
	\filldraw[fill=red!80!white, draw=black] (4.5+6,-3) circle (3pt);
	\filldraw[fill=green, draw=black] (4.5+6,-2.5) circle (3pt);
	\filldraw[fill=red!80!white, draw=black] (4.5+6,-2) circle (3pt);
	\filldraw[fill=green, draw=black] (4.5+6,-1.5) circle (3pt);
	
	\filldraw[fill=orange!30!white, draw=black] (5+6,-3) circle (3pt);
	\filldraw[fill=orange!30!white, draw=black] (5+6,-2.5) circle (3pt);

%    \filldraw[fill=white, draw=black]
 %   \filldraw[fill=green, draw=black]
  %  \filldraw[fill=blue!50!white, draw=black]
   % \filldraw[fill=red!80!white, draw=black]
    %\filldraw[fill=orange!30!white, draw=black]

    % % % % % % % % % % % % % % % % % % % % % %
    \end{tikzpicture}
    \end{center}
    \caption{Illustration of the partition in $\Pi_{\geq 2}^{\rm con}(2,2,3,3)$. }
    \label{fig:proofd=3b}
    \end{figure}

In order to deal with $M_{2,3}(f^{(0)})$, up to renumbering of the elements precisely the $12$ partitions $\sigma_1,\ldots,\sigma_{12}$ in $\Pi_{\geq 2}^{\rm con}(2,2,3,3)$ have to be considered, see Figure \ref{fig:proofd=3b}. Using that $\cH^0(H_1\cap H_2\cap H_3\cap B_r)\leq 1$ for $\mu_2^3$-almost all $(H_1,H_2,H_3)\in A_h(3,2)^3$ we find for $\sigma_1$ that
\begin{align*}
&\int_{A_h(3,2)^3}\cH^1(H_1\cap H_2\cap B_r)^2\,\cH^0(H_1\cap H_2\cap H_3\cap B_r)^2\,\mu_2^3(d(H_1,H_2,H_3))\\
&\qquad = \int_{A_h(3,2)^3}\cH^1(H_1\cap H_2\cap B_r)^2\,\cH^0(H_1\cap H_2\cap H_3\cap B_r)\,\mu_2^3(d(H_1,H_2,H_3)).
\end{align*}
Applying now Crofton's formula, we obtain the upper bound
\begin{align*}
&c\int_{A_h(3,2)^2}\cH^1(H_1\cap H_2\cap B_r)^3\,\mu_2^3(d(H_1,H_2))\le c\, I(1,2)\le c\, e^{2r}.
\end{align*}
The same arguments also lead to bounds for the remaining partitions $\sigma_2,\ldots,\sigma_{12}$. As for $\sigma_1$, the first step is always to bound the $0$-dimensional Hausdorff measure $\cH^0(\,\cdot\,)$ of the intersection of the three planes corresponding to the last row of the partition by $1$, which is a valid estimate for $\mu_2^3$-almost all triples of planes. For this reason we systematically skip this first step in our following computations and only show how to deal with the integral of the three remaining terms
\begin{align*}
&\cH^1(\text{intersection of the  2 planes corresponding to the first row})\\
&\times\cH^1(\text{intersection of the  2 planes corresponding to the second row})\\
&\quad\times\cH^0(\text{intersection of the  3 planes corresponding to the third row}).
\end{align*}
For $\sigma_2$ we get
\begin{align*}
&\int_{A_h(3,2)^3}\cH^1(H_1\cap H_2\cap B_r)\,\cH^1(H_1\cap H_3\cap B_r)\cH^0(H_1\cap H_2\cap H_3\cap B_r)\,\mu_2^3(d(H_1,H_2,H_3))\\
&\qquad\leq c\int_{A_h(3,2)^3}\cH^1(L_1(H_1)\cap B_r)^2\,\cH^0(H_1\cap H_2\cap H_3\cap B_r)\,\mu_2^3(d(H_1,H_2,H_3)) \\
&\qquad\leq c\, I(1,2)\leq c\,e^{2r},
\end{align*}
for $\sigma_3$ we get
\begin{align*}
&\int_{A_h(3,2)^4}\cH^1(H_1\cap H_2\cap B_r)\,\cH^1(H_1\cap H_3\cap B_r)\,\cH^0(H_1\cap H_3\cap H_4\cap B_r)\,\mu_2^4(d(H_1,H_2,H_3,H_4))\\
&\qquad\leq c\int_{A_h(3,2)^3}\cH^1(H_1\cap H_2\cap B_r)\,\cH^1(L_1(H_1)\cap B_r)\,\cH^1(L_1(H_3)\cap B_r)\,\mu_2^3(d(H_1,H_2,H_3))\\
&\qquad\leq c\, I(1,1)\, I(0,1)\leq c\,e^{4r},
\end{align*}
for $\sigma_4$ we get
\begin{align*}
&\int_{A_h(3,2)^4}\cH^1(H_1\cap H_2\cap B_r)^2\,\cH^0(H_1\cap H_3\cap H_4\cap B_r)\,\mu_2^4(d(H_1,H_2,H_3,H_4))\\
&\qquad\leq c\int_{A_h(3,2)^2}\cH^1(L_1(H_1)\cap B_r)\,\cH^1(L_1(H_2)\cap B_r)\,\cH^2(H_1\cap B_r)\,\mu_2^2(d(H_1,H_2))\\
&\qquad\leq c\, I(1,1)\, I(0,1)\leq c\,e^{4r},
\end{align*}
for $\sigma_5$ we get
\begin{align*}
&\int_{A_h(3,2)^4}\cH^1(H_1\cap H_2\cap B_r)\,\cH^1(H_1\cap H_3\cap B_r)\,\cH^0(H_2\cap H_3\cap H_4\cap B_r)\,\mu_2^4(d(H_1,H_2,H_3,H_4))\\
&\qquad\leq c\int_{A_h(3,2)^3}\cH^1(H_1\cap H_2\cap B_r)\,\cH^1(L_1(H_3)\cap B_r)^{2}\,\mu_2^3(d(H_1,H_2,H_3))\\
&\qquad \leq c\,\cH^3(B_r)\, I(0,2)\leq c\,e^{4r},
\end{align*}
for $\sigma_6$ we get
\begin{align*}
& \int_{A_h(3,2)^4}\cH^1(H_1\cap H_2\cap B_r)\,\cH^1(H_2\cap H_3\cap B_r)\cH^0(H_2\cap H_3\cap H_4\cap B_r)\,\mu_2^4(d(H_1,H_2,H_3,H_4))\\
&\qquad\leq c\int_{A_h(3,2)^3}\cH^1(H_1\cap H_2\cap B_r)\,\cH^1(L_1(H_3)\cap B_r)^{2}\,\mu_2^3(d(H_1,H_2,H_3)) ,
\end{align*}
which is the same as for $\sigma_5$ and thus bounded by $e^{4r}$. For $\sigma_7$ we have
\begin{align*}
&\int_{A_h(3,2)^4}\cH^1(H_1\cap H_2\cap B_r)^2\,\cH^0(H_1\cap H_3\cap H_4\cap B_r)\,\mu_2^4(d(H_1,H_2,H_3,H_4))\\
&\qquad\leq c\int_{A_h(3,2)^2}\cH^1(L_1(H_1)\cap B_r)\,\cH^1(L_1(H_2)\cap B_r)\,\cH^2(H_1\cap B_r)\,\mu_2^2(d(H_1,H_2))\\
&\qquad\leq c\, I(1,1)\, I(0,1) \leq c\,e^{4r},
\end{align*}
for $\sigma_8$ we have
\begin{align*}
&\int_{A_h(3,2)^4}\cH^1(H_1\cap H_2\cap B_r)\,\cH^1(H_3\cap H_4\cap B_r)\,\cH^0(H_2\cap H_3\cap H_4\cap B_r)\,\mu_2^4(d(H_1,H_2,H_3,H_4))\\
&\qquad\leq \int_{A_h(3,2)^4}\cH^1(H_1\cap H_2\cap B_r)\,\cH^1(H_3\cap H_4\cap B_r)\,\mu_2^4(d(H_1,H_2,H_3,H_4))\\
&\qquad =c\,\cH^3(B_r)^2 \leq c\,e^{4r},
\end{align*}
for $\sigma_9$ we have
\begin{align*}
&\int_{A_h(3,2)^4}\cH^1(H_1\cap H_2\cap B_r)\,\cH^1(H_3\cap H_4\cap B_r)\,\cH^0(H_1\cap H_2\cap H_3\cap B_r)\,\mu_2^4(d(H_1,H_2,H_3,H_4))\\
&\qquad\leq \int_{A_h(3,2)^4}\cH^1(H_1\cap H_2\cap B_r)\,\cH^1(H_3\cap H_4\cap B_r)\,\mu_2^4(d(H_1,H_2,H_3,H_4))\\
&\qquad
=c\,\cH^3(B_r)^2 \leq c\,e^{4r}.
\end{align*}
Next, for $\sigma_{10}$ we get
\begin{align*}
&\int_{A_h(3,2)^5}\cH^1(H_1\cap H_2\cap B_r)\,\cH^1(H_1\cap H_3\cap B_r)\\
&\hspace{4cm}\times\cH^0(H_3\cap H_4\cap H_5\cap B_r)\,\mu_2^5(d(H_1,H_2,H_3,H_4,H_5))\\
&\qquad\leq c\int_{A_h(3,2)^3}\cH^1(H_1\cap H_2\cap B_r)\,\cH^1(L_1(H_3)\cap B_r)\,\cH^2(H_3\cap B_r)\,\mu_2^3(d(H_1,H_2,H_3))\\
&\qquad\leq c\,\cH^3(B_r)\, I(1,1)\leq c\,e^{4r},
\end{align*}
for $\sigma_{11}$ we get
\begin{align*}
&\int_{A_h(3,2)^5}\cH^1(H_1\cap H_2\cap B_r)\,\cH^1(H_3\cap H_4\cap B_r)\\
&\hspace{4cm}\times \cH^0(H_3\cap H_4\cap H_5\cap B_r)\,\mu_2^5(d(H_1,H_2,H_3,H_4,H_5))\\
&\qquad=c\int_{A_h(3,2)^4}\cH^1(H_1\cap H_2\cap B_r)\,\cH^1(H_3\cap H_4\cap B_r)^2\,\mu_2^4(d(H_1,H_2,H_3,H_4))\\
&\qquad\leq c\,\cH^3(B_r)\int_{A_h(3,2)^2}\cH^1(L_1(H_3)\cap B_r)\,\cH^1(H_3\cap H_4\cap B_r)\,\mu_2^2(d(H_3,H_4))\\
&\qquad= c\,\cH^3(B_r)\int_{A_h(3,2)}\cH^1(L_1(H_3)\cap B_r)\,\cH^2(H_3\cap B_r)\,\mu_2(d H_3)%\\
%&\qquad\leq
\le c\,\cH^3(B_r)\,I(1,1) \leq c\,e^{4r}
\end{align*}
and for $\sigma_{12}$ we get
\begin{align*}
&\int_{A_h(3,2)^5}\cH^1(H_1\cap H_2\cap B_r)\,\cH^1(H_3\cap H_4\cap B_r)\\
&\hspace{4cm}\times \cH^0(H_2\cap H_3\cap H_5\cap B_r)\,\mu_2^5(d(H_1,H_2,H_3,H_4,H_5))\\
&\qquad\leq c\,\cH^3(B_r)\int_{A_h(3,2)}\cH^2(H_3\cap B_r)\,\cH^1(L_1(H_3)\cap B_r)\,\mu_2(d H_3)%\\
%&\qquad
\leq  c\,e^{2r}\, I(1,1)\le c\, e^{4r}.
\end{align*}
Altogether this yields that $M_{2,3}(f^{(0)})\leq c\,t^7\,(2\, e^{2r}+10 \, e^{4r})\leq c\,t^7\,e^{4r}$.
%\newpage
%%%%%%%%%%%%%%%%%%%%%%%%%%%%%%
%%%%% arxiv-Version  %%%%%%%%%
%%%%%%%%%%%%%%%%%%%%%%%%%%%%%%
    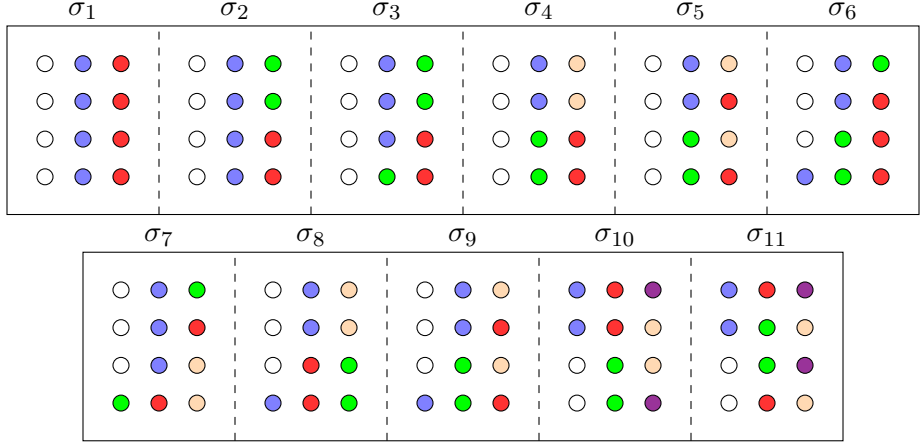
\begin{figure}[t]
    \begin{center}
    \begin{tikzpicture}
    \draw (-0.5,-0.5) rectangle (13.5-2,2);
    \draw[dashed] (1.5,-0.5) -- (1.5,2);
    \draw[dashed] (3.5,-0.5) -- (3.5,2);
    \draw[dashed] (5.5,-0.5) -- (5.5,2);
    \draw[dashed] (7.5,-0.5) -- (7.5,2);
    \draw[dashed] (9.5,-0.5) -- (9.5,2);
    %\draw[very thick] (3.25,-0.5) -- (3.25,2);
    %\node at (1.3,-1) {$\Pi_{\geq 2}^{\rm con}(1,1,2,2)$};
    \node at (0.5,2.2) {$\sigma_1$};
    \node at (2.5,2.2) {$\sigma_2$};
    \node at (4.5,2.2) {$\sigma_3$};
    \node at (6.5,2.2) {$\sigma_4$};
    \node at (8.5,2.2) {$\sigma_5$};
    \node at (10.5,2.2) {$\sigma_6$};

	\filldraw[fill=white!50!white, draw=black] (0,0) circle (3pt);
	\filldraw[fill=white!50!white, draw=black] (0,0.5) circle (3pt);
	\filldraw[fill=white!50!white, draw=black] (0,1) circle (3pt);
	\filldraw[fill=white!50!white, draw=black] (0,1.5) circle (3pt);
	
	\filldraw[fill=blue!50!white, draw=black] (0.5,0) circle (3pt);
	\filldraw[fill=blue!50!white, draw=black] (0.5,0.5) circle (3pt);
	\filldraw[fill=blue!50!white, draw=black] (0.5,1) circle (3pt);
	\filldraw[fill=blue!50!white, draw=black] (0.5,1.5) circle (3pt);
	
	\filldraw[fill=red!80!white, draw=black] (1,0) circle (3pt);
	\filldraw[fill=red!80!white, draw=black] (1,0.5) circle (3pt);
	\filldraw[fill=red!80!white, draw=black] (1,1) circle (3pt);
	\filldraw[fill=red!80!white, draw=black] (1,1.5) circle (3pt);

	\filldraw[fill=white!50!white, draw=black] (2,0) circle (3pt);
	\filldraw[fill=white!50!white, draw=black] (2,0.5) circle (3pt);
	\filldraw[fill=white!50!white, draw=black] (2,1) circle (3pt);
	\filldraw[fill=white!50!white, draw=black] (2,1.5) circle (3pt);
	
	\filldraw[fill=blue!50!white, draw=black] (2.5,0) circle (3pt);
	\filldraw[fill=blue!50!white, draw=black] (2.5,0.5) circle (3pt);
	\filldraw[fill=blue!50!white, draw=black] (2.5,1) circle (3pt);
	\filldraw[fill=blue!50!white, draw=black] (2.5,1.5) circle (3pt);
	
	\filldraw[fill=red!80!white, draw=black] (3,0) circle (3pt);
	\filldraw[fill=red!80!white, draw=black] (3,0.5) circle (3pt);
	\filldraw[fill=green, draw=black] (3,1) circle (3pt);
	\filldraw[fill=green, draw=black] (3,1.5) circle (3pt);

	 \filldraw[fill=white!50!white, draw=black] (4,0) circle (3pt);
	 \filldraw[fill=white!50!white, draw=black] (4,0.5) circle (3pt);
	 \filldraw[fill=white!50!white, draw=black] (4,1) circle (3pt);
	 \filldraw[fill=white!50!white, draw=black] (4,1.5) circle (3pt);
	
	 \filldraw[fill=green, draw=black] (4.5,0) circle (3pt);
	 \filldraw[fill=blue!50!white, draw=black] (4.5,0.5) circle (3pt);
	 \filldraw[fill=blue!50!white, draw=black] (4.5,1) circle (3pt);
	 \filldraw[fill=blue!50!white, draw=black] (4.5,1.5) circle (3pt);
	
	 \filldraw[fill=red!80!white, draw=black] (5,0) circle (3pt);
	 \filldraw[fill=red!80!white, draw=black] (5,0.5) circle (3pt);
	\filldraw[fill=green, draw=black] (5,1) circle (3pt);
	\filldraw[fill=green, draw=black] (5,1.5) circle (3pt);

	 \filldraw[fill=white!50!white, draw=black] (6,0) circle (3pt);
	 \filldraw[fill=white!50!white, draw=black] (6,0.5) circle (3pt);
	 \filldraw[fill=white!50!white, draw=black] (6,1) circle (3pt);
	 \filldraw[fill=white!50!white, draw=black] (6,1.5) circle (3pt);
	
	 \filldraw[fill=green, draw=black] (6.5,0) circle (3pt);
	 \filldraw[fill=green, draw=black] (6.5,0.5) circle (3pt);
	 \filldraw[fill=blue!50!white, draw=black] (6.5,1) circle (3pt);
	 \filldraw[fill=blue!50!white, draw=black] (6.5,1.5) circle (3pt);
	
	 \filldraw[fill=red!80!white, draw=black] (7,0) circle (3pt);
	 \filldraw[fill=red!80!white, draw=black] (7,0.5) circle (3pt);
	 \filldraw[fill=orange!30!white, draw=black] (7,1) circle (3pt);
	 \filldraw[fill=orange!30!white, draw=black] (7,1.5) circle (3pt);

	 \filldraw[fill=white!50!white, draw=black] (8,0) circle (3pt);
	 \filldraw[fill=white!50!white, draw=black] (8,0.5) circle (3pt);
	 \filldraw[fill=white!50!white, draw=black] (8,1) circle (3pt);
	 \filldraw[fill=white!50!white, draw=black] (8,1.5) circle (3pt);
	
	 \filldraw[fill=green, draw=black] (8.5,0) circle (3pt);
	 \filldraw[fill=green, draw=black] (8.5,0.5) circle (3pt);
	 \filldraw[fill=blue!50!white, draw=black] (8.5,1) circle (3pt);
	 \filldraw[fill=blue!50!white, draw=black] (8.5,1.5) circle (3pt);
	
	 \filldraw[fill=red!80!white, draw=black] (9,0) circle (3pt);
	 \filldraw[fill=orange!30!white, draw=black] (9,0.5) circle (3pt);
	  \filldraw[fill=red!80!white, draw=black](9,1) circle (3pt);
	 \filldraw[fill=orange!30!white, draw=black] (9,1.5) circle (3pt);

	 \filldraw[fill=blue!50!white, draw=black] (10,0) circle (3pt);
	 \filldraw[fill=white!50!white, draw=black] (10,0.5) circle (3pt);
	 \filldraw[fill=white!50!white, draw=black] (10,1) circle (3pt);
	 \filldraw[fill=white!50!white, draw=black] (10,1.5) circle (3pt);
	
	 \filldraw[fill=green, draw=black] (10.5,0) circle (3pt);
	 \filldraw[fill=green, draw=black] (10.5,0.5) circle (3pt);
	 \filldraw[fill=blue!50!white, draw=black] (10.5,1) circle (3pt);
	 \filldraw[fill=blue!50!white, draw=black] (10.5,1.5) circle (3pt);
	
	 \filldraw[fill=red!80!white, draw=black] (11,0) circle (3pt);
	 \filldraw[fill=red!80!white, draw=black] (11,0.5) circle (3pt);	
	  \filldraw[fill=red!80!white, draw=black] (11,1) circle (3pt);
	 \filldraw[fill=green, draw=black] (11,1.5) circle (3pt);

	     % % % % % % % % % %
	
    \draw (-0.5+1,-3.5) rectangle (11.5+1-2,-1);
    \draw[dashed] (1.5+1,-3.5) -- (1.5+1,-1);
    \draw[dashed] (3.5+1,-3.5) -- (3.5+1,-1);
    \draw[dashed] (5.5+1,-3.5) -- (5.5+1,-1);
    \draw[dashed] (7.5+1,-3.5) -- (7.5+1,-1);
    \node at (0.5+1,2.2-3) {$\sigma_7$};
    \node at (2.5+1,2.2-3) {$\sigma_8$};
    \node at (4.5+1,2.2-3) {$\sigma_{9}$};
    \node at (6.5+1,2.2-3) {$\sigma_{10}$};
    \node at (8.5+1,2.2-3) {$\sigma_{11}$};

	\filldraw[fill=green, draw=black] (0+1,-3) circle (3pt);
	\filldraw[fill=white, draw=black] (0+1,-2.5) circle (3pt);
	\filldraw[fill=white, draw=black] (0+1,-2) circle (3pt);
	\filldraw[fill=white, draw=black] (0+1,-1.5) circle (3pt);
	
	\filldraw[fill=red!80!white, draw=black] (0.5+1,-3) circle (3pt);
	\filldraw[fill=blue!50!white, draw=black] (0.5+1,-2.5) circle (3pt);
	\filldraw[fill=blue!50!white, draw=black] (0.5+1,-2) circle (3pt);
	\filldraw[fill=blue!50!white, draw=black] (0.5+1,-1.5) circle (3pt);
	
	\filldraw[fill=orange!30!white, draw=black] (1+1,-3) circle (3pt);
	\filldraw[fill=orange!30!white, draw=black] (1+1,-2.5) circle (3pt);
	\filldraw[fill=red!80!white, draw=black] (1+1,-2) circle (3pt);
	\filldraw[fill=green, draw=black] (1+1,-1.5) circle (3pt);

	\filldraw[fill=blue!50!white, draw=black] (0+2+1,-3) circle (3pt);
	\filldraw[fill=white, draw=black] (0+2+1,-2.5) circle (3pt);
	\filldraw[fill=white, draw=black] (0+2+1,-2) circle (3pt);
	\filldraw[fill=white, draw=black] (0+2+1,-1.5) circle (3pt);
	
	\filldraw[fill=red!80!white, draw=black] (0.5+2+1,-3) circle (3pt);
	\filldraw[fill=red!80!white, draw=black] (0.5+2+1,-2.5) circle (3pt);
	\filldraw[fill=blue!50!white, draw=black] (0.5+2+1,-2) circle (3pt);
	\filldraw[fill=blue!50!white, draw=black] (0.5+2+1,-1.5) circle (3pt);
	
	\filldraw[fill=green, draw=black] (1+2+1,-3) circle (3pt);
	\filldraw[fill=green, draw=black] (1+2+1,-2.5) circle (3pt);
	\filldraw[fill=orange!30!white, draw=black] (1+2+1,-2) circle (3pt);
	\filldraw[fill=orange!30!white, draw=black] (1+2+1,-1.5) circle (3pt);

	\filldraw[fill=blue!50!white, draw=black] (0+4+1,-3) circle (3pt);
	\filldraw[fill=white, draw=black] (0+4+1,-2.5) circle (3pt);
	\filldraw[fill=white, draw=black] (0+4+1,-2) circle (3pt);
	\filldraw[fill=white, draw=black] (0+4+1,-1.5) circle (3pt);
	
	\filldraw[fill=green, draw=black] (0.5+4+1,-3) circle (3pt);
	\filldraw[fill=green, draw=black] (0.5+4+1,-2.5) circle (3pt);
	\filldraw[fill=blue!50!white, draw=black] (0.5+4+1,-2) circle (3pt);
	\filldraw[fill=blue!50!white, draw=black] (0.5+4+1,-1.5) circle (3pt);
	
	\filldraw[fill=red!80!white, draw=black] (1+4+1,-3) circle (3pt);
	\filldraw[fill=orange!30!white, draw=black] (1+4+1,-2.5) circle (3pt);
	\filldraw[fill=red!80!white, draw=black] (1+4+1,-2) circle (3pt);
	\filldraw[fill=orange!30!white, draw=black] (1+4+1,-1.5) circle (3pt);			
	
	\filldraw[fill=white, draw=black] (0+6+1,-3) circle (3pt);
	\filldraw[fill=white, draw=black] (0+6+1,-2.5) circle (3pt);
	\filldraw[fill=blue!50!white, draw=black] (0+6+1,-2) circle (3pt);
	\filldraw[fill=blue!50!white, draw=black] (0+6+1,-1.5) circle (3pt);
	
	\filldraw[fill=green, draw=black] (0.5+6+1,-3) circle (3pt);
	\filldraw[fill=green, draw=black] (0.5+6+1,-2.5) circle (3pt);
	\filldraw[fill=red!80!white, draw=black] (0.5+6+1,-2) circle (3pt);
	\filldraw[fill=red!80!white, draw=black] (0.5+6+1,-1.5) circle (3pt);
	
	\filldraw[fill=violet!80!white, draw=black] (1+6+1,-3) circle (3pt);
	\filldraw[fill=orange!30!white, draw=black] (1+6+1,-2.5) circle (3pt);
	\filldraw[fill=orange!30!white, draw=black] (1+6+1,-2) circle (3pt);
	\filldraw[fill=violet!80!white, draw=black]  (1+6+1,-1.5) circle (3pt);

	\filldraw[fill=white, draw=black] (2+6+1,-3) circle (3pt);
	\filldraw[fill=white, draw=black] (2+6+1,-2.5) circle (3pt);
	\filldraw[fill=blue!50!white, draw=black] (2+6+1,-2) circle (3pt);
	\filldraw[fill=blue!50!white, draw=black] (2+6+1,-1.5) circle (3pt);
	
	\filldraw[fill=red!80!white, draw=black] (2.5+6+1,-3) circle (3pt);
	\filldraw[fill=green, draw=black] (2.5+6+1,-2.5) circle (3pt);
	\filldraw[fill=green, draw=black] (2.5+6+1,-2) circle (3pt);
	\filldraw[fill=red!80!white, draw=black] (2.5+6+1,-1.5) circle (3pt);
	
	\filldraw[fill=orange!30!white, draw=black] (3+6+1,-3) circle (3pt);
	\filldraw[fill=violet!80!white, draw=black] (3+6+1,-2.5) circle (3pt);
	\filldraw[fill=orange!30!white, draw=black] (3+6+1,-2) circle (3pt);
	\filldraw[fill=violet!80!white, draw=black]  (3+6+1,-1.5) circle (3pt);

    % % % % % % % % % % % % % % % % % % % % % %
    \end{tikzpicture}
    \end{center}
    \caption{Illustration of the partition in $\Pi_{\geq 2}^{\rm con}(3,3,3,3)$. }
    \label{fig:proofd=3c}
    \end{figure}

Finally, we deal with the term $M_{3,3}(f^{(0)})$. This requires to consider the partitions in $\Pi_{\geq 2}^{\rm con}(3,3,3,3)$. Up to renumbering of the elements there are precisely $11$ partitions $\sigma_1,\ldots,\sigma_{11}$ of this type and they are shown in Figure \ref{fig:proofd=3c}. The analysis of the resulting integrals works the same way as above. Especially, we use once again systematically that $\cH^0(H_1\cap H_2\cap H_3\cap B_r)\leq 1$ for $\mu_2^3$-almost all $(H_1,H_2,H_3)\in A_h(3,2)^3$ and apply this to the term corresponding to the last row of each of the partitions. This leaves us with integrals over
\begin{align*}
&\cH^0(\text{intersection of the  3 planes corresponding to the first row})\\
&\times\cH^0(\text{intersection of the  3 planes corresponding to the second row})\\
&\quad\times\cH^0(\text{intersection of the  3 planes corresponding to the third row}),
\end{align*}
which in turn can be bounded using Crofton's formula,
Lemma \ref{lem:lines_intersecting_ball_inequality} and Lemma \ref{upperboundL1}. For $\sigma_1$ this yields
\begin{align*}
&\int_{A_h(3,2)^3}\cH^0(H_1\cap H_2\cap H_3\cap B_r)^3\,\mu_2^3(d(H_1,H_2,H_3))\\
&\qquad = \int_{A_h(3,2)^3}\cH^0(H_1\cap H_2\cap H_3\cap B_r)\,\mu_2^3(d(H_1,H_2,H_3)) = c\,\cH^3(B_r) \leq c\,e^{2r},
\end{align*}
for $\sigma_2$ and $\sigma_3$ we obtain
\begin{align*}
&\int_{A_h(3,2)^4}\cH^0(H_1\cap H_2\cap H_3\cap B_r)^2\,\cH^0(H_1\cap H_2\cap H_4\cap B_r)\,\mu_2^4(d(H_1,H_2,H_3,H_4))\\
&\qquad = \int_{A_h(3,2)^4}\cH^0(H_1\cap H_2\cap H_3\cap B_r)\,\cH^0(H_1\cap H_2\cap H_4\cap B_r)\,\mu_2^4(d(H_1,H_2,H_3,H_4))\\
&\qquad =c\int_{A_h(3,2)^2}\cH^1(H_1\cap H_2\cap B_r)^2\,\mu_2^2(d(H_1,H_2)) \le  c\, I(1,1)\le c\, e^{2r},
\end{align*}
for $\sigma_4$ we obtain
\begin{align*}
&\int_{A_h(3,2)^5}\cH^0(H_1\cap H_2\cap H_3\cap B_r)^2\,\cH^0(H_1\cap H_4\cap H_5\cap B_r)\,\mu_2^5(d(H_1,H_2,H_3,H_4,H_5))\\
&\qquad = \int_{A_h(3,2)^5}\cH^0(H_1\cap H_2\cap H_3\cap B_r)\,\cH^0(H_1\cap H_4\cap H_5\cap B_r)\,\mu_2^5(d(H_1,H_2,H_3,H_4,H_5))\\
&\qquad =c\int_{A_h(3,2)}\cH^2(H_1\cap B_r)^2\,\mu_2(dH_1) \leq c\, g(2,2,3,r)\leq c\,re^{2r},
\end{align*}
for $\sigma_5$ we have
\begin{align*}
&\int_{A_h(3,2)^5}\cH^0(H_1\cap H_2\cap H_3\cap B_r)\,\cH^0(H_1\cap H_2\cap H_4\cap B_r)\\
&\hspace{4cm}\times\cH^0(H_1\cap H_3\cap H_5\cap B_r)\,\mu_2^5(d(H_1,H_2,H_3,H_4,H_5))\\
&\qquad\leq c\int_{A_h(3,2)}\cH^2(H_1\cap B_r)\,\cH^1(L_1(H_1)\cap B_r)^2\,\mu_2(dH_1)\le c\, I(1,2)\leq c\,e^{2r},
\end{align*}
for $\sigma_6$ we have
\begin{align*}
&\int_{A_h(3,2)^4}\cH^0(H_1\cap H_2\cap H_3\cap B_r)\,\cH^0(H_1\cap H_2\cap H_4\cap B_r)\\
&\hspace{4cm}\times\cH^0(H_1\cap H_3\cap H_4\cap B_r)\,\mu_2^4(d(H_1,H_2,H_3,H_4))\\
&\qquad\leq\int_{A_h(3,2)^4}\cH^0(H_1\cap H_2\cap H_3\cap B_r)\,\cH^0(H_1\cap H_2\cap H_4\cap B_r)\,\mu_2^4(d(H_1,H_2,H_3,H_4))\\
&\qquad=c\int_{A_h(3,2)^2}\cH^1(H_1\cap H_2\cap B_r)^{2}\,\mu_2^2(d(H_1,H_2)) \leq c\,e^{2r}
\end{align*}
by the same argument as for $\sigma_2$ and $\sigma_3$. For $\sigma_7$ we have
\begin{align*}
&\int_{A_h(3,2)^5}\cH^0(H_1\cap H_2\cap H_3\cap B_r)\,\cH^0(H_1\cap H_2\cap H_4\cap B_r)\\
&\hspace{4cm}\times\cH^0(H_1\cap H_2\cap H_5\cap B_r)\,\mu_2^5(d(H_1,H_2,H_3,H_4,H_5))\\
&\qquad = c\int_{A_h(3,2)^2}\cH^1(H_1\cap H_2\cap B_r)^3\,\mu_2(d(H_1,H_2))\\
&\qquad \leq c\int_{A_h(3,2)^2}\cH^1(H_1\cap H_2\cap B_r)\,\cH^1(L_1(H_1)\cap B_r)^2\,\mu_2^2(d(H_1,H_2))\leq c\, I(1,2) \leq c\,e^{2r},
\end{align*}
for $\sigma_8$ we obtain
\begin{align*}
&\int_{A_h(3,2)^5}\cH^0(H_1\cap H_2\cap H_3\cap B_r)^2\,\cH^0(H_1\cap H_4\cap H_5\cap B_r)\,\mu_2^5(d(H_1,H_2,H_3,H_4,H_5))\\
&\qquad = \int_{A_h(3,2)^5}\cH^0(H_1\cap H_2\cap H_3\cap B_r)\,\cH^0(H_1\cap H_4\cap H_5\cap B_r)\,\mu_2^5(d(H_1,H_2,H_3,H_4,H_5))\\
&\qquad=c\int_{A_h(3,2)}\cH^2(H_1\cap B_r)^2\,\mu_2(dH_1) \leq c\, g(2,2,3,r) \leq c\,re^{2r},
\end{align*}
for $\sigma_9$ we get
\begin{align*}
&\int_{A_h(3,2)^5}\cH^0(H_1\cap H_2\cap H_3\cap B_r)\,\cH^0(H_1\cap H_2\cap H_4\cap B_r)\\
&\hspace{4cm}\times\cH^0(H_1\cap H_3\cap H_5\cap B_r)\,\mu_2^5(d(H_1,H_2,H_3,H_4,H_5))\\
&\qquad\leq  c\int_{A_h(3,2)^3}\cH^1(H_1\cap H_2\cap B_r)\,\cH^1(H_1\cap H_3\cap B_r)\,\mu_2^3(d(H_1,H_2,H_3))\\
&\qquad = c\int_{A_h(3,2)}\cH^2(H_1\cap B_r)^2\,\mu_2(dH_1)\leq c\, g(2,2,3,r) \leq c\,re^{2r},
\end{align*}
for $\sigma_{10}$ we obtain
\begin{align*}
&\int_{A_h(3,2)^6}\cH^0(H_1\cap H_2\cap H_3\cap B_r)\,\cH^0(H_1\cap H_2\cap H_4\cap B_r)\\
&\hspace{4cm}\times\cH^0(H_4\cap H_5\cap H_6\cap B_r)\,\mu_2^6(d(H_1,\ldots,H_6))\\
&\qquad \leq c \int_{A_h(3,2)^4}\cH^0(H_1\cap H_2\cap H_3\cap B_r)\,\cH^2(H_4\cap B_r)\,\mu_2^4(d(H_1,H_2,H_3,H_4))\\
&\qquad
=c\,\cH^3(B_r)^2\leq c\,e^{4r}
\end{align*}
and, finally, for $\sigma_{11}$ we have
\begin{align*}
&\int_{A_h(3,2)^6}\cH^0(H_1\cap H_2\cap H_3\cap B_r)\,\cH^0(H_1\cap H_4\cap H_5\cap B_r)\\
&\hspace{4cm}\times\cH^0(H_3\cap H_4\cap H_6\cap B_r)\,\mu_2^6(d(H_1,\ldots,H_6))\\
&\qquad = c\int_{A_h(3,2)^3}\cH^{1}(H_1 \cap H_3 \cap B_r) \,\cH^{1}(H_1 \cap H_4 \cap B_r) \,\cH^{1}(H_3 \cap H_4 \cap B_r) \,\mu_2^3(d(H_1,H_3,H_4))\\
&\qquad \leq c\int_{A_h(3,2)^3}\cH^{1}(L_1(H_1) \cap B_r)^{2} \,\cH^{1}(H_3 \cap H_4 \cap B_r) \,\mu_2^3(d(H_1,H_3,H_4))\\
&\qquad
=c\,\cH^3(B_r)\,I(0,2)\leq c\,e^{4r}.
\end{align*}
We thus conclude that $M_{3,3}(f^{(0)})\leq c\,t^6\,(6e^{2r}+3\,re^{2r}+2e^{4r})\leq c\,t^6\,e^{4r}$. An application of the upper bounds for $M_{u,v}(f^{(0)})$ with $(u,v)\in\{(1,1),(1,2),(1,3),(2,2),(2,3),(3,3)\}$ and the lower bound for the variance  from Lemma \ref{lem:VarianceBoundd=3} in \eqref{eq:Kolmogorov} shows that
\begin{equation}\label{eq:Boundd=3i=0}
  d\left(\frac{F_{r,t}^{(0)}-\mathbb{E}F_{r,t}^{(0)}}{\sqrt{\Var(F_{r,t}^{(0)})}},N \right) \leq c\, \frac{3\sqrt{t^9e^{4r}}+3\sqrt{t^9re^{4r}}}{t^5c^{(1)}(3)e^{2r}r} \leq c\,t^{-1/2}\,r^{-1/2}
\end{equation}
  and the proof of Theorem \ref{thm:CLTrToInfinity} (b) is complete.
 \hfill $\Box$

\subsubsection{The higher dimensional cases $d\geq 4$: Proof of Theorem \ref{thm:CLTrToInfinity} (c)}

In order to show that for $d\geq 4$ and $i=d-1$ and for $d\ge 7$ and $i\in\{0,\ldots,d-1\}$ non of the centred and normalized functionals $F_{r,t}^{(i)}$ converges in distribution to a Gaussian random variable, as $r\to\infty$, we will argue that the fourth cumulant
$$
\cum_4:=\E\left(\widetilde{F_{r,t}^{(i)}}\right)^4-3,\qquad\qquad \widetilde{F_{r,t}^{(i)}}:={F_{r,t}^{(i)}-\E F_{r,t}^{(i)}\over \sqrt{\Var(F_{r,t}^{(i)})}}
$$
does not converge to zero, which is the value of the fourth cumulant of a standard Gaussian random variable. We start with the following crucial, but rather technical result, which is based on the formula \eqref{eq:MomentsUstatistic} for the centred moments of a Poisson U-statistic.

\begin{Lemma}\label{lem:UnifInt}
Let $d\geq 4$, $i\in\{0,1,\ldots,d-1\}$ and $t\ge t_0>0$. If $d \in \{4,5,6\}$ and $i=d-1$ or if $d\ge 7$, then
$$
\sup_{r\geq 1}\E\left(\widetilde{F_{r,t}^{(i)}}\right)^5<\infty.
$$
\end{Lemma}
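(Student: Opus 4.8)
The plan is to control the fifth centred moment of $F_{r,t}^{(i)}$ by expanding it via the combinatorial moment formula \eqref{eq:MomentsUstatistic} for Poisson U-statistics of order $m=d-i$, so that
$$
\E\big(F_{r,t}^{(i)}-\E F_{r,t}^{(i)}\big)^5 = \sum_{\sigma\in\Pi_{\geq 2}^{**}(m,\ldots,m)}\int_{A_h(d,d-1)^{5m+|\sigma|-\|\sigma\|}}\big((tf^{(i)})^{\otimes 5}\big)_\sigma\,d\mu_{d-1}^{5m+|\sigma|-\|\sigma\|},
$$
and then to divide by $\Var(F_{r,t}^{(i)})^{5/2}$, whose exact order in $r$ is given by Lemmas \ref{lem:VarianceBoundd=2}--\ref{lem:VarianceBoundd>=4} (namely $e^{r(d-1)}$ for $d=2$, $re^{2r}$ for $d=3$, and $e^{2r(d-2)}$ for $d\ge 4$). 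First I would record that, since $\mu_{d-1}=t\mu_{d-1}$ enters with $t\ge t_0>0$, the $t$-dependence is harmless (the exponent of $t$ in each summand is bounded by a constant depending only on $d$), so the whole issue is the dependence on $r$: one must show each summand is $O\big(\Var(F_{r,t}^{(i)})^{5/2}\big)$ as $r\to\infty$. Because there are only finitely many $\sigma\in\Pi_{\geq 2}^{**}(m,\ldots,m)$ (a number depending only on $d$), it suffices to bound each integral separately.

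The core of the argument is a uniform estimate for a single diagram integral. Fix $\sigma\in\Pi_{\geq 2}^{**}(m,\ldots,m)$. Using Lemma \ref{lem:mu_almost_surely_d-n_hyperplane} one may first replace, within each of the five ``rows'', the intersection of the $m=d-i$ hyperplanes by integration over an $i$-plane, turning $\big((tf^{(i)})^{\otimes 5}\big)_\sigma$ into an integral involving five copies of $\cH^i(E_\ell\cap B_r)$ where the $E_\ell$ are planes of varying dimensions forced by how many identifications $\sigma$ makes in row $\ell$. The shared variables of $\sigma$ then couple these integrations. The strategy is to integrate out the hyperplanes one at a time, in an order dictated by the diagram, each time invoking the Crofton formula (Lemma \ref{lem:Crofton}) to collapse an inner integral against a plane already containing the current point, together with the basic bound $\cH^k(L_k\cap B_r)\le 2r$ for $k=1$ resp.\ $\cH^k(L_k(s)\cap B_r)\le \frac{\omega_k}{k-1}e^{(r-s)(k-1)}$ for $k\ge 2$ from Lemma \ref{lem:H_d_bounds}, and finally reducing everything to a one-dimensional integral of the type handled in Lemma \ref{lem:lines_intersecting_ball_inequality}. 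The output is a bound of the shape $c\cdot g(k,l,d,r)$ for suitable $k,l$ (where, as in Section \ref{sec:growing_window_d=3}, $l$ records an accumulated power and $k$ the relevant plane dimension). The claim is that for every diagram the resulting exponential rate is at most $\tfrac52$ times the variance rate, i.e.\ at most $e^{\frac52 r(d-1)}$ for $d=2$, $r^{5/2}e^{5r}$ for $d=3$ (polynomial factors are irrelevant here since the dominant variance term carries an honest $r$), and $e^{5r(d-2)}$ for $d\ge 4$. This is exactly the point where the dimensional restriction enters: for $d\ge 4$, $i=d-1$ the only relevant power is $l(k-1)$ with $k=d-1$, $l\le 5$ (coming from the fully-connected diagram where all five rows share one variable), giving rate $e^{5r(d-2)}$, matching; for general $i$ with $d\ge 7$ the extra plane-dimension slack ($k$ can be as small as $d-m=i$, but the dominant diagrams still have $k$ close to $d-1$) makes the same comparison work, whereas for $4\le d\le 6$ and $i<d-1$ one would pick up a diagram whose rate exceeds $\Var^{5/2}$, which is why those cases are excluded.

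The main obstacle is precisely the bookkeeping over all diagrams $\sigma\in\Pi_{\geq 2}^{**}(m,\ldots,m)$: one must argue, uniformly, that no diagram produces a worse-than-$\Var^{5/2}$ rate. I would organize this by proving a general lemma: for a product of $\cH^{k_\ell}$-factors over planes $E_\ell$ glued by a sub-partition, the exponential rate in $r$ of the iterated integral is $\max\big\{d-1,\ \textstyle\sum_\ell (k_\ell-1)^+\big\}$ up to polynomial corrections — here $\sum_\ell$ is over the rows and $(k_\ell-1)^+$ reflects that a $1$-plane or $0$-plane contributes no exponential growth (bounded by $2r$ resp.\ $1$), while a higher plane contributes $e^{r(k_\ell-1)}$, with the base $e^{r(d-1)}$ always present from the outermost integration (the ``volume of $B_r$'' term). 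Granting such a lemma, the verification reduces to checking the inequality $\max\{d-1,\Sigma_\sigma\}\le \tfrac52\cdot(\text{variance rate})$ for the finitely many relevant values of $\Sigma_\sigma$, a finite arithmetic check in each of the excluded/included dimension ranges. The only subtlety is that the glueing can force some of the $E_\ell$ to coincide or to be related (e.g.\ row $\ell$ might share \emph{two} variables with another row), which one handles exactly as the repeated use of $\cH^0(H_1\cap H_2\cap H_3\cap B_r)\le 1$ and the function $I(a,b)$ from Lemma \ref{upperboundL1} in Section \ref{sec:growing_window_d=3}: bound the most constrained factor by a constant first, then Crofton the rest. Once $\sup_{r\ge1}\E(\widetilde{F_{r,t}^{(i)}})^5<\infty$ is established, uniform integrability of $(\widetilde{F_{r,t}^{(i)}})^4$ follows by de la Vallée-Poussin, which is what is needed to turn non-vanishing of the fourth cumulant into a genuine obstruction to a central limit theorem.
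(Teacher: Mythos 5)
Your overall strategy coincides with the paper's: expand $\E\big(F_{r,t}^{(i)}-\E F_{r,t}^{(i)}\big)^5$ via \eqref{eq:MomentsUstatistic}, note that the $t$-powers are bounded by a constant depending only on $d$, and bound each of the finitely many diagram integrals by a constant multiple of $\Var(F_{r,t}^{(i)})^{5/2}\asymp e^{5r(d-2)}$; your treatment of $i=d-1$ (two diagram types, with rates $5(d-2)$ and $3(d-2)+2(d-2)$) is correct. The gap is in the organizing principle you propose for $i<d-1$. The ``general lemma'' asserting that the rate of a glued diagram integral is $\max\{d-1,\sum_\ell(k_\ell-1)^+\}$ is false: a diagram in $\Pi^{**}_{\ge 2}(m,\ldots,m)$ need not be connected, its integral factorizes over connected components, and the total exponential rate is the \emph{sum} of the component rates, not a single maximum. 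For instance, with $d=7$, $i=0$, glue rows $1,2$ completely, glue rows $3,4$ in six variables, and attach row $5$ to row $4$ through one further block: your formula predicts rate $e^{6r}$, while the true rate is $e^{6r}\cdot e^{12r}=e^{18r}$. Since the formula can underestimate by an arbitrarily large exponential factor, the ``finite arithmetic check'' built on it is not a proof, even though $e^{18r}\le e^{25r}$ happens to hold in this example.

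Moreover, within a single connected group the factors $\cH^{d-1-b(p)}(L\cap B_r)$ are coupled through the common distance parameter $s$ of the shared hyperplane, so one must analyse $\int_0^r\cosh^{d-1}(s)\prod_p(\cdots)\,ds$ directly. This is exactly where the paper's bookkeeping $(b(p),c(p))$, the case distinction on whether some block joins two rows below the maximal block, and the three-way sign analysis of the exponent $E$ enter; the hypothesis $d\ge 7$ emerges from specific boundary inequalities such as $(d-1)(m(\sigma)-1)>5$ in the case $E=0$ with $m(\sigma)=2$. Your proposal neither locates where $d\ge 7$ is actually used nor substantiates the claim that for $4\le d\le 6$ and $i<d-1$ some diagram genuinely exceeds the $\Var^{5/2}$ rate (the paper only asserts that its method breaks down there). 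A secondary point: you cannot begin by applying Lemma \ref{lem:mu_almost_surely_d-n_hyperplane} row by row to replace each row's $(d-i)$-fold hyperplane integral by a single $i$-plane integral, because the glued hyperplanes are shared across rows; only the free variables of each row can be integrated out by Crofton's formula, which is what the paper does.
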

\begin{figure}[t]
\begin{center}
\begin{tikzpicture}
\filldraw
(0,0) circle (3pt)
(0,0.5) circle (3pt)
(0,1) circle (3pt)
(0,1.5) circle (3pt)
(0,2) circle (3pt)
(2,0) circle (3pt)
(2,0.5) circle (3pt)
(2,1) circle (3pt)
(2,1.5) circle (3pt)
(2,2) circle (3pt);

\filldraw
(6,0) circle (3pt)
(6,0.5) circle (3pt)
(6,1) circle (3pt)
(6,1.5) circle (3pt)
(6,2) circle (3pt)
(6.5,0) circle (3pt)
(6.5,0.5) circle (3pt)
(6.5,1) circle (3pt)
(6.5,1.5) circle (3pt)
(6.5,2) circle (3pt)
(7,0) circle (3pt)
(7,0.5) circle (3pt)
(7,1) circle (3pt)
(7,1.5) circle (3pt)
(7,2) circle (3pt)
(7.5,0) circle (3pt)
(7.5,0.5) circle (3pt)
(7.5,1) circle (3pt)
(7.5,1.5) circle (3pt)
(7.5,2) circle (3pt);

\draw[rotate=90] (1,0) ellipse (38pt and 10pt);

\draw[rotate=90] (1.5,-2) ellipse (21pt and 7pt);
\draw[rotate=90] (0.25,-2) ellipse (12pt and 6pt);

\draw[rotate=90] (1.5,-6) ellipse (21pt and 7pt);
\draw[rotate=90] (0.5,-6.5) ellipse (21pt and 7pt);
\draw[rotate=90] (1.75,-7) ellipse (14pt and 7pt);
\draw[rotate=90] (1,-7.5) ellipse (21pt and 7pt);

\draw (9,2.25) -- (11,2.25);
\draw (10,2.75) -- (10,0);

\node at (9.5,2.5) {$b(\,\cdot\,)$};
\node at (9.5,2) {$1$};
\node at (9.5,1.5) {$1$};
\node at (9.5,1) {$2$};
\node at (9.5,0.5) {$1$};
\node at (9.5,0) {$0$};

\node at (10.5,2.5) {$c(\,\cdot\,)$};
\node at (10.5,2) {$2$};
\node at (10.5,1.5) {$2$};
\node at (10.5,1) {$1$};
\node at (10.5,0.5) {$3$};
\node at (10.5,0) {$4$};

\end{tikzpicture}
\end{center}
\caption{Left panel: The two types of (sub-)partitions in $\Pi_{\geq 2}^{**}(1,1,1,1,1)$. Right panel: Example of a sub-partition $\sigma$ from $\Pi_{\geq 2}^{**}(4,4,4,4,4)$ with $m(\sigma)=3$.}
\label{fig:SurfaceArea}
\end{figure}
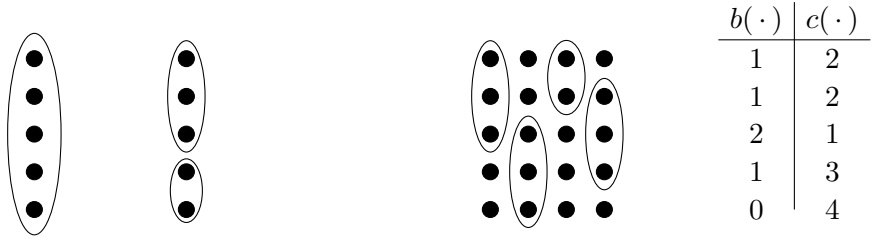

\begin{proof}
We start by explaining our method by considering the case $i=d-1$. In this situation
\begin{align*}
\E\left(\widetilde{F_{r,t}^{(d-1)}}\right)^5 = {\E\big(F_{r,t}^{(d-1)}-\E F_{r,t}^{(d-1)}\big)^5\over(\Var(F_{r,t}^{(d-1)}))^{5/2}} \leq c\,{\E\big(F_{r,t}^{(d-1)}-\E F_{r,t}^{(d-1)}\big)^5\over e^{5r(d-2)}},
\end{align*}
where we used the variance bound from Lemma \ref{lem:VarianceBoundd>=4}, which is available since $t\ge t_0$ and $r\geq 1$. For the centred fifth moment,  \eqref{eq:MomentsUstatistic} implies that
$$
\E\big(F_{r,t}^{(d-1)}-\E F_{r,t}^{(d-1)}\big)^5 = \sum_{\sigma\in\Pi_{\geq 2}^{**}(1,1,1,1,1)}t^{5-|\sigma|+\|\sigma\|}\int_{A_h(d,d-1)^{5-|\sigma|-\|\sigma\|}}\big((f^{(d-1)})^{\otimes 5}\big)_\sigma\,d\mu_{d-1}^{5-|\sigma|+\|\sigma\|}.
$$
The set $\Pi_{\geq 2}^{**}(1,1,1,1,1)$ consists only of two types of sub-partitions of $\{1,2,3,4,5\}$, which are actually partitions, see Figure \ref{fig:SurfaceArea}. The first type only consists of one partition, namely the trivial partition,  only containing the single block $\{1,2,3,4,5\}$. The second type contains ${5\choose 2}=10$ partitions having precisely two blocks, one of size $2$ and the other of type $3$. Since the integrals corresponding to these partitions all yield the same contribution, we can restrict our computations to $\{\{1,2,3\},\{4,5\}\}$, for example. Thus,
\begin{align*}
\E\big(F_{r,t}^{(d-1)}-\E F_{r,t}^{(d-1)}\big)^5 &= t^9\int_{A_h(d,d-1)}\cH^{d-1}(H\cap B_r)^5\,\mu_{d-1}(dH)\\
&\quad+10t^8\int_{A_h(d,d-1)^2}\cH^{d-1}(H_1\cap B_r)^3\,\cH^{d-1}(H_2\cap B_r)^2\,\mu_{d-1}^2(d(H_1,H_2)).
\end{align*}
By Lemma \ref{lem:lines_intersecting_ball_inequality} we have
$$
\int_{A_h(d,d-1)}\cH^{d-1}(H\cap B_r)^5\,\mu_{d-1}(dH)\le c\, g(d-1,5,d,r)\le c\, e^{5r(d-2)},
$$
since $5(d-2)-(d-1)=4d-9>0$.  Again by Lemma \ref{lem:lines_intersecting_ball_inequality} we obtain
\begin{align*}
&\int_{A_h(d,d-1)^2}\cH^{d-1}(H_1\cap B_r)^3\,\cH^{d-1}(H_2\cap B_r)^2\,\mu_{d-1}^2(d(H_1,H_2))\\
&\qquad
\le c\,  g(d-1,3,d,r)\, g(d-1,2,d,r)\le c\, e^{3r(d-2)}e^{2r(d-2)} \le c\, e^{5r(d-2)},
\end{align*}
since $d>3$. Thus we get
$$
\sup_{r\geq 1}\E\left(\widetilde{F_{r,t}^{(i)}}\right)^5 \leq c\,\sup_{r\geq 1}{e^{5r(d-2)}+e^{5r(d-2)}\over e^{5r(d-2)}} = c<\infty.
$$
This proves the claim for $i=d-1$.

\medskip

Now we fix $i\in\{0,1,\ldots,d-2\}$ arbitrarily and assume that $d\geq 7$. Furthermore, we fix an arbitrary partition $\sigma\in\Pi_{\geq 2}^{**}(d-i,d-i,d-i,d-i,d-i)$. We denote by $m(\sigma)\in\{2,3,4,5\}$ the size of the maximal block of $\sigma$ and represent $\sigma$ as a diagram. The elements of this diagram are labelled $a_{p,q}$. Here, $p\in\{1,\ldots,5\}$ represents the row number and  $q\in\{1,\ldots,d-i\}$ stands for the column number. Without loss of generality we can and will assume that the maximal block of $\sigma$ sits in the left upper corner of the diagram of $\sigma$, that is, the maximal block is of the form $\{a_{1,1},\ldots,a_{m(\sigma),1}\}$. To each row $p\in\{1,\ldots,5\}$ we associate two numbers $b(p)$ and $c(p)$ in the following way. By $b(p)$ we denote the number of elements of row $p$  in position
$$(p,q)\in (\{1,\ldots,m(\sigma)\}\times \{2,\ldots,d-i\})\cup (\{m(\sigma)+1,\ldots,5\}\times \{1,\ldots,d-i\})$$
which are contained in a block of $\sigma$ that has at least one element in a row below $p$, and we let $c(p)$ be the number of elements in position $(p,q)$ (with the same restrictions as above) in row $p$ not contained in any block of $\sigma$ that has at least one element in a row below $p$, see Figure \ref{fig:SurfaceArea} for an example. Note that $b(5)=0$, $c(5)=d-i$ if $m(\sigma)<5$, and $c(p)=d-i-b(p)-1$ if $p\in\{1,\ldots,m(\sigma)\}$. Our task is to show that the integral (in symbolic notation)
\begin{align*}
\mathscr{I} :&= \int\cdots\int \Big(\big(f^{(i)}\big)^{\otimes 5}\Big)_\sigma\\
&=\int\cdots\int f^{(i)}(H_1, G_1,\ldots, G_{b(1)}, K_1,\ldots, K_{c(1)})\\
&\qquad\qquad\qquad\times f^{(i)}(\ldots)\,f^{(i)}(\ldots)\,f^{(i)}(\ldots)\,f^{(i)}(\ldots) \, \mu_{d-1}(d H_1) \ldots
\end{align*}
is bounded by a constant multiple of $e^{5(d-2)r}$, which is the order of $(\Var(F_{r,t}^{(i)}))^{5/2}$.
We first integrate with respect to the hyperplanes $K_1,\ldots,K_{c(1)}$, which do not appear in any of the arguments of the other four functions $f^{(i)}(\ldots)$. By Crofton's formula  this gives $c\,\cH^{d-1-b(1)}(B_r\cap H_1\cap G_1\cap\ldots\cap G_{b(1)})$. Now we replace $H_1\cap G_1\cap\ldots\cap G_{b(1)}$ by a $(d-1-b(1))$-dimensional subspace $L_{d-1-b(1)}(s_1)$ having distance $s_1=d_h(H_1,p)$ from $p$. This leads to
\begin{align}\label{eq:UnifInt_1}
\cH^{d-1-b(1)}(B_r\cap H_1\cap G_1\cap\ldots\cap G_{b(1)})\leq\cH^{d-1-b(1)}(B_r\cap L_{d-1-b(1)}(s_1)).
\end{align}
Then $G_1,\ldots,G_{b(1)}$ are active integration variables for rows below the first row.
Repeating the same argument for $p=2,\ldots,m(\sigma)$, we arrive at (again in symbolic notation)
\begin{align*}
\mathscr{I} &\leq c \int \cdots \int \cH^{d-1-b(1)}(B_r\cap L_{d-1-b(1)}(s_1))\cdots \cH^{d-1-b(m(\sigma))}(B_r\cap L_{d-1-b(m(\sigma))}(s_1))\\
&\hspace{5cm}\times f^{(i)}(\ldots)\cdots f^{(i)}(\ldots)\, \mu_{d-1}(dH_1) \ldots ,
\end{align*}
where $f^{(i)}(\ldots)$ appears $5-m(\sigma)$ times. From now on we distinguish the following two cases:
\begin{itemize}
\item[(a)] there is no block that contains precisely two elements from the rows below $m(\sigma)$,
\item[(b)] there exists a block that contains precisely two elements from the rows below $m(\sigma)$.
\end{itemize}
We start by treating case (a). If $m(\sigma)=2$, then all blocks of $\sigma$ have two elements. In particular, no element of
row $p\ge 3$ can be in a ($2$-element) block with another element in a block below. Hence, we have $c(p)=d-i$ for $p\ge 3$. If $m(\sigma)=3$, then an element of row $p=4$ cannot be in a common block with an element of row $5$ due to assumption (a). Hence $c(4)=c(5)=d-i$. This shows that $c(p)=d-i$  for $p\in\{m(\sigma)+1,\ldots,5\}$. We can thus carry out the $5-m(\sigma)$ integrals involving the functions $f^{(i)}(\ldots)$, which by Crofton's formula and Lemma \ref{lem:lines_intersecting_ball_inequality} leads to the upper bound
\begin{align}\label{eq:NCLT1}
\cH^d(B_r)^{5-m(\sigma)} \leq c\,e^{(5-m(\sigma))(d-1)r}.
\end{align}
The only remaining integral in $\mathscr{I}$ is
\begin{align*}
\mathscr{J}:=\int_0^r\cosh^{d-1}(s)\,\cH^{d-1-b(1)}(B_r\cap L_{d-1-b(1)}(s))\cdots \cH^{d-1-b(m(\sigma))}(B_r\cap L_{d-1-b(m(\sigma))}(s))\,ds.
\end{align*}
To proceed, we define for $p\in\{1,\ldots,m(\sigma)\}$ the function
\begin{align*}
g_p(s):&=e^{-r(d-2)}\cdot\begin{cases}
e^{(r-s)(d-2-b(p))}&:d-1-b(p) \geq 2,\\
r-s+\log(2)&: d-1-b(p) = 1,\\
1&: d-1-b(p) = 0.
\end{cases}
\end{align*}
Then, Lemma \ref{lem:CoshBound}, \eqref{eq:volume_ball} and Lemma \ref{lem:H_d_bounds} imply that
\begin{align}\label{eq:NCLT2}
\mathscr{J} \leq c \,e^{m(\sigma)(d-2)r}\,\mathscr{K}\qquad\text{with}\qquad\mathscr{K}:=\int_0^r \cosh^{d-1}(s)\,g_1(s)\cdots g_{m(\sigma)}(s)\,ds.
\end{align}
We let
\begin{align*}
Z_{01} &:= \{p\in\{1,\ldots,m(\sigma)\}:d-1-b(p)\in\{0,1\}\},\\
Z_{1} &:= \{p\in\{1,\ldots,m(\sigma)\}:d-1-b(p)=1\}.
\end{align*}
Then
\begin{align}
\mathscr{K} &\leq
c\,e^{-r(d-2)|Z_{01}|-r\sum_{p=1, p\notin Z_{01}}^{m(\sigma)}b(p)}
\int_0^r (r-s+\log(2))^{|Z_1|}\,e^{sE}\,ds,\label{eq:NCLT3}
\end{align}
where the exponent $E$ is given by
$$
E := (d-1)-(d-2)(m(\sigma)-|Z_{01}|)+\sum_{p=1, p\notin Z_{01}}^{m(\sigma)}b(p).
$$
If $E<0$ the integral in \eqref{eq:NCLT3} is bounded by a constant times $r^{|Z_1|}$. In view of \eqref{eq:NCLT1} and \eqref{eq:NCLT2} we conclude that
\begin{align}\label{eq:NCLT4}
\mathscr{I} \leq c\,e^{(5-m(\sigma))(d-1)r}\,e^{m(\sigma)(d-2)r}\,e^{-(d-2)|Z_{01}|r-r\sum_{p=1, p\notin Z_{01}}^{m(\sigma)}b(p)}r^{|Z_1|}.
\end{align}
In order to bound $\mathscr{I}$ from above by a constant times $e^{5(d-2)r}$, we use the decomposition
\begin{align}\label{eq:NCLT5}
e^{5(d-2)r} = e^{(5-m(\sigma))(d-1)r}\,e^{m(\sigma)(d-2)r}\,e^{-(5-m(\sigma))r}.
\end{align}
A comparison of the exponents in \eqref{eq:NCLT4} and \eqref{eq:NCLT5} shows that if $E<0$,
then it is sufficient to prove that
$$
(d-2)|Z_{01}|+\sum_{p=1, p\notin Z_{01}}^{m(\sigma)}b(p)
\begin{cases}
\geq 5-m(\sigma)&: \text{if } |Z_1|=0,\\
> 5-m(\sigma)&: \text{if } |Z_1|>0.
\end{cases}
$$
If $|Z_{01}|>0$, then $(d-2)|Z_{01}|\ge 4> 5-m(\sigma)$ for $d\ge 6$.
If $|Z_{01}|=0$, then also $|Z_1|=0$, and in this case it is sufficient to show that $\sum_{p=1}^{m(\sigma)}b(p)\ge 5-m(\sigma)$. To see this, note that, for any $m(\sigma)\in\{2,\ldots,5\}$, under condition (a) we know that for $5-m(\sigma)$ of the positions $(p,q)\in\{1,\ldots,m(\sigma)\}\times \{2,\ldots,d-i\}$
there has to be a block containing the element at $(p,q)$ and exactly one element at $(p',q')\in\{m(\sigma)+1,\ldots,5\}\times \{1,\ldots,d-i\}$, since each row has to be visited by some block. But this implies  the required inequality.

 Next, suppose that $E=0$. Then the integral in \eqref{eq:NCLT3} is bounded by a polynomial in $r$ of degree at most $|Z_1|+1$ and another comparison of exponents in \eqref{eq:NCLT4} and \eqref{eq:NCLT5} implies that in this case we need to prove that
\begin{align}\label{eq:NCLT6}
(d-2)|Z_{01}| + \sum_{p=1, p\notin Z_{01}}^{m(\sigma)}b(p) > 5-m(\sigma).
\end{align}
Using the assumption that $E=0$, we see that in this case
\begin{align*}
(d-2)|Z_{01}| + \sum_{p=1, p\notin Z_{01}}^{m(\sigma)}b(p) %&= (d-2)|Z_{01}|+(m(\sigma)-|Z_{01}|)(d-2)-(d-1) \\
&= m(\sigma)(d-2)-(d-1).
\end{align*}
This shows that the inequality in \eqref{eq:NCLT6} is equivalent to $(d-1)(m(\sigma)-1) > 5$,
which is always satisfied for $d\geq 7$.

Finally, we suppose that $E>0$ in which case a comparison of the exponents in \eqref{eq:NCLT4} and \eqref{eq:NCLT5} shows that we have to verify that
\begin{align*}
(d-2)|Z_{01}|+ \sum_{p=1, p\notin Z_{01}}^{m(\sigma)}b(p) - (d-1) + (d-2)(m(\sigma)-|Z_{01}|) - \sum_{p=1, p\notin Z_{01}}^{m(\sigma)}b(p) \geq 5-m(\sigma).
\end{align*}
After simplification, this is equivalent to $(d-1)(m(\sigma)-1) \geq 5$,
which  holds for $d\geq 6$. This completes the argument in case (a) for $d\ge 7$.

\medskip

We turn now to case (b), where we have to distinguish the sub-cases $m(\sigma)=2$ and $m(\sigma)=3$. We start with the case $m(\sigma)=2$.  Then, arguing as at the beginning of the proof for case (a), we have
$$
\mathscr{I}\le c\, \mathscr{I}_1\mathscr{I}_2\cH^d(B_r)
$$
with
\begin{align*}
  \mathscr{J}_j:= & \int_0^r \cosh^{d-1}(s)\,\cH^{d-1-\bar b(2j-1)}(B_r\cap L_{d-1-\bar b(2j-1)}(s))\,\cH^{d-1-\bar b(2j)}(B_r\cap L_{d-1-\bar b(2j)}(s))\,ds
\end{align*}
for $j\in\{1,2\}$, where $\bar b(i)=b(i)$ for $i\in\{1,2,4\}$ and $\bar b(3)=b(3)-1\ge 0$. Moreover, without loss of generality, we can assume that $b(1)\ge 1$.
Similarly to \eqref{eq:NCLT2}, for $j \in \{1,2\}$ we get
$$\mathscr{J}_j \leq e^{2(d-2)r}\,\mathscr{K}_j \qquad\text{with}\qquad\mathscr{K}_j:=\int_0^r \cosh^{d-1}(s)\,g_{2j-1}(s)\,g_{2j}(s)\,ds. $$
For $j \in \{1,2\}$ we let
\begin{align*}
Z_{01}^{j} &:= \{p\in\{2j-1,2j\}:d-1-\bar b(p)\in\{0,1\}\},\\
Z_{1}^{j} &:= \{p\in\{2j-1,2j\}:d-1-\bar b(p)=1\}.
\end{align*}
Then
\begin{align}
\mathscr{K}_j &\leq
c\,e^{-r(d-2)|Z_{01}^{j}|-r\sum_{p=2j-1, p\notin Z_{01}^{j}}^{2j}\bar b(p)}\int_0^r (r-s+\log(2))^{|Z_1^{j}|}\,e^{sE_j}\,ds,\label{eq:NCLT7}
\end{align}
where the exponents $E_j$, $j\in\{1,2\}$, are given by
\begin{align*}
  E_j :=& (d-1)-(d-2)(2-|Z_{01}^{j}|)+\sum_{p=2j-1, p\notin Z_{01}^{j}}^{2j}\bar b(p).
\end{align*}
We will show that $\mathscr{K}_1$ is bounded by a constant multiple of $e^{-r}$ and $\mathscr{K}_2$ by a constant.
Then we can conclude that
$$
\mathscr{I}\le c\, e^{(d-1)r}\mathscr{I}_1\mathscr{I}_2\le c\, e^{(d-1)r}e^{4(d-2)r}e^{-r}\le e^{5(d-2)r}.
$$
We first consider $\mathscr{K}_1$.  For $E_1<0$ the integral in \eqref{eq:NCLT7} is bounded by a constant multiple of $r^{|Z_1^1|}$. Therefore it is sufficient to compare the exponents and to show that
$$(d-2)|Z_{01}^{1}|+\sum_{p=1, p\notin Z_{01}^{1}}^{2}b(p) \begin{cases}
\ge 1&:|Z^1_1|=0,\\
> 1&:|Z^1_1|>0.
\end{cases}
$$
Since $b(1)\ge 1$ and $d\ge 4$, this is satisfied.

Next, suppose that $E_1=0$. In this case, the integral in \eqref{eq:NCLT7} is bounded by a polynomial in $r$ and we have to show the inequality
\begin{align}\label{eq:NCLT8}
(d-2)|Z_{01}^{1}|+\sum_{p=1, p\notin Z_{01}^{1}}^{2}b(p) > 1.
\end{align}
Using the assumption that $E_1=0$, we get
$$(d-2)|Z_{01}^{1}|+\sum_{p=1, p\notin Z_{01}^{1}}^{2}b(p)=-(d-1)+2(d-2)=d-3.$$
Hence \eqref{eq:NCLT8} is true for $d\geq 5$.

Finally, we suppose that $E_1>0$.  Then we have to show that
$$(d-2)|Z_{01}^{1}|+\sum_{p=1, p\notin Z_{01}^{1}}^{2}b(p)-(d-1)+(d-2)(2-|Z_{01}|^{j})-\sum_{p=1, p\notin Z_{01}^{1}}^{2}b(p)\geq 1.$$
After simplifications this is equivalent to $d\geq 4$.

Now we prove that $\nonumber\mathscr{K}_2$ is bounded by a constant. For $E_2<0$, a comparison of the exponents in \eqref{eq:NCLT7} shows that we need that
$$(d-2)|Z_{01}^{2}|+ \sum_{p=3, p \notin Z_{01}^{2}}^{4}\bar b(p)\begin{cases}
\geq 0& :|Z^2_{1}|=0,\\
> 0& :|Z^2_{1}|>0,
\end{cases}
$$
which is trivially satisfied.

For $E_2=0$ the required inequality is
$$(d-2)|Z_{01}^{2}|+ \sum_{p=3, p \notin Z_{01}^{2}}^{4}\bar b(p)> 0,$$
which is equivalent to $-(d-1)+2(d-2)>0$, that is, to $d\ge 4$.

Finally, if $E_2>0$ then we have to verify that
$$(d-2)|Z_{01}^{2}|+ \sum_{p=3, p \notin Z_{01}^{2}}^{4}\bar b(p)-(d-1)+(d-2)(2-|Z_{01}|^{2})-\sum_{p=3, p \notin Z_{01}^{2}}^{4}\bar b(p)\geq 0.$$
Again simplification yields that this is equivalent to $d\ge 3$.

\medskip
Now we turn to the case $m(\sigma)=3$. Then we have
$$
\mathscr{I}\le c\, \mathscr{I}_3\mathscr{I}_4
$$
with
\begin{align*}
\mathscr{I}_3&:=\int_0^r\cosh^{d-1}(s)\prod_{i=1}^3\cH^{d-1-b(i)}(B_r\cap L_{d-1-b(i)}(s))\, ds,\\
\mathscr{I}_4&:= \int_0^r\cosh^{d-1}(s)\prod_{i=4}^5\cH^{d-1-\bar b(i)}(B_r\cap L_{d-1-\bar b(i)}(s))\, ds,
\end{align*}
where $0\le \bar b(4):=b(4)-1\le d-i-1\le d-1$ and $\bar b(5)=0$. We will prove that $\mathscr{I}_3\le c\, e^{3(d-2)r}$ and $\mathscr{I}_4\le c\, e^{2(d-2)r}$, which in turn proves that $\mathscr{I}\le c\, e^{5(d-2)r}$.

As in the proof of case (a) (and  for $m(\sigma)=3$ there), we obtain
$$
\mathscr{I}_3\le c\, e^{3(d-2)r}\mathscr{K}_3\qquad\text{with}\qquad \mathscr{K}_3:=\int_0^r\cosh^{d-1}(s) g_1(s)g_2(s)g_3(s)\, ds.
$$
We show that $\mathscr{K}_3\le c$. For this, we proceed as before and obtain
\begin{align*}
\mathscr{K}_3 &\leq
c\,e^{-r(d-2)|Z_{01}^3|-r\sum_{p=1, p\notin Z_{01}^3}^3 b(p)}
\int_0^r (r-s+\log(2))^{|Z_1^3|}\,e^{sE_3}\,ds,
\end{align*}
where
$$
Z_{01}^3 := \{p\in\{1,\ldots,3\}:d-1-b(p)\in\{0,1\}\}, \quad
Z_{1}^3 := \{p\in\{1,\ldots,3:d-1-b(p)=1\}
$$
and
$$
E_3:=(d-1)-(d-2)(3-|Z^3_{01}|)+\sum_{p=1,p\notin Z_{01}^3}^3b(p).
$$
If $E_3\le 0$, then
$$
r^{|Z^3_{01}|}e^{-r(d-2)|Z^3_{01}|}e^{-r\sum_{p=1,p\notin Z_{01}^3}^3 b(p)}\le c
$$
provided that
$$
(d-2)|Z^3_{01}|+\sum_{p=1,p\notin Z_{01}^3}^3 b(p)  \begin{cases}
\ge 0&:|Z^3_1|=0,\\
>0 &:|Z^3_1|>0.
\end{cases}
$$
This is obviously true, since $|Z^3_{01}|\ge |Z^3_1|$ and $d\ge 4$. Hence, if $E_3\le 0$, then $\mathscr{K}_3\le c$.

If $E_3>0$,   then $\mathscr{K}_3\le c$ follows provided that
$$
(d-2)|Z^3_{01}|+\sum_{p=1,p\notin Z_{01}^3}^3 b(p)-E_3\ge 0.
$$
The latter is equivalent to $(d-2)3-(d-1)\ge 0$, that is, to $2d\ge 5$. Thus we have shown that $\mathscr{I}_3\le c\, e^{3(d-2)}$. In order to show that $\mathscr{I}_4\le c\, e^{2(d-2)}$, we distinguish several cases.

If $\bar b(4)<d-3$, then
\begin{align*}
\mathscr{I}_4&\le c\int_0^re^{s(d-1)}e^{(r-s)(d-2-\bar b(4))}e^{(r-s)(d-2)}\, ds\\
&\le c\,e^{(2(d-2)-\bar b(4))r}\int_0^re^{s(-d+3+\bar b(4))}\, ds\le c\, e^{2(d-2)r}.
\end{align*}

If $\bar b(4)=d-3$, then
$$
\mathscr{I}_4\le c\, e^{(2(d-2)-d+3)r}r=c\, r e^{r(d-1)}\le c\, e^{2(d-2)r},
$$
since $d-1< 2(d-2)$ for $d\ge 4$.

If $\bar b(4)=d-2$, then
$$
\mathscr{I}_4\le c\int_0^re^{s(d-1)}(r-s+\log(2))e^{(r-s)(d-2)}\, ds\le c\, e^{r(d-1)}.
$$

If $\bar b(4)=d-1$, then
$$
\mathscr{I}_4\le c\int_0^re^{s(d-1)} e^{(r-s)(d-2)}\, ds\le c\, e^{r(d-1)}.
$$
Thus in all cases  we have $\mathscr{I}_4\le c\, e^{2(d-2)r}$, which completes the proof.
\end{proof}

\begin{proof}[Proof of Theorem \ref{thm:CLTrToInfinity} (c)]
Let $d$ and $i$ be as in the statement of Theorem \ref{thm:CLTrToInfinity} (c), and suppose to the contrary that $\widetilde{F_{r,t}^{(i)}}$ converges in distribution, as $r\to\infty$, to a standard Gaussian random variable $N$. As a consequence of Lemma \ref{lem:UnifInt}, the family of random variables $\big((\widetilde{F_{r,t}^{(i)}})^4\big)_{r\geq 1}$ is uniformly integrable, which implies that $\E (\widetilde{F_{r,t}^{(i)}})^4\to \E N^4=3$, as $r\to\infty$. Thus, we would also have that
\begin{align}\label{eq:13-06-1}
\cum_4 = \E \left(\widetilde{F_{r,t}^{(i)}}\right)^4-3 \to \E N^4 - 3 = 0,
\end{align}
as $r\to\infty$. On the other hand, from {\cite[page 112]{SchulteKolmogorov}} we know that
$$
\frac{M_{1,1}(f^{(i)})}{(\Var(F_{r,t}^{(i)}))^2} \leq \cum_4.
$$
In addition, we have the following lower bound for $M_{1,1}(f^{(i)})$:
$$    M_{1,1}(f^{(i)})
     =c t^{4(d-1-i)+1} \int_{A_h(d,d-1)} \mathcal{H}^{d-1}(\tilde{H}_1\cap B_r)^4 \ \mu_{d-1}(d\tilde{H}_1) \ge c\, g(d-1,4,d,r) \ge c\,
   e^{4r(d-2)},
$$
since $4(d-2)-(d-1)>0$, which follows from our assumption that $d\geq 4$, and since  $i\le d-1$ and $t\ge 1$. In combination with Lemma \ref{lem:VarianceBoundd>=4} we thus find that
\begin{align*}
\cum_4\geq \frac{M_{1,1}(f^{(i)})}{(\Var(F_{r,t}^{(i)}))^2} \geq {c\over c^{(i)}(d)}{e^{4r(d-2)}\over e^{4r(d-2)}}=c>0,
\end{align*}
which is a contradiction to \eqref{eq:13-06-1}. Consequently, the family of random variables $\big(\widetilde{F_{r,t}^{(i)}}\big)_{r\geq 1}$ cannot satisfy a central limit theorem as $r\to\infty$.
\end{proof}

\begin{remark}\label{rem:LimitDistribution}\rm
Let $d\geq 4$ and $i=d-1$ or $d\geq 7$ and $i\in\{0,1,\ldots,d-1\}$. For such $d$ and $i$ the proof of Theorem \ref{thm:CLTrToInfinity} (c) in combination with \cite[Corollary 4.7.19]{Bogachev}, a corollary of the Eberlein-\u{S}mulian theorem, shows that there exists a subsequence $\widetilde{F_{r_k,t}^{(i)}}$ such that $\widetilde{F_{r_k,t}^{(i)}}$ converges in distribution and in $L^4$ to some limiting random variable $X$, say. Especially this implies that $\E X=0$, $\E X^2=1$ and $\E X^m<\infty$ for $m\in\{3,4\}$. In particular, this rules out for $X$ the classical $\alpha$-stable distributions for any $0<\alpha<2$ and, since we have shown that $\cum_4(X)>0$, also a Gaussian distribution. We leave the determination of the distribution of the limiting random variable $X$ as a challenging open problem for future research.
\end{remark}

\subsection{The case of simultaneous growth of intensity and window: Proof of Theorem \ref{thm:RandTtoInfinity}}

According to Lemma \ref{lem:UnifInt} we have that, for any fixed $t\geq 1$,
$$
\sup_{r\geq 1}\E\left(\widetilde{F_{r,t}^{(i)}}\right)^5<\infty,\qquad\text{where}\qquad \widetilde{F_{r,t}^{(i)}}={F_{r,t}^{(i)}-\E F_{r,t}^{(i)}\over \sqrt{\Var(F_{r,t}^{(i)})}}
$$
and where $d$ and $i$ are as in the statement of Theorem \ref{thm:RandTtoInfinity}. Then, taking $t=1$, by H\"older's inequality it follows that
\begin{equation}\label{eq:4MomentSup}
\sup_{r\geq 1}\E\left(\widetilde{F_{r,1}^{(i)}}\right)^4 \leq \sup_{r\geq 1}\left(\E\left(\widetilde{F_{r,1}^{(i)}}\right)^5\right)^{4/5} < \infty.
\end{equation}

Next, we recall the definition of the integrals $M_{u,v}(h)$, $u,v\in\{1,\ldots,m\}$, from \eqref{eq:defMij} that are associated with a general Poisson U-statistic of order $m\in\N$ with kernel function $h$. In order to emphasize the role of the measure these integrals are taken with, we will write $M_{u,v}(h\,;\,\mu)$ in what follows. By definition of the integrated kernels in \eqref{eq:ChaosKernels} we have that
\begin{align}\label{eq:Mij_in_t}
   M_{u,v}(f^{(i)}\,;\,t \mu_{d-1})  \leq  t^{4(d-i-1)+1} M_{u,v}(f^{(i)}\,;\,\mu_{d-1})
\end{align}
for any $t\geq 1$ and any fixed $r\geq 1$. In fact, $f_u^{(i)}$ and $f_v^{(i)}$ contribute twice the factor $t^{d-i-u}$ and twice the factor $t^{d-i-v}$ by \eqref{eq:ChaosKernels}, respectively, and the integral in \eqref{eq:defMij} leads to an additional factor $t^{|\sigma|}$. By the choice $u=v=1$ we maximize the resulting exponent and see that their product is bounded by $t^{4(d-i-1)+1}$. Indeed, if $u=v=1$ we necessarily have that $|\sigma|=1$ since $\sigma$ has to be connected. On the other hand, if $u+v\geq 3$ then $|\sigma|\leq u+v$ and hence
\begin{align*}
2(d-i-u)+2(d-i-v)+|\sigma| &\leq 2(d-i-u)+2(d-i-v)+u+v\\
&=4(d-i-1)-(u+v)+4\\
&\leq 4(d-i-1)+1.
\end{align*}

Now, we apply the normal approximation bound \eqref{eq:Kolmogorov} to the Poisson U-statistic $F_{r,t}^{(i)}$. Together with \eqref{eq:Mij_in_t} and the lower and the upper variance bound from Lemma \ref{lem:VarianceBoundd>=4} this yields
\begin{align*}
d\left(\frac{F_{r,t}^{(i)}- \mathbb{E}F_{r,t}^{(i)}}{\sqrt{\Var F_{r,t}^{(i)}}},N \right) &\leq c \sum_{u,v=1}^{d-i} \frac{\sqrt{M_{u,v}(f^{(i)}{;} \ t \mu_{d-1})}}{\Var(F_{r,t}^{(i)})} \\
&\leq c \sum_{u,v=1}^{d-i} {t^{2(d-i-1)+1/2}\over t^{2(d-i)-1}}\frac{\sqrt{M_{u,v}(f^{(i)}\,;\, \mu_{d-1})}}{\Var(F_{r,1}^{(i)})}\\
&= {c\over\sqrt{t}} \sum_{u,v=1}^{d-i} \frac{\sqrt{M_{u,v}(f^{(i)}\,;\, \mu_{d-1})}}{\Var(F_{r,1}^{(i)})}
\end{align*}
for any $t\geq 1$ and $r\geq 1$. Note that the expression in the sum has now become a function of the parameter $r$ only. We can now apply for any $u,v\in\{1,\ldots,d-i\}$ the estimate
$$
\frac{\sqrt{M_{u,v}(f^{(i)}\,;\, \mu_{d-1})}}{\Var(F_{r,1}^{(i)})} \leq \sqrt{\E\left(\widetilde{F_{r,1}^{(i)}}\right)^4 - 3}
$$
from the discussion after {\cite[Corollary 4.3]{SchulteKolmogorov}} (see also \cite[Proposition 3.8]{LaPecc}). This leads to the bound
\begin{align*}
d\left(\frac{F_{r,t}^{(i)}- \mathbb{E}F_{r,t}^{(i)}}{\sqrt{\Var F_{r,t}^{(i)}}},N \right) &\leq \frac{c}{\sqrt{t}} \sqrt{\E\left(\widetilde{F_{r,1}^{(i)}}\right)^4 - 3} \leq \frac{c}{\sqrt{t}} \sqrt{\E\left(\widetilde{F_{r,1}^{(i)}}\right)^4}.
\end{align*}
However, in view of \eqref{eq:4MomentSup} the last expression is bounded by $c/\sqrt{t}$ for all $t\geq 1$ and $r\geq 1$. This completes the proof of Theorem \ref{thm:RandTtoInfinity}. $\hfill \Box$

% % % % % % % % % % % % % % % % % % % % % % % % % % %
% % % % % % % % % % % % % % % % % % % % % % % % % % %
% % % % % % % % % % % % % % % % % % % % % % % % % % %

\section{Proofs IV -- Multivariate limit theorems}\label{sec:Multi}

\subsection{The case of growing intensity: Proof of Theorem \ref{thm:CLTMultivariate} (a)}

This is a direct consequence of \cite[Theorem 5.2]{LPST}. \hfill $\Box$

\subsection{The case of growing windows: Proof of Theorem \ref{thm:CLTMultivariate} (b) and (c)}

\subsubsection{The planar case $d=2$: Proof of Theorem \ref{thm:CLTMultivariate} (b)}

Our goal is to use \eqref{eq:d_3}. The first term in \eqref{eq:d_3} is bounded by a constant multiple of $r^2e^{-r}$ by Lemma \ref{lem:dist_cov_d=2}. To evaluate the second term we have to combine the lower variance bound from Lemma \ref{lem:VarianceBoundd=2} with upper bounds for the terms $M_{1,1}$, $M_{1,2}$ and $M_{2,2}$. In the proof of Theorem \ref{thm:CLTrToInfinity} (a) we have already shown that
$M_{1,1}(f^{(i)},f^{(i)})\leq ce^r$ for $i\in\{0,1\}$ and $M_{2,2}(f^{(0)},f^{(0)})\leq cre^r$, which implies that
\begin{align*}
M_{1,1}(e^{-r/2}{f}^{(i)},e^{-r/2}f^{(i)}) &\leq c\,e^{-2r}\,e^{r} = c\,e^{-r},\\
%M_{1,1}(e^{-r/2}{f}^{(1)},e^{-r/2}f^{(1)}) &\leq c\,e^{-2r}\,e^{r} = c\,e^{-r},\\
M_{2,2}(e^{-r/2}{f}^{(0)},e^{-r/2}f^{(0)}) & \leq c\,r\,e^{-2r}\,e^{r} = c\,r\,e^{-r}.
\end{align*}
Finally, up to a constant factor an upper bound for $M_{1,2}(e^{-r/2}{f}^{(i)},e^{-r/2}f^{(0)})$, for $i\in\{0,1\}$, is
given by $M_{1,2}(e^{-r/2}{f}^{(0)},e^{-r/2}f^{(0)})$, which is equal to
$$
e^{-2r}M_{1,2}( {f}^{(0)} ) \leq c\,e^{-2r}\,(e^r+2 r^2\,e^r) \leq c\,r^2\,e^{-r}.
$$
Thus we conclude from \eqref{eq:d_3} that
\begin{align*}
d_3(\textbf{F}_{r,t},N_{\Sigma_2}) \leq c\,(r^2\,e^{-r}+e^{-r/2}+r^{1/2}\,e^{-r/2}+r\,e^{-r/2}) \leq c\, r \,e^{-r/2}.
\end{align*}
Since the covariance matrix $\Sigma_2$ is invertible, $\|\Sigma_2^{-1}\|_{\rm op}\|\Sigma_2\|_{\rm op}^{1/2}$ and $\|\Sigma_2^{-1}\|_{\rm op}\|^{3/2}\Sigma_2\|_{\rm op}$ are positive and finite constants only depending on $t$. Together with \eqref{eq:d_2} this also implies that
$$
d_2(\textbf{F}_{r,t},N_{\Sigma_2}) \leq c\, r \,e^{-r/2}.
$$
and completes the proof of Theorem \ref{thm:CLTMultivariate} (b). \hfill $\Box$

\subsubsection{The spatial case $d=3$: Proof of Theorem \ref{thm:CLTMultivariate} (c)}

Our goal is again to use the normal approximation bound \eqref{eq:d_3}. By Lemma \ref{lem:dist_cov_d=3} the first term in \eqref{eq:d_3} is bounded from above by a constant multiple of $r^{-1}$. Next, it remains to provide upper bounds for the terms
$$
M_{u,v} \qquad\text{for}\qquad (u,v)\in\{(1,1),(1,2),(1,3),(2,2),(2,3),(3,3)\}.
$$
As in the planar case $d=2$ all integrals which are involved have already been treated in the proof of the univariate limit theorem. Thus, using the bounds derived in the proof of Theorem \ref{thm:CLTrToInfinity} (b) we can complete the proof in dimension $d=3$. \hfill $\Box$

%%%%%%%%%%%%%%%%%%%%%%%%%%%%%%%%%%%%%%%
%%%%%%%%%%%%%%%%%%%%%%%%%%%%%%%%%%%%%%%
%%%%%%%%%%%%%%%%%%%%%%%%%%%%%%%%%%%%%%%

\subsection*{Acknowledgement}
This project was initiated when FH was visiting Ruhr University Bochum in March 2019, the financial support by the Deutsche Forschungsgemeinschaft (DFG) through RTG 2131 \textit{High-dimensional Phenomena in Probability -- Fluctuations and Discontinuity} is gratefully acknowledged. Our thanks also go to Tom Kaufmann (Bochum) whose programming skills helped us to verify the list of permutations we had to use to prove Theorem \ref{thm:CLTrToInfinity} (a) and (b).

%%%%%%%%%%%%%%%%%%%%%%%%%%%%%%%%%%%%%%%
%%%%%%%%%%%%%%%%%%%%%%%%%%%%%%%%%%%%%%%
%%%%%%%%%%%%%%%%%%%%%%%%%%%%%%%%%%%%%%%

\end{document}